\theoremstyle{plain}
\newtheorem{theorem}{Theorem}[section]
\newtheorem{lemma}[theorem]{Lemma}
\newtheorem{proposition}[theorem]{Proposition}
\newtheorem{corollary}[theorem]{Corollary}
\theoremstyle{definition}
\newtheorem{definition}[theorem]{Definition}
\newtheorem{example}[theorem]{Example}
\newtheorem{remark}[theorem]{Remark}
\newtheorem{prop/def}[theorem]{Proposition/Definition}
\numberwithin{figure}{section}
\numberwithin{table}{section}
\numberwithin{equation}{section}
\newcommand*{\transpose}{%
  {\mathpalette\@transpose{}}%
}
\newcommand*{\@transpose}[2]{%
  \raisebox{\depth}{$\m@th#1\intercal$}%
}
\newcommand*\cs{\textnormal{cs }}
\newcommand*\Z{\mathbb{Z}}
\newcommand*\R{\mathbb{R}}
\newcommand*\C{\mathbb{C}}
\newcommand*\cla{\textnormal{cla}}
\newcommand*\DNC{D}
\newcommand\numberthis{\addtocounter{equation}{1}\tag{\theequation}}
\newcommand\SmallMatrix[1]{{%
  \tiny\arraycolsep=0.3\arraycolsep\ensuremath{\begin{pmatrix}#1\end{pmatrix}}}}
\title{Orientations for DT invariants on quasi-projective Calabi--Yau 4-folds}
\date{}
\author{Arkadij Bojko\thanks{Address: Mathematical Institute, Andrew Wiles Building, Radcliffe Observatory Quarter, Woodstock Road, Oxford,
OX2 6GG, U.K., E-mail:
arkadijbojko@merton.ox.ac.uk}}
\begin{document}

\maketitle
\begin{abstract}
        For a Calabi--Yau 4-fold $(X,\omega)$, where $X$ is quasi-projective and $\omega$ is a nowhere vanishing section of its canonical bundle $K_X$, the (derived) moduli stack of compactly supported perfect complexes $\mathcal{M}_X$ is $-2$-shifted symplectic and thus has an orientation bundle $O^\omega\to \mathcal{M}_X$ in the sense of Borisov--Joyce \autocite{BJ} necessary for defining Donaldson--Thomas type invariants of $X$. We extend first the orientability result of Cao--Gross--Joyce \autocite{CGJ} to projective spin 4-folds. Then for any smooth projective compactification $Y$, such that $D=Y\backslash X$ is strictly normal crossing, we define orientation bundles on the stack $\mathcal{M}_{Y}\times_{\mathcal{M}_D}\mathcal{M}_{Y}$ and express these as pullbacks of $\Z_2$-bundles in gauge theory, constructed using positive Dirac operators on the double of $X$. As a result, we relate the orientation bundle $O^\omega\to \mathcal{M}_X$ to a gauge-theoretic orientation on the classifying space of compactly supported K-theory. Using orientability of the latter, we obtain orientability of $\mathcal{M}_X$. We also prove orientability of moduli spaces of stable pairs and Hilbert schemes of proper subschemes. Finally, we consider the compatibility of orientations under direct sums. 
    \end{abstract}

\section{Introduction}
\subsection{Background and results}
Donaldson-Thomas type theory for Calabi--Yau 4-folds has been developed by Borisov--Joyce \autocite{BJ}, Cao--Leung \autocite{CL} and Oh--Thomas \autocite{OT}\,. The construction relies on having Serre duality 
$$
\textnormal{Ext}^2(E^\bullet,E^\bullet)\cong \textnormal{Ext}^2(E^\bullet,E^\bullet)^*\,.
$$
for a perfect complex $E^\bullet$.
This gives a real structure on $\textnormal{Ext}^2(E^\bullet,E^\bullet)$, and one can take its real subspace $\textnormal{Ext}^2(E^\bullet,E^\bullet)_{\mathbb{R}}$. Borisov--Joyce \autocite[Definition 3.25]{BJ} define orientations
as a square root of the natural isomorphism 
$$
\textnormal{det}(\mathbb{L}_{\boldsymbol{X}})^2\cong \mathcal{O}_{\boldsymbol{X}}
$$
for any $-2$-shifted symplectic derived scheme $\boldsymbol{X}$. On moduli stacks of sheaves, this is equivalent to the orientation of $\textnormal{Ext}^2(E^\bullet,E^\bullet)_{\mathbb{R}}$ or in the language of \autocite{OT} to a choice of a class of positive isotropic subspaces.  \\

For the Borisov--Joyce class or Oh--Thomas class to be defined in singular homology, respectively Chow homology with $\Z[2^{-1}]$ coefficients, one needs to show that these choices can be made continuously. This was proved for compact Calabi--Yau 4-folds in Cao--Gross--Joyce \autocite{CGJ}. 
However, many conjectural formulas have been written down in \autocite{CK1, CK2, CK3, CKM, CKM2, CMT, CT1} and \autocite{CT2}  for DT4 invariants of non-compact Calabi--Yau 4-folds, where additionally the moduli spaces can be non-compact and one has to use localization (see Oh--Thomas \autocite[§6]{OT} and Cao--Leung \autocite[§8]{CL}). The construction of virtual structure sheaves of Oh--Thomas \autocite{OT} for non-compact Calabi--Yau 4-folds and its localization also depends on existence of global orientation. Moreover, in string theory the DT4-invariants over Hilbert (respectively~Quot-) schemes of points were studied by Nekrasov \autocite{Nekrasov1} and Nekrasov--Piazzalunga \autocite{Nekrasov2}. The dependence on signs (orientations) is explained in \autocite[§2.4.2]{Nekrasov2}. 

The main goal of this work is to justify these results geometrically by proving orientability of the moduli stack $\mathcal{M}_X$ of compactly supported perfect complexes for any quasi-projective Calabi--Yau 4-fold extending the result of Cao--Gross--Joyce \cite{CGJ}. We also prove orientability of moduli spaces of stable pairs (as in \autocite{CMT2, CKM2})  and of Hilbert schemes of proper subschemes. Our result is also used by Oh--Thomas \cite{OT} to extend their theory to quasi-projective Calabi--Yau 4-folds and allows for the construction of global virtual structure sheaves even over quasi-projective moduli spaces when $X$ is also quasi-projective. 

Of particular importance are general toric Calabi--Yau 4-folds and due to work of Nekrasov and his collaborators especially $\C^4$ which was considered from a mathematical point of view in Cao--Kool \autocite{CK1} and Cao--Kool--Monavari \autocite{CKM}. Recently. Kool--Rennemo \cite{KRdraft} described explicitly the orientations on Quot-schemes of points on $\C^4$ by entirely different means giving evidence for our more general result. 

  A Calabi--Yau $n$-fold $(X,\Omega)$ here is a smooth quasi-projective variety $X$ of dimension $n$ over $\C$, such that $\Omega$ is a nowhere vanishing (algebraic) section of its canonical bundle $K_X$. 


 An important feature of our results is that we are able to express our orientations on $\mathcal{M}_X$ in terms of differential geometric ones.  This allows us to consider compatibility of choices of trivializations of the orientation bundle $O^\omega\to \mathcal{M}_X$ under direct sums, which is related to constructing vertex algebras and Lie algebras on the homology of $\mathcal{M}_X$ as done by Joyce in \autocite{Jvertex} and used for expressing wall-crossing formulae by Gross--Joyce--Tanaka in \autocite{GJT}. As another consequence we obtain that if $M_{\alpha}$ is a moduli scheme or stack of sheaves with given compactly supported K-theory class $\alpha \in K^0_{\cs}(X)$, then there is a natural choice of orientations on it up to a global sign coming from differential geometry independent of how many connected components $M_\alpha$ has after fixing a choice of compactification $Y$ of $X$ by a strictly normal crossing divisor in the sense of Definition \ref{defSNC}. \\



\subsection{Content of sections and main theorems} 
In §\ref{bigsection duality and spin}  we introduce briefly the language of derived algebraic geometry and recall the results of Pantev--Toën--Vaquié--Vezzosi \autocite{PTVV} and Brav--Dyckerhoff \autocite{BD} about existence of shifted symplectic structures. For any smooth (quasi-)projective variety $X$, we introduce \textit{twisted virtual canonical bundles} $\Lambda_{L}$ for any coherent sheaf $L$ on the moduli stack of (compactly supported) perfect complexes $\mathcal{M}_X$. If $(X,\Theta)$ is a spin manifold as in Definition \ref{definition spin variety}, then the virtual canonical bundle  is defined by $K_{\mathcal{M}_X} = \Lambda_{\Theta}$, where $\Theta$ is the corresponding theta characteristic. In Definition \ref{definition spin real structure}, we define the orientation bundle $O^{\mathcal{S}}\to\mathcal{M}_X$ over the moduli stack of perfect complexes on a projective spin 4-fold $X$ and prove that it is trivializable:
\begin{theorem}[see Theorem \ref{spinprop}]
\label{spinpropintro}
Let $(X,\Theta)$ be smooth, projective and spin  of complex dimension $4$, and
\begin{equation}
\label{OSintro}
O^\mathcal{S}\to \mathcal{M}_X
\end{equation}
 the orientation bundle from Definition \ref{definition spin real structure}. Let $\Gamma_X : (M_X)^{\textnormal{top}}\to \mathcal{C}_X$ be as in Definition \ref{definition before theorems} and apply the topological realization functor $(-)^{\textnormal{top}}$ from Blanc \autocite[Definition 3.1]{Blanc}  to \ref{OSintro} to obtain a $\Z_2$-bundle 
 $
 (O^\mathcal{S})^{\textnormal{top}}\to (\mathcal{M}_X)^{\textnormal{top}}\,.
 $
 There is a canonical isomorphism of $\Z_2$-bundles
 \begin{equation*}
     (O^{\mathcal{S}})^{\textnormal{top}}\cong \Gamma_{X}^*(O^{\slashed{D}_+}_{\mathcal{C}})\,,
 \end{equation*}
 where $O^{\slashed{D}_+}_{\mathcal{C}}\to \mathcal{C}_X$ is the $\Z_2$-bundle from Joyce--Tanaka--Upmeier \autocite[Definition 2.22]{JTU} applied to the positive Dirac operator $\slashed{D}_+: S_+\to S_-$ as in Cao--Gross--Joyce \autocite[Theorem 1.11]{CGJ}. In particular, $O^{\mathcal{S}}\to \mathcal{M}_X$ is trivializable. 
\end{theorem}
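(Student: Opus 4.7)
The plan is to identify the two $\Z_2$-bundles by tracing both back to the determinant line of a virtual index bundle on $(\mathcal{M}_X)^{\textnormal{top}}$ that carries a natural real structure coming from the spin data. At a perfect complex $E$, the fiber of $O^{\mathcal{S}}$ is, by construction, the set of square roots of the canonical isomorphism $\Lambda_\Theta|_E^{\otimes 2}\cong \mathcal{O}$, where $\Lambda_\Theta|_E = \det \textnormal{RHom}(E, E\otimes \Theta)$ and the trivialization of its square is obtained from Serre duality together with the isomorphism $\Theta^{\otimes 2}\cong K_X$. On the gauge-theoretic side, the fiber of $O^{\slashed{D}_+}_{\mathcal{C}}$ at a connection is the set of orientations of the real determinant line of the family of positive Dirac operators coupled to the universal endomorphism bundle.

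First I would compare the underlying virtual bundles via Atiyah--Singer: on a spin 4-fold the identity $\widehat{A}(X)\,\textnormal{ch}(\Theta) = \textnormal{Td}(X)$ implies that the topological index of $\slashed{D}_+$ twisted by $E^\vee \otimes E$ equals $\chi(E^\vee\otimes E\otimes \Theta)$, and its family version identifies the Dirac index bundle on $\mathcal{C}_X$ with the topological realization of the perfect complex $\textnormal{RHom}(\mathcal{E},\mathcal{E}\otimes \Theta)$ for the universal complex $\mathcal{E}$. Taking determinants yields an isomorphism of real line bundles. Conceptually this rests on the fact that, in the Kähler spin setting, $\slashed{D}_+$ coincides with the Dolbeault operator twisted by $\Theta$, so the two constructions are computing the same K-theoretic object.

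The heart of the argument is then upgrading this to an isomorphism of $\Z_2$-torsor structures. Both fibers are orientation sets of real determinant lines, but the real structures come from different inputs: on the algebraic side from the Serre duality pairing on $\textnormal{RHom}(E,E\otimes \Theta)$ rendered symmetric by the trivialization of $\Theta^{\otimes 2}$, and on the gauge-theoretic side from the formal self-adjointness of $\slashed{D}_+$ combined with the spin pairing on $S_\pm$. I would prove these correspond by realizing both as coming from a single symbol-level pairing, using that the spin structure $\Theta$ controls both the Dolbeault-type splitting $S_+\oplus S_-$ and the Serre duality pairing. The main obstacle I foresee is establishing this compatibility naturally in families and independently of auxiliary choices (such as Kähler metrics and connections on $\Theta$), most likely by exhibiting a chain of canonical homotopies relating the two symbol pairings at the level of symmetric perfect complexes.

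Once the canonical isomorphism $(O^{\mathcal{S}})^{\textnormal{top}}\cong \Gamma_X^*(O^{\slashed{D}_+}_{\mathcal{C}})$ is in place, the trivializability of $O^{\mathcal{S}}$ follows immediately from that of $O^{\slashed{D}_+}_{\mathcal{C}}$ proved by Cao--Gross--Joyce \autocite[Theorem 1.11]{CGJ}, since pullbacks of trivializable $\Z_2$-bundles are trivializable. The technical bulk of the proof will concern the construction of the comparison map $\Gamma_X$ and verifying the naturality of the induced isomorphism of $\Z_2$-bundles at the level of stacks rather than just pointwise.
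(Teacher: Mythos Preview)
Your proposal is correct and follows essentially the same route as the paper. Both arguments hinge on the identification, in the K\"ahler spin setting, of the $\Theta$-twisted Dolbeault operator with the positive Dirac operator, and on matching the Serre-duality real structure on $\det\textnormal{RHom}(E,E\otimes\Theta)$ with the real structure coming from the Dirac operator.

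The paper's execution is somewhat more direct than yours: rather than first comparing virtual index bundles via Atiyah--Singer and then separately upgrading to real structures, it goes straight to the operator level. It defines an explicit \emph{spin Hodge star} $\star^S:\mathcal{A}^{0,q}\otimes\Theta\to\mathcal{A}^{0,n-q}\otimes\Theta$ via the pairing $\wedge^S$ induced by $\Theta^{\otimes 2}\cong K_X$, builds the real structures $\#_1^S,\#_2^S$ from it, checks $D_\Theta\circ\#_1^S=\#_2^S\circ D_\Theta$, and then invokes the standard fact (Friedrich) that the real part of $D_\Theta$ is $\slashed{D}_+$. This single construction simultaneously handles the index-level and the real-structure compatibility, so your preliminary Atiyah--Singer step is subsumed. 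The ``chain of canonical homotopies'' you anticipate as the main obstacle is not needed: once the spin Hodge star is written down, the identification of real structures is immediate, and the family version follows the existing arguments in Cao--Gross--Joyce for the Calabi--Yau case verbatim (the paper explicitly says it only discusses the replacement for their Definition~3.24 and otherwise defers to their proof).
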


We describe examples where this can be used to construct orientations on non-compact Calabi--Yau 4-folds using Corollary \ref{sccor}. These include 
\begin{enumerate}[label=\alph*)]
    \item $\C^4$.
    \item $\textnormal{Tot}(K_{V}\to V)$ for every smooth projective variety $V$ , where $K_V$ is the canonical line bundle and $\textnormal{dim}_{\C}(V) = 3$.
    \item $\textnormal{Tot}(L_1\oplus L_2\to S)$ for any smooth projective surface $S$, where $L_1, L_2$ are line bundles such that $L_1\otimes L_2\cong K_S$.
    \item $\textnormal{Tot}(E\to C)$ for any smooth projective curve $C$, where $E$ is a rank 3 vector bundle, such that $\Lambda^3E = K_C$. 
\end{enumerate}
This is a simple generalization of the work of Cao--Gross--Joyce \autocite{CGJ}, . 

Instead of trying to answer the question of existence of ``spin compactifications" of Calabi--Yau 4-folds, we define in Definition \ref{definitionbowtie} for any non-compact Calabi--Yau 4-fold $X$ and its smooth projective compactification $Y$ the moduli stack $\mathcal{M}_{Y}\times_{\mathcal{M}^{\DNC}}\mathcal{M}_{Y}$ of perfect complexes identified at the divisor $D = Y\backslash X$ which we require to be strictly normal crossing (see Definition \ref{defSNC}). We define an orientation bundle $O^{\bowtie}\to \mathcal{M}_{Y}\times_{\mathcal{M}^{\DNC}}\mathcal{M}_{Y}$ depending on extension data $\bowtie$, which contains the information about a choice of order $\mathfrak{ord}$ of the smooth irreducible divisors in $D = \bigcup_{i=1}^N D_i$. Under the inclusion into the first component
$$
\zeta: \mathcal{M}_X\to \mathcal{M}_{Y}\times_{\mathcal{M}^{\DNC}}\mathcal{M}_{Y}
$$
the bundle $O^{\bowtie}$ pulls back to  $O^{\omega}\to \mathcal{M}_X$\,. This construction mimics the excision techniques from gauge theory as in Donaldson--Kronheimer \autocite{DK}, Donaldson \cite{Donaldson} and Upmeier \autocite{Markus} and is more natural. The analog in topology would be considering vector bundles $E,F\to X^+$ identified on $+$, where $(X^+,+)$ is the one point compactification of $X$ by the point $+$. This is known to generate $K^0_{\cs}(X)$ which is the natural topological analog of compactly supported perfect complexes on $X$. We collect here the orientability results stated in §\ref{bigsection duality and spin} (see Theorem \ref{maintheorem} and Theorem \ref{mainnctheorem}):
\begin{theorem}[Theorem \ref{maintheorem}]
\label{maintheoremintro}
Let $(X,\Omega)$ be a smooth Calabi--Yau 4-fold and $Y$ its smooth projective compactification by a strictly normal crossing divisor $D$. Choose $\mathfrak{ord}$ and the extension data $\bowtie$ as in Definition \ref{extension data}, then  the $\mathbb{Z}_2$-bundle 
\begin{equation*}
    O^{\bowtie}\to\mathcal{M}_{Y}\times_{\mathcal{M}^{\DNC}}\mathcal{M}_{Y}
\end{equation*}
is trivializable.
 Let $\mathcal{C}_{D}, \mathcal{C}_{Y}$ be the topological spaces of maps from Definition \ref{definition before theorems}.  Let $D^{\mathcal{C}}_O\to \mathcal{C}_{Y}\times_{\mathcal{C}_{D}}\mathcal{C}_{Y}$ be the trivializable $\Z_2$-bundle from \eqref{DO}. If 
 $$\Gamma: (\mathcal{M}_{Y,D})^{\textnormal{top}}\to \mathcal{C}_{Y}\times_{\mathcal{C}_{D}}\mathcal{C}_{Y}$$
 is the natural map from Definition \ref{definition Gamma}, then there exists a canonical isomorphism 
\begin{equation*}
    \mathfrak{I}^{\bowtie}: \Gamma^*(D^{\mathcal{C}}_O)\cong (O^{\bowtie})^{\textnormal{top}}\,.
\end{equation*}
\end{theorem}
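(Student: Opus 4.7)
The plan is to first establish the canonical isomorphism $\mathfrak{I}^{\bowtie}$, from which the trivializability of $O^{\bowtie}$ follows immediately since the gauge-theoretic $\Z_2$-bundle $D^{\mathcal{C}}_O$ is trivializable by construction from the positive Dirac operator. The overall strategy is to reduce, via Blanc's topological realization and the spin result of Theorem \ref{spinpropintro}, to a gauge-theoretic statement about the gluing of Dirac operator orientations across $D$, which is controlled by the excision techniques of \autocite{DK, Markus}.

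First, I would apply the topological realization functor $(-)^{\textnormal{top}}$ to the defining diagram of $\mathcal{M}_Y\times_{\mathcal{M}^{\DNC}}\mathcal{M}_Y$. Since $(-)^{\textnormal{top}}$ commutes with derived fibered products up to the natural transformation encoded by $\Gamma$, the realization admits a canonical map into $\mathcal{C}_Y\times_{\mathcal{C}_D}\mathcal{C}_Y$ which coincides with $\Gamma$. Consequently, it suffices to identify $(O^{\bowtie})^{\textnormal{top}}$ with the pullback of $D^{\mathcal{C}}_O$ through the comparison of orientation data on the two factors. Since $Y$ itself is not assumed spin, Theorem \ref{spinpropintro} does not apply directly, but the extension data $\bowtie$ encodes precisely the auxiliary virtual spin-type data on the pair $(Y,D)$ needed to obtain a twisted version of that theorem: unwinding the definition of the twisted virtual canonical bundle $\Lambda_L$ exhibits $O^{\bowtie}$ as the tensor product, over the pullback to $\mathcal{M}_D$, of orientation $\Z_2$-bundles on each factor $\mathcal{M}_Y$ paired via the data in $\bowtie$.

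Next, I would match this decomposition with the gauge-theoretic one. The bundle $D^{\mathcal{C}}_O$ is built, following \autocite{JTU}, from orientation bundles of positive Dirac operators on each copy of $Y$ in the topological double $Y\cup_D Y$, paired along $D$. On each copy, once a theta characteristic-like twist from $\bowtie$ is incorporated, Theorem \ref{spinpropintro} provides a canonical identification of the algebraic orientation bundle with its gauge-theoretic counterpart. Using the excision principle for Dirac operator orientations in \autocite{Markus}, the pairing of these bundles along $D$ in the gauge-theoretic double corresponds to the algebraic pairing along $\mathcal{M}_D$, and composing these identifications furnishes $\mathfrak{I}^{\bowtie}$.

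The main obstacle is matching the combinatorial sign conventions encoded by the ordering $\mathfrak{ord}$ of the irreducible components of $D=\bigcup_{i=1}^N D_i$ with the sign conventions inherent in the gauge-theoretic excision formulas, which likewise require an ordering of the boundary strata to render the identification canonical. I expect to handle this by induction on $N$, peeling off one component $D_i$ at a time in the order prescribed by $\mathfrak{ord}$ and reducing to the single smooth divisor case, where both algebraic and gauge-theoretic sides are transparent. The base and inductive steps rely on the multiplicativity of $\Lambda_L$ under the stratification of $D$ and the corresponding multiplicativity of Dirac determinants under excision; ensuring that these two multiplicativities match sign-by-sign is the core technical content.
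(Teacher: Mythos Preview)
Your proposal has genuine gaps in several key places.

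First, the spin manifold on which the Dirac operator lives is not $Y$ nor a topological double $Y\cup_D Y$. Since $Y$ is not assumed spin, there is no positive Dirac operator $\slashed{D}_+$ on $Y$, and the extension data $\bowtie$ does \emph{not} encode ``auxiliary virtual spin-type data'' on $(Y,D)$: it is merely an ordered choice of sections decomposing $K_Y$ in terms of the $D_i$. The paper instead constructs a genuine compact spin manifold $\tilde{Y}$ by doubling a compact piece of $X$ (Definition~\ref{definition double}), and $D^{\mathcal{C}}_O$ is pulled back from the Dirac orientation bundle on $\tilde{Y}$ via the map $G_{\tilde{Y}}$ of~\eqref{Sigma_{tilde{Y}}}. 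So Theorem~\ref{spinpropintro} is never applied to $Y$ at all; it is Cao--Gross--Joyce's result on the spin $8$-manifold $\tilde{Y}$ that supplies the trivializability.

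Second, you omit the reduction to vector bundles via homotopy-theoretic group completion. The comparison between algebraic and gauge-theoretic orientations is only carried out on the Ind-scheme $\mathcal{T}_{Y,D}$ of pairs of vector bundles generated by global sections, where Dolbeault operators are available. One then uses that $(\Omega^{\textnormal{ag}})^{\textnormal{top}}:\mathcal{T}^{\textnormal{an}}_{Y,D}\to(\mathcal{M}_{Y,D})^{\textnormal{top}}$ is a homotopy-theoretic group completion (Proposition~\ref{lemma groupcompletion}) together with the H-principal structure (Proposition~\ref{proposition Ovartheta Hprincipal}) and Proposition~\ref{exun} to descend to $(\mathcal{M}_{Y,D})^{\textnormal{top}}$. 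Without this step there is no mechanism to pass from vector bundles to arbitrary perfect complexes.

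Third, and most substantively, the actual comparison on $\mathcal{T}_{Y,D}$ requires the paper's \emph{complex} excision for determinant line bundles (\S\ref{section pseudo}, Lemma~\ref{upmeier}), precisely because $Y$ lacks a real Dirac operator. The core technical work (Proposition~\ref{proposition comparing excision}) shows that the algebraic isomorphism $\sigma^{\textnormal{vb}}_{\bowtie}$, built from the short exact sequences $0\to L_{\underline{a}-e_i}\to L_{\underline{a}}\to L_{\underline{a}}|_{D_i}\to 0$, agrees up to natural isotopy with a differential-geometric excision isomorphism $\sigma^{\textnormal{dg}}_{\bowtie}$ constructed via the symbol deformations of Definition~\ref{definition excision I family}. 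This is done by resolving each restriction step through an auxiliary kernel bundle $K_{\underline{a},i}$ (Lemmas~\ref{propositiondiagramofresolutions} and~\ref{lemma phi_K}) and tracking the resulting determinant isomorphisms explicitly. Your ``multiplicativity of Dirac determinants under excision'' is too coarse: the sign-matching you flag is not handled by an induction on the number of components but by constructing explicit isotopies of complex symbol families and invoking Lemma~\ref{proposition global excision isomorphism}. Only after this comparison does one pass, via a further isotopy with the Hodge-star real structure (proof of Proposition~\ref{proposition main}), to the genuine $\Z_2$-bundle $D_O$ coming from $\tilde{Y}$.
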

As a consequence of this we prove:
\begin{theorem}[Theorem \ref{mainnctheorem}]
\label{mainnctheoremintro}
Let $(X,\Omega)$ be  a Calabi--Yau 4-fold, then the $\mathbb{Z}_2$-bundle 
\begin{equation}
\label{Oomegaintro}
    O^\omega\to \mathcal{M}_X
\end{equation}
from Definition \ref{orientation-def} is trivializable. 

Moreover, fix a smooth projective compactification $Y$, such that $D=Y\backslash X$ is a strictly normal crossing divisor. Let $(O^{\omega})^{\textnormal{top}}\to (\mathcal{M}_X)^{\textnormal{top}}$ be the $\Z_2$-bundle obtained by applying the topological realization functor $(-)^{\textnormal{top}}$ to \eqref{Oomegaintro}, then there is a canonical isomorphism
$$
\mathfrak{I}_{Y}:(\Gamma^{\cs}_X)^*(O^{\cs})\cong (O^{\omega})^{\textnormal{top}}\,,
$$
where $O^{\cs}\to \mathcal{C}^{\cs}_X$ is the trivializable $\Z_2$-bundle from Definition \ref{definition beforencmain}, $\mathcal{C}^{\cs}_X = \textnormal{Map}_{C^0}\big((X^+,+),(BU\times\mathbb{Z},0)\big)$ is the topological space classifying $K^0_{\cs}(X)$ and $\Gamma^{\cs}: (\mathcal{M}_X)^{\textnormal{top}}\to C^{\cs}_X$ is the natural map from Definition \ref{definition beforencmain}. 
\end{theorem}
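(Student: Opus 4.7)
The plan is to deduce both statements from Theorem \ref{maintheoremintro} by pullback along the inclusion $\zeta:\mathcal{M}_X\to \mathcal{M}_Y\times_{\mathcal{M}^{\DNC}}\mathcal{M}_Y$ into the first component, with the zero complex in the second slot. As noted in the text preceding the statement, $\zeta^\ast O^{\bowtie}\cong O^{\omega}$, so the (non-canonical) trivialization of $O^{\bowtie}$ provided by Theorem \ref{maintheoremintro} pulls back to a trivialization of $O^{\omega}$, settling the first claim.

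For the canonical isomorphism $\mathfrak{I}_Y$, I would first set up a commutative square on topological realizations
$$
\begin{tikzcd}
(\mathcal{M}_X)^{\textnormal{top}} \arrow[r,"\zeta^{\textnormal{top}}"] \arrow[d,"\Gamma^{\cs}_X"'] & (\mathcal{M}_{Y,D})^{\textnormal{top}} \arrow[d,"\Gamma"] \\
\mathcal{C}^{\cs}_X \arrow[r,"\iota"] & \mathcal{C}_Y\times_{\mathcal{C}_D}\mathcal{C}_Y
\end{tikzcd}
$$
where $\iota$ realizes the identification of $K^0_{\cs}(X)$ with $\ker\bigl(K^0(Y)\to K^0(D)\bigr)$ by sending a based map $(X^+,+)\to(BU\times\Z,0)$ to the pair $(\tilde f,0)$, where $\tilde f$ is any extension of $f$ across $D$; commutativity at the level of homotopy classes is forced by $K$-theoretic excision applied to the pair $(Y,D)$. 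I would then verify $\iota^\ast D^{\mathcal{C}}_O\cong O^{\cs}$ canonically: in the subspace of $\mathcal{C}_Y\times_{\mathcal{C}_D}\mathcal{C}_Y$ where the second factor is trivial, the relative Dirac excision on $Y\cup_D Y$ defining $D^{\mathcal{C}}_O$ degenerates to the positive Dirac operator on the double of $X$ that defines $O^{\cs}$.

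Granting these two compatibilities, $\mathfrak{I}_Y$ is assembled as the composition
$$
(\Gamma^{\cs}_X)^\ast O^{\cs}\;\cong\;(\Gamma^{\cs}_X)^\ast\iota^\ast D^{\mathcal{C}}_O\;\cong\;(\zeta^{\textnormal{top}})^\ast\Gamma^\ast D^{\mathcal{C}}_O\;\xrightarrow{(\zeta^{\textnormal{top}})^\ast\mathfrak{I}^{\bowtie}}\;(\zeta^{\textnormal{top}})^\ast(O^{\bowtie})^{\textnormal{top}}\;\cong\;(O^{\omega})^{\textnormal{top}}\,,
$$
using functoriality of $(-)^{\textnormal{top}}$ together with $\zeta^\ast O^{\bowtie}\cong O^{\omega}$ in the last identification. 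The main obstacle lies in establishing $\iota^\ast D^{\mathcal{C}}_O\cong O^{\cs}$ as a truly canonical isomorphism rather than just an abstract one: this requires unpacking the gauge-theoretic models on both sides, checking that the relative Dirac excision degenerates as claimed when one factor is trivial, and verifying independence of the auxiliary data $(\mathfrak{ord},\bowtie)$ modulo the pullback, so that the resulting $\mathfrak{I}_Y$ depends only on the compactification $Y$ as the notation indicates. Once this local analytic comparison is in place, the rest is formal pullback from Theorem \ref{maintheoremintro}.
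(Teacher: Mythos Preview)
Your approach is essentially the same as the paper's: pull back along $\zeta$, use the commutative square \eqref{factoring}, and compose with $\mathfrak{I}^{\bowtie}$. However, you have misidentified where the work lies. In the paper, $O^{\cs}$ is \emph{defined} as $(\kappa^{\cs})^*(D^{\mathcal{C}}_O)$ (your $\iota$ is the paper's $\kappa^{\cs}$, see \eqref{Ocs}), so the isomorphism $\iota^* D^{\mathcal{C}}_O\cong O^{\cs}$ is tautological and there is no gauge-theoretic degeneration to check. The substantive content of the proof is instead: (i) verifying $\zeta^*(O^{\bowtie})\cong O^\omega$ by checking at $\textnormal{Spec}(A)$-points that $\vartheta_{\bowtie}|_{([\tilde E],0)}$ coincides with $i^\omega|_{[E]}$ (you take this for granted, but it is Step~1 of the paper's argument and uses that the sections $s_{i,k},t_{j,l}$ compose to $\Omega$ on compactly supported complexes); (ii) independence of $\bowtie$, which the paper handles by noting that for fixed $\mathfrak{ord}$ the space of choices is the connected set $(\C^*)^{\sum_i|a_i|-1}$; and (iii) independence of $\mathfrak{ord}$, which requires the $3\times 3$ diagram argument with $\mathbb{P}_{e_1+e_2,1,2}$ and a sign cancellation. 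You correctly flag (ii) and (iii) as issues, but your discussion conflates them with the non-issue $\iota^* D^{\mathcal{C}}_O\cong O^{\cs}$.
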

We argue in Remark \ref{remark joyce suggestion}, that with some extra care, the isomorphism $\mathfrak{I}_{Y}$ could be shown to be independent of $Y$, but leave proof of this for once applications arise. 

If $X$ is quasi-projective and $\mathcal{M}$ is a moduli stack of pairs of the form $\mathcal{O}_X\to F$, where $F$ is compactly supported, we need a modification, because the complex is not compactly supported. Let $\mathcal{E}\to \mathcal{M}$ be the universal family, then for fixed compactification $X\subset Y$ there is a natural isomorphism from Definition \ref{definition orientation stablepairs}:
$$
\textnormal{det}\Big(\big(\underline{\textnormal{Hom}}_{\mathcal{M}}(\mathcal{E},\mathcal{E})\big)_0\Big)\cong \textnormal{det}^*\Big(\big(\underline{\textnormal{Hom}}_{\mathcal{M}}(\mathcal{E},\mathcal{E})\big)_0\Big)\,,
$$
where $(-)_0$ denotes the trace-less part and we use the notation $\underline{\textnormal{Hom}}_Z(E,F)$ for two perfect complexes $E,F$ on $X\times Z$ to denote $\pi_{2\,*}(E^\vee\otimes F)$, where $\pi_2: X\times Z\to Z$ is the projection. To this, there is an associated $\Z_2$-bundle $O^0\to \mathcal{M}$. We can now state the result.
\begin{theorem}[Thm. \ref{theorem stable pairs}]
Let $i:X\to Y$ be a compactification with $Y\backslash X=D$ strictly normal crossing. Let $\eta: \mathcal{M} \to \mathcal{M}_{X}\times_{\mathcal{M}_{D}}\mathcal{M}_{Y}$ be given by $[E]\to ,[\bar{E},\mathcal{O}_{Y}]$, where $\bar{E}$ is the extension by a structure sheaf to the divisor, then there is a canonical isomorphism
$$
\eta^*(O^{\bowtie}) \cong O^0\,.
$$
In particular, $O^0$ is trivializable. 
\end{theorem}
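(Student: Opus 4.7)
The plan is to pull back along $\eta$ the defining data of $O^{\bowtie}$ and match it, term by term, with the trace-less self-duality defining $O^0$. Since one of the two complexes in the image $(\bar{\mathcal{E}},\mathcal{O}_Y)$ is the fixed structure sheaf $\mathcal{O}_Y$, most of the Ext-terms entering $O^{\bowtie}$ will involve $\mathcal{O}_Y$ and either be canonically trivialized or cancel against cross-terms, leaving exactly the trace-less Hom of $\mathcal{E}$.

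First I would compute $\eta^\ast$ of the virtual complex whose self-dual determinant underlies $O^{\bowtie}$. At the image point $(\bar{\mathcal{E}},\mathcal{O}_Y)$ this is built from the K-theoretic combination
\begin{align*}
\underline{\textnormal{Hom}}_{\mathcal{M}}(\bar{\mathcal{E}},\bar{\mathcal{E}}) \oplus \underline{\textnormal{Hom}}_{\mathcal{M}}(\mathcal{O}_Y,\mathcal{O}_Y) \ominus \underline{\textnormal{Hom}}_{\mathcal{M}}(\bar{\mathcal{E}},\mathcal{O}_Y) \ominus \underline{\textnormal{Hom}}_{\mathcal{M}}(\mathcal{O}_Y,\bar{\mathcal{E}}),
\end{align*}
which computes the compactly-supported Ext of the difference class $[\bar{\mathcal{E}}] - [\mathcal{O}_Y] \in K^0_{\cs}(X)$ (via the identification at $D$ given by $\bowtie$). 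Using the triangle $\mathcal{A} \to \bar{\mathcal{E}} \to \mathcal{O}_Y$ induced by the section, with $\mathcal{A}$ supported away from $D$, this combination rewrites as the trace-less part $(\underline{\textnormal{Hom}}_{\mathcal{M}}(\bar{\mathcal{E}},\bar{\mathcal{E}}))_0$ together with the canonically trivialized summand $\underline{\textnormal{Hom}}(\mathcal{O}_Y,\mathcal{O}_Y)$. Since $\bar{\mathcal{E}}|_X = \mathcal{E}$ and the trace-less Hom receives no contribution from the piece at $D$, this is $(\underline{\textnormal{Hom}}_{\mathcal{M}}(\mathcal{E},\mathcal{E}))_0$.

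Second, I would promote this K-theoretic identity to an isomorphism of $\mathbb{Z}_2$-bundles by tracking the self-duality structures. The Serre-duality pairing underlying $O^{\bowtie}$ respects the above decomposition: its restriction to the trace-less summand is precisely the pairing defining $O^0$, while on $\underline{\textnormal{Hom}}(\mathcal{O}_Y,\mathcal{O}_Y)$ it admits a canonical square root coming from the identity endomorphism. This yields the desired canonical isomorphism $\eta^\ast(O^{\bowtie}) \cong O^0$, and trivializability of $O^0$ follows at once from the trivializability of $O^{\bowtie}$ given by Theorem \ref{maintheoremintro}.

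The main obstacle will be handling the extension data $\bowtie$ consistently, since $O^{\bowtie}$ depends on the ordering $\mathfrak{ord}$ of the smooth irreducible components of $D$ and on a choice of extension of $\bar{\mathcal{E}}$ across $D$, while $O^0$ depends on neither. Because $\mathcal{O}_Y$ is rigid near $D$ and the section $s$ provides a canonical identification $\bar{\mathcal{E}}|_D \cong \mathcal{O}_D = \mathcal{O}_Y|_D$, the various Koszul and Thom signs entering $\bowtie$ should be absorbed into the canonical trivialization of the $\underline{\textnormal{Hom}}(\mathcal{O}_Y,\mathcal{O}_Y)$ factor on the second side; verifying this cancellation carefully is the technical heart of the argument.
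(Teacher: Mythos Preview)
Your overall strategy---identify the underlying line bundles via a canonical isomorphism $\gamma$, then check that $\gamma$ intertwines the two self-duality maps---is exactly right and is what the paper does. However, your computation of $\eta^*(\mathcal{L}_{Y,D})$ is incorrect, and this derails the rest of the argument.

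By Definition~\ref{definitionbowtie}, the line bundle is $\mathcal{L}_{Y,D}=\pi_1^*\Lambda_{\underline{0}}\otimes(\pi_2^*\Lambda_{\underline{0}})^*$ with $\Lambda_{\underline{0}}=\textnormal{det}(\Delta^*\mathcal{E}\textnormal{xt}_{\underline{0}})$. Pulled back along $\eta$, this gives
\[
\eta^*(\mathcal{L}_{Y,D})\;=\;\textnormal{det}\big(\underline{\textnormal{Hom}}(\bar{\mathcal{E}},\bar{\mathcal{E}})\big)\otimes\textnormal{det}^*\big(\underline{\textnormal{Hom}}(\mathcal{O}_Y,\mathcal{O}_Y)\big),
\]
with \emph{no} cross-terms $\underline{\textnormal{Hom}}(\bar{\mathcal{E}},\mathcal{O}_Y)$ or $\underline{\textnormal{Hom}}(\mathcal{O}_Y,\bar{\mathcal{E}})$. (Those only appear in the additive comparison $\phi^{\bowtie}$ of Proposition~\ref{proposition Ovartheta Hprincipal}, not in the bundle itself.) Your four-term $K$-theoretic expression and its claimed rewriting via the triangle $\mathcal{A}\to\bar{\mathcal{E}}\to\mathcal{O}_Y$ are therefore both off the mark; in fact that combination does not equal $[\underline{\textnormal{Hom}}_0]+[\underline{\textnormal{Hom}}(\mathcal{O},\mathcal{O})]$ in $K$-theory.

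The paper's route is more direct. The trace splitting $\underline{\textnormal{Hom}}(\bar{\mathcal{E}},\bar{\mathcal{E}})\cong\underline{\textnormal{Hom}}(\bar{\mathcal{E}},\bar{\mathcal{E}})_0\oplus\underline{\textnormal{Hom}}(\mathcal{O},\mathcal{O})$ immediately gives $\gamma:\textnormal{det}\big(\underline{\textnormal{Hom}}(\bar{\mathcal{E}},\bar{\mathcal{E}})_0\big)\cong\eta^*(\mathcal{L}_{Y,D})$. To match the self-dualities one checks, for each step $L_{\underline{a}}\to L_{\underline{a}-e_i}$ of $\vartheta_{\bowtie}$, the commutativity of the $3\times3$ diagram whose rows are the trace triangles $\underline{\textnormal{Hom}}_0\to\underline{\textnormal{Hom}}\xrightarrow{\textnormal{tr}}\underline{\textnormal{Hom}}(\mathcal{O},\mathcal{O})$ twisted by $L_{\underline{a}-e_i}$, $L_{\underline{a}}$, $L_{\underline{a},i}$. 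The bottom-left entry $\underline{\textnormal{Hom}}(\bar{\mathcal{E}}|_{D_i},\bar{\mathcal{E}}|_{D_i}\otimes L_{\underline{a},i})_0$ vanishes because $\bar{\mathcal{E}}|_{D_i}=\mathcal{O}_{D_i}$ has rank one, so the trace is an isomorphism there. This is what replaces your proposed ``Koszul/Thom sign cancellation'' and makes the dependence on $\bowtie$ transparent: the steps defining $\kappa$ in $i^\omega_M$ are literally the same short exact sequences as in $\vartheta_{\bowtie}$, filtered through the trace.
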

Consequentially, if all stable pairs parameterized by $\mathcal{M}$ are of a fixed class $\llbracket \mathcal{O}_X\rrbracket +\alpha$, where $\alpha\in K^0_{\textnormal{cs}}(X)$, this determines unique orientations up to a global sign for a fixed compactification $Y$ of $X$.


In §\ref{bigsection technical proof}, we recall some background from Cao--Gross--Joyce \autocite{CGJ} and their orientability result \autocite[Theorem 1.11]{CGJ}. We then develop some technical tools for transport along complex determinant line bundles of complex pseudo-differential operators generalizing the work of work of Upmeier \cite{Markus}, Donaldson \cite{Donaldson}, \cite{DK} and Atiyah--Singer \cite{AS1}. We require such generalizations, because in allowing $Y$ to be any compactification of $X$, we lose the ability to work with real structures and real line bundles, as there is no analog for the real Dirac operator $\slashed{D}_+$ on $Y$. We can still do excisions natural up to contractible choices of isotopies and use these to restrict everything back into $X$ where the operator $\slashed{D}_+$ exists.
 
 In §\ref{section relative framing} we construct the doubled spin manifold $\tilde{Y}$ from $X$, and its moduli space (topological stack) $\mathcal{B}_{\tilde{Y},\tilde{T}}$ of connections on pairs of principal bundle $P,Q\to \tilde{Y}$ identified on $\tilde{T}$. This space has a product orientation as defined in Definition \ref{definition relative framing orienation}, which corresponds to the product of the orientation $\Z_2$-torsors at each pair of connections. The resulting orientation bundle is denoted $D_O(\tilde{Y})$ and its pullback to pairs of vector bundles on $Y$ identified on $\DNC$ is denoted by $D_O$.
 
 The bundles $D_O$ and $O^{\bowtie}$ restricted to holomorphic vector bundles generated by global sections are related in §\ref{section comon resolution} as strong H-principal $\Z_2$-bundles (see Definition \ref{definition weakstrong Hprincipal}) by constructing isotopies (natural up to contractible choices) between the real structures used to obtain them. This tells us that $O^{\bowtie}$ is trivializable and allows us to express compatibility under direct sums in §\ref{bigsection signs}. As the theory developed for identifying the $\Z_2$-bundles relies on constructing natural isotopies of algebraic and gauge theoretic isomorphisms of complex determinant line bundles, and this part of it works in great generality in any dimension, the author hopes to use it to construct orientation data as in Joyce--Upmeier \autocite{JoyceMarkus} for any non-compact Calabi--Yau 3-fold. 
 
 
 Finally, in §\ref{bigsection signs}, we discuss the relations of orientations under direct sums in K-theory. For given extension data $\bowtie$, we obtain the result in Proposition \ref{proposition sign}, which expresses choices of trivialization of $O^{\bowtie}\to \mathcal{M}_{Y}\times_{ \mathcal{M}^{\DNC}}\mathcal{M}_{Y}$ in terms of orientations on the K-theoretic space $\mathcal{C}_{Y}\times_{\mathcal{C}_{\DNC}}\mathcal{C}_{Y}$ with the property $\pi_0(\mathcal{C}_{Y}\times_{\mathcal{C}_{\DNC}}\mathcal{C}_{Y}) = K^0(Y\cup_{\DNC}Y)$. The signs comparing the orientations are more complicated, but restrict to the expected result under the inclusions $\zeta:\mathcal{M}_X\to \mathcal{M}_{Y}\times_{\mathcal{M}^{\DNC}}\mathcal{M}_{Y}$ and $\kappa^{\cs}:\mathcal{C}^{\cs}_X\to \mathcal{C}_{Y}\times_{\mathcal{C}_{\DNC}}\mathcal{C}_{Y}$:
\begin{theorem}[Theorem \ref{signscomparison}]
\label{theorem rules under addition intro}
Let $\mathcal{C}^{\textnormal{cs}}_{\alpha}$ denote the connected component of $\mathcal{C}_X^{\textnormal{cs}}$ corresponding to $\alpha\in K^0_{\textnormal{cs}}(X) =\pi_0(\mathcal{C}^{\textnormal{cs}}_X)$\, and $O^{\cs}_\alpha = O^{\cs}|_{\mathcal{C}^{\cs}_\alpha}$. 
There is a canonical isomorphism
$\phi^\omega:O^\omega\boxtimes O^\omega\to \mu^*(O^\omega)$ such that for fixed choices of trivialization $o^{\textnormal{cs}}_\alpha$ of $O^{\textnormal{cs}}_\alpha$, we have
$$
\phi^\omega\Big(\mathfrak{I}\big((\Gamma^{\textnormal{cs}}_X)^*o^{\textnormal{cs}}_\alpha\big)\boxtimes \mathfrak{I}\big((\Gamma^{\textnormal{cs}}_X)^*o^{\textnormal{cs}}_{\beta}\big)\Big) \cong \epsilon_{\alpha,\beta}\mathfrak{I}\big((\Gamma^{\textnormal{cs}}_X)^*o^{\textnormal{cs}}_{\alpha+\beta}\big)\,.
$$
where the $\epsilon_{\alpha,\beta}\in \{\pm1 \}$ satisfy
$\epsilon_{\beta,\alpha} =(-1)^{\bar{\chi}(\alpha,\alpha)\bar{\chi}(\beta,\beta) + \bar{\chi}(\alpha,\beta)} \epsilon_{\alpha,\beta}$ and $\epsilon_{\alpha,\beta}\epsilon_{\alpha+\beta,\gamma}=\epsilon_{\beta,\gamma}\epsilon_{\alpha,\beta+\gamma}$ for all $\alpha,\beta,\gamma\in K^0_{\textnormal{cs}}(X)$.
\end{theorem}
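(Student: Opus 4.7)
The plan is to construct $\phi^\omega$ from the functorial decomposition of the derived endomorphism complex under direct sums, transfer the comparison to the topological K-theory classifying space using the identification $\mathfrak{I}$ from Theorem~\ref{mainnctheoremintro}, and finally read off the sign relations from naturality of the commutativity and associativity constraints on the direct-sum monoidal structure.

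To construct $\phi^\omega$ universally over $\mathcal{M}_X\times\mathcal{M}_X$, I would use that for any pair of compactly supported perfect complexes $E,F$ one has
$$
\underline{\textnormal{Hom}}(E\oplus F,E\oplus F)\cong \underline{\textnormal{Hom}}(E,E)\oplus\underline{\textnormal{Hom}}(F,F)\oplus \underline{\textnormal{Hom}}(E,F)\oplus\underline{\textnormal{Hom}}(F,E),
$$
and that Serre duality pairs the two cross terms into a hyperbolic complex carrying a natural positive isotropic (Lagrangian) subspace. Taking determinants and the square-root construction defining $O^\omega$ componentwise yields the desired canonical isomorphism. The same procedure applied to the virtual positive Dirac operator produces an analogous compatibility $\phi^{\textnormal{cs}}:O^{\textnormal{cs}}\boxtimes O^{\textnormal{cs}}\to (\mu^{\textnormal{cs}})^*(O^{\textnormal{cs}})$ on the H-space $\mathcal{C}^{\textnormal{cs}}_X=\textnormal{Map}_{C^0}\bigl((X^+,+),(BU\times\mathbb{Z},0)\bigr)$. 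Since $\Gamma^{\textnormal{cs}}_X$ intertwines the algebraic and topological direct sums up to homotopy, Theorem~\ref{mainnctheoremintro} identifies $(\Gamma^{\textnormal{cs}}_X)^*\phi^{\textnormal{cs}}$ with $(\phi^\omega)^{\textnormal{top}}$ up to locally constant signs on the components indexed by $(\alpha,\beta)\in K^0_{\textnormal{cs}}(X)^2$; by construction these are the $\epsilon_{\alpha,\beta}\in\{\pm1\}$.

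The sign relations both come from naturality of the monoidal structure. For the symmetry, the commutativity constraint $E\oplus F\simeq F\oplus E$ induces on the algebraic side (i) a Koszul swap on the determinant of the diagonal terms, contributing $(-1)^{\bar{\chi}(\alpha,\alpha)\bar{\chi}(\beta,\beta)}$, and (ii) a swap of the two Lagrangian complements in the hyperbolic pair $(\textnormal{Ext}^\bullet(E,F),\textnormal{Ext}^\bullet(F,E))$, which changes the chosen positive isotropic by $(-1)^{\bar{\chi}(\alpha,\beta)}$. On the topological side, $BU\times\mathbb{Z}$ is strictly commutative as an H-space up to specified homotopy, so the analogous swap is trivialized; combining both gives $\epsilon_{\beta,\alpha}=(-1)^{\bar{\chi}(\alpha,\alpha)\bar{\chi}(\beta,\beta)+\bar{\chi}(\alpha,\beta)}\epsilon_{\alpha,\beta}$. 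For the cocycle identity I would use that both $\phi^\omega$ and $\phi^{\textnormal{cs}}$ are associative up to canonical coherent isomorphism (an essentially tautological pentagon check), so that iterating the defining identity of $\epsilon$ on $(\alpha\oplus\beta)\oplus\gamma$ versus $\alpha\oplus(\beta\oplus\gamma)$ yields $\epsilon_{\alpha,\beta}\epsilon_{\alpha+\beta,\gamma}=\epsilon_{\beta,\gamma}\epsilon_{\alpha,\beta+\gamma}$.

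The hard part is pinning down the extra factor $(-1)^{\bar{\chi}(\alpha,\beta)}$ coming from the Lagrangian swap: one has to verify that the positive isotropic subspace chosen in the construction of $O^{\bowtie}$ on the intermediate stack $\mathcal{M}_Y\times_{\mathcal{M}^{\DNC}}\mathcal{M}_Y$ (whose trivializability was exploited via $\mathfrak{I}^{\bowtie}$ in Theorem~\ref{maintheoremintro}) transforms correctly under commutation, and that no extraneous signs enter through the isotopies of real structures built in \S\ref{section comon resolution} relating $O^{\bowtie}$ with its gauge-theoretic counterpart $D_O$. The computation is local over $\mathcal{M}_X\times\mathcal{M}_X$, but the bookkeeping of Koszul signs along the full chain of identifications is delicate and where most of the actual work will sit.
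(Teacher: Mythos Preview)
Your construction of $\phi^\omega$ from the direct-sum decomposition of $\underline{\textnormal{Hom}}(E\oplus F,E\oplus F)$ and your derivation of the cocycle identity from associativity are both correct and match the paper (the paper packages this as the \emph{strong H-principal} structure on $O^{\bowtie}$ and $O^\omega$, Proposition~\ref{proposition Ovartheta Hprincipal} and Example~\ref{algebraicbundles}). One minor conceptual point: $\epsilon_{\alpha,\beta}$ is not the discrepancy between $(\phi^\omega)^{\textnormal{top}}$ and $(\Gamma^{\textnormal{cs}}_X)^*\phi^{\textnormal{cs}}$; once one knows $\mathfrak{I}$ is an isomorphism of strong H-principal bundles (Proposition~\ref{proposition main}), there is no such discrepancy, and $\epsilon_{\alpha,\beta}$ is simply the sign cocycle of $\tau^{\textnormal{cs}}$ with respect to the chosen trivializations $o^{\textnormal{cs}}_\alpha$.

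Where your route genuinely diverges from the paper is the symmetry relation. You propose to compute the swap sign directly on the algebraic side, decomposing it into a Koszul contribution from the diagonal terms and a Lagrangian-swap contribution $(-1)^{\bar\chi(\alpha,\beta)}$ from the off-diagonal pair, then tracking this through the isotopies of \S\ref{section comon resolution}. The paper sidesteps this entirely. It observes (Proposition~\ref{proposition sign}) that on the doubled compact spin $8$-manifold $\tilde{Y}$ the bundle $D_O(\tilde{Y})$ is literally a $\Z_2$-graded product $p_1^*(O^{\slashed{D}_+})\otimes p_2^*((O^{\slashed{D}_+})^*)$, so the swap sign follows from the already-established compact-spin formula of Joyce--Tanaka--Upmeier \autocite[eq.~(2.26)]{JTU} together with the $\Z_2$-graded tensor/dual calculus of Lemma~\ref{lemma Z2graded} and Definition~\ref{definitiondual}. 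Index excision then converts the resulting $\chi^{\R}_{\tilde{Y}}$-differences into $\chi_Y$-differences, and setting $\beta_1=\beta_2=0$ restricts to $\mathcal{C}^{\textnormal{cs}}_X$ and yields $\bar\chi$. The ``hard part'' you flag --- verifying the Lagrangian swap is compatible with the isotopies relating $O^{\bowtie}$ to $D_O$ --- is thus bypassed: Proposition~\ref{proposition main} shows that isomorphism is strong H-principal, so all sign computations may be done on the gauge-theoretic side where they reduce to the compact case. Your direct approach should also work and is closer in spirit to what Cao--Gross--Joyce do for projective $X$, but executing it here would mean chasing Koszul signs through the extension data $\bowtie$ and the common-resolution arguments of \S\ref{section comon resolution}, which the paper's doubling trick avoids.
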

 
 Using this, we formulate a version of \autocite[Theorem 2.27] {JTU} for the compactly supported orientation group $\Omega^{\cs}_X$ adapted to the compactly supported case. 
 
 As a result, we obtain that for all three stacks $\mathcal{M}_Z$, $\mathcal{M}_{Y}\times_{\mathcal{M}^{\DNC}}\mathcal{M}_{Y}$ and $\mathcal{M}_X$ for $(Z,\Theta)$ projective spin 4-fold, $(X,\Omega)$ a non-compact Calabi--Yau 4-fold and $Y$ a smooth compactification of $X$, one can construct Joyce's vertex algebras on their homology as in \autocite{Jvertex} using the signs from Proposition \ref{switching and signs on Y} and Remark \ref{spinsigns}. For $\mathcal{M}_X$ the signs $\epsilon_{\alpha,\beta}$ from Theorem \ref{theorem rules under addition intro} are used in constructing the vertex algebra, which could be used to express wall-crossing formulas for (localized) DT4 invariants using the framework of Gross--Joyce--Tanaka \autocite{GJT}.

 \section*{Acknowledgments} 
 The author wishes to express his gratitude to Dominic Joyce for his supervision and many helpful suggestions and ideas.
 
 We are indebted to Markus Upmeier for patient explanation of his work. We also thank Chris Brav, Yalong Cao, Simon Donaldson, Mathieu Florence,  Michel van Garrel, Jacob Gross, Maxim Kontsevich, Martijn Kool, Edwin Kutas,  Sven Meinhardt, Hector Papoulias, Yuuji Tanaka, Richard Thomas, Bertrand Toën and Andrzej Weber. Finally, we would like to thank the referee for invaluable remarks.
 
 The author was supported by a Clarendon Fund Scholarship at the University of Oxford. 
 \section*{Notation}
 If $E\to X$ is a complex vector bundle, we denote $\textnormal{det}(E) = \Lambda^{\textnormal{dim}_{\C}}E$ its complex determinant line bundle.  We write $\textnormal{det}^*(E)$ to denote its dual. If $E$ is real we write $\textnormal{det}_{\R}(E) = \Lambda^{\textnormal{dim}_{\R}}E$. 
\\
 If $L_1,L_2\to Y$ are two line bundles we will sometimes use the notation $L_1L_2 = L_1\otimes L_2$.\\
 If $P:H_0\to H_1$ is an operator between Hilbert spaces, we write $P^*$ to denote its adjoint. \\
 If $D\subset X$ is a divisor, we write $E(D) = E\otimes_{\mathcal{O}_X}\mathcal{O}_D$.\\
 If $s:V\to W$ is a map of vector bundles $V,W\to X$ and $Q\to X$ another vector bundle, then we will write
 $$
 s=s\otimes\textnormal{id}_Q: V\otimes Q\to W\otimes Q\,.
 $$
We often do not distinguish between a scheme $X$ over $\C$ and its analytification $X^{\textnormal{an}}$ .
 
\section{Orientation on non-compact Calabi--Yau 4-folds}
\label{bigsection duality and spin}
In this section, we review the definition of orientation on the moduli stack of perfect complexes of a Calabi--Yau 4-fold which uses the existence of $-2$-shifted symplectic structure as defined by Pantev--Toën--Vaquié--Vezzosi in \autocite{PTVV}. We introduce some new notions of orientation in more general cases which we use together with compactifications of non-compact Calabi--Yau 4-folds to prove that the moduli stack of compactly supported perfect complexes is orientable. 

\subsection{Moduli stacks of perfect complexes and shifted symplectic structures}
\label{section 3.1}

Here we recall the construction of moduli stacks of perfect complexes that we will be working with and the shifted symplectic structures on them. \\

Let $X$ be a smooth algebraic variety. Its category $D^b\big(\textnormal{Coh}(X)\big)$ of complexes of sheaves with coherent cohomologies does not have a moduli stack of its objects in the setting of standard algebraic stacks. Instead, one needs to rely on the methods provided by the theory of higher stacks and derived stacks. For a thorough discussion of these two terms, one can look at Toën and Vezzosi \autocite{TVHAG1, TVHAG2, TVHAGDAG} in the setting of model categories (see Hovey \autocite{Hovey} and Hirschhorn \autocite{Hirsch} ) and Lurie \autocite{DAG} in the setting of $\infty$-categories (see Lurie \autocite{LurieHT} or \autocite{LurieSHT}). Both higher stacks and derived stacks over $\mathbb{C}$ form the $\infty$-categories \textbf{HSta}$_\mathbb{C}$ and $\textbf{DSta}_\mathbb{C}$. There exist an inclusion $\infty$-functor $i:\textbf{HSta}_\mathbb{C}\to\textbf{DSta}_\mathbb{C} $ and its adjoint truncation $\infty$-functor $t_0:\textbf{DSta}_\mathbb{C}\to\textbf{HSta}_\mathbb{C} $, which relate the two categories. Note that the $\mathbb{C}$-points of the stacks are left invariant under these functors.

Let $\mathcal{T}$ be a dg-category (for background on dg-categories see Keller \autocite{Keller} and Toën \autocite{To1}). Toën \autocite{To1} introduces homotopy theory of dg-categories which is then used in  Toën and Vaquié \autocite{TVaq} to define for $\mathcal{T}$ its associated moduli stack as a derived stack $\boldsymbol{\mathcal{M}}_\mathcal{T}$ which classifies the pseudo-perfect dg-modules of $\mathcal{T}^{\textnormal{op}}$. This defines a functor from the homotopy category of dg-categories to the homotopy category of derived stacks
$
\boldsymbol{\mathcal{M}}_{(-)}:\textnormal{Ho}(\textbf{dg - Cat})^{\textnormal{op}}\to \textnormal{Ho}(\textbf{DSta}).
$
One can compose this with the truncation functor 
$t_0: \textnormal{Ho}(\textbf{DSta})\to \textnormal{Ho}(\textbf{HSta})$
mapping to the homotopy category of higher (infinity) stacks. We denote the composition by $\mathcal{M}_{(-)} = t_0\circ\boldsymbol{\mathcal{M}}_{(-)}$\,.

\begin{definition}
\label{definition stack of perfect complexes}
Let $X$ be quasi-projective variety over $\mathbb{C}$, then we use the notation $\boldsymbol{\mathcal{M}}_X$, $\mathcal{M}_X$ for $\boldsymbol{\mathcal{M}}_{L_{\textnormal{pe}}(X)},$ respectively $\mathcal{M}_{L_{\textnormal{pe}}(X)}$, where $L_{\textnormal{pe}}(X)$ is the dg-category of perfect complexes on $X$. When $X$ is smooth and projective, it is shown by Toën--Vaquié in \autocite[Corollary 3.29]{TVaq} that $\boldsymbol{\mathcal{M}}_X$ is a locally geometric derived stack which is locally of finite type. A locally geometric derived stack corresponds to a union of open geometric sub-stacks, and thus it has a well defined cotangent complex $\mathbb{L}_{\boldsymbol{\mathcal{M}}_X}$ in the dg-category of perfect modules on $\boldsymbol{\mathcal{M}}_X$ denoted by $L_{\textnormal{pe}}(\boldsymbol{\mathcal{M}}_X)$.\\
\end{definition}

For $X$ smooth and projective, both $\boldsymbol{\mathcal{M}}_X$ and $\mathcal{M}_X$ can be expressed as mapping stacks in the homotopy categories of the  $\infty$-categories $\textbf{DSta}_\mathbb{C}$ and $\textbf{HSta}_\mathbb{C}$. Let $\textbf{Perf}_\mathbb{C}$ be the derived stack of perfect dg-modules/complexes over $\mathbb{C}$ as defined in \autocite[Definition 1.3.7.5]{TVHAG2} when applied to complexes of vector spaces over $\mathbb{C}$ as the chosen homotopy algebraic geometric context. Taking its truncation $\textnormal{Perf}_\mathbb{C} = t_0(\textbf{Perf}_\mathbb{C})$, we have
\begin{equation}
\label{smooth projective mapping}
  \boldsymbol{\mathcal{M}}_X=\textbf{Map}(X,\textbf{Perf}_\mathbb{C})\,\qquad \mathcal{M}_X=\textnormal{Map}(X,\textnormal{Perf}_\mathbb{C})\,.
\end{equation}

\begin{definition}If $X$ is just a quasi-projective variety that is not smooth or is not projective, then we will denote the mapping stacks using a superscript:

\begin{equation}
\label{mapping stack}
    \boldsymbol{\mathcal{M}}^X=\textbf{Map}(X,\textbf{Perf}_\mathbb{C})\,\qquad \mathcal{M}^X=\textnormal{Map}(X,\textnormal{Perf}_\mathbb{C})\,.
\end{equation}
\end{definition}

\begin{remark}
	One can show that if $X$ is not smooth, then $\mathcal{M}_X$ will not be equal to $\mathcal{M}^X$ in general, as the former can classify objects of $\textbf{QCoh}(X)$ which are not perfect. When $X$ is a quasi-projective smooth variety over $\mathbb{C}$, then $L_{\textnormal{pe}}(X)$ is no longer a finite type dg-category and its moduli stack $\boldsymbol{\mathcal{M}}_X$  behaves differently. For any affine scheme $\textnormal{Spec}(A)\in \textbf{Aff}_{\mathbb{C}}$, the $\textnormal{Spec}(A)$-points of $\mathcal{M}_X$ are families of perfect complexes on $X\times \textnormal{Spec}(A)$ with proper support in $X$. 
\end{remark}

If $X$ is proper, we can use the description of $\boldsymbol{\mathcal{M}}_X$ as a mapping stack to construct a universal complex on $X\times\boldsymbol{\mathcal{M}}_X$:
If
\begin{equation}
\label{mapping map}
    \boldsymbol{u}: X\times \boldsymbol{\mathcal{M}}_X\to \textbf{Perf}_\mathbb{C}\,
\end{equation}
is the canonical morphisms for $\boldsymbol{\mathcal{M}}_X$ as a mapping stack, and  $\mathcal{U}_0$ is the universal complex on $\textbf{Perf}_\mathbb{C}$ used by Pantev--Toën--Vaquié--Vezzosi in \autocite{PTVV}, then one defines the universal complex 
$
\mathcal{U}_X = \boldsymbol{u}^*(\mathcal{U}_0)
$
on  $X\times\boldsymbol{\mathcal{M}}_X$. 
When $X$ is quasi-projective and not necessarily proper, we need a different construction of the universal complex.
\begin{definition}
\label{def unicom}
Let $i_X: X \hookrightarrow Y$ be a smooth compactification of $X$ and $\mathcal{U}_Y\in L_{\textnormal{pe}}\big(Y\times \boldsymbol{\mathcal{M}}_Y\big)$ the universal complex. Let \begin{equation}
\label{xiY}
\xi_Y=\boldsymbol{\mathcal{M}}_{(i_X^*)}: \boldsymbol{\mathcal{M}}_X\to \boldsymbol{\mathcal{M}}_Y\end{equation}
be the image of the pullback $i_X^*: L_{\textnormal{pe}}(Y)\to L_{\textnormal{pe}}(Y)$, then it acts on $\textnormal{Spec}(A)$-points by the right adjoint of $\big(i_{X}\times \textnormal{id}_{\textnormal{Spec}(A)}\big)^*$ as follows from its construction in Toën--Vaquié \cite[§3.1]{TVaq} and therefore by the pushforward $ (i_X\times\textnormal{id}_{\textnormal{Spec}(A)})_*$ of families of compactly supported perfect complexes on $X$. We define $\mathcal{U}_X\to X\times \boldsymbol{\mathcal{M}}_X$ by
$$
\mathcal{U}_X = \xi^*_Y(\mathcal{U}_Y)\,.
$$
It is independent of the choice of a compactification\footnote{Simply choose a common compactification $Y\leftarrow Y''\to Y'$ and compare the resulting universal complexes.}.
\end{definition}

 When $Z=\prod_{i\in I} Z_i$, we will use $\pi_{I'}:Z\to \prod_{i\in I'}Z_i$ for $I'\subset I$ to denote the projection to $I'$ components of the product. We use this also for general fiber products.   Let $Y$ now be any projective smooth four-fold and  $L$ a coherent sheaf on $Y$ and $\mathcal{U}_{Y}\in \textnormal{L}_{\textnormal{pe}}(Y\times \mathcal{M}_{Y})$ its universal complex. We define
\begin{equation}
\label{extl}
    \mathcal{E}\textnormal{xt}_{L} = \pi_{2,3\,*} (\pi_{1,2}^*\,\mathcal{U}^\vee_{Y}\otimes \pi_{1,3}^*\,\mathcal{U}_{Y}\otimes\pi_1^*L)\,,\quad 
 \mathbbm{P}_{ L}=\Delta_{\mathcal{M}_Y}^*\mathcal{E}\textnormal{xt}_{ L}\,.
\end{equation}
As pushforward along $\pi_{2,3}:Y\times \boldsymbol{\mathcal{M}}_{Y}\times \boldsymbol{\mathcal{M}}_{Y}\to \boldsymbol{\mathcal{M}}_{Y}\times \boldsymbol{\mathcal{M}}_{Y}$ maps (compactly supported) perfect complexes in $Y$ to perfect complexes, it has a right adjoint $\pi_{2,3}^!$ by Lurie's adjoint functor theorem Gaitsgory--Rozenblyum \cite[Thm. 2.5.4, §1.1.2]{Gait}, Lurie \cite[Cor. 5.5.2.9]{LurieHT}. Moreover, $\pi^!_{2,3} = \pi_{2,3}^*(-)\otimes K_Y[4]$, which gives us the usual Serre duality in families
\begin{equation}
\label{serreduality}
    \mathcal{E}\textnormal{xt}_{ L}\cong {\sigma}^*(\mathcal{E}\textnormal{xt}_{ (K_Y\otimes L^\vee)}^\vee)[-4]\,,
\end{equation}
where ${\sigma}: \boldsymbol{\mathcal{M}}_Y\times\boldsymbol{\mathcal{M}}_Y\to\boldsymbol{\mathcal{M}}_Y\times\boldsymbol{\mathcal{M}}_Y $ is the map interchanging the factors. 
\begin{definition}
\label{Plambda}
Let $Y$ be smooth and and $L$ a coherent sheaf on $Y$, then 
as \eqref{extl} are perfect, we construct the \textit{$L$-twisted virtual canonical bundle}
$$
\Sigma_L =\textnormal{det}(\mathcal{E}\textnormal{xt}_{L})\,,\quad \Lambda_{ L} = \textnormal{det}(\mathbb{P}_{ L})\,. 
$$
Moreover, $\Sigma_L, \Lambda_L$ are \textit{$\Z_2$-graded} with \textit{degree} given by a map $\textnormal{deg}(\Lambda_L):\boldsymbol{\mathcal{M}}_X\to \Z_2$, such that $$ \textnormal{deg}(\Sigma_L)|_{\boldsymbol{\mathcal{M}}_\alpha\times\boldsymbol{\mathcal{M}}_\beta} \equiv \chi(\alpha,\beta\cdot L)\quad (\textnormal{mod }\, 2)\,,\quad  \textnormal{deg}(\Lambda_L)|_{\boldsymbol{{\mathcal{M}}_\alpha}} \equiv \chi(\alpha,\alpha\cdot L)\quad (\textnormal{mod }\, 2)\,.$$ where $\alpha,\beta\in K^0(X)$ and $\boldsymbol{\mathcal{M}}_\alpha$ is the stack of complexes with class $\llbracket E\rrbracket = \alpha$. See Definition \ref{def grading real} and \eqref{compactly supported chi} for more details.
\end{definition}
 From the duality \eqref{serreduality}, we obtain the isomorphisms 
\begin{align}
    \label{i_L}
  \Sigma_L\cong\sigma^*(\Sigma_{K_X\otimes L^\vee})^*\,, \quad   \theta_{ L}:\mathbb{P}_{ L}\longrightarrow \mathbb{P}^\vee_{ (K_X\otimes L^\vee)}[-4]\,,\quad
    i_{ L}: \Lambda_{ L}\longrightarrow \Lambda_{ (K_X\otimes L^\vee)}^ *\,.
\end{align}
It follows from \autocite[Proposition 3.3]{BD} that the tangent complex of $\boldsymbol{\mathcal{M}}_X$ can be expressed as \begin{equation}
    \mathbbm{T}(\boldsymbol{\mathcal{M}}_X) = \mathbb{P}_{\mathcal{O}_X}[1]
\end{equation}
We use the term Calabi--Yau manifold in the following sense.
\begin{definition}
\label{definition CY}
A Calabi--Yau $n$-fold is a pair $(X,\Omega)$, where $X$ is a smooth quasi-projective variety of dimension $n$ over $\C$ and $\Omega$ is a global non-vanishing algebraic section of the canonical bundle $K_X$ of $X$.
 In this case, we use the notation $K_{\boldsymbol{\mathcal{M}}_X}= \Lambda_{ \mathcal{O}_X}$. Then applying the above to $L =\mathcal{O}_X$, we obtain the isomorphisms
\begin{align}
\label{theta}
    &\theta^\omega: \mathbb{T}(\boldsymbol{\mathcal{M}}_X)\to \mathbb{L}(\boldsymbol{\mathcal{M}}_X)[-2]\,,\\
    \label{i}
    &i^\omega: K_{\boldsymbol{\mathcal{M}}_X}\to K_{\boldsymbol{\mathcal{M}}_X}^*\,.
\end{align}
Brav and Dyckerhoff prove in \autocite[Proposition 5.3]{BD} and \autocite[Theorem 5.5 (1)]{BD} that the isomorphism $\theta^\omega$ comes from a $-2$-shifted symplectic form $\omega$ on $\boldsymbol{\mathcal{M}}_X$.
\end{definition}

 This extra condition to have a $-2$-shifted symplectic stack $(\boldsymbol{\mathcal{M}}_X,\omega)$ is necessary for constructing Borisov--Joyce fundamental classes as in \autocite{BJ} and Oh--Thomas classes from \autocite{OT}. \\

\subsection{Orientation bundles on moduli stacks of perfect complexes}
\label{section alggeom orientation}

In this subsection, we review the known results of orientation on compact Calabi--Yau 4-folds, and we define new orientation bundles which we will use in later sections to prove the orientability for the non-compact case. \\

There is a different but equivalent approach to constructing a $\mathbb{Z}_2$-bundle other than taking a real line bundle (or a complex line bundle with a real structure) and its associated orientation bundle.
\begin{definition}
\label{definition Z_2 bundles}
Let $L\to X$ be a complex line bundle over some variety, stack or a topological space. Additionally, let us assume, that there is an isomorphism $\mathcal{I}: L\to L^*$. Then  consider the adjoint isomorphism $\mathcal{J}$ which is the composition of
$$
L\otimes L\xrightarrow{\textnormal{id}_L\otimes \mathcal{I}}L\otimes L^*\to \underline{\mathbb{C}}\,,
$$
where the second isomorphism is the canonical one. Then one can define the \textit{square root $\mathbb{Z}_2$-bundle associated with $\mathcal{I}$}  denoted by $O^{\mathcal{I}}$. This bundle is given by the sheaf of its sections (which we denote the same way):
$$
O^{\mathcal{I}}(U) = \{o:L|_U\xrightarrow{\sim}\underline{\mathbb{C}}_U : o\otimes o = \mathcal{J}\} \,.
$$
\end{definition}

As this will be important for the proof of Theorem \ref{maintheorem}, we mention here how this construction is related to the one using real structures.

\begin{remark}
\label{realstruc}
Let $\mathcal{I}: L \to L^*$ be an isomorphism and $\langle-,-\rangle$  a metric on $L$. There is the induced isomorphism $\mu': L\to \bar{L}$ such that $\langle-,\mu'(l) \rangle = \mathcal{I}(l)$ for all $l\in L$. As an anti-linear endomorphism, we take its second power $(\mu')^2$ which is given by a multiplication by a strictly positive real function $s$ on $X$. Then $\mu = \frac{\mu'}{\sqrt{s}}$ is a well defined real structure on $L$. Let $L_{\mu}$ denote the real line bundle of fixed points and $\textnormal{or}(L_{\mu})$ its associated orientation bundle, then it is easy to see that $\textnormal{or}(L_{\mu})$  is canonically isomorphic to $O^{\mathcal{I}}$. For a nice exposition of orientations on $O(n,\mathbb{C})$ bundles see Oh-Thomas \autocite{OT}.
 \end{remark}

We recall the definition  of orientations on $-2$-shifted derived stacks (see Borisov--Joyce \autocite[Definition 2.12]{BJ}), which can now be applied also to non-compact Calabi--Yau 4-folds.

\begin{definition}
\label{orientation-def}
Let $(\boldsymbol{S},\omega)$ be a $-2$-shifted symplectic derived stack. Let 
$
\theta^\omega: \mathbb{T}(\boldsymbol{S})\to \mathbb{L}(\boldsymbol{S})[-2]
$
be the isomorphism associated to the $-2$ shifted symplectic form. Taking the determinants and inverting, one obtains the isomorphism
$
i^\omega: \textnormal{det}(\mathbb{L}_{\boldsymbol{S}})\to \textnormal{det}(\mathbb{L}_{\boldsymbol{S}})^*.
$
The \textit{orientation bundle}  $O^\omega\to \boldsymbol{S}$ is the square root $\mathbb{Z}_2$ bundle associated to $i^\omega$. \\
Let $(X,\omega)$ be a Calabi--Yau 4-fold as in Definition \ref{definition CY}, then we have $K_{\boldsymbol{\mathcal{M}}_X} = \textnormal{det}(\mathbb{L}_{\boldsymbol{\mathcal{M}}_X})$. The isomorphisms $\theta^\omega$ and $i^\omega$ from \eqref{theta} and \eqref{i} are associated to the $-2$-shifted symplectic derived stack from Definitions \ref{definition stack of perfect complexes} and \ref{definition CY}.
We denote in this case the orientation bundle by $O^{\omega}\to \boldsymbol{\mathcal{M}_X}$.
\end{definition}

From now now on we will restrict ourselves to working only in higher stacks $\textbf{HSta}_\mathbb{C}$. By this we mean that we take truncations $S = t_0(\boldsymbol{S})$ everywhere and restrict bundles and their isomorphism constructed on $\boldsymbol{S}$ to $S$ by the canonical inclusion $S\hookrightarrow \boldsymbol{S}$.

When $X$ is a compact Calabi--Yau 4-fold, Cao--Gross--Joyce \autocite[Theorem 1.15]{CGJ} prove that $O^\omega\to \mathcal{M}_X$ is trivializable. One could generalize their result by replacing the requirement of $X$ being Calabi--Yau by a weaker one. 
\begin{definition}
\label{definition spin variety}
Let $X$ be a smooth projective variety and $K_X$ its canonical divisor class. A divisor class $\Theta$, such that $2\Theta= K_{X}$ is called a \textit{theta characteristic}. We say that $(X,\Theta)$ for a given choice of a theta characteristic $\Theta$ is \textit{spin}. 
\end{definition}

Using this, we construct orientation for the case when $X$ is smooth projective and spin. 

\begin{definition}
\label{definition spin real structure}
When $X$ is spin and a choice of $\Theta$ is made, we will also use the notation $K_{\mathcal{M}_X} = \Lambda_{\Theta}$.  
\end{definition}

\begin{remark}
 A choice of $\Theta$ is equivalent to a choice of spin structure on $X^{\textnormal{an}}$ (see Atiyah \autocite[Proposition 3.2]{atiyahriemann}). 
\end{remark}

Before we state the generalization of Cao--Gross--Joyce \autocite[Theorem 1.15]{CGJ} to projective spin varieties, we recall some terminology used to formulate it. 

Blanc \autocite{Blanc} and Simpson \autocite{Simpson}, define the topological realization $\infty$-functor:
$
(-)^{\textnormal{top}}: \textbf{HSta}_{\mathbb{C}}\to \textbf{Top}\,
$
as the simplically enriched left Kan extension of the functor $
(-)^{\textnormal{an}}: \textbf{Aff}_{\mathbb{C}}\to \textbf{Top}\,,$
which maps every finite type affine scheme over $\mathbb{C}$ to its analytification.\\

\begin{definition}
\label{definition before theorems}
Let $Z$ be a projective variety over $\C$. Let $\mathcal{M}^Z$, be the mapping stack from \eqref{mapping stack}. Let 
$
u_Z: Z\times\mathcal{M}^Z\to \textnormal{Perf}_{\C}
$
be the canonical map. Applying $(-)^{\textnormal{top}}$ and using Blanc \autocite[§4.2]{Blanc}, we obtain
$
(u_Z)^{\textnormal{top}}: Z^{\textnormal{an}}\times (\mathcal{M}^Z)^{\textnormal{top}}\to BU\times \mathbb{Z}\,,
$
where $BU = \varinjlim_{n\to \infty}BU(n)$.
This gives us 
$$
\Gamma_Z: (M^Z)^{\textnormal{top}}\to \textnormal{Map}_{C^0}(Z^{\textnormal{an}}, BU\times \Z)\,,
$$
by the universal property of $\textnormal{Map}_{C^0}(-,-)$, where $\textnormal{Map}_{C^0}(-,-)$ denotes the mapping space bifunctor in \textbf{Top}. For any topological space $T$ we use the notation 
$
\mathcal{C}_T = \textnormal{Map}_{C^0}(T, BU\times \Z)\,.
$
\end{definition}

\begin{theorem}
\label{spinprop}
Let $(X,\Theta)$ be spin with the orientation bundle
$
\label{spinorientationbundle}
O^\mathcal{S}\to \mathcal{M}_X.
$ 
Let $\Gamma_X : (\mathcal{M}_X)^{\textnormal{top}}\to \mathcal{C}_X$ be as in Definition \ref{definition before theorems} and apply $(-)^{\textnormal{top}}$ to obtain a $\Z_2$-bundle 
 $
 (O^\mathcal{S})^{\textnormal{top}}\to (\mathcal{M}_X)^{\textnormal{top}}.
 $
 There is a canonical isomorphism of $\Z_2$-bundles
 \begin{equation*}
     (O^{\mathcal{S}})^{\textnormal{top}}\cong \Gamma_{X}^*(O^{\slashed{D}_+}_{\mathcal{C}})\,,
 \end{equation*}
 where $O^{\slashed{D}_+}_{\mathcal{C}}\to \mathcal{C}_X$ is the $\Z_2$-bundle from Joyce--Tanaka--Upmeier \autocite[Definition 2.22]{JTU} applied to the positive Dirac operator $\slashed{D}_+: S_+\to S_-$ as in Cao--Gross--Joyce \autocite[Theorem 1.11]{CGJ}. In particular, $O^{\mathcal{S}}\to \mathcal{M}_X$ is trivializable by the aforementioned theorem. 
\end{theorem}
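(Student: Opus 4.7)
The plan is to adapt the proof of Cao--Gross--Joyce \autocite[Theorem 1.15]{CGJ} from the Calabi--Yau to the spin setting, exploiting the observation that a theta characteristic $\Theta$ with $2\Theta = K_X$ plays the role that the trivial canonical class plays in the CY case: it gives the algebraic Serre duality isomorphism $\mathbb{P}_\Theta \cong \mathbb{P}_\Theta^\vee[-4]$ via \eqref{i_L} (since $K_X \otimes \Theta^\vee \cong \Theta$), hence an isomorphism $i^{\mathcal{S}}: \Lambda_\Theta \to \Lambda_\Theta^*$ whose square-root $\Z_2$-bundle is $O^{\mathcal{S}}$ by Definition \ref{definition spin real structure}. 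Thus the whole argument should run parallel to \autocite{CGJ} with $\Theta$ inserted in each trace-free Hom.

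First I would construct a canonical isomorphism between $(\Lambda_\Theta)^{\textnormal{top}}$ and the pullback along $\Gamma_X$ of the determinant of the family index of $\slashed{D}_+$ twisted by the universal $K$-theory class over $\mathcal{C}_X$. On a K\"ahler spin 4-fold the identifications $S_+ \cong \Omega^{0,\textnormal{even}}\otimes \Theta$ and $S_- \cong \Omega^{0,\textnormal{odd}}\otimes \Theta$ convert $\slashed{D}_+$ twisted by a complex vector bundle $E$ into $\bar\partial + \bar\partial^*$ acting on $\Omega^{0,*}(E\otimes \Theta)$, so its family index classifies topologically the same $K$-theory element as $\pi_*(\mathcal{U}^\vee\otimes\mathcal{U}\otimes\Theta) = \mathbb{P}_\Theta$. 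Promoting this index-theoretic equality to an isomorphism of determinant line bundles is a Knudsen--Mumford/Bismut--Freed-type statement, for which the technical machinery announced for §\ref{bigsection technical proof} (transport along complex determinant lines of pseudo-differential operators) is tailor-made.

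Next I would match the two real structures. The algebraic $i^{\mathcal{S}}$ is induced by Serre duality on $X$, while the analytic real structure underlying $O^{\slashed{D}_+}_\mathcal{C}$ from \autocite[Definition 2.22]{JTU} originates from the fact that in real dimension $8$ the positive Dirac operator (twisted so as to real-ify the complex spinor via $E\oplus E^\vee$) carries a genuine real structure intertwining $\slashed{D}_+$ with its formal adjoint. Under the identification built in the previous paragraph, both data determine the same self-duality of $\mathbb{P}_\Theta$ up to a canonical positive scalar function, and Remark \ref{realstruc} then shows that the associated orientation $\Z_2$-bundles constructed via Definition \ref{definition Z_2 bundles} agree canonically, producing the desired isomorphism $(O^{\mathcal{S}})^{\textnormal{top}} \cong \Gamma_X^*(O^{\slashed{D}_+}_\mathcal{C})$.

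The main obstacle is the second step: making the comparison of the two real structures \emph{canonical} (rather than merely existing up to overall sign) requires exhibiting a continuous homotopy between the algebraic Serre-duality isomorphism and the analytic self-adjointness isomorphism inside the contractible space of isomorphisms of the underlying complex determinant line bundle. In the spin case this is simpler than the general non-compact case because everything takes place on the closed manifold $X$ itself, with no excision to a compactification required, but it is still the technical heart of the argument. Once the isomorphism $(O^{\mathcal{S}})^{\textnormal{top}} \cong \Gamma_X^*(O^{\slashed{D}_+}_\mathcal{C})$ is established, trivializability of $O^{\mathcal{S}}$ follows at once from \autocite[Theorem 1.11]{CGJ}.
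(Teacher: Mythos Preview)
Your overall strategy is correct and matches the paper's: this is indeed a direct adaptation of \autocite[Theorem 1.15]{CGJ}, replacing the trivialization of $K_X$ by the theta characteristic $\Theta$ and using that $S_\pm\cong\mathcal{A}^{0,\textnormal{even/odd}}\otimes\Theta$ identifies the $\Theta$-twisted Dolbeault operator with the positive Dirac operator.

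The one place where the paper is more efficient is your ``main obstacle''. You propose to first build the determinant-line isomorphism and then separately compare the algebraic Serre-duality real structure with the analytic Dirac real structure, worrying about a homotopy to make the comparison canonical. The paper sidesteps this entirely by writing down a single explicit \emph{spin Hodge star} $\star^S:\mathcal{A}^{0,q}\otimes\Theta\to\mathcal{A}^{0,4-q}\otimes\Theta$, defined via the pairing $\wedge^S:(\mathcal{A}^{0,k}\otimes\Theta)\otimes(\mathcal{A}^{0,4-k}\otimes\Theta)\to\mathcal{A}^{4,4}$, from which one builds antilinear involutions $\#^S_1,\#^S_2$ on the even and odd $\Theta$-twisted forms satisfying $D_\Theta\circ\#^S_1=\#^S_2\circ D_\Theta$. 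This single operator is simultaneously the real structure encoding Serre duality (hence $i^{\mathcal S}$) \emph{and} the real structure whose fixed points give the real Dirac operator $\slashed{D}_+:S_+\to S_-$ (via Friedrich \autocite[\S3.4]{Fri}). So the two real structures do not merely agree up to homotopy --- they are literally the same map under the spinor/Dolbeault identification, and the canonical isomorphism of $\Z_2$-bundles follows immediately without any transport argument. Your route would work but introduces an unnecessary layer.
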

\begin{proof}
This is a simple generalization of the proof of \autocite[Theorem 1.15]{CGJ} relying on the fact that Theorem \ref{Cao} requires $X^{\textnormal{an}}$ to be a spin manifold to trivialize the orientation bundle on $\mathcal{B}_X$. We mention it, as we hope that it will be useful in the future for generalization of DT$_4$ invariants. 
We only discuss the corresponding real structure on the differential geometric side replacing \cite[Definition 3.24]{CGJ}. We have the pairing
$
    \wedge^{S}: (\mathcal{A}^{0,k}\otimes \Theta)\otimes(\mathcal{A}^{0,4-k}\otimes \Theta)\to \mathcal{A}^ {4,4}\,,
$
and the corresponding \textit{spin Hodge star} $\star^S: \mathcal{A}^{0,q}\otimes \Theta\to \mathcal{A}^{0,n-q}\otimes \Theta$ 
$$
(\beta\otimes t)\wedge^S \star^S_k(\alpha\otimes s) = \langle\beta\otimes s,\alpha\otimes t \rangle\underline{\Omega} \qquad \alpha,\beta\in A^{0,k}, \, s,t \in \Gamma^{\infty}(\underline{\Omega})\,,
$$
where $\underline{\Omega}\in \mathcal{A}^{4,4}$ is the volume form.  
As a result, we have the real structures:
$
\#^S_1: \mathcal{A}^{0,\textnormal{even}}\otimes \Theta\to\mathcal{A}^{0,\textnormal{even}} \otimes \Theta$ and $
\#^S_2:\mathcal{A}^{0,\textnormal{odd}}\otimes \Theta\to \mathcal{A}^{0,\textnormal{odd}}\otimes \Theta$, where again $\#^S_1|_{\mathcal{A}^{0,2q}\otimes \Theta} = (-1)^q\star^S$ and $\#^S_2|_{\mathcal{A}^{0,2q+1}\otimes \Theta} =(-1)^{q+1}\star^S$. The Dolbeault operator commutes with these
$D_{\Theta}\circ\#^S_1=\#^S_2\circ D_{ \Theta}$ and its real part is the positive Dirac operator $\slashed{D}: S_+\to S_-$ by Friedrich \autocite[§3.4]{Fri}. As twisting by connections only corresponds to tensoring symbols of operators by identity, this extends also to real structures on $\textnormal{det}(D^{\nabla_{\textnormal{End}(E)}})$. 
\end{proof}

\begin{remark}
\label{spinsigns}
Note that one can also state the equivalent of Cao--Gross--Joyce \autocite[Theorem 1.15 (c)]{CGJ}, expressing the comparison of orientations under direct sums on $\mathcal{M}_X$ in terms of the comparison on $\mathcal{C}_X$. 
\end{remark}

 Suppose, that $X$ is Calabi--Yau and that there exists $Y$ smooth with an open embedding $i_Y: X\hookrightarrow Y$, where $Y$ is spin. Recall that we have the map $\xi_Y: \mathcal{M}_X\to \mathcal{M}_Y$ from Definition \ref{def unicom}. We say that $Y$ is a \textit{spin compactification} of $X$. We now state the weaker result about orientability for non-compact Calabi--Yau 4-folds.

\begin{corollary}
\label{sccor}
Let $X$ be a Calabi--Yau 4-fold, and let  $Y$ be a spin compactification of $X$ with a choice of $\Theta$ and an isomorphism $\phi:\mathcal{O}_X\xrightarrow{\sim}\Theta|_{X}$, then there exists an induced isomorphism of $\mathbb{Z}_2$ bundles on $\mathcal{M}_X$:
$$
O^\omega \cong \xi_Y^*(O^\Theta)\,.
$$
In particular, $\mathcal{M}_X$ is orientable. 
\end{corollary}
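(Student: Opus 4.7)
The plan is to exhibit a canonical identification of the twisted virtual canonical bundles under the pushforward map $\xi_Y$ and check that the duality isomorphisms match, so that the associated square-root $\Z_2$-bundles agree. First I would compare the Ext-complexes: for a compactly supported perfect complex $E$ on $X$, the open embedding $i_X : X \hookrightarrow Y$ is fully faithful on the dg-category of such complexes, and since $(i_X)_*$ preserves compact support, one has
$$
\textnormal{RHom}_Y\bigl((i_X)_*E,\, (i_X)_*F \otimes \Theta\bigr)\;\cong\; \textnormal{RHom}_X\bigl(E,\, F \otimes \Theta|_X\bigr)\;\stackrel{\phi}{\cong}\; \textnormal{RHom}_X(E,F),
$$
using the provided trivialization $\phi:\mathcal{O}_X\xrightarrow{\sim}\Theta|_{X}$. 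Promoting this to universal complexes on $X\times\mathcal{M}_X$ and $Y\times\mathcal{M}_Y$ along the map $\xi_Y$ from Definition \ref{def unicom}, I obtain a canonical quasi-isomorphism $\xi_Y^*\mathbbm{P}_{\Theta}\xrightarrow{\sim}\mathbbm{P}_{\mathcal{O}_X}$, and taking determinants yields $\xi_Y^*\Lambda_{\Theta}\cong \Lambda_{\mathcal{O}_X}=K_{\mathcal{M}_X}$.

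Next I would check that this isomorphism intertwines the duality maps. On $Y$, Serre duality combined with $K_Y\cong \Theta^{\otimes 2}$ gives the isomorphism $i_\Theta:\Lambda_\Theta\to \Lambda_\Theta^*$ from \eqref{i_L}. Pulling this back along $\xi_Y$ yields an isomorphism of $\xi_Y^*\Lambda_\Theta$ with its dual. Under the identification above, this corresponds to Serre duality on $X$ twisted by $\Theta|_X$ and then untwisted via $\phi\otimes \phi:\mathcal{O}_X\to \Theta^{\otimes 2}|_X = K_X$. Since $X$ is Calabi--Yau with trivialization $\Omega\in H^0(K_X)$, the map $\phi\otimes\phi$ differs from $\Omega$ only by a nowhere-vanishing scalar, which does not affect the induced square-root $\Z_2$-bundle (scalars are squares locally up to contractible choices, so they do not change the $\Z_2$-torsor). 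Hence $\xi_Y^*(i_\Theta)$ coincides with $i^\omega$ up to a canonical homotopy of isomorphisms, and by the functoriality of the construction in Definition \ref{definition Z_2 bundles} I obtain a canonical isomorphism
$$
\xi_Y^*(O^{\Theta})\;\cong\; O^\omega.
$$

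The trivializability then follows by combining this with Theorem \ref{spinprop}: $O^\Theta\to \mathcal{M}_Y$ is trivializable, so its pullback $\xi_Y^*(O^\Theta)\cong O^\omega$ is trivializable on $\mathcal{M}_X$.

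The main obstacle I expect is the compatibility between the algebraic trivialization $\phi\otimes\phi$ of $K_Y|_X$ and the Calabi--Yau form $\Omega$: these need only agree up to a nowhere-vanishing holomorphic function, and one has to argue that this ambiguity is absorbed into the canonical identifications of square-root $\Z_2$-bundles. A secondary subtlety is the careful handling of the pushforward $(i_X)_*$ on derived/higher stacks so that the Ext-comparison above promotes to a morphism of perfect complexes on $\mathcal{M}_X$ rather than just a fiberwise statement — here one uses base change for the open embedding $i_X\times\textnormal{id}$ together with the description of $\xi_Y$ on $\textnormal{Spec}(A)$-points from Definition \ref{def unicom}.
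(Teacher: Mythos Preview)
Your approach matches the paper's: both identify $\xi_Y^*\Lambda_\Theta\cong K_{\mathcal{M}_X}$ via $\phi$, observe that the Serre-duality isomorphisms agree under this identification, and conclude that the square-root $\Z_2$-torsors coincide at each $\textnormal{Spec}(A)$-point (the paper compresses this into a single sentence). The compatibility issue you flag between $\phi^{\otimes 2}$ and $\Omega$ is not addressed in the paper's terse proof either; Remark~\ref{remark comparison of spin} immediately following the corollary indicates that in the intended applications one chooses $\phi$ as a meromorphic square root of $\bar\Omega$, so that $\phi^{\otimes 2}=\Omega$ on $X$ and the ambiguity disappears---your informal claim that a nowhere-vanishing discrepancy cannot change the $\Z_2$-bundle would otherwise need more justification, since a function without a global square root can alter the torsor.
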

\begin{proof}
Let $E$ be a perfect complex on $X$ with a proper support, then the $\mathbb{Z}_2$ torsors over a $\textnormal{Spec}(A)$-point $[E]$ of both of the above $\mathbb{Z}_2$-bundles are given by 
$$\{o_{E}: \textnormal{det}\big(\underline{\textnormal{Hom}}(E,E)\big)\xlongrightarrow{\sim}\mathbb{C}\textnormal{   s.t. }o_{E}\otimes o_{E} = \textnormal{ad}(i^\omega)|_{[E]}\}\,$$
where $i^\omega$ is the Serre duality, and we used the isomorphism $E\otimes \Theta \cong E$ induced by $\phi$. Thus we have a natural identification of both $\mathbb{Z}_2$-bundles. The last statement follows from Theorem \ref{spinprop}.
\end{proof}

\begin{remark}
\label{remark comparison of spin}
If $Y$ is a spin compactification of $X$ and $Y\backslash X = D$ is a divisor. Let $D=\cup_{i=1}^{N}D_i$ be its decomposition into  irreducible components. If we can write the canonical divisor of $Y$ as  $K_{Y} = \sum_{i=1}^Na_i D_i$, where $a_i \equiv 0 (\textnormal{mod } 2)$, then one can take $$\Theta = \sum_{i=1}^N\frac{a_i}{2} D_i$$
as the square root. After choosing a meromorphic section $\bar{\Omega}^{\frac{1}{2}}$ of $\Theta$ with poles and zeros on $D$, one obtains an isomorphism $\phi:\mathcal{O}_X\xrightarrow{\sim} \Theta|_X $. Then the condition of Corollary \ref{sccor} is satisfied. 
\end{remark}
 
\begin{example}
The simplest example is $\mathbb{C}^4$. While its natural compactification $\mathbb{P}^4$ is not spin, one can choose to compactify it as $\mathbb{P}^1\times\mathbb{P}^3$ or $(\mathbb{P}^1)^{\times 4}$ which are both spin, both of which satisfy the property in Remark \ref{remark comparison of spin}.
\end{example}
\begin{example}
\label{vector bundle example}
Let $S$ be a smooth projective variety $0\leq\textnormal{dim}_\C(S) = k\leq 4$ and let $E\to S$ be a vector bundle, s.t. $\textnormal{det}(E) = K_S$. Then $X = \textnormal{Tot}(E\to S)$ is Calabi--Yau. Taking its smooth compactification $Y=\mathbb{P}(E\oplus \mathcal{O}_S)$ with the divisor at infinity $D = \mathbb{P}(E)\subset \mathbb{P}(E\oplus \mathcal{O}_S)$, one can show that $K_{Y} = -(\textnormal{rk}(E)+1)D$. If $\textnormal{rk}(E)\in 2\Z + 1$, we see that we can choose $\Theta =\mathcal{O}_{Y}(-\frac{(\textnormal{rk}(E)+1)}{2}D)$ which satisfies the property of Remark \ref{remark comparison of spin}. Then if $\textnormal{rk}(E) + k=4$, this is an example of Corollary \ref{sccor}, when $\textnormal{rk}(E)= 1,3$. 

If $X = \textnormal{Tot}(L_1\oplus L_2\to S)$ for a smooth projective surface $S$ and its line bundles $L_1,L_2$, s.t. $L_1L_2=K_S$, then the spin compactification can be obtained as $\mathbb{P}(L_1\oplus \mathcal{O}_S)\times_{S}\mathbb{P}(L_2\oplus \mathcal{O}_S)$\,. 
\end{example}
\begin{example}
\label{example toricspin}
Suppose we have a toric variety $X$ (see Fulton \autocite{Fultontoric}, Cox \autocite{Cox}) given by a fan in the lattice $\mathbb{Z}^n\subset \mathbb{R}^n$. Suppose it is smooth and it contains the natural cone spanned by $(e_i)_{i=1}^n$. Define the hyperplanes 
$$
H_i = \{(x_1,\ldots, x_n)\in \R^n : \sum_{j=1}^nx_j = i \}\,.
$$
Then $X$ is Calabi--Yau if and only if all the primitive vectors of rays of the fan lie in $H_1$ and all the cones are spanned by a basis. A simple generalization of this well known statement shows that $X$ is spin if and only if all the primitive vectors lie in $H_{\textnormal{odd}} = \bigcup_{i\in 2\Z +1} H_i$\,. Starting from a toric Calabi--Yau $X$,  one can compactify $X$ to a projective smooth toric variety $Y$ by adding divisors corresponding to primitive vectors. In general, we will not have spin compactifications: 
 Consider the fan in $\mathbb{R}^2$ with more than 3 primitive vectors in $H_1$, then any compactification will be consecutive blow ups of a Hirzebruch surface at points, with at least one blow up. 

A common way of constructing Calabi--Yau manifolds is by removing anti-canonical divisors from a Fano manifold. To further illustrate the scarceness of spin-compactifica\-tions in even dimension, we study the classification of toric projective Fano fourfolds by Batyrev \cite{batyrev}. Using the condition described above, we can show that there exist only 4 smooth toric Fano fourfolds with a spin structure. These are $\mathbb{P}_{\mathbb{P}^3}(\mathcal{O}\oplus\mathcal{O}(2))$, $\mathbb{P}^1\times\mathbb{P}^3$, $\mathbb{P}^1\times \mathbb{P}_{\mathbb{P}^2}(\mathcal{O}\oplus\mathcal{O}(1))$ and $4\mathbb{P}^1$ corresponding to the polytopes $B_2,B_4, D_{12}$ and $L_8$ respectively. Note that there are 123 smooth projective toric Fano fourfolds in total. 
\end{example}

To avoid having to answer the question of existence of spin-compactifications, we develop a different general approach in the next section.

\subsection{Orientations for a non-compact Calabi--Yau via algebraic excision principle}

Let $(X,\Omega)$ be a Calabi--Yau fourfold, then we fix a compactification $Y$ with $D= Y\backslash X$ a \textit{strictly normal crossing divisor}, where we are using the following convention.

\begin{definition}
\label{defSNC}
A strictly normal crossing divisor is a normal crossing divisor, which is a union of smooth divisors with transversal intersections (any $k$-fold intersection is in particular smooth).
\end{definition}

Let $(X,\Omega)$ be a Calabi--Yau fourfold, then we fix a compactification $Y$ with $Y\backslash D$ a strictly normal crossing divisor. By Hironaka  \autocite[Main Theorem 1]{Hironaka}, Bierstone--Milman \cite{bierstone} there exists such a compactification by embedding into a projective space and taking resolutions. Consider triples complexes $(E,F,\phi)$, where $E,F\in L_{\textnormal{pe}}(Y)$ and  
$
\phi: E|_{D}\xrightarrow{\sim}  F|_{D}\,.
$
We will take the difference of the determinants 
$\textnormal{det}\big(\underline{\textnormal{Hom}}(E,E)\big)$ and $\textnormal{det}\big(\underline{\textnormal{Hom}}((F,F)\big)$ and cancel the contributions which live purely on the divisor. One could think of this as an algebraic version of the  excision principle defined for complex operators in §\ref{section pseudo}. Let us now make the described method more rigorous.

Let $X$, $Y$ and $D$ be as in the paragraph above, then we can write $D$ as the union
\begin{equation}
\label{divisordecomposition}
D=\bigcup_{i=1}^N D_i\,
\end{equation}
where each $D_i$ is a smooth divisor. We require $\Omega$ to be algebraic, then there exists a unique meromorphic section $\bar{\Omega}$ of $K_{Y}$, s.t. $\bar{\Omega}|_{X} =\Omega$. The poles and zeroes of $\bar{\Omega}$ express $K_{Y}$ uniquely in the following form
$
K_{Y} = \sum_{i=1}^N a_iD_i,$ where $a_i\in \Z.
$
We may write for the canonical line bundle:
\begin{equation}
\label{canbun}
  K_{Y} = \bigotimes_{i=1}^N\mathcal{O}(k_iD_i) =  \bigotimes_{i=1}^N\mathcal{O}(\textnormal{sgn}(k_i)D_i)^{\otimes |k_i|}\,.  
\end{equation}
Let $N_D$ be the free lattice spanned by the divisors $D_i$ which we from now on denote by the elements $e_i\in N_D$. For a line bundle $L = \otimes_{i=1}^N\mathcal{O}(a_iD_i)$ we write $L_{\underline{a}}$, where $\underline{a} = (a_1,\ldots,a_N)$. We will also use the notation $L_{\underline{k}} = K_Y$. Then for a non-zero global section $s_i$ of $\mathcal{O}(D_i)$ one has the usual exact sequence
$$
\begin{tikzcd}
  0\arrow[r]& L_{\underline{a}}\arrow[r,"\cdot s_i"]&L_{\underline{a}+e_i}\arrow[r]& L_{\underline{a}+e_i}\otimes_{\mathcal{O}_{Y}}\mathcal{O}_{D_i}\arrow[r]& 0\,. 
\end{tikzcd}
$$
As all the operations used to define 
$\mathcal{E}\textnormal{xt}_{\underline{a}} = \mathcal{E}\textnormal{xt}_{L_{\underline{a}}}$ and $\mathbb{P}_{\underline{a}}=\mathbb{P}_{L_{\underline{a}}}$
in Definition \ref{Plambda} are derived, we obtain distinguished triangles
\begin{equation}
\label{disttr}
    \begin{tikzcd}
        \mathcal{E}\textnormal{xt}_{\underline{a}}\arrow{r} & \mathcal{E}\textnormal{xt}_{\underline{a}+e_i}\arrow{r} & \mathcal{E}\textnormal{xt}_{L_{\underline{a}+e_i}\otimes_{\mathcal{O}_{Y}}\mathcal{O}_{D_i}} \arrow{r}{[1]} &  \mathcal{E}\textnormal{xt}_{\underline{a}}[1]\,,\\
        \mathbb{P}_{\underline{a}}\arrow{r} & \mathbb{P}_{\underline{a}+e_i}\arrow{r} & \mathbb{P}_{L_{\underline{a}+e_i}\otimes_{\mathcal{O}_{Y}}\mathcal{O}_{D_i}} \arrow{r}{[1]} &  \mathbb{P}_{\underline{a}}[1]\,. 
    \end{tikzcd}
\end{equation}
\sloppy By \eqref{mapping stack} both $\mathcal{M}_{Y}$ and $\mathcal{M}_{D_i}$ can be expressed as mapping stacks
$\textnormal{Map}\big(Y,\textnormal{Perf}_{\mathbb{C}}\big)$ and $\textnormal{Map}\big(D_i,\textnormal{Perf}_{\mathbb{C}}\big)$, respectively. Let $\textnormal{inc}_{D_i}:D_i\to Y$ be the inclusion, then we denote by $\rho_i:\mathcal{M}_{Y}\to \mathcal{M}_{D_i}$ the morphisms induced by the pullback $(\textnormal{inc}_{D_i})^* :L_{\textnormal{pe}}(Y)\to L_{\textnormal{pe}}(D_i)$, where we are using that both stacks are mapping stacks. For each divisor $D_i$ we set $L_{\underline{a}}|_{D_i} = L_{\underline{a},i}$ and
$$\mathcal{E}\textnormal{xt}_{\underline{a},i}=\pi_{2,3\,*}(\pi_{1,2}^*\,\mathcal{U}^\vee_{D_i}\otimes\pi_{1,3}^*\,\mathcal{U}_{D_i}\otimes\pi_1^*L_{\underline{a},i})\,,\qquad  \mathbb{P}_{\underline{a},i} = \Delta^*\mathcal{E}\textnormal{xt}_{\underline{a},i}\,,\\
$$
\begin{lemma}
We have the isomorphism 
\begin{align*}
\label{restriction}
    \mathcal{E}\textnormal{xt}_{L_{\underline{a}+e_i}\otimes_{\mathcal{O}_{Y}}\mathcal{O}_{D_i}} &\cong (\rho_i\times \rho_i)^*(\mathcal{E}\textnormal{xt}_{\underline{a}+e_i,i})\,,\\
    \mathbb{P}_{L_{\underline{a}+e_i}\otimes_{\mathcal{O}_{Y}}\mathcal{O}_{D_i}} &\cong \rho_i^*(\mathbb{P}_{\underline{a}+e_i,i})\,.
\end{align*}
where we use the same notation for the complexes $\mathbb{P}$ on $\mathcal{M}_{Y}$ and $\mathcal{M}_{D_i}$.
\end{lemma}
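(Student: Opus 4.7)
The plan is to prove the isomorphism for $\mathcal{E}\textnormal{xt}$ first and then obtain the statement for $\mathbb{P}$ by pulling back along the diagonal $\Delta_{\mathcal{M}_Y}$, using that $\Delta_{\mathcal{M}_Y}$ covers $\Delta_{\mathcal{M}_{D_i}}$ via $\rho_i\times\rho_i$ after composing with $\rho_i$. So the real content is the first isomorphism.

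The key compatibility between universal complexes is that $(\textnormal{inc}_{D_i}\times\textnormal{id}_{\mathcal{M}_Y})^*\mathcal{U}_Y \cong (\textnormal{id}_{D_i}\times\rho_i)^*\mathcal{U}_{D_i}$. This is immediate from the definition of $\rho_i$ as $\mathcal{M}_{((\textnormal{inc}_{D_i})^*)}$ at the level of mapping stacks, together with the fact that the universal complex on $Y\times\mathcal{M}_Y$ is obtained from the canonical map $Y\times\mathcal{M}_Y\to\textnormal{Perf}_\C$, so that restricting to $D_i$ in $Y$ corresponds to precomposing with $\rho_i$ in $\mathcal{M}_Y$. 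First, I would spell this identification out, abbreviating both sides as $\mathcal{U}_Y|_{D_i}$.

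Next, I would use the projection formula for the closed immersion $\textnormal{inc}_{D_i}\times\textnormal{id}: D_i\times \mathcal{M}_Y\times\mathcal{M}_Y \hookrightarrow Y\times\mathcal{M}_Y\times\mathcal{M}_Y$: since $L_{\underline{a}+e_i}\otimes_{\mathcal{O}_Y}\mathcal{O}_{D_i}=(\textnormal{inc}_{D_i})_*L_{\underline{a}+e_i,i}$ and $\mathcal{U}_Y$ is perfect, the sheaf $\pi_{1,2}^*\mathcal{U}_Y^\vee\otimes\pi_{1,3}^*\mathcal{U}_Y\otimes\pi_1^*(L_{\underline{a}+e_i}\otimes\mathcal{O}_{D_i})$ on $Y\times\mathcal{M}_Y\times\mathcal{M}_Y$ is isomorphic to the pushforward under $\textnormal{inc}_{D_i}\times\textnormal{id}$ of $\pi_{1,2}^*(\mathcal{U}_Y|_{D_i})^\vee\otimes\pi_{1,3}^*(\mathcal{U}_Y|_{D_i})\otimes\pi_1^*L_{\underline{a}+e_i,i}$. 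Pushing forward further along $\pi_{2,3}$ (which is the projection off of $Y$) and using that $\pi_{2,3}\circ(\textnormal{inc}_{D_i}\times\textnormal{id})$ equals the analogous projection off of $D_i$, I get
\[
\mathcal{E}\textnormal{xt}_{L_{\underline{a}+e_i}\otimes_{\mathcal{O}_Y}\mathcal{O}_{D_i}} \cong (\pi^{D_i}_{2,3})_*\bigl(\pi_{1,2}^*(\mathcal{U}_Y|_{D_i})^\vee\otimes\pi_{1,3}^*(\mathcal{U}_Y|_{D_i})\otimes\pi_1^*L_{\underline{a}+e_i,i}\bigr).
\]

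Finally, I would apply derived (flat) base change to the Cartesian square
\[
\begin{tikzcd}
D_i\times\mathcal{M}_Y\times\mathcal{M}_Y \arrow[r,"\textnormal{id}\times\rho_i\times\rho_i"] \arrow[d,"\pi^Y_{2,3}"'] & D_i\times\mathcal{M}_{D_i}\times\mathcal{M}_{D_i} \arrow[d,"\pi^{D_i}_{2,3}"] \\
\mathcal{M}_Y\times\mathcal{M}_Y \arrow[r,"\rho_i\times\rho_i"'] & \mathcal{M}_{D_i}\times\mathcal{M}_{D_i}
\end{tikzcd}
\]
which is Cartesian since the top row is the identity on $D_i$ crossed with $\rho_i\times\rho_i$. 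Rewriting $\mathcal{U}_Y|_{D_i}$ as $(\textnormal{id}_{D_i}\times\rho_i)^*\mathcal{U}_{D_i}$ and commuting the pullback past the tensor products and the derived pushforward $(\pi^{D_i}_{2,3})_*$ gives the desired isomorphism $(\rho_i\times\rho_i)^*\mathcal{E}\textnormal{xt}_{\underline{a}+e_i,i}$. The pullback by $\Delta_{\mathcal{M}_Y}$ then yields the $\mathbb{P}$-version, where on the right one uses $(\rho_i\times\rho_i)\circ\Delta_{\mathcal{M}_Y}=\Delta_{\mathcal{M}_{D_i}}\circ\rho_i$.

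The main obstacle is justifying the derived base change and the projection formula in the setting of locally geometric derived stacks and perfect complexes. I would cite the relevant results from Toën--Vaquié \cite{TVaq} and the adjoint functor theorem already invoked earlier (Gaitsgory--Rozenblyum, Lurie) to guarantee that $\pi_{2,3}$ is proper enough on its perfect-complex-valued image and that flat base change applies to the projection $\pi_{2,3}$, which is the pullback of the structure morphism $\mathcal{M}_Y\to\textnormal{pt}$ and thus formal.
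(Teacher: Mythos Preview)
Your proposal is correct and follows essentially the same approach as the paper: identify $\mathcal{U}_Y|_{D_i}\cong(\textnormal{id}_{D_i}\times\rho_i)^*\mathcal{U}_{D_i}$ from the mapping-stack description, then apply the projection formula and derived base change (the paper cites Gaitsgory \cite{Gaitsgorylecture1.3} for both), and deduce the $\mathbb{P}$-statement from $(\rho_i\times\rho_i)\circ\Delta_{\mathcal{M}_Y}=\Delta_{\mathcal{M}_{D_i}}\circ\rho_i$. The only cosmetic issue is that in your Cartesian square the left vertical arrow should be labeled $\pi^{D_i}_{2,3}$ rather than $\pi^Y_{2,3}$.
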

\begin{proof}
For universal complexes $\mathcal{U}_Y$, $\mathcal{U}_{D_i}$ on $Y\times \mathcal{M}_{Y}$, $D_i\times \mathcal{M}_{D_i}$, we have 
$
\mathcal{U}_Y|_{D_i\times \mathcal{M}_{Y}} = (\textnormal{id}_{D_i}\times\rho_i)^*\mathcal{U}_{D_i}
$ as follows from the commutative diagram
\begin{equation*}
    \begin{tikzcd}[column sep=4em, row sep=3em]
      Y\times\mathcal{M}_{Y}\arrow[r]&\textnormal{Perf}_{\mathbb{C}}\\
      D_i\times\mathcal{M}_{Y}\arrow[u, "\textnormal{inc}_{D_i}\times\textnormal{id}_{\mathcal{M}_{Y}}"]\arrow[r,"\textnormal{id}_{D_i}\times\rho_i"]&D_i\times\mathcal{M}_{D_i}\arrow[u]\,.
    \end{tikzcd}
\end{equation*}
For the dual, we also have $\mathcal{U}^\vee_Y|_{D_i\times \mathcal{M}_{Y}} = (\textnormal{id}_{D_i}\times\rho_i)^*\mathcal{U}^\vee_{D_i}.$ Thus we have the following equivalences
\begin{align*}
    &\mathcal{E}\textnormal{xt}_{L_{\underline{a}+e_i}\otimes_{\mathcal{O}_{Y}}\mathcal{O}_{D_i}}\\ =& \pi_{2,3\,*}\big(\pi_{1,2}^*(\mathcal{U}^\vee)\otimes\pi_{1,3}^*(\mathcal{U})\otimes\pi_1^*(L_{\underline{a}+e_i}\otimes_{\mathcal{O}_{Y}}\mathcal{O}_{D_i})\big)\\ 
    \cong &\pi_{2,3\,*}\Big((\textnormal{inc}_{D_i}\times \textnormal{id}_{\mathcal{M}_{Y}\times \mathcal{M}_{Y}} )_*\big(\mathcal{U}^\vee\otimes\mathcal{U}\otimes\pi_1^*L_{\underline{a}+e_i}\big)|_{D_i\times\mathcal{M}_{Y}\times \mathcal{M}_Y}\Big)\\
    \cong &\pi_{2,3\,*}\Big((\textnormal{inc}_{D_i}\times \textnormal{id}_{\mathcal{M}_{Y}\times \mathcal{M}_{Y}})_*\circ (\textnormal{id}_{D_i}\times\rho_i\times \rho_i)^*(\pi^*_{1,2}(\mathcal{U}^\vee_{D_i})\otimes\pi_{1,3}^*(\mathcal{U}_{D_i})\otimes\pi_1^*L_{\underline{a}+e_i,i})\Big)
    \\\cong &\pi_{2,3\,*}\Big((\textnormal{id}_{D_i}\times\rho_i\times\rho_i)^*(\pi_{1,2}^*(\mathcal{U}_{D_i}^\vee)\otimes\pi_{1,3}^*(\mathcal{U}_{D_i}))\otimes\pi_1^*(L_{\underline{a}+e_i,i})\Big) 
    \\ \cong & (\rho_i\times \rho_i)^*(\mathcal{E}\textnormal{xt}_{\underline{a}+e_i,i})\,,
\end{align*}
the first isomorphism is the projection formula \cite[Lem. 3.2.4]{Gaitsgorylecture1.3} and the last step follows from the base change isomorphism $\pi_{2,3\,*}\circ (\textnormal{id}_{D_i}\times \rho_i)^*\cong (\rho_i\times \rho_i)^*\circ \pi_{2,3\, *}$ using  Gaitsgory \cite[Prop. 2.2.2]{Gaitsgorylecture1.3}\footnote{These references are stated for derived stacks. So we should work with derived stacks until we construct the isomorphisms in Definition \ref{definitionbowtie}, which we can then restrict to its truncation.} and that the diagram
$$
\begin{tikzcd}
D\times \mathcal{M}_X\times \mathcal{M}_X\arrow[d]\arrow[r]&D\times M_D\times M_D\arrow[d]\arrow[r]&D\arrow[d]\\
M_X\times M_X\arrow[r]&M_D\times M_D\arrow[r]& *
\end{tikzcd}
$$
 consists of Cartesian diagrams by the pasting law in $\infty$-categories.
The second formula follows using $(\rho_i\times\rho_i)\circ \Delta_{\mathcal{M}_{Y}} = \Delta_{\mathcal{M}_{D_i}}\circ\rho_i:\mathcal{M}_X\to \mathcal{M}_{D_i}\times \mathcal{M}_{D_i}$.
\end{proof}
 After taking determinants of \eqref{disttr}, we obtain the isomorphisms
\begin{align*}
\label{divcon}
   \Sigma_{\underline{a}+e_i} \cong  \Sigma_{\underline{a}}\otimes \rho_i^*\Sigma_{\underline{a}+e_i,i}\,,\quad  \Lambda_{\underline{a}+e_i}\cong \Lambda_{\underline{a}}\otimes \big(\rho_i^*\Lambda_{\underline{a}+e_i,i})\,,
  \numberthis 
\end{align*}
where we omit writing $L$. We have the maps 
$
 i_{D_i}: D_i\to Y\,,i_{D}: D\to Y
$
inducing
\begin{align*}
    \rho_i:& \mathcal{M}_{Y} \longrightarrow \mathcal{M}_{D_i}\,,\qquad \rho_{D}: \mathcal{M}_{Y}\longrightarrow \mathcal{M}^{D}\,,\\
    \mathcal{M}^{\textnormal{sp}}_D &:=\prod_{i=1}^N \mathcal{M}_{D_i}\quad\textnormal{    and    }\quad \rho=\prod_{i=1}^N\rho_i:\mathcal{M}_{Y}\longrightarrow \mathcal{M}^{\textnormal{sp}}_D
\end{align*}

Note that we have the obvious map $\mathcal{M}^{D}\to \mathcal{M}_{D_i}$ induced by the inclusion $D_i\hookrightarrow D$. This gives
\begin{equation}
\label{sp}
\textnormal{sp}: \mathcal{M}_{Y}\times_{\mathcal{M}^D}\mathcal{M}_Y=\mathcal{M}_{Y,D}\longrightarrow \mathcal{M}_{Y}\times_{\mathcal{M}^{\textnormal{sp}}_{D}}\mathcal{M}_{Y}=\mathcal{M}^{\textnormal{sp}}_{Y,D}\,.
\end{equation}

\begin{definition}
\label{extension data}
For given $X$, $Y$ as above let $\bar{\Omega}$ be a meromorphic section of $K_{Y}$ restricting to $\Omega$. Let $\mathfrak{ord}$ denote the decomposition of $D$ into irreducible components as in \eqref{divisordecomposition}, which also specifies their order, such that there exist $0\leq N_1\leq N_2\leq N$, such that $a_i=0$ for $0<i\leq N_1$, $a_i>0$ for $N_1<i\leq N_2$ and $a_i<0$ for $N_2<i\leq N$, where $a_i$ are the coefficients from \eqref{canbun}. For the construction, we may assume $N_1=0$. We define \textit{extension data} as the following ordered collection of sections
$$
\bowtie=
\Big((s_{i,k})_{\begin{subarray}a i\in\{1,\ldots N_2\}\\
1\leq k\leq a_i 
\end{subarray}},(t_{j,l})_{\begin{subarray}a j\in\{N_2+1,\ldots N\}\\
1\leq l\leq -a_j 
\end{subarray}}\Big)\,, \quad s_{i,k}:\mathcal{O}_{Y}\to \mathcal{O}_{Y}(D_i)\,,\quad t_{j,l}:\mathcal{O}_{Y}\to \mathcal{O}_{Y}(D_j)\,.
$$ such that $\prod_{{\begin{subarray}a i\in\{1,\ldots N_2\}\\
1\leq k\leq a_i\end{subarray}}}s_{i,k}\prod_{\begin{subarray}a j\in \{N_2+1,\ldots, N\}\\
1\leq l\leq -a_j 
\end{subarray}}(t_{j,l})^{-1} = \Omega$ and $s_{i,k}$, $t_{j,k}$ are holomorphic with zeros only on $D_i$, resp. $D_j$.
\end{definition}
This leads to a definition of a new $\Z_2$-bundle:
\begin{definition}
\label{definitionbowtie}
On $\mathcal{M}^{\textnormal{sp}}_{Y,D}$ we have the line bundle 
\begin{equation}
\mathcal{L}_{Y,D} = \pi_1^*\Lambda_{\underline{0}}\otimes(\pi_2^*\Lambda_{\underline{0}})^*\,,
\end{equation}
where $\mathcal{M}_{Y}\xleftarrow{\pi_1} \mathcal{M}^{\textnormal{sp}}_{Y,D}\xrightarrow{\pi_2}\mathcal{M}_{Y}$ are the natural projections.

For a fixed choice $\bowtie$, there is a natural isomorphism
\begin{align*}
\label{taubowtie}
    \vartheta^{\textnormal{sp}}_{\bowtie}:\mathcal{L}_{Y,D}&\cong\pi_1^*\Lambda_{\underline{0}} \otimes \pi_1^*\circ\rho^*(\Lambda_{D})\otimes \pi_1^*\circ\rho^*(\Lambda_{D})^* \otimes(\pi_2^*\Lambda_{\underline{0}})^*\\
    &\cong \pi_1^*\Lambda_{\underline{k}}\otimes \pi_2^*(\Lambda_{\underline{k}})^*
   \cong  \pi_1^*(\Lambda_{\underline{0}})^*\otimes \pi_2^*(\Lambda_{\underline{0}})
  \cong \mathcal{L}^*_{Y,D}\,,
    \numberthis
\end{align*}
Here $\Lambda_{D}\to \mathcal{M}^{\textnormal{sp}}_{D}$ are line bundles, and we used the commutativity of 
\begin{equation*}
    \begin{tikzcd}
     \arrow[d,"\pi_1"] \mathcal{M}^{\textnormal{sp}}_{Y,D}\arrow[r,"\pi_2"]&\mathcal{M}_{Y}\arrow[d,"\rho"] \\
     \mathcal{M}_{Y}\arrow[r,"\rho"]&\mathcal{M}^{\textnormal{sp}}_{D}
    \end{tikzcd}\,
\end{equation*}
in the first step. The bundles $\Lambda_D =\Lambda^*_{D,-}\otimes\Lambda_{D,+}$ appear as the result of using chosen $s_{i,k} $ to construct isomorphism \eqref{divcon} for the first $N_2$ divisors, then $\Lambda_{D,-}$ is obtained from using $t_{j,k}^{-1}$ and \eqref{divcon}. Thus we will have the expressions:
\begin{align*}
 \Lambda_{D,+}=&\Lambda_{\underline{k}-\sum_{i=1}^{N_2-1}a_ie_i+(a_{N_2}-1)e_{N_2},N_2}\ldots\otimes\Lambda_{(\underline{k}-\sum_{i=1}^{N_2-1}a_ie_i),N_2}\otimes\ldots\\
 &\otimes\Lambda_{\underline{k}-(a-1)e_1,1}\otimes\ldots\otimes\Lambda_{\underline{k},1}\\
 \Lambda_{D,-}=&\Lambda_{\underline{k}-\sum^{N_2}_{i=1}a_ie_i,N_2+1}\otimes \ldots \ldots \otimes \Lambda_{-e_N,N}\,.
\end{align*}
The second to last step uses \eqref{i_L}.
We define the $\Z_2$-bundles by using Definition \ref{definition Z_2 bundles}:
\begin{align*}
\label{prodbunmor}
    \vartheta^{\textnormal{sp}}_{\bowtie} : \mathcal{L}_{Y,D}\to& (\mathcal{L}_{Y,D})^*\,,\qquad O^{\bowtie}_{\textnormal{sp}}\to \mathcal{M}^{\textnormal{sp}}_{Y,D}\,,\\
    O^{\bowtie} &= \textnormal{sp}^*\big(O^{\bowtie}_{\textnormal{sp}}\big) 
    \numberthis
\end{align*}
where $O^{\bowtie}_{\textnormal{sp}}$ associated to $\vartheta_{\bowtie}^{\textnormal{sp}}$.
\end{definition}
The important property of the $\mathbb{Z}_2$-bundle $O^{\bowtie}$ is that it is going to allow us to use index theoretic excision on the side of gauge theory to prove its triviality. One should think of the triples  $[E,F,\phi]$ which are the points in $\mathcal{M}_{Y,D}$ as similar objects to the relative pairs in \autocite[Definition 2.5]{Markus} with identification given in some neighborhood of the divisor $D$. The $\mathbb{Z}_2$-bundle $O^{\bowtie}$ only cares about the behavior of the complexes in $X$. 
\begin{definition}
 \label{definition Gamma}
 Recall that from  Definition \ref{definition before theorems} we have the maps  $\Gamma_{Y}:(\mathcal{M}_{Y})^{\textnormal{top}}\to \mathcal{C}_{Y}$ and $\Gamma_{D}:(\mathcal{M}^{D})^{\textnormal{top}}\to \mathcal{C}_{D}$, We define $\Gamma$ as the composition 
\begin{align*}
\label{Gamma}
  &(\mathcal{M}_{Y,D})^{\textnormal{top}}\longrightarrow  (\mathcal{M}_{Y})^{\textnormal{top}}\times^h_{(\mathcal{M}^{D})^{\textnormal{top}}}(\mathcal{M}_{Y})^{\textnormal{top}}
  \\
  &\longrightarrow \mathcal{C}_{Y}\times^h_{\mathcal{C}_{D}}\mathcal{C}_{Y}\simeq\mathcal{C}_{Y}\times_{\mathcal{C}_D}\mathcal{C}_Y=\mathcal{C}_{Y,D} \,.
  \numberthis
\end{align*}
The first map is induced by the homotopy commutative diagram obtained from applying $(-)^{\textnormal{top}}$ to the Cartesian diagram 
\begin{equation*}
   \begin{tikzcd}
\arrow[d]\mathcal{M}_{Y,D}\arrow[r]&\mathcal{M}_{Y}\arrow[d]\\
\mathcal{M}_{Y}\arrow[r]& \mathcal{M}^{D}
 \end{tikzcd} \,.
\end{equation*}
The second map uses homotopy commutativity of 
\begin{equation*}
\begin{tikzcd}
  (\mathcal{M}_{Y})^{\textnormal{top}} \arrow[d,"\Gamma_{Y}"]\arrow[r]& (\mathcal{M}^{D})^{\textnormal{top}}\arrow[d,"\Gamma_{D}"]&\arrow[l](\mathcal{M}_{Y})^{\textnormal{top}}\arrow[d,"\Gamma_{Y}"]\\
 \mathcal{C}_{Y}\arrow[r]&\mathcal{C}_{D}&
\arrow[l]\mathcal{C}_{Y}
\end{tikzcd}\,.
\end{equation*}
The final homotopy equivalence is the result of the map $(\textnormal{inc}_{D})^{\textnormal{an}}: (D)^{\textnormal{an}}\to Y^{\textnormal{an}}$ being a cofibration for the standard model structure on \textbf{Top}. The map $\mathcal{C}_{Y}\to \mathcal{C}_{D}$ is a fibration so the homotopy fiber-product is given by the strict fiber-product up to homotopy equivalences.

 \end{definition}
 
 We now state the theorem which follows from Proposition \ref{proposition main} below and is the main tool in proving orientability of $\mathcal{M}_X$.

\begin{theorem}
\label{maintheorem}
For $X,Y$ and $D$ fix $\mathfrak{ord}$ and the extension data $\bowtie$ as in Definition \ref{definitionbowtie}, then the $\mathbb{Z}_2$-bundle 
\begin{equation}
\label{Ovartheta}
    O^{\bowtie}\to\mathcal{M}_{Y,D}
\end{equation}
is trivializable. Let $D^{\mathcal{C}}_{O}\to \mathcal{C}_{Y,D}$ be the trivializable $\Z_2$-bundle from \eqref{DO bundle}, then there exists a canonical isomorphism 
\begin{equation}
\label{isomorphism DCO Ovartheta}
    \mathfrak{I}^{\bowtie}: \Gamma^*(D^{\mathcal{C}}_O)\cong (O^{\bowtie})^{\textnormal{top}}\,.
\end{equation}
\end{theorem}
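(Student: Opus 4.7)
My plan is to reduce the algebraic orientability problem to a gauge-theoretic one by identifying $(O^{\bowtie})^{\textnormal{top}}$ with a pullback of the trivializable $\Z_2$-bundle $D^{\mathcal{C}}_O$ coming from positive Dirac operators on a doubled spin manifold associated to $X$. The first step is to apply the topological realization functor to the real-structure isomorphism $\vartheta^{\textnormal{sp}}_{\bowtie}: \mathcal{L}_{Y,D} \to \mathcal{L}_{Y,D}^*$ and treat the entire problem at the level of complex determinant line bundles. A common-resolution argument (the content of \S\ref{section comon resolution}) then allows me to restrict the comparison to the subspace of pairs of holomorphic vector bundles on $Y$ generated by global sections and identified on $D$; these generate the relevant topological $K$-theory classes, so a natural isomorphism on this subspace propagates to all of $(\mathcal{M}_{Y,D})^{\textnormal{top}}$.

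Next I would bring in the gauge-theoretic side from \S\ref{section relative framing}. Form the doubled spin manifold $\tilde{Y}$ from $X$ and its topological moduli stack $\mathcal{B}_{\tilde{Y},\tilde{T}}$ of pairs of connections on principal bundles $P,Q \to \tilde{Y}$ identified along $\tilde{T}$, equipped with the product orientation bundle $D_O(\tilde{Y})$ constructed from the positive Dirac operator on the spin double. Its pullback $D_O$ to pairs of vector bundles on $Y$ identified on $D$, and the further pullback $D^{\mathcal{C}}_O$ to $\mathcal{C}_{Y,D}$, are then trivializable: because $\tilde{Y}$ is compact and spin, Cao--Gross--Joyce's result (Theorem \ref{Cao}, the analogue for the doubled manifold) trivializes $D_O(\tilde{Y})$, and this trivialization descends.

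The main step, and the place where essentially all of the technical work lives, is to construct a canonical isomorphism of complex determinant line bundles between the one defining $(O^{\bowtie})^{\textnormal{top}}$ and the one defining $D_O$, intertwining the two real structures. On the algebraic side the real structure comes from Serre duality composed with the trivialization of $K_Y|_X$ via $\bar{\Omega}$ and with the extension-data cancellations packaged in $\Lambda_{D,+}$ and $\Lambda_{D,-}$; on the gauge-theoretic side it comes from the symbol pairing for $\slashed{D}_+$ on $\tilde{Y}$ combined with an excision identification along $\tilde{T}$. To compare them I would deform the twisted Dolbeault operator $\bar{\partial}_{\textnormal{End}(E)} \oplus \bar{\partial}_{\textnormal{End}(F)}$ on $Y$ through a path of complex elliptic pseudo-differential operators to a twisted Dirac operator on $\tilde{Y}$, using the transport machinery for complex determinant lines developed in \S\ref{bigsection technical proof} (generalizing Atiyah--Singer, Donaldson--Kronheimer, and Upmeier). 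The point is that such a path induces a canonical isomorphism of determinant line bundles up to contractible choices, so the resulting $\mathfrak{I}^{\bowtie}$ is well-defined as a strong $H$-principal isomorphism in the sense of Definition \ref{definition weakstrong Hprincipal}.

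The hard part is showing that the two real structures actually correspond under this deformation. On the algebraic side the divisor contributions are explicit: the sections $s_{i,k}$ and $t_{j,l}$ cancel the $\Lambda_{D,\pm}$ factors in \eqref{taubowtie}. On the gauge-theoretic side, the analogous contributions are contributions to the index localized in a neighbourhood of $\tilde{T}$ which vanish by the excision principle on $\tilde{Y}$. Matching these cancellations requires an explicit handle-by-handle construction in which the holomorphic sections defining $\bowtie$ control how each Dolbeault operator on $D_i$ is interpolated with a Dirac operator on a collar neighborhood of $\tilde{T}_i$. Once this identification is built on pairs generated by global sections and shown to be $H$-equivariantly canonical, both conclusions of the theorem follow: the isomorphism $\mathfrak{I}^{\bowtie}$ extends by the common-resolution step to all of $(\mathcal{M}_{Y,D})^{\textnormal{top}}$, and trivializability of $O^{\bowtie}$ reduces to trivializability of $D^{\mathcal{C}}_O$, which holds by the spin excision argument, because $(-)^{\textnormal{top}}$ is a bijection on $\pi_0$ and trivializability of a $\Z_2$-bundle depends only on its $\pi_0$-level data.
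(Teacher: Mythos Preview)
Your outline follows the paper's strategy closely: pass to pairs of globally generated holomorphic bundles identified along $D$ (the Ind-scheme $\mathcal{T}_{Y,D}$), compare the algebraic isomorphism $\vartheta_{\bowtie}$ with a gauge-theoretic one built from Dirac operators on the spin double $\tilde{Y}$ via the complex-excision machinery of \S\ref{section pseudo}--\S\ref{section comon resolution}, and then use the homotopy-theoretic group completion $\mathcal{T}_{Y,D}\to(\mathcal{M}_{Y,D})^{\textnormal{top}}$ (Proposition~\ref{lemma groupcompletion}) together with Proposition~\ref{exun} to extend the resulting strong $H$-principal isomorphism. This is exactly the content of Propositions~\ref{proposition comparing excision} and~\ref{proposition main}, and your description of the ``hard part'' (matching the divisor contributions $\Lambda_{D,\pm}$ against index contributions localized near $\tilde{T}$) is an accurate high-level summary of what those propositions do.

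The one genuine error is in your last sentence. The assertion that ``trivializability of a $\Z_2$-bundle depends only on its $\pi_0$-level data'' is false: trivializability is detected by $H^1(-;\Z_2)$, and a bijection on $\pi_0$ says nothing about this. What is actually being used (here and, tacitly, in \autocite{CGJ}) is that isomorphism classes of algebraic $\Z_2$-torsors on a locally finite-type higher Artin $\C$-stack $\mathcal{M}$ biject with isomorphism classes of topological $\Z_2$-bundles on $(\mathcal{M})^{\textnormal{top}}$, so that trivializability of $(O^{\bowtie})^{\textnormal{top}}$ forces trivializability of $O^{\bowtie}$. You should replace your $\pi_0$ remark with this comparison statement; the rest of the argument is sound and matches the paper.
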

We now reinterpret this result to apply it to the orientation bundle of interest $O^{\omega}\to \mathcal{M}_X$. 
\begin{definition}
\label{definition beforencmain}
Let $\zeta:\mathcal{M}_X\to \mathcal{M}_{Y,D}$ induced by the map $\xi_Y\times [0]: \mathcal{M}_X\times \textnormal{Spec}\mathbb{C}\to \mathcal{M}_{Y,D}$. We have the commutative diagram
\begin{equation}
\label{factoring}
\begin{tikzcd}
   \arrow[d,"\Gamma^{\textnormal{cs}}"] \mathcal{M}^{\textnormal{top}}_X\arrow[r,"\zeta^{\textnormal{top}}"]&\mathcal{M}_{Y,D}\arrow[d,"\Gamma"]\\
   C^{\textnormal{cs}}_X\arrow[r,"i_{\textnormal{cs}}"]&\mathcal{C}_{Y,D}\,,
\end{tikzcd}
\end{equation}
where 
$$\kappa^{\textnormal{cs}}: \mathcal{C}^{\textnormal{cs}}_X =\mathcal{C}_{Y}\times_{\mathcal{C}_{D}}\{0\}\hookrightarrow \mathcal{C}_{Y,D},$$
and
$
   \mathcal{C}_{Y}\times_{\mathcal{C}_{D}}\{0\}= \textnormal{Map}_{C^0}\big((X^+,+),(BU\times\mathbb{Z},0)\big)\,.
$
 The space 
$C^{\textnormal{cs}}_X=\textnormal{Map}_{C^0}\big((X^+,+),(BU\times\mathbb{Z},0)\big)$
is the classifying space of compactly supported K-theory on $X$ (see Spanier \autocite{Spanier}, Ranicki--Roe \autocite[§2]{RaRo}, May \autocite[Chapter 21]{MayPonto}):
$
\pi_0(C^{\textnormal{cs}}_X) := K^0_{\textnormal{cs}}(X)\,.
$
We define 
\begin{equation}
\label{Ocs}
 O^{\textnormal{cs}}:=(\kappa^{\cs})^*(D^{\mathcal{C}}_O)\,.   
\end{equation}
\end{definition}
The following is the first important consequence of Theorem \ref{maintheorem} and leads to the construction of virtual fundamental classes using Borisov--Joyce \cite{BJ} or Oh--Thomas \cite{OT} using the $-2$-shifted symplectic structures. It also gives preferred choices of orientations at fixed points up to a global sign when defining/computing invariants using localization as in \cite{OT, CL} for moduli spaces of compactly supported sheaves $M_{\alpha}$ with a fixed $K$-theory class $\alpha\in K^0_{\textnormal{cs}}$ and for a given compactification $Y$.
\begin{theorem}
\label{mainnctheorem}
Let $(X,\Omega)$ be a Calabi--Yau fourfold then the $\mathbb{Z}_2$-bundle 
$O^\omega\to \mathcal{M}_X$ is trivializable. Moreover, for a fixed choice of embedding $X\to Y$, with $D=Y\backslash X$ strictly normal crossing, there exists a canonical isomorphism
$$
\mathfrak{I}:(\Gamma^{\cs}_X)^*(O^{\cs})\cong (O^{\omega})^{\textnormal{top}}\,.
$$
\end{theorem}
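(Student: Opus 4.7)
The plan is to deduce the statement from Theorem~\ref{maintheorem} by pulling back along the map $\zeta:\mathcal{M}_X\to\mathcal{M}_{Y,D}$ of Definition~\ref{definition beforencmain}. First I would establish the algebraic identification $\zeta^*(O^{\bowtie})\cong O^\omega$. Then I would apply $(-)^{\textnormal{top}}$, transport via $\mathfrak{I}^{\bowtie}$ from Theorem~\ref{maintheorem}, and paste with the commutative square \eqref{factoring} to produce $\mathfrak{I}$. Trivializability of $O^\omega$ follows because $O^{\cs}=(\kappa^{\cs})^*(D^{\mathcal{C}}_O)$ is trivializable, and triviality of a $\mathbb{Z}_2$-bundle on the stacks in question is detected on their topological realization, exactly as in Cao--Gross--Joyce \autocite[Theorem 1.15]{CGJ}.

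For the first step, note that on $\C$-points $\zeta$ acts by $[E]\mapsto[\xi_Y(E),0]$, and $\textnormal{sp}\circ\zeta$ lands in the stratum of $\mathcal{M}^{\textnormal{sp}}_{Y,D}$ where the restriction of each entry to every $D_i$ is the zero object, since $E$ is compactly supported in $X=Y\setminus D$. Hence $\pi_2^*\Lambda_{\underline 0}$ and each factor $\rho_i^*\Lambda_{\underline a,i}$ in the chain \eqref{taubowtie} pull back canonically to the trivial line, while $\pi_1^*\Lambda_{\underline 0}$ pulls back to $\Lambda_{\underline 0}(\xi_Y(E))\cong K_{\mathcal{M}_X}([E])$ using the natural identification $\textnormal{Ext}^\bullet_Y(\xi_Y(E),\xi_Y(E))\cong\textnormal{Ext}^\bullet_X(E,E)$ for compactly supported $E$. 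Under these identifications, the middle step of \eqref{taubowtie} built from \eqref{i_L} becomes bare Serre duality on $Y$, and the twist by $L_{\underline k}$ is absorbed by the product $\prod s_{i,k}\prod t_{j,l}^{-1}$, which equals $\Omega$ on $X$ by Definition~\ref{extension data}. Consequently $\zeta^*\vartheta^{\textnormal{sp}}_{\bowtie}$ coincides, up to the canonical identifications above, with the Serre-duality isomorphism $i^\omega$ defining $O^\omega$, and the associated $\mathbb{Z}_2$-bundles agree canonically.

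The main isomorphism $\mathfrak{I}$ is then read off from the chain
\begin{equation*}
(\Gamma^{\cs}_X)^*(O^{\cs})=(\Gamma^{\cs}_X)^*(\kappa^{\cs})^*(D^{\mathcal{C}}_O)\cong(\zeta^{\textnormal{top}})^*\Gamma^*(D^{\mathcal{C}}_O)\cong(\zeta^{\textnormal{top}})^*(O^{\bowtie})^{\textnormal{top}}\cong(O^\omega)^{\textnormal{top}},
\end{equation*}
where the first equality is \eqref{Ocs}, the second uses commutativity of \eqref{factoring} together with the fact that $(-)^{\textnormal{top}}$ commutes with pullback, the third is $(\zeta^{\textnormal{top}})^*\mathfrak{I}^{\bowtie}$, and the last is the topological realization of $\zeta^*(O^{\bowtie})\cong O^\omega$. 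The hardest part I expect is the careful bookkeeping in the first step: verifying in families, and not merely pointwise, that the identification $\textnormal{Ext}^\bullet_Y(\xi_Y(E),\xi_Y(E))\cong\textnormal{Ext}^\bullet_X(E,E)$ is compatible with the base-change and projection-formula isomorphisms already invoked in the lemma preceding Definition~\ref{definitionbowtie}, and that the elaborate collapse of \eqref{taubowtie} introduces no extra sign or twist. These checks are of the same nature as those used to construct $O^{\bowtie}$, and so should be routine but require care.
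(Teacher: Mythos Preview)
Your construction of the isomorphism and the proof of trivializability follow essentially the same route as the paper: both establish $\zeta^*(O^{\bowtie})\cong O^\omega$ by observing that for compactly supported $E$ the divisor contributions in \eqref{taubowtie} trivialize and the chain collapses to Serre duality twisted by $\Omega$, and both then paste with \eqref{factoring} and $\mathfrak{I}^{\bowtie}$ from Theorem~\ref{maintheorem} to obtain the chain of isomorphisms you display.

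However, you have not addressed canonicity. The isomorphism you build depends, a priori, on the extension data $\bowtie$ of Definition~\ref{extension data}: both on the choice of sections $s_{i,k},t_{j,l}$ and on the ordering $\mathfrak{ord}$ of the irreducible components $D_i$. The statement asserts a \emph{canonical} isomorphism depending only on the embedding $X\hookrightarrow Y$, so this dependence must be eliminated. The paper devotes two further steps to this. For fixed $\mathfrak{ord}$, it observes that the space of admissible section tuples is $(\C^*)^{\sum_i|a_i|-1}$, which is connected, so the resulting family $\mathfrak{I}(\bowtie)$ of isomorphisms of $\mathbb{Z}_2$-bundles is constant. Independence of $\mathfrak{ord}$ is more delicate: the paper analyzes a $3\times3$ commutative diagram of distinguished triangles built from two divisors $D_1,D_2$, takes determinants, and checks that the Koszul signs arising from swapping the order cancel between the $\pi_1^*$ and $\pi_2^*$ factors after pulling back along $\textnormal{sp}$; this, together with the corresponding independence of $D_O^{\mathcal{C}}$ and $\mathfrak{I}^{\bowtie}$ established in Lemma~\ref{lemma independent} and Propositions~\ref{proposition Ovartheta Hprincipal}, \ref{proposition main}, yields independence of $\mathfrak{ord}$. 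Without these arguments your $\mathfrak{I}$ is only a choice-dependent isomorphism, not the canonical one claimed.
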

\begin{proof}
We prove this in 3 steps:

1. We have a natural isomorphism $\zeta^*(O^{\bowtie}) \cong O^\omega$: Consider a $\textnormal{Spec}(A)$-point $[E]\in \mathcal{M}_{X}$, then at the corresponding point $([\tilde{E}],0)\in \mathcal{M}_{Y,D}$, the isomorphism $\vartheta_{\bowtie}$ is given by
$$
\textnormal{det}(\underline{\textnormal{Hom}}(\tilde{E},\tilde{E}))\otimes \mathbb{C}\cong \textnormal{det}^*(\underline{\textnormal{Hom}}(\tilde{E},\tilde{E}\otimes K_{Y}))\cong \textnormal{det}^*(\underline{\textnormal{Hom}}(\tilde{E},\tilde{E}))\,,
$$
where we use that $\Lambda_{L}|_{[0]}\cong \mathbb{C}$, the first isomorphism is Serre duality, and the second one is the composition of isomorphisms induced by $E\xrightarrow{s_{i,k}}E(D_i)$ and $E(-D_j)\xrightarrow{t_{j,l}}E$. As $E$ is compactly supported in $X$, these isomorphisms compose into $E\xrightarrow{\Omega}E\otimes K_{Y}$ by the assumption on $\bowtie$. Therefore $\vartheta_{\bowtie}|_{([\tilde{E}],0)}$ coincides with $i^\omega|_{[E]}$ and their associated $\mathbb{Z}_2$-bundles are identified. 

2. By Lemma \ref{lemma independent}, we know $O^{\textnormal{cs}}$, $O^\omega$ are independent of the choice of $\bowtie$. We define a family of $\mathfrak{I}(\bowtie)$ for fixed $\mathfrak{ord}$:
$$
\mathfrak{I}(\bowtie):(\Gamma^{\cs}_{Y})^*(O^{\cs})\stackrel{\eqref{factoring}}{\cong} (\zeta^{\textnormal{top}})^*\circ \Gamma^*(D^{\mathcal{C}}_O)\cong(\zeta^{\textnormal{top}})^*\circ (O^{\bowtie})^{\textnormal{top}}\stackrel{1.}{\cong} (O^\omega)^{\textnormal{top}}\,.
$$
 Any two choices of $s_{i,k}$ differ by $\C^*$ (and same holds for $t_{j,l}$). Therefore the set of $\bowtie$ corresponds to $(\C^*)^{\sum_{i}^N|a_i|-1}$ which is connected and $\mathfrak{I}(\bowtie)$ does not depend on $\bowtie$.
 
3. \sloppy For simplicity, let us assume we only have two different divisors $D_1,D_2$. We then have the isomorphism obtained from applying \eqref{divcon} twice
\begin{align}
\label{D_2pi1}
\pi_1^*\big(\Lambda_{e_1+e_2}\big)\cong& \pi_1^*(\Lambda_{\underline{0}})(\rho_2\circ\pi_1)^*\Lambda_{e_2,2}(\rho_1\circ\pi_1)^*\Lambda_{e_1+e_2,1}\,,\\
\label{D_1pi1}
 \pi_1^*\big(\Lambda_{e_1+e_2}\big)\cong& \pi_1^*(\Lambda_{\underline{0}})(\rho_1\circ\pi_1)^*\Lambda_{e_1,1}(\rho_2\circ\pi_1)^*\Lambda_{e_1+e_2,2}\,,\\
 \label{D_2pi2}
\pi_2^*\big(\Lambda_{e_1+e_2}\big)\cong& \pi_2^*(\Lambda_{\underline{0}})(\rho_2\circ\pi_2)^*\Lambda_{e_2,2}(\rho_1\circ\pi_2)^*\Lambda_{e_1+e_2,1}\,,\\
\label{D_1pi2}
 \pi_2^*\big(\Lambda_{e_1+e_2}\big)\cong& \pi_2^*(\Lambda_{\underline{0}})(\rho_1\circ\pi_2)^*\Lambda_{e_1,1}(\rho_2\circ\pi_2)^*\Lambda_{e_1+e_2,2}\,.
\end{align}
To show that there is no difference between the chosen two orders, we use the commutative diagram, where all rows and columns fit into distinguished triangles:
\begin{equation}
\label{3x3diag}
    \begin{tikzcd}
   \mathbb{P}\arrow[d]\arrow[r]&\mathbb{P}_{e_1}\arrow[d]\arrow[r]&\arrow[d]\rho_1^*\big(\mathbb{P}_{e_1,1}\big)\\
      \mathbb{P}_{e_2}\arrow[d]\arrow[r]&\mathbb{P}_{e_1+e_2}\arrow[d]\arrow[r]&\arrow[d]\rho_1^*\big(\mathbb{P}_{e_1+e_2,1}\big)\\
          \rho_2^*\big(\mathbb{P}_{e_2,2}\big)\arrow[r]&\rho_2^*(\mathbb{P}_{e_1+e_2,2})\arrow[r]&\rho^*_{1,2}\big(\mathbb{P}_{e_1+e_2,1,2}\big)
         \end{tikzcd}
         \end{equation}
We used in the bottom right corner term the restriction $\rho_{1,2}:\mathcal{M}_Y\to \mathcal{M}_{D_1\cap D_2}$ and $\mathbb{P}_{e_1+e_2,1,2} = \mathbb{P}_{\mathcal{O}(D_1+D_2)|_{D_1\cap D_2}}$.
Taking determinants  of all four corner terms of the diagram (see Knudsen--Mumford \cite[Prop. 1]{knumum}) and pulling back by $\pi_1$, we get both \eqref{D_2pi1} and \eqref{D_1pi1} where the latter comes with $(-1)^{\textnormal{deg}\big((\rho_{D_2\circ \pi_1})^*\Lambda_{\mathcal{O}(D_1)|_{D_1}}\big)\textnormal{deg}\big((\rho_1\circ \pi_1)^*\Lambda_{\mathcal{O}(D_2)|_{D_2}}\big)}$.
This holds also for  \eqref{D_2pi2}, \eqref{D_1pi2}. By commutativity of the diagram, we see that choosing the steps \eqref{D_2pi1}, \eqref{D_2pi2} or \eqref{D_1pi1}, \eqref{D_1pi2} we obtain the same as the signs cancel.  As this permutes any two divisors, we obtain independence in the general case.  From Lemma \ref{lemma independent}, Proposition \ref{proposition Ovartheta Hprincipal} and Proposition \ref{proposition main}, $D^{\mathcal{C}}_O$ and $\mathfrak{I}$ are independent of the order.  Note that this should be all considered under pull-back by $\textnormal{sp}: \mathcal{M}_{Y,D}\to \mathcal{M}_{Y,D}^{\textnormal{sp}}$ to have natural isomorphism independent of choices on the smooth intersection $D_1\cap D_2$:
$$
\textnormal{sp}^*\big(\pi^*\circ\rho_{1,2}^*(\mathbb{P}_{e_1+e_2,1,2})\big)\cong \textnormal{sp}^*\big(\pi_2^*\circ\rho_{1,2}^*(\mathbb{P}_{e_1+e_2,1,2})\big)\,.
$$
\end{proof}
\begin{remark}
\label{remark joyce suggestion}
If the need arises to show independence of compactification, one can relate two compactifications $Y_1 \leftarrow \tilde{Y}\rightarrow Y_2$ by a common one obtained as a blow up of the closure of $X\hookrightarrow Y_1\times Y_2$. Then one could use \cite[Thm. 1.2]{RYY} comparing Hodge cohomologies for locally free sheaves under blow up in hopes of showing independence of the isomorphism $\mathfrak{I}$.
\end{remark}
Let us discuss another straight-forward consequence of the framework used in Theorem \ref{maintheorem}. For $(X,\Omega)$ a quasi-projective Calabi--Yau fourfold, let $M$ be a moduli scheme of stable pairs $\mathcal{O}_X\to F$ where $F$ is compactly supported (see  \autocite{CMT2, CKM2, CT2, JoyceSong, todacurve}) or ideal sheaves of proper subvarieties. To make sense out of Serre duality, generalizing the approach in Kool--Thomas \autocite[§3]{KT1} and Maulik--Pandharipande--Thomas \cite[§3.2]{MPT}, we choose a compactification $Y$ as in Theorem \ref{maintheorem}.

\begin{definition}
\label{definition orientation stablepairs}
 Let $\mathcal{E} = (\mathcal{O}_{X\times M}\to \mathcal{F})\to X\times M$ be the universal perfect complex. Using the inclusion $i_X:X\to Y$ we obtain the universal complex $\bar{\mathcal{E}} = \big(\mathcal{O}_{Y\times M}\to (i_X\times\textnormal{id}_M)_*(\mathcal{F})\big)\to Y\times M$. We have the following isomorphism, where $(-)_0$ denotes the trace-less part:
\begin{align*}
    i^\omega_M:\,\,& \textnormal{det}\big(\underline{\textnormal{Hom}}_M(\mathcal{E},\mathcal{E})_0\big)\cong \textnormal{det}\big(\underline{\textnormal{Hom}}_M(\bar{\mathcal{E}},\bar{\mathcal{E}})_0\big)\cong \textnormal{det}^*\big(\underline{\textnormal{Hom}}_M(\bar{\mathcal{E}},\bar{\mathcal{E}}\otimes L_{\underline{k}})_0\big)\\&\stackrel{\kappa}{\cong}\textnormal{det}^*\big(\underline{\textnormal{Hom}}_M(\bar{\mathcal{E}},\bar{\mathcal{E}})_0\big)\cong \textnormal{det}^*\big(\underline{\textnormal{Hom}}_M(\mathcal{E},\mathcal{E})_0\big)\,,
\end{align*}
where $\kappa$ is constructed using the isomorphisms 
\begin{equation}
\label{pairsstep}
\underline{\textnormal{Hom}}_M(\bar{\mathcal{E}},\bar{\mathcal{E}}\otimes L_{\underline{a}})_0\cong\underline{\textnormal{Hom}}_M(\bar{\mathcal{E}},\bar{\mathcal{E}}\otimes L_{\underline{a}-e_i})_0\end{equation}
 in each step determined by $\bowtie$ as in Definition \ref{definitionbowtie}. 
The orientation bundle $O^\omega_M\to M$ is defined as the square root $\Z_2$-bundle associated with $i^\omega_M$.
\end{definition}
Let $\bar{\mathcal{M}}$ be a moduli stack of stable pairs or ideal sheaves on $Y$  of subvarieties with proper support in $X$ with the projection $\pi_{\mathbb{G}_m}: \bar{\mathcal{M}}\to M$ which is a $[*/\mathbb{G}_m]$ torsor. We have an inclusion $\eta: \bar{\mathcal{M}}\to \mathcal{M}_{Y,D}$ given on $\textnormal{Spec}(A)$-points by mapping $[E]\mapsto ([\bar{E}, \mathcal{O}_{Y}])$, where $\bar{E}= i_{X\,*}(E)$.

The following result leads to the construction of virtual fundamental classes when $M$ is compact (assuming one believes the existence of shifted symplectic structures on pairs as in Preygel \cite{preygel} or Bussi \cite{bussi}) and preferred choices of orientations up to a global sign at fixed points when using localization for a fixed K-theory class  $\llbracket\mathcal{O}_X\rrbracket+\alpha$ for $\alpha\in K^0_{\textnormal{cs}}(X)$ and a choice of compactification $Y$.

\begin{theorem}
\label{theorem stable pairs}
Let $(X,\Omega)$ be a quasi-projective Calabi--Yau fourfold and let $Y$ be its compactification as in Theorem \ref{maintheorem}. Let $O^\omega_M\to M$ be the orientation bundle from Definition \ref{definition orientation stablepairs} for $M$ a moduli scheme of stable pairs or ideal sheaves of proper subschemes of $X$. There is a canonical isomorphism of $\Z_2$-bundles
$$
\pi_{\mathbb{G}_m}^*(O^\omega_M)\cong \eta^*(O^{\bowtie})\,.
$$
In particular, $O^\omega_M\to M$ is trivializable.
\end{theorem}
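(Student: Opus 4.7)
The plan is to pull back $O^{\bowtie}$ along $\eta$ and show that the contribution of the constant second component $\mathcal{O}_Y$ cancels exactly with the trace part of the endomorphism complex of $\bar{\mathcal{E}}$, leaving the traceless determinant line bundle whose square root is $O^\omega_M$. Concretely, since $\pi_1 \circ \eta = [\bar{\mathcal{E}}]$ and $\pi_2 \circ \eta = [\mathcal{O}_Y]$, one computes
$$\eta^*(\mathcal{L}_{Y,D}) \cong \textnormal{det}\big(\underline{\textnormal{Hom}}_{\bar{\mathcal{M}}}(\bar{\mathcal{E}},\bar{\mathcal{E}})\big) \otimes \textnormal{det}^*\big(\underline{\textnormal{Hom}}_{\bar{\mathcal{M}}}(\mathcal{O}_Y,\mathcal{O}_Y)\big).$$
Since $\bar{\mathcal{E}}$ has generic rank $1$, the trace map splits and gives
$$\underline{\textnormal{Hom}}_{\bar{\mathcal{M}}}(\bar{\mathcal{E}},\bar{\mathcal{E}}) \cong \underline{\textnormal{Hom}}_{\bar{\mathcal{M}}}(\bar{\mathcal{E}},\bar{\mathcal{E}})_0 \oplus \underline{\textnormal{Hom}}_{\bar{\mathcal{M}}}(\mathcal{O}_Y,\mathcal{O}_Y),$$
so the determinant of the second summand cancels with $\textnormal{det}^*(\underline{\textnormal{Hom}}(\mathcal{O}_Y,\mathcal{O}_Y))$, yielding a canonical identification
$\eta^*(\mathcal{L}_{Y,D}) \cong \textnormal{det}(\underline{\textnormal{Hom}}_{\bar{\mathcal{M}}}(\bar{\mathcal{E}},\bar{\mathcal{E}})_0)$.

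The next step is to verify that under this identification $\eta^* \vartheta^{\textnormal{sp}}_{\bowtie}$ becomes precisely the isomorphism $i^\omega_M$ of Definition \ref{definition orientation stablepairs}. The map $\vartheta^{\textnormal{sp}}_{\bowtie}$ factors as Serre duality composed with the twist isomorphisms \eqref{divcon} built from the sections $s_{i,k}$ and $t_{j,l}^{-1}$; each of these operations commutes with the trace (being tensor products with a line bundle section on the target $\bar{\mathcal{E}}\otimes L_{\underline{a}}$), so the entire chain is diagonal with respect to the trace/traceless decomposition. Hence on the trace factor the resulting duality between $\textnormal{det}(\underline{\textnormal{Hom}}(\bar{\mathcal{E}},\bar{\mathcal{E}}))_{\text{tr}}$ and $\textnormal{det}(\underline{\textnormal{Hom}}(\mathcal{O}_Y,\mathcal{O}_Y))$ coincides with the corresponding chain applied to the constant $\mathcal{O}_Y$-component alone, so they cancel; on the traceless factor it matches the construction of $i^\omega_M$ step by step, using the same $\bowtie$.

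Finally, the traceless complex $\underline{\textnormal{Hom}}_{\bar{\mathcal{M}}}(\bar{\mathcal{E}},\bar{\mathcal{E}})_0$ is $\mathbb{G}_m$-invariant under the rescaling action on the map $\mathcal{O}_X \to F$, so the $\Z_2$-bundle $\eta^*(O^{\bowtie})$ descends to $O^\omega_M$ on $M$, giving the stated canonical isomorphism $\pi_{\mathbb{G}_m}^*(O^\omega_M) \cong \eta^*(O^{\bowtie})$. Trivializability of $O^\omega_M$ then follows from Theorem \ref{maintheorem}, since a trivialization of $\eta^*(O^{\bowtie})$ is automatically $\mathbb{G}_m$-equivariant and descends along the gerbe $\pi_{\mathbb{G}_m}$.

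The main obstacle I expect is the second step: bookkeeping the $\mathbb{Z}_2$-graded signs in the determinants so that the two cancelling copies of $\textnormal{det}(\underline{\textnormal{Hom}}(\mathcal{O}_Y,\mathcal{O}_Y))$ really match, rather than differing by the sign from the graded commutativity of the Knudsen--Mumford determinant; in particular one must check that the sign conventions of \eqref{i_L} applied to both $\bar{\mathcal{E}}$ and $\mathcal{O}_Y$ produce identical twists on the trace component, and that the analogue of \eqref{pairsstep} for the full (non-traceless) complex agrees with the isomorphism \eqref{divcon} after projection onto the trace summand.
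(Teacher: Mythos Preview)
Your proposal is correct and follows essentially the same route as the paper. The paper packages your steps 1--3 into a single isomorphism $\gamma:\textnormal{det}\big(\underline{\textnormal{Hom}}_{\bar{\mathcal{M}}}(\mathcal{E}_{\bar{\mathcal{M}}},\mathcal{E}_{\bar{\mathcal{M}}})_0\big)\cong \eta^*(\mathcal{L}_{Y,D})$ and then checks the square $\gamma^{-*}\circ\eta^*(\vartheta^{\bowtie})\circ\gamma=\pi_{\mathbb{G}_m}^*(i^\omega_M)$; your concern about signs is handled by a $3\times 3$ commutative diagram at each step \eqref{pairsstep}, whose key feature is that the bottom-left entry $\underline{\textnormal{Hom}}(\bar{\mathcal{E}},\bar{\mathcal{E}}\otimes L_{\underline{a},i})_0$ vanishes (since $\bar{\mathcal{E}}|_{D_i}\cong\mathcal{O}_{D_i}$), so the trace and traceless columns are separately compatible with the divisor restriction and no spurious Knudsen--Mumford sign survives.
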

\begin{proof}
The universal perfect complex $\mathcal{E}_{\bar{\mathcal{M}}}$ on $\bar{\mathcal{M}}$ is given by $(\textnormal{id}_{Y}\times \pi_{\mathbb{G}_m})^*(\bar{\mathcal{E}})$. We have: 
\begin{align*}
\gamma:\textnormal{det}\big(\underline{\textnormal{Hom}}_{\bar{\mathcal{M}}}(\mathcal{E}_{\bar{\mathcal{M}}}, \mathcal{E}_{\bar{\mathcal{M}}})_0\big)\cong \textnormal{det}\big(\underline{\textnormal{Hom}}_{\bar{\mathcal{M}}}(\mathcal{E}_{\bar{\mathcal{M}}}, \mathcal{E}_{\bar{\mathcal{M}}})\big)\textnormal{det}^*\big(\underline{\textnormal{Hom}}_{\bar{\mathcal{M}}}(\mathcal{O},\mathcal{O})\big)
\cong \eta^*(\mathcal{L}_{Y,D})\,,
\end{align*}
such that the following diagram of isomorphism commutes:
\begin{equation*}
    \begin{tikzcd}[column sep= huge]
    \eta^*(\mathcal{L}_{Y,D})\arrow[r,"\sim","\eta^*(\vartheta^{\bowtie})"']&\eta^*(\mathcal{L}_{Y,D})^*\arrow[d,"\sim","\gamma^{-*}"']\\
    \arrow[u,"\sim", "\gamma"']\textnormal{det}\big(\underline{\textnormal{Hom}}_{\bar{\mathcal{M}}}(\mathcal{E}_{\bar{\mathcal{M}}}, \mathcal{E}_{\bar{\mathcal{M}}})_0\big)\arrow[r,"\sim" ,"\pi^*_{\mathbb{G}_m}(i^\omega_M)"']&\textnormal{det}^*\big(\underline{\textnormal{Hom}}_{\bar{\mathcal{M}}}(\mathcal{E}_{\bar{\mathcal{M}}}, \mathcal{E}_{\bar{\mathcal{M}}})_0\big)^*
    \end{tikzcd}
\end{equation*}
which follows from the commutativity of
\begin{equation*}
\adjustbox{scale=0.8, center}{%
\begin{tikzcd}
   \arrow[d] \underline{\textnormal{Hom}}_{\bar{\mathcal{M}}}\big(\mathcal{E}_{\bar{\mathcal{M}}},\mathcal{E}_{\bar{\mathcal{M}}}\otimes L_{\underline{a}-e_i}\big)_0\arrow[r]&\arrow[d]\underline{\textnormal{Hom}}_{\bar{\mathcal{M}}}\big(\mathcal{E}_{\bar{\mathcal{M}}},\mathcal{E}_{\bar{\mathcal{M}}}\otimes L_{\underline{a}-e_i}\big)\arrow[r,"\textnormal{tr}"]&\arrow[d]\underline{\textnormal{Hom}}_{\bar{\mathcal{M}}}\big(\mathcal{O},\mathcal{O}\otimes L_{\underline{a}-e_i}\big)\\
   \arrow[d]\underline{\textnormal{Hom}}_{\bar{\mathcal{M}}}\big(\mathcal{E}_{\bar{\mathcal{M}}},\mathcal{E}_{\bar{\mathcal{M}}}\otimes L_{\underline{a}}\big)_0\arrow[r]&\arrow[d]\underline{\textnormal{Hom}}_{\bar{\mathcal{M}}}\big(\mathcal{E}_{\bar{\mathcal{M}}},\mathcal{E}_{\bar{\mathcal{M}}}\otimes L_{\underline{a}}\big)\arrow[r,"\textnormal{tr}"]&\underline{\textnormal{Hom}}_{\bar{\mathcal{M}}}\big(\mathcal{O},\mathcal{O}\otimes L_{\underline{a}}\big)\arrow[d]\\
0\arrow[r]&\underline{\textnormal{Hom}}_{\bar{\mathcal{M}}}\big(\mathcal{E}_{\bar{\mathcal{M}}},\mathcal{E}_{\bar{\mathcal{M}}}\otimes L_{\underline{a},i}\big)\arrow[r,"\textnormal{tr}"]&\underline{\textnormal{Hom}}_{\bar{\mathcal{M}}}\big(\mathcal{O},\mathcal{O}\otimes L_{\underline{a},i}\big)
    \end{tikzcd}}
\end{equation*}
in each step \eqref{pairsstep}.
As a result, the $\Z_2$-bundles associated to these are canonically isomorphic and we apply Theorem \ref{maintheorem}.
\end{proof}

The reader interested in computations is welcome to skip the next two sections. We discuss sign comparisons under direct sums of perfect complexes in §\ref{bigsection signs}.
\section{Background and some new methods}
\label{bigsection technical proof}
In this section, we review and develop further the necessary language for working with orientations. This includes developing an excision principle for complex determinant line bundles generalizing the work of Upmeier \cite{Markus}, Donaldson \cite{Donaldson}, \cite{DK} and Atiyah--Singer \cite{AS1}.

 \subsection{Topological stacks}

\label{backtopsta}
The definition of a topological stack follows at first the standard definition of stacks over the standard site of topological spaces. It can be found together with all basic results in Noohi \autocite{Noohi1} and Metzler \autocite{Metzler}, the homotopy theory of topological stacks is developed by Noohi in \autocite{Noohi2}. For each groupoid of topological spaces $[G\rightrightarrows X]$, one defines a prestack $\lfloor X/G\rfloor$, such that the objects of the groupoid $\lfloor X/G\rfloor(W)$ correspond to the continuous maps $W\to X$ for any $W\in \textnormal{Ob}(\textbf{Top})$. The morphisms between $\alpha:W\to X$ and $\beta: W\to X$ correspond to  $\lambda:W\to G$ which are mapped respectively to $\alpha$ and $\beta$ under the two maps $G\rightrightarrows X$. One also defines $[X/G]$  as the stack associated to this prestack. The following result makes working with topological stacks much easier. 
\begin{proposition}[Noohi {\autocite[p.26]{Noohi1}}]
Every topological stack $\mathcal{X}$ has the form of an associated stack $[X/G]$ for some topological groupoid $[G\rightrightarrows X]$. The canonical map $X\to[X/G]$ gives a chart of $\mathcal{X}$. Conversely $[X/G]$ associated to any groupoid is a topological stack. 
\end{proposition}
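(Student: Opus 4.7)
The plan is to construct an explicit mutual assignment between topological stacks and the topological groupoids presenting them, in analogy with the familiar story for algebraic or differentiable stacks. The key input for one direction is that a topological stack $\mathcal{X}$ comes bundled (by definition) with a chart, that is, a representable epimorphism $p: X \to \mathcal{X}$ satisfying the appropriate local-section / covering property. The input for the other direction is the standard stackification of the prestack $\lfloor X/G\rfloor$.

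I would first dispatch the converse assertion, since it fixes conventions. Given a topological groupoid $[G \rightrightarrows X]$, form $\lfloor X/G\rfloor$ as in the text and let $[X/G]$ be its stackification. The canonical map $q: X \to [X/G]$ has the property that $X \times_{[X/G]} X \cong G$ as topological spaces; this identification is computed from the descent description of $[X/G](W)$ as principal $G$-bundles over $W$ with equivariant maps to $X$. In particular $q$ is representable, and it admits local sections because, over any chart of an object of $[X/G](W)$, the associated principal $G$-bundle is trivial. Hence $q$ is a chart in Noohi's sense and $[X/G]$ is a topological stack.

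For the first assertion, start from $\mathcal{X}$ with chart $p: X \to \mathcal{X}$. Since $p$ is representable, the fiber product $G := X \times_{\mathcal{X}} X$ is a topological space. The two projections give source and target $s, t: G \rightrightarrows X$; the diagonal $X \to G$ gives units; factor-swap gives inversion; and the middle projection from $X \times_{\mathcal{X}} X \times_{\mathcal{X}} X$ gives an associative composition. This builds a topological groupoid. I would then exhibit a morphism $\Phi: [X/G] \to \mathcal{X}$ on the prestack level by sending an object $\alpha: W \to X$ of $\lfloor X/G\rfloor(W)$ to $p \circ \alpha: W \to \mathcal{X}$, and sending a $G$-valued morphism over $W$ to the induced isomorphism in $\mathcal{X}(W)$ via the universal property of $G = X \times_{\mathcal{X}} X$. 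Stackification extends $\Phi$ to $[X/G] \to \mathcal{X}$. Faithfulness and fullness of $\Phi$ on fibers are immediate from the very definition of $G$, while essential surjectivity uses the covering property of $p$: every object of $\mathcal{X}(W)$ locally lifts through $p$, and the lifts glue via elements of $G$.

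The main obstacle, in my view, is not the formal groupoid bookkeeping but the verification of the topological and descent conditions at each stage: that $G$ acquires the correct topology as a fiber product of stacks, that composition $m: G \times_X G \to G$ is continuous, and above all that the essential surjectivity of $\Phi$ takes place in the Grothendieck topology on $\textbf{Top}$ used throughout, which forces one to invoke the local-section / open-cover property built into the notion of chart. Once these continuity and descent checks have been carried out, all three claims of the proposition follow; the equivalence $[X/G] \simeq \mathcal{X}$ sends the distinguished chart $X \to [X/G]$ to the given chart $p$, justifying the phrasing ``the canonical map $X \to [X/G]$ gives a chart of $\mathcal{X}$.''
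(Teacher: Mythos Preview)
The paper does not give its own proof of this proposition; it merely cites Noohi \autocite[p.26]{Noohi1} and moves on. Your sketch is the standard argument that underlies that reference: build the groupoid $G = X\times_{\mathcal{X}} X$ from a chosen chart and verify that the stackification of $\lfloor X/G\rfloor$ is equivalent to $\mathcal{X}$, with the converse handled by checking that $X\to [X/G]$ is a representable epimorphism. So there is nothing to compare against in the paper itself, and your approach is essentially the one found in Noohi's work; the technical caveats you flag (continuity of the groupoid structure maps, descent in the relevant topology on $\textbf{Top}$) are precisely the points Noohi treats in detail.
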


\begin{remark}The definition of a topological stack given in \autocite{Noohi1} is more complicated and depends on the choice of a class of morphisms called local fibrations (LF). Instead, we are using Noohi's definition of topological stacks from \autocite{Noohi2} which corresponds to pretopological stacks in \autocite{Noohi1}.  
\end{remark}

In \autocite{Noohi2}, Noohi proposes a homotopy theory for a class of topological stacks called \textit{hoparacompact}. 
Let $\textbf{tSt}_{\textbf{hp}}$ denote the 2-category of hoparacompact topological stacks. A\textit{ classifying space} of $\mathcal{X}$ in $\normalfont\textbf{tSt}_{\textbf{hp}}$ is a topological space $X = \mathcal{X}^{cla}$ with a representable map $\pi^{\textnormal{cla}}: X\to\mathcal{X}$ such that for any $T\to \mathcal{X}$, where $T$ is a topological space, its base change $T\times_\mathcal{X}X\to T$ is a weak homotopy equivalence.

\sloppy In \autocite[§8.1]{Noohi2}, Noohi provides a functorial construction of the classifying space $\mathcal{X}^{\textnormal{cla}}$ for every hoparacompact topological stack $\mathcal{X}$, such that the resulting space is paracompact. In fact,  \autocite[Corollary 8.9]{Noohi2} states that the functor 
$$(-)^{\textnormal{cla}}:\textnormal{Ho}(\textbf{tSt}_{\textbf{hp}})\longrightarrow \textnormal{Ho}(\textbf{pTop})$$
is an equivalence of categories, where \textbf{pTop} denotes the category of paracompact topological spaces. Note that as, we are interested in comparing $\Z_2$-bundles, it is enough to restrict to finite CW complexes and weak homotopy equivalences are replaced by usual ones avoiding the question of ghost maps.

\subsection{Moduli stack of connections and their $\Z_2$-graded $H$-principal $\Z_2$-bundles}
\label{Hspaces}
Let $X$ be a smooth connected manifold of dimension $n$ and $\pi: P\to X$ be a principal $G$ bundle for a connected Lie group $G$ with the Lie algebra $\mathfrak{g}$. We have the action of the gauge group $\mathcal{G}_P$ on the space of connections $\mathcal{A}_P$.  This action will be continuous and the spaces are paracompact because they are infinity CW-complexes, so we get a hoparacompact stack $\mathcal{B}_P = [\mathcal{A}_P/\mathcal{G}_P]$.

If $X$ is a compact spin Kähler fourfold, let $S_+,S_-$ denote the positive and negative spinor bundles and $\slashed{D}:S_+\to S_-$ the positive Dirac operator, then for each connection $\nabla_P\in \mathcal{A}_P$ one can define the \textit{twisted Dirac operator}
\begin{equation}
\label{twistedop}
   \slashed{D}^{\nabla_{\textnormal{ad}(P)}}:\Gamma^{\infty}(\textnormal{ad}(P)\otimes S_+)\to \Gamma^{\infty}(\textnormal{ad}(P)\otimes S_-)\,.
\end{equation}
This induces an $\mathcal{A}_P$ family of real elliptic operators and therefore gives by §\ref{section pseudo} a real line bundle $\textnormal{det}^{\slashed{D}}_P\to \mathcal{A}_P$. Because $\mathcal{G}_P$ maps the kernel of \eqref{twistedop} to the kernel and same for the cokernels, this $\R$-bundle is $\mathcal{G}_p$ equivariant and descends to an $\R$-bundle on $\mathcal{B}_P$. The orientation bundle of which we denote by $O^{\slashed{D}}_P\to \mathcal{B}_P$. One takes the unions over all isomorphism classes of $U(n)$-bundles for all $n$:
\begin{equation}
\label{Odiracbundle}
  \mathcal{B}_X = \bigsqcup_{[P]}\mathcal{B}_P\,,\qquad O^{\slashed{D}} = \bigsqcup_{[P]}O^{\slashed{D}}_P\,. 
\end{equation}
These are the \textit{orientation bundles} of Joyce--Tanaka--Upmeier \autocite{JTU} and  Cao--Gross--Joyce \autocite{CGJ}. 
For the proof of Theorem \ref{maintheorem}, we will rely on the properties of special principal $\mathbb{Z}_2$-bundles under homotopy-theoretic group completion of H-spaces. For background on H-spaces, see Hatcher \autocite[§3.C]{Hatcher}, May--Ponto \cite[§9.2]{MayPonto} and Cao-Gross--Joyce \autocite[§3.1]{CGJ}.
Recall that an \textit{(admissible) H-space }is a triple $(X,e_X,\mu_X)$ of a topological space $X$, its point $e_x\in X$ and a continuous map $\mu_X: X\times X\to X$ is called an \textit{H-space}, if it induces a commutative monoid in $\textnormal{Ho}(\textbf{Top})$.  An admissible H-space $X$ is \textit{group-like} if $\pi_0(X)$ is a group.  Note that there are many ways how to include higher homotopies into the theory of H-spaces. For $A^n$-spaces see Stasheff \autocite{StasheffI} and \autocite{StasheffII}. For $E^\infty$-spaces see May \autocite{May}, for $\Gamma$-spaces see Segal \autocite{Segal}. While $E^\infty$-spaces and $\Gamma$-spaces are roughly the same, $A^\infty$ spaces do not require commutativity. All our spaces fit into these frameworks which by  \autocite[Example 2.19]{JTU} give us additional control over the $\mathbb{Z}_2$-bundles on them. One also defines H-maps as the obvious maps in the category of H-spaces. We use the notion of \textit{homotopy-theoretic group completions} from May \autocite[§1]{May}. One has the following universality result for homotopy theoretic group completion, that we will use throughout. 

\begin{proposition}[Caruso--Cohen--May--Taylor {\autocite[Proposition 1.2]{CCMT}}]
\label{proposition homotopy theoretic group completion}
Let $f:X\to Y$ be a homotopy-theoretic group-completion. If $\pi_0(X)$ contains a countable cofinal sequence, then for each weak H-map $g: X\to Z$, where $Z$ is group-like, there exists a weak H-map $g': Y\to Z$ unique up to weak homotopy equivalence, such that $g'\circ f$ is weakly homotopy equivalent to $g$.
\end{proposition}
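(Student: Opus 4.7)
The plan is to realize $Y$ as the standard model $\Omega BX$ of the homotopy-theoretic group completion and then exploit the naturality of the bar/loop adjunction on H-spaces. The key fact I would use is that for any admissible H-space $W$, the unit map $\eta_W:W\to\Omega BW$ is itself a homotopy-theoretic group completion, and if $W$ is already group-like then $\eta_W$ is a weak equivalence (McDuff--Segal, May). The countable cofinality hypothesis enters precisely to control the telescope used in identifying $\pi_*(\Omega BX)$ with the localization of $\pi_*(X)$ at $\pi_0(X)$.

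First, I would produce a weak equivalence $\alpha:Y\simeq\Omega BX$ compatible with $f$ and $\eta_X$, using the universal characterization of homotopy-theoretic group completion applied to $\eta_X$ on the one hand and to $f$ on the other. This step is where the cofinality hypothesis on $\pi_0(X)$ is crucial: without it, the comparison between a given ``group completion'' $Y$ and $\Omega BX$ can fail at the level of homology, since $H_*(X)[\pi_0(X)^{-1}]$ need not coincide with $H_*(\Omega BX)$ when $\pi_0(X)$ is not reasonably generated.

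Next, given a weak H-map $g:X\to Z$, I would apply $B$ to obtain $Bg:BX\to BZ$ (well-defined up to homotopy because $g$ is a weak H-map), loop once to get $\Omega Bg:\Omega BX\to\Omega BZ$, and use the weak equivalence $\eta_Z:Z\to\Omega BZ$ together with a chosen homotopy inverse $\eta_Z^{-1}$ to define
\[
g' \;:=\; \eta_Z^{-1}\circ \Omega Bg\circ\alpha : Y \longrightarrow Z.
\]
The factorization $g'\circ f\simeq g$ follows by chasing the naturality square $\eta_Z\circ g\simeq \Omega Bg\circ\eta_X$ together with $\eta_X\simeq\alpha\circ f$; and $g'$ is a weak H-map because $\Omega B$ is a functor of H-spaces up to weak H-homotopy and $\eta_Z^{-1}$ is a weak H-map since $\eta_Z$ is.

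The main obstacle is uniqueness up to weak homotopy equivalence. If $g'_1,g'_2:Y\to Z$ are weak H-maps with $g'_i\circ f$ both weakly H-homotopic to $g$, one must show $g'_1\simeq g'_2$. My approach would be to apply $B$, obtaining maps $Bg'_i:BY\to BZ$; then $Y\simeq\Omega BX$ and $Z\simeq\Omega BZ$ reduce the problem to comparing maps $BY\to BZ$ after restriction along $Bf:BX\to BY$. Since $\pi_1(BY)=\pi_0(Y)=\mathrm{Gr}(\pi_0(X))$ is generated by the image of $\pi_0(X)$, and the higher homotopy of $BY$ is controlled by that of $BX$ via the group-completion theorem, the restriction along $Bf$ is injective on weak homotopy classes of pointed maps into the group-like space $BZ$. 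Making this coherence argument rigorous for \emph{weak} H-maps (as opposed to strict $E_\infty$ or $\Gamma$-space maps) is the delicate technical point, and this is where the CCMT proof is most careful.
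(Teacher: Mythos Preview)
The paper does not prove this proposition: it is stated as a background result with attribution to Caruso--Cohen--May--Taylor \autocite[Proposition 1.2]{CCMT} and no proof is given. There is therefore nothing in the paper to compare your argument against.

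That said, your sketch follows the standard architecture (identify $Y$ with $\Omega BX$, push $g$ through the bar/loop adjunction, and invert $\eta_Z$), and you have correctly flagged the uniqueness step as the delicate point. If you want to turn this into an actual proof you should consult the CCMT paper directly, since the precise homological localization argument and the handling of weak versus strict H-maps are exactly where the cofinality hypothesis does its work, and your outline does not yet engage with those details.
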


Note that weak H-maps correspond to relaxing the commutativity to hold only up to weak homotopy equivalences. We will again not differentiate between the two. Let us now merge the definition of $\Z_2$-graded commutativity with the notion of H-principal $\Z_2$-bundles of Cao--Gross--Joyce \cite{CGJ}.
\begin{definition}
\label{definition weakstrong Hprincipal}
Let $(X,e_X,\mu_X)$ be an H-space. A $\mathbb{Z}_2$-bundle $O\to X$ together with a continuous map $\textnormal{deg}(O):X\to \mathbb{Z}_2$  satisfying $$\textnormal{deg}(O)\circ\mu(x,y) = \textnormal{deg}(O)(x) + \textnormal{deg}(O)(y)$$
is a $\mathbb{Z}_2$-\textit{graded} $\mathbb{Z}_2$\textit{-bundle}. If $O_1,O_2$ are $\mathbb{Z}_2$-graded then the isomorphism
$$
O_1\otimes _{\mathbb{Z}_2}O_2\cong O_2\otimes _{\mathbb{Z}_2}O_1
$$
differs by the sign $(-1)^{\textnormal{deg}(O_1)\textnormal{deg}(O_2)}$ from the naive one. Moreover, $O_1\otimes_{\mathbb{Z}_2} O_2$ has grading $\textnormal{deg}(O_1) + \textnormal{deg}(O_2)$. A pullback of a $\mathbb{Z}_2$-graded $\mathbb{Z}_2$-bundle, is naturally $\mathbb{Z}_2$-graded. An isomorphism of $\mathbb{Z}_2$-graded $\mathbb{Z}_2$-bundles has to preserve the grading.
A \textit{weak H-principal $\Z_2$-graded $\mathbb{Z}_2$-bundle} on $X$ is a $\Z_2$-graded $\mathbb{Z}_2$-bundle $P\to X$, such that there exists an isomorphism $p$ of $\Z_2$-graded $\mathbb{Z}_2$-bundles on $X\times X$ 
$$
p:P\boxtimes_{\mathbb{Z}_2}P\to \mu_X^*(P)\,.
$$
A \textit{$\Z_2$-graded strong H-principal $\mathbb{Z}_2$-bundle} on $X$ is a pair $(Q,q)$ of a trivializable $\Z_2$ graded $\mathbb{Z}_2$-bundle $Q\to X$ and an isomorphism of $\Z_2$-graded $\mathbb{Z}_2$ bundles on $X\times X$
$$
q:Q\boxtimes_{\mathbb{Z}_2}Q\to \mu_X^*(Q)\,,
$$
such that under the homotopy $h: \mu_X\circ(\textnormal{id}_X\times\mu_X)\simeq \mu_X\circ(\mu_X\times\textnormal{id}_X)$ the following two isomorphisms of the $\mathbb{Z}_2$-bundles on $X\times X\times X$ are identified

$$
(\textnormal{id}_X\times \mu_X)^*(q)\circ (\textnormal{id}\times q): Q\boxtimes_ {\mathbb{Z}_2}Q\boxtimes_{\mathbb{Z}_2}Q\to\big(\mu_X\circ(\textnormal{id}_X\times\mu_X)\big)^*Q
$$
and 
$$
( \mu_X\times \textnormal{id}_X)^*(q)\circ (q\times \textnormal{id}): Q\boxtimes_ {\mathbb{Z}_2}Q\boxtimes_{\mathbb{Z}_2}Q\to\big(\mu_X\circ(\mu_X\times \textnormal{id}_X)\big)^*Q\,.
$$
 The isomorphism $i:(P,p)\to (Q,q)$ of $\Z_2$-graded strong H-principal $\mathbb{Z}_2$-bundles has to solve
$
\mu_X^*i\circ p = q\circ(i\boxtimes i)\,.
$
\end{definition}
 Including the $\Z_2$-gradedness, we obtain a minor modification of Cao--Gross--Joyce {\cite[Proposition 3.5]{CGJ}}.

\begin{proposition}
\label{exun}
Let $f: X\to Y$ be a homotopy-theoretic group completion of H-spaces, then for 
\begin{enumerate}[label=(\roman*)]
\item a $\Z_2$-graded weak H-principal $\mathbb{Z}_2$-bundle $P\to X$, there exists a unique $\Z_2$-graded weak H-principal $\mathbb{Z}_2$-bundle $P'\to Y$ such that $f^*(P')$ is isomorphic to $P$. 
\item a $\Z_2$-graded strong H-principal $\mathbb{Z}_2$-bundle $(Q,q)$ on $X$, there exists a unique $\Z_2$-graded strong H-principal $\mathbb{Z}_2$-bundle $(Q',q')$ on $Y$ unique up to a canonical isomorphism, such that $(f^*Q',(f\times f)^* q')$ is isomorphic to $(Q,q)$. 
\end{enumerate}
\end{proposition}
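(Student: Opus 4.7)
The strategy is to follow the proof of Cao--Gross--Joyce \autocite[Proposition 3.5]{CGJ} essentially verbatim, absorbing the $\Z_2$-grading into the target classifying H-space. Concretely, I would encode a $\Z_2$-graded $\Z_2$-bundle on a space $T$ as a continuous map $T \to K(\Z_2,1) \times \Z_2$: the first factor classifies the underlying $\Z_2$-bundle via $H^1(T;\Z_2)$, and the second records the degree map $T \to \Z_2$. I equip this target with the H-space structure whose multiplication is addition on the $\Z_2$ factor and tensor product on $K(\Z_2,1)$, with the interchange on the product bundle twisted by the Koszul sign $(-1)^{\textnormal{deg}\cdot\textnormal{deg}}$ demanded in Definition \ref{definition weakstrong Hprincipal}. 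With this set-up, a $\Z_2$-graded weak H-principal bundle $P \to X$ corresponds precisely to a weak H-map $\phi_P: X \to K(\Z_2,1) \times \Z_2$, and strong H-principality corresponds to a coherent refinement of such a map.

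For part (i), the target H-space is group-like, since $\pi_0 \cong \Z_2$ is a group. Proposition \ref{proposition homotopy theoretic group completion} then applies, and $\phi_P$ extends along $f$ uniquely up to weak homotopy equivalence to a weak H-map $\phi_{P'}: Y \to K(\Z_2,1) \times \Z_2$. The bundle $P'$, its degree map, and its weak H-principal structure are read off from $\phi_{P'}$, and the isomorphism $f^*(P') \cong P$ is immediate by construction. Uniqueness of the extension up to weak homotopy equivalence translates into a canonical isomorphism between any two candidate $\Z_2$-graded weak H-principal bundles on $Y$.

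For part (ii), the additional associativity datum in a strong H-principal bundle $(Q,q)$ upgrades $\phi_Q$ from a weak H-map to an $A_\infty$-map (or, once the graded-commutativity is also imposed, an $E_\infty$/$\Gamma$-space map) in the frameworks of Stasheff \autocite{StasheffI,StasheffII}, May \autocite{May} and Segal \autocite{Segal} referenced earlier in §\ref{Hspaces}. The coherent analogue of Proposition \ref{proposition homotopy theoretic group completion} for such maps, which is established in those same references, produces $(Q',q')$ on $Y$ unique up to canonical equivalence together with the asserted isomorphism $(f^*Q',(f\times f)^*q') \cong (Q,q)$; the pentagon/higher-associativity identities on $Y$ are forced by those on $X$ together with uniqueness of the extension.

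The only place where the $\Z_2$-grading changes the argument of \autocite{CGJ} is the installation of the Koszul sign in the H-space structure on the target, which must be done both on the nose for the weak case and coherently for the strong case. This is the main (but minor) obstacle: one needs to verify that the bijection between $\Z_2$-graded weak (respectively strong) H-principal bundles on $T$ and weak (respectively $E_\infty$-)H-maps $T \to K(\Z_2,1)\times\Z_2$ is natural in $T$, so that the universal property of the group completion can be quoted directly. Once this bookkeeping is done the proof reduces to a formal application of Proposition \ref{proposition homotopy theoretic group completion} and its coherent refinement.
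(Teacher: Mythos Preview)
Your approach is correct but more elaborate than necessary, and it differs from the paper's in how the grading is handled. The paper's proof is two sentences: first it quotes Cao--Gross--Joyce \autocite[Proposition~3.5]{CGJ} for the ungraded statement, and then it observes that the degree map $\textnormal{deg}(P)\colon X\to\Z_2$ factors through an additive map $\pi_0(X)\to\Z_2$, which extends uniquely to $\pi_0(Y)\to\Z_2$ because $\pi_0(Y)$ is the group completion of the monoid $\pi_0(X)$. In other words, the grading is a purely $\pi_0$-level datum, entirely decoupled from the bundle, and can be extended separately and trivially.

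By contrast, you package the grading and the bundle together into a single target $K(\Z_2,1)\times\Z_2$ and extend the combined H-map in one shot. This certainly works, but it leads you to introduce a Koszul-twisted H-space structure on the target that is not actually needed here: the definition of a (weak or strong) H-principal $\Z_2$-graded $\Z_2$-bundle in Definition~\ref{definition weakstrong Hprincipal} involves only a single bundle $P$ and an isomorphism $P\boxtimes P\to\mu^*(P)$ preserving the grading; no Koszul sign enters those axioms. The sign convention governs how \emph{two different} graded bundles swap under tensor product (as in Lemma~\ref{lemma Z2graded}), not the H-principal structure of one bundle. So your ``main obstacle'' is in fact no obstacle at all, and once you drop it your argument collapses to the paper's: extend the bundle via \autocite[Proposition~3.5]{CGJ}, extend the degree via the universal property of group completion on $\pi_0$.
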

\begin{proof}
Without the $\Z_2$-graded condition the result is stated in Cao--Gross--Joyce {\cite[Proposition 3.5]{CGJ}}. Then as $\textnormal{deg}(P)$ respectively $\textnormal{deg}(Q)$ can be viewed as additive maps $\pi_0(X)\to \Z_2$ and $\pi_0(Y)$ is a group-completion, there exist unique extensions of the grading.
\end{proof}
We often suppress the maps $\mu_X$ and $e_X$ for an H-space $X$, we also write $Q$ instead of $(Q,q)$ for a strong H-principal $\Z_2$-bundle when $q$ is understood. 
\begin{lemma}
\label{lemma Z2graded}
Let $O_1,O_2\to X$ be $\mathbb{Z}_2$-graded strong (resp. weak) H-principal $\mathbb{Z}_2$-bundles. Then $O_1\otimes_{\mathbb{Z}_2} O_2$ is a $\mathbb{Z}_2$-graded strong (resp. weak) H-principal $\mathbb{Z}_2$-bundle. 
\end{lemma}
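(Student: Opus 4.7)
The idea is that $\mathbb{Z}_2$-graded $\mathbb{Z}_2$-bundles form a symmetric monoidal category under $\otimes_{\mathbb{Z}_2}$ with the Koszul sign rule, and that (weak / strong) H-principal structures are the ``multiplicative'' data on such bundles; the claim is then essentially that a tensor product of multiplicative objects is again multiplicative. Accordingly, I first set $\deg(O_1\otimes_{\mathbb{Z}_2} O_2) := \deg(O_1)+\deg(O_2)$ as a map $X\to\mathbb{Z}_2$; additivity with respect to $\mu_X$ is immediate from the assumed additivity of each $\deg(O_i)$.

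For the weak case, I would define the structure isomorphism on $X\times X$ as the composition
\begin{equation*}
(O_1\otimes_{\mathbb{Z}_2}O_2)\boxtimes_{\mathbb{Z}_2}(O_1\otimes_{\mathbb{Z}_2}O_2) \xrightarrow{\,\tau\,} (O_1\boxtimes_{\mathbb{Z}_2}O_1)\otimes_{\mathbb{Z}_2}(O_2\boxtimes_{\mathbb{Z}_2}O_2) \xrightarrow{o_1\otimes o_2} \mu_X^*(O_1\otimes_{\mathbb{Z}_2}O_2),
\end{equation*}
where $\tau$ is the graded braiding interchanging the middle two factors with Koszul sign $(-1)^{(\pi_2^*\deg O_2)\cdot(\pi_1^*\deg O_1)}$, and the last equality uses $\mu_X^*(O_1)\otimes_{\mathbb{Z}_2}\mu_X^*(O_2)=\mu_X^*(O_1\otimes_{\mathbb{Z}_2}O_2)$. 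This is manifestly an isomorphism of $\mathbb{Z}_2$-graded $\mathbb{Z}_2$-bundles on $X\times X$ and gives the weak H-principal structure.

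For the strong case, one must further verify the associativity condition on $X\times X\times X$. After inserting the graded braidings, each of the two iterated compositions produced by $(\textnormal{id}\times\mu_X)^*o\circ(\textnormal{id}\boxtimes o)$ and $(\mu_X\times\textnormal{id})^*o\circ(o\boxtimes\textnormal{id})$ decomposes into the $\otimes_{\mathbb{Z}_2}$ of the corresponding iterated compositions for $O_1$ and $O_2$ separately, multiplied by a total Koszul sign produced by the braidings. The required equality therefore reduces to: (a) the strong H-principal associativity satisfied by each $(O_i,o_i)$ individually, and (b) the hexagon coherence of the graded symmetric monoidal structure, which states that the two ways of permuting $O_1^{(1)}O_2^{(1)}O_1^{(2)}O_2^{(2)}O_1^{(3)}O_2^{(3)}$ into $O_1^{(1)}O_1^{(2)}O_1^{(3)}\otimes O_2^{(1)}O_2^{(2)}O_2^{(3)}$ produce the same net Koszul sign. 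The main bookkeeping step is (b); it is automatic from the coherence of Koszul signs on a graded symmetric monoidal category, but it is the only part that requires some care to write out.

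Finally, uniqueness-of-grading is trivial since $\deg(O_1\otimes_{\mathbb{Z}_2}O_2)$ is pinned down to $\deg(O_1)+\deg(O_2)$, and naturality of the construction shows the resulting structure is compatible with isomorphisms of the inputs, so that $O_1\otimes_{\mathbb{Z}_2}O_2$ is well-defined as a $\mathbb{Z}_2$-graded weak (respectively strong) H-principal $\mathbb{Z}_2$-bundle.
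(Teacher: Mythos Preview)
Your proposal is correct and follows essentially the same approach as the paper: define the grading as the sum, build the structure isomorphism by inserting the graded braiding and then applying $o_1\otimes o_2$, and reduce associativity to that of each $(O_i,o_i)$ plus coherence of the Koszul signs. The only differences are cosmetic: the paper carries out the sign check for step~(b) by an explicit computation on $X\times X\times X$, whereas you invoke symmetric monoidal (Mac~Lane/hexagon) coherence; both are valid. One small slip: the braiding interchanges $\pi_1^*(O_2)$ and $\pi_2^*(O_1)$, so the Koszul sign should be $(-1)^{(\pi_1^*\deg O_2)\cdot(\pi_2^*\deg O_1)}$, not $(-1)^{(\pi_2^*\deg O_2)\cdot(\pi_1^*\deg O_1)}$ as you wrote.
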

\begin{proof}
Let $q_i:O_i\boxtimes O_i\to \mu_X^*(O_i)$ be the isomorphisms from Definition \ref{definition weakstrong Hprincipal}. Then we define 
\begin{align*}
    q: &(O_1\otimes_{\mathbb{Z}_2}O_2)\boxtimes_{\mathbb{Z}_2}(O_1\otimes_{\mathbb{Z}_2}O_2)\stackrel{\textnormal{Def \ref{definition weakstrong Hprincipal}}}{\cong} (O_1\boxtimes_{\mathbb{Z}_2}O_1)\otimes_{\mathbb{Z}_2}(O_2\boxtimes_{\mathbb{Z}_2} O_{2})\\ &\stackrel{p_1\otimes p_2}{\cong}\mu_X^*(O_1)\otimes_{\mathbb{Z}_2}\mu_X^*(O_2)\cong \mu^*(O_1\otimes_{\mathbb{Z}_2}O_2)\,.
\end{align*}
Notice that we get an extra sign $(-1)^{\textnormal{deg}(\pi_1^*(O_2))\textnormal{deg}(\pi_2^*(O_1))}$. To check associativity \ref{definition weakstrong Hprincipal}, we need commutativity of
\begin{equation*}
\begin{tikzcd}[row sep=2.8cm]
   \arrow[d,"(-1)^{\textnormal{deg}(\pi_1^*(O_2)\textnormal{deg}(\pi_2^*(O_1))}"'] (O_1\otimes O_2)\boxtimes   (O_1\otimes O_2)\boxtimes   (O_1\otimes O_2)\arrow[r,"\begin{subarray} a (-1)^{\textnormal{deg}(\pi_2^*(O_2))\textnormal{deg}(\pi_3^*(O_1))}\\\,\\\,\end{subarray}"]&(O_1\otimes O_2)\boxtimes \mu_X^*(O_1\otimes O_2)\arrow[d,"\begin{subarray}a(-1)^{\textnormal{deg}\big(\mu_X^*(\pi_2^*(O_1)\boxtimes \pi_3*(O_1)\big)
    \textnormal{deg}\big(\pi_1^*(O_2)\big)}\\  \,\\\,\end{subarray}"]\\
    \mu_X^*(O_1\otimes O_2)\boxtimes_{\mathbb{Z}_2} (O_1\otimes O_2)\arrow[r,"\begin{subarray}a (-1)^{\begin{subarray}a \textnormal{deg}\big(\mu_X^*(\pi_1^*(O_2)\\
   \otimes\pi_2^*(O_2)\big)\textnormal{deg}\big(\pi_2^*(O_1)\big)\\\,\\\end{subarray}}\\\,\\\end{subarray}"]&\begin{subarray}a (\mu_X\times \textnormal{id}_X)^*\circ \mu^*_X(O_1\otimes O_2)\cong\\
   \circ(\textnormal{id}_X\times \mu_X)^*\circ \mu_X^*(O_1\otimes O_2)\,.
   \end{subarray}
\end{tikzcd}
\end{equation*}
Without the extra signs, it would be commutative because $O_i$ are strong H-principal. To check the signs note that going  down and right, resp. right and down we get
\begin{align*}
&(-1)^{\textnormal{deg}(\pi_2^*(O_2)\textnormal{deg}(\pi_3^*)(O_1)+(\textnormal{deg}\pi_2^*(O_1)\textnormal{deg}+(\pi_3^*)(O_1))\textnormal{deg}\big(\pi_1^*(O_1)\big)} \\&
= (-1)^{\textnormal{deg}(\textnormal{deg}(\pi_2^*(O_2)\textnormal{deg}(\pi_3^*)(O_1)+(\textnormal{deg}\pi_1^*(O_2)\textnormal{deg}+(\pi_2^*)(O_1))\textnormal{deg}\big(\pi_1^*(O_2)\big)}\,.
\end{align*}
\end{proof}
With the $\Z_2$-grading we need to distinguish between duals of strong H-principal $\Z_2$-bundles. 
\begin{definition}
\label{definitiondual}
 Let $(O,p)$ be a strong H-principal $\mathbb{Z}_2$-graded $\mathbb{Z}_2$-bundle. Its \textit{dual} $(O^*,p^*)$  will be defined to be a strong H-principal $\mathbb{Z}_2$-graded $\mathbb{Z}_2$-bundle, such that as $\mathbb{Z}_2$-bundles $O^* = O$ and  the isomorphism
$$
p^*: O^*\boxtimes_{\mathbb{Z}_2}O^*\xrightarrow{\sim}\mu^*_X(O^*)\,,
$$
is given by $p^*=(-1)^{\textnormal{deg}(\pi_1^*(O))\textnormal{deg}(\pi_2^*(O))}p$, where $\pi_1,\pi_2$ are the projections $X\times X\to X$.
\end{definition}

\begin{example}
\label{z2gradedbundles}
An example of an H-space is the topological space $(\mathcal{B}_X)^{\textnormal{cla}}$, where the multiplication $\mu_{\mathcal{B}_X}: \mathcal{B}_X\times \mathcal{B}_X\to \mathcal{B}_X$ is given by mapping
$
([\nabla_P],[\nabla_Q]\mapsto [\nabla_P\oplus \nabla_Q]),
$
and we take $(\mu_{\mathcal{B}_X})^{\textnormal{cla}}: (\mathcal{B}_X)^{\textnormal{cla}}\times (\mathcal{B}_X)^{\cla}\to (\mathcal{B}_X)^{\cla}$. 
It is $\mathbb{Z}_2$-graded (see  \autocite{Markus, Zinger} for the corresponding grading of real determinant line bundles) in the following sense: Let $[\nabla_P]\in \mathcal{B}_X$, then
\begin{equation}
\label{degree of OD}
    \textnormal{deg}(O^{\slashed{D}_+})([\nabla_P]) = \chi^{\slashed{D}}(E,E)\,,
\end{equation}
where $E$ is the $\C^n$ vector bundle associated to the $U(n)$-bundle $P$ and $\chi^{\slashed{D}}(E,E)=\textnormal{ind}\big(\slashed{D}^{\nabla_{\textnormal{End}(E)}}\big)$ is the complex index from Definition \ref{def grading real}.
\end{example}
\begin{example}
\label{algebraicbundles}
Let $X$ be a quasi-projective Calabi--Yau 4-fold. Let $\mu_{\mathcal{M}_X}:\mathcal{M}_X\times\mathcal{M}_X\to \mathcal{M}_X$ be the map corresponding to taking sums of perfect complexes. Then a natural isomorphism
$$
\phi^{\omega}: O^\omega\boxtimes _{\Z_2}O^{\omega}\to \mu_{\mathcal{M}_X}^*(O^\omega)
$$
was constructed in \autocite[Definition 3.12]{CGJ} for projective $X$. This also works when $X$ is quasi-projective by restricting the isomorphism $\Lambda_{\underline{0}}\xrightarrow{\sim}\Lambda^*_{\underline{k}}$ for any compactification $Y$ along $\xi_Y$ from \eqref{xiY}. Therefore if $O^{\omega}$ is trivializable, then the pair
$
(O^{\omega})^{\textnormal{top}}\to (\mathcal{M}_X)^{\textnormal{top}}
$
together with $(\phi^{\omega})^{\textnormal{top}}$ gives a strong H-principal $\Z_2$-bundle.

We also construct in Proposition \ref{proposition Ovartheta Hprincipal} the isomorphism
$
\phi^{\vartheta_{\bowtie}}: O^{\bowtie}\boxtimes _{\Z_2}O^{\bowtie}\to \mu_{\mathcal{M}_{Y,D}}^*(O^{\bowtie}),
$
where 
$$\mu_{\mathcal{M}_{Y,D}}: \mathcal{M}_{Y,D}\times \mathcal{M}_{Y,D}\to \mathcal{M}_{Y,D}$$
corresponds to summation on both components of $\mathcal{M}_{Y,D}$.
\end{example}

We will need the following formulation of \autocite[Thm. 1.11]{CGJ}:
\begin{theorem}[Cao--Gross--Joyce {\cite[Thm 1.11]{CGJ}}]
\label{Cao}
Let $X$ be a compact spin manifold of dimension 8, then the $\mathbb{Z}_2$-bundle $O^{\slashed{D}_+}\to \mathcal{B}_X$ are $\Z_2$-graded strong H-principal $\Z_2$-bundles. 
\end{theorem}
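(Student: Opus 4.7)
The claim unpacks into three parts: (a) $O^{\slashed{D}_+}\to\mathcal{B}_X$ is trivializable; (b) the direct-sum map $\mu_{\mathcal{B}_X}([\nabla_P],[\nabla_Q])=[\nabla_P\oplus\nabla_Q]$ is covered by a canonical isomorphism $p\colon O^{\slashed{D}_+}\boxtimes_{\Z_2}O^{\slashed{D}_+}\to\mu_{\mathcal{B}_X}^*(O^{\slashed{D}_+})$; and (c) $p$ satisfies the associativity coherence of Definition \ref{definition weakstrong Hprincipal} and is additive for the $\Z_2$-grading $\textnormal{deg}(O^{\slashed{D}_+})([\nabla_P])=\chi^{\slashed{D}}(E,E)\pmod 2$. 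My plan is to handle (a) via the real structure available for the Dirac operator in real dimension $8$ combined with an excision reduction to a standard model manifold, and (b)--(c) via the block decomposition of the twisted operator under direct sums.

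For (a), the starting point is that the Clifford algebra $\textnormal{Cl}(8)\cong\R(16)$ is a real matrix algebra, so on a spin $8$-manifold the complex spinor bundles $S_\pm$ carry parallel real structures compatible with Clifford multiplication. Tensoring with the antilinear involution on $\textnormal{ad}(P)\otimes\C\cong\textnormal{End}(E)$ given by Hermitian conjugation yields an antilinear involution on $\Gamma^\infty(\textnormal{End}(E)\otimes S_\pm)$ commuting with the twisted operator \eqref{twistedop}. Hence the complex determinant line $\textnormal{det}(\slashed{D}^{\nabla_{\textnormal{ad}(P)}})\to\mathcal{A}_P$ inherits a natural real structure, and $O^{\slashed{D}_+}_P$ is the orientation bundle of the associated real line. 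To kill the remaining obstruction $w_1(O^{\slashed{D}_+})\in H^1(\mathcal{B}_X;\Z_2)$ I would argue, via a gauge-theoretic excision identification of complex determinant lines up to canonical isotopies, that the class descends along the classifying map $\Gamma_X\colon(\mathcal{B}_X)^{\cla}\to\mathcal{C}_X$ to a class on $\mathcal{C}_X=\textnormal{Map}(X,BU\times\Z)$. On $\mathcal{C}_X$ the class depends only on the (framed) spin bordism class of $X$, and an explicit Atiyah--Singer computation on a model (e.g.\ $X=S^8$) shows that it vanishes.

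For (b) and (c), given $(\nabla_P,\nabla_Q)$ inducing Hermitian connections on $E$ and $F$, the orthogonal decomposition
\begin{equation*}
\textnormal{End}(E\oplus F)=\textnormal{End}(E)\,\oplus\,\textnormal{Hom}(F,E)\,\oplus\,\textnormal{Hom}(E,F)\,\oplus\,\textnormal{End}(F)
\end{equation*}
puts the twisted operator in block form and factorises its complex determinant as a tensor product. The off-diagonal blocks $\textnormal{Hom}(F,E)$ and $\textnormal{Hom}(E,F)$ are Hermitian adjoints, so the tensor product of their real determinant lines has a canonical positive trivialization, while the two diagonal factors give pullbacks of $O^{\slashed{D}_+}_P$ and $O^{\slashed{D}_+}_Q$. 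Together these produce the isomorphism $p$. Associativity on triples $(\nabla_P,\nabla_Q,\nabla_R)$ follows by iterating the same decomposition into nine blocks: the six off-diagonal pieces pair into three adjoint pairs that cancel identically for both bracketings $(E\oplus F)\oplus G$ and $E\oplus(F\oplus G)$. Additivity of the grading reduces to the identity $\textnormal{ind}(\slashed{D}^{\nabla_{\textnormal{Hom}(E,F)}})+\textnormal{ind}(\slashed{D}^{\nabla_{\textnormal{Hom}(F,E)}})\equiv 0\pmod 2$, immediate from the Serre-type symmetry between the two off-diagonal pieces.

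The main obstacle is clearly (a): a pointwise real structure is immediate, but a global trivialization requires the real structure to vary naturally enough in the connection for the excision reduction from $\mathcal{B}_X$ to a model to preserve triviality of the determinant line. This naturality --- together with the comparison between gauge-theoretic and algebraic determinant lines later developed in §\ref{bigsection technical proof} --- is where the bulk of the work lives. Given (a), items (b) and (c) reduce to bookkeeping with Koszul signs in the $\Z_2$-graded framework of Definition \ref{definition weakstrong Hprincipal}.
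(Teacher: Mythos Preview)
The paper does not give its own proof of this statement: it is quoted as a result of Cao--Gross--Joyce \cite[Thm.~1.11]{CGJ} and used as a black box. So there is no in-paper argument to compare against; I can only comment on how your sketch relates to what is actually done in \cite{CGJ} and \cite{JTU}.

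Your treatment of (b) and (c) is correct and is essentially the construction in \cite[Ex.~2.11]{JTU}: the block decomposition of $\textnormal{End}(E\oplus F)$ together with the complex structure on the off-diagonal summand $\textnormal{Hom}(E,F)\oplus\textnormal{Hom}(F,E)$ gives the canonical isomorphism $p$, and associativity is the nine-block bookkeeping you describe.

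Part (a) is where the content lies, and here your sketch has a genuine gap. You propose to show that the obstruction $w_1(O^{\slashed{D}_+})$ depends only on the spin bordism class of $X$ and then ``check on $S^8$''. But $\Omega^{\textnormal{Spin}}_8\cong\Z^2$ is nontrivial (generated e.g.\ by $\mathbb{HP}^2$ and a Bott manifold), while $S^8$ is spin null-bordant, so verifying vanishing on $S^8$ tells you nothing about the general case. The actual argument in \cite{CGJ} is rather different in structure: they first show the bundle descends along $\Gamma_X$ to a weak H-principal $\Z_2$-bundle on $\mathcal{C}_X=\textnormal{Map}_{C^0}(X,BU\times\Z)$, then identify the obstruction to trivializing it there as a natural transformation $K^1(-)\to\Z_2$ on compact spin $8$-manifolds, and finally kill this transformation by a combination of index theory and an explicit Steenrod-square computation in $H^*(BU;\Z_2)$. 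The excision/bordism philosophy you invoke is present, but it is applied to the universal bundle on $\mathcal{C}_X$ rather than to $X$ itself, and the vanishing step requires genuine algebraic-topological input beyond a single model computation.
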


\subsection{Complex excision}
\label{section pseudo}
Pseudo-differential operators over $\mathbb{R}^n$ are explained in Hörmander \autocite{Hor}. For background on pseudo-differential operators on manifolds, we recommend Lawson--Michelson \autocite[§3.3]{LawMic}, Atiyah--Singer \autocite[§5]{AS1}, Donaldson--Kronheimer \autocite[7.1.1]{DK}, and Upmeier \autocite[Appendix A]{Markus}. We will not review the definition due to its highly analytic nature, as we do not use it explicitly. The excision principle for differential operators was initiated by Seeley \cite{Seeley} and used by Atiyah--Singer \cite{AS1}.
Its refinement to excision for $\Z_2$-bundles was  applied by  Donaldson \cite{Donaldson}, Donaldson--Kronheimer \cite{DK} and categorified by Upmeier \cite{Markus}. We use these ideas and extend them to complex determinant line bundles. 

 From now on, we will be assuming that all real bundles come with a choice of a metric and all  complex vector bundles  with a choice of a hermitian metric. Note that the spaces of metrics are convex and therefore contractible. When we use convex, we automatically mean non-empty.

 Let $X$ be a manifold, $E,F\to X$ complex vector bundles, $P: \Gamma_{\textnormal{cs}}^\infty(E)\to \Gamma^{\infty}(F)$ pseudo-differential operator of degree $m$, then its symbol $\sigma(P): \pi^*(E)\to \pi^*(F)$, where $\pi:T^*X\to X$ is the projection map, is a homogeneous of degree $m$ on each fiber of $T^*X$ linear homomorphism. One says that $P$ is \textit{elliptic}, when its symbol $\sigma(P)$ is an isomorphism outside of the zero section $X\subset T^*X$.

 We will be working with continuous families of symbols and pseudo-differential operators as defined in \autocite[p. 491]{AS1} or as in Upmeier \autocite[Appendix]{Markus}. For a topological space $M$, we denote the corresponding set of elliptic pseudo-differential $M$-families by $\Psi_m(E,F;M)$ and the elliptic symbol $M$-families by $S_m(E,F;M)$ with the map
\begin{equation}
\label{symbol map}
\sigma: \Psi_m(E,F;M)\to S_m(E,F;M)\,.
\end{equation}
It is compatible with respect to addition, scalar multiplication, composition and taking duals (see \cite[§5]{AS1} for details).
It is standard to restrict to degree 0 operators and symbols using 
\begin{equation*}
    \begin{tikzcd}
    P\dar[maps to,"\Psi_0"]\rar["\sigma"]&\sigma(P)\dar[mapsto,"S_0"]\\
    (1+PP^*)^{-\frac{1}{2}}P\rar["\sigma"] &(\sigma(P)\sigma(P)^*)^{-\frac{1}{2}}\sigma(P)
    \end{tikzcd}\,.
\end{equation*}
If $X$ is compact then each $P\in \Psi_m(E,F;M)$ gives an $M$-family of Fredholm operators between Hilbert spaces containing $\Gamma^{\infty}_{\textnormal{cs}}(E)$ and $\Gamma^{\infty}(E)$ such that $\textnormal{ker}(P)$ and $\textnormal{coker}(P)$ lie in $\Gamma^{\infty}(E_0)$ and $\Gamma^{\infty}(E_1)$ respectively.  

Let $P$ be a continuous $Y$-family of Fredholm operators $P_y: H_0\to H_1$ for each $y$, where $H_i$ are Hilbert spaces. Determinant line bundle $\textnormal{det}(P)\to Y$ of $P$ is defined in Zinger \autocite{Zinger} using stabilization (in this case one only needs the $H_i$ to be Banach spaces) and in Upmeier \autocite[Definition 3.4]{Markus}, Freed \autocite{Freed} or Quillen \autocite{Quillen}. We will use the conventions from \autocite[Definition 3.4]{Markus}. 

\begin{definition}[Phillips \autocite{phillips}]
\label{definition mu}
Let $P:H\to H$ be a self adjoint Fredholm operator on the Hilbert space $H$. The \textit{essential spectrum} $\textnormal{spec}_{\textnormal{ess}}(P)$ is the set $\lambda\in \mathbb{R}$, such that $P-\lambda \textnormal{Id}$ is not Fredholm. We denote by $\textnormal{spec}(P)$ the spectrum of $P$. For each $\mu >0$, such that $\pm\mu\notin \textnormal{spec}(P)$ and $(-\mu,\mu)\cap \textnormal{spec}_{\textnormal{ess}}(P)=\emptyset$, one defines $V_{(-\mu,\mu)}(P)\subset H$  as the subspace of eigenspaces of $P$ for eigenvalues $-\mu<\lambda<\mu$. If $P$ is positive semi-definite, we will also write $V_{[0,\mu)}(P)$\,. If $P$ is skew adjoint, we will also denote the set of its eigenvalues by $\textnormal{spec}(P)$ (note that $\textnormal{spec}(P) = i\textnormal{spec}(-iP)$).
\end{definition}

For a $Y$ family of self adjoint Fredholm operators, one can choose $\mathfrak{U} \subset Y$ sufficiently small and $\mu$ from Definition \ref{definition mu}, such that $V_{(-\mu,\mu)}(P)$ becomes a vector bundle on $\mathfrak{U}$. This can be used to define topology on the union of determinant lines
$$
\textnormal{det}(P_y) = \textnormal{det}(P_y)\otimes \textnormal{det}(P_y^*)^*\,,y\in Y
$$
as in \autocite[Definition 3.4]{Markus}. 

\begin{definition}
\label{def grading real}
The bundle $\textnormal{det}(P)$ is $\mathbb{Z}_2$-graded with degree $\textnormal{ind}(P)$, where $\textnormal{ind}(P) = \dim(\textnormal{Ker}(P_y))-\dim(\textnormal{Ker}(P_y^*))=\textnormal{ind}(P_y)$.  If we have two $Y$-families $P_1$ and $P_2$, then the isomorphism 
\begin{equation}
    \label{sum isomorphism}
    \textnormal{det}(P_1)\otimes \textnormal{det}(P_2)\cong \textnormal{det}(P_2)\otimes\textnormal{det}(P_1)
\end{equation}
differs from the naive one by the sign $(-1)^{\textnormal{ind}(P_1)\textnormal{ind}(P_2)}$. 
\end{definition} 

We have the ``inverse" of \eqref{symbol map}
\begin{equation*}
    \begin{tikzcd}
 S_0(E,F,M) \ni   p\longmapsto P\in S_0\big(E,F,M\times \sigma^{-1}(p)\big)
    \end{tikzcd}
\end{equation*}
Which we use to abuse the notation
\begin{equation*}
    \begin{tikzcd}
    \textnormal{det}(p)\arrow[d, ""{name = A}]&\textnormal{det}(P_0)\arrow[d, ""{name = B}]&\textnormal{det}(P)\arrow[d]\\
    Y& *\times Y\arrow[r,"i_{P_0}\times \textnormal{id}"]&\sigma^{-1}(p)\times Y
    \arrow[equal,shorten >=2mm, from=A, to=B]
    \end{tikzcd}
\end{equation*}
Here $\textnormal{det}(P_0) = (i_{P_0}\times\textnormal{id})^*\textnormal{det}(P)$. Note that as $\sigma^{-1}(p)$ is convex (\cite[Theorem 4.6]{Markus}), for two different choices $P_0,P_1\in \sigma^{-1}(P)$ we have natural isomorphisms $\textnormal{det}(P_0)\cong \textnormal{det}(P_1)$. Therefore
\begin{lemma}
The complex line bundle $\textnormal{det}(p)$ is well-defined up to natural choices of isomorphisms.
\end{lemma}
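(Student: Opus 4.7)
The plan is to exploit the convexity of the fiber $\sigma^{-1}(p)$ (Upmeier \cite[Theorem 4.6]{Markus}), which is already cited in the paragraph above the lemma, to produce canonical comparison isomorphisms between the determinant line bundles of any two lifts of $p$. The determinant line bundle construction is functorial in $Y$-families under pullback, so the argument is essentially homotopical: we must show that different lifts are joined by a contractible space of paths inside $\sigma^{-1}(p)$, and then transport the determinant line bundle along those paths.

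First, given any two lifts $P_0, P_1 \in \sigma^{-1}(p)$ of the same symbol family $p \in S_0(E,F;M)$, I would form the straight-line interpolation $P_t = (1-t)P_0 + tP_1$ for $t \in [0,1]$. Convexity of $\sigma^{-1}(p)$ guarantees that each $P_t$ remains an elliptic pseudo-differential $M$-family with symbol $p$, so $(P_t)_{t \in [0,1]}$ is an $[0,1] \times M$-family in $\Psi_0(E,F;[0,1] \times M)$. Applying the construction of \cite[Definition 3.4]{Markus} yields a complex line bundle $\textnormal{det}(P_\bullet) \to [0,1] \times M$; restricting to the endpoints and using that $[0,1]$ is contractible produces a canonical isomorphism
\[
\tau_{P_0,P_1} : \textnormal{det}(P_0) \xrightarrow{\sim} \textnormal{det}(P_1).
\]

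Next, I would verify that $\tau_{P_0,P_1}$ is independent of the interpolating path. Given any two paths $\gamma_0, \gamma_1 : [0,1] \to \sigma^{-1}(p)$ joining $P_0$ to $P_1$, convexity of $\sigma^{-1}(p)$ again lets me fill in a homotopy $H : [0,1] \times [0,1] \to \sigma^{-1}(p)$ between them, namely $H(s,t) = (1-s)\gamma_0(t) + s\gamma_1(t)$. This produces a determinant line bundle on $[0,1]^2 \times M$, whose restriction to the two vertical edges $\{s=0\}$ and $\{s=1\}$ is canonically identified by horizontal transport, showing $\gamma_0$ and $\gamma_1$ induce the same isomorphism on the endpoints.

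Finally, I would check the cocycle condition $\tau_{P_1,P_2} \circ \tau_{P_0,P_1} = \tau_{P_0,P_2}$ for three lifts $P_0, P_1, P_2$, which again follows from convexity applied to the triangle spanned by the three operators: the determinant line bundle on $\Delta \times M$, where $\Delta \subset \sigma^{-1}(p)$ is this 2-simplex, is trivially parallel along each edge, forcing the composition of the edge isomorphisms to be trivial around the boundary. Together these three steps show that $\textnormal{det}(p) := \textnormal{det}(P_0)$ is well-defined up to a coherent system of canonical isomorphisms, exactly in the sense required for later assembly of the excision comparison. The only potential obstacle is purely bookkeeping, namely checking that the topology on determinant lines of \cite[Definition 3.4]{Markus} is compatible with this parameterized construction in families over $[0,1]^k \times M$, but this compatibility is built into the definition since the construction in \cite{Markus} is applied to arbitrary parameter spaces.
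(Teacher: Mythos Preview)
Your proposal is correct and follows exactly the approach indicated in the paper: the paragraph immediately preceding the lemma already records that $\sigma^{-1}(p)$ is convex by \cite[Theorem 4.6]{Markus} and that this gives natural isomorphisms $\textnormal{det}(P_0)\cong\textnormal{det}(P_1)$ for any two lifts, which is the entire content of the lemma. You have simply spelled out the standard transport-along-contractible-parameter-space argument in more detail than the paper does.
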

The following lemma is meant for book-keeping purposes.
\begin{lemma} 
\label{proposition functoriality ds adj}
Let $p_{i} \in S_m(E_i,F_i;M)$ for $i=1,2$ and $q\in S_m(E,F,Y\times I)$.
\begin{enumerate}[label=(\roman*)]
\item (\textit{Functoriality.}) If $\mu_E: E_1\to E_2$, $\mu_F: F_1\to F_2$ are isomorphisms such that 
\begin{equation}
\label{functoriality}
\begin{tikzcd}
  \arrow[d,"\pi^*(\mu_E)"]  \pi^*(E_1)\arrow[r,"p_1"]& \pi^*(F_1)\arrow[d,"\pi^*(\mu_F)"]\\
    \pi^*(E_2)\pi^*\arrow[r,"p_2"]& \pi^*(F_2)
    \end{tikzcd}
\end{equation}
commutes, then there is a natural isomorphism $\textnormal{det}(p_1)\to \textnormal{det}(p_2)$\,.
\item (\textit{Direct sums}.) There is a natural isomorphism 
\begin{equation}
\label{direct sums}
  \textnormal{det}(p_1\oplus p_2)\longrightarrow \textnormal{det}(p_1)\textnormal{det}(p_2)\,.  
\end{equation}
\item (\textit{Adjoints.}) There is a natural isomorphism 
\begin{equation}
\label{dual}
 \textnormal{det}((p_1)^*)\longrightarrow \textnormal{det}^*(p_1)\,.  
\end{equation}
\item (Triviality.) If $p_1 = \pi^*(\mu)$ for some isomorphism $\mu: E_1\to F_1$, then there is a natural isomorphism
\begin{equation}
\label{triviality}
    \textnormal{det}(p^+)\longrightarrow \C\,.
\end{equation}
\item (Transport.) There is a natural isomorphism $\textnormal{det}(q)|_{Y\times \{0\}}\cong \textnormal{det}(q)|_{Y\times \{1\}}$. such that for 
  $q_i\in S_m(E_i,F_i,Y\times I)$ we have the commutative diagram
   $$
   \begin{tikzcd}
     \arrow[d,"(ii)"] \textnormal{det}(q_1\oplus q_2)|_{Y\times \{0\}}\arrow[r,"(v)"]& \textnormal{det}(q_1\oplus q_2)|_{Y\times \{1\}}\arrow[d,"(ii)"]\\
     \textnormal{det}(q_1)|_{Y\times \{0\}}\otimes \textnormal{det}(q_2)|_{Y\times \{0\}}\arrow[r,"(v)\otimes (v)"]& \textnormal{det}(q_1)|_{Y\times \{1\}}\otimes \textnormal{det}(q_2)|_{Y\times \{1\}}
   \end{tikzcd}
   $$
   \end{enumerate}
\end{lemma}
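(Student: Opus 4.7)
The plan is to lift each of (i)--(v) from the level of symbols to the level of actual pseudo-differential operators by choosing representatives in $\sigma^{-1}(p)$, perform the construction on operators where everything is standard, and then argue independence of the lift using the convexity of $\sigma^{-1}(p)$ (\cite[Theorem 4.6]{Markus}) combined with the transport property. Throughout, once an assertion is verified after fixing a representative, any two such choices are joined by a linear path in $\sigma^{-1}(p)$ and the resulting determinant lines are naturally identified by (v) along this path; as $\sigma^{-1}(p)$ is convex, any two such paths are homotopic rel endpoints, so the identification is canonical.

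For (i), choose a lift $P_1\in\sigma^{-1}(p_1)$ and set $P_2:=\mu_F\circ P_1\circ\mu_E^{-1}$; then $\sigma(P_2)=p_2$ by functoriality of the symbol map, and $\mu_E,\mu_F$ induce isomorphisms $\ker(P_1)\cong\ker(P_2)$ and $\mathrm{coker}(P_1)\cong\mathrm{coker}(P_2)$ that respect the local Phillips-style trivialisations of \cite[Definition~3.4]{Markus}, hence an isomorphism $\det(P_1)\to\det(P_2)$ descending to $\det(p_1)\to\det(p_2)$. For (ii), given lifts $P_i\in\sigma^{-1}(p_i)$, the operator $P_1\oplus P_2$ lifts $p_1\oplus p_2$ and the standard isomorphism of finite-dimensional determinants $\det(\ker)\otimes\det(\mathrm{coker})^*$ on each fibre gives (\ref{direct sums}); commutativity with the graded symmetry is built in via the Koszul sign on $\det$. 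For (iii), the formal adjoint $P_1^*$ lifts $p_1^*$ and interchanges $\ker$ and $\mathrm{coker}$ up to complex conjugation of metrics, yielding (\ref{dual}); well-definedness with respect to the choice of hermitian metric is immediate from the contractibility of the space of metrics.

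For (iv), if $p_1=\pi^*(\mu)$ for an isomorphism $\mu\colon E_1\to F_1$, then $\mu$ itself (viewed as a zeroth order multiplication operator) is a valid lift whose kernel and cokernel vanish identically over $M$, so its determinant line is canonically $\C$; any other lift is identified with this one via convexity and (v). For (v), a family $q\in S_0(E,F,Y\times I)$ is in particular an elliptic family parameterised by $Y\times I$, so its determinant line bundle $\det(q)$ is a complex line bundle on $Y\times I$; the inclusions $Y\times\{0\},Y\times\{1\}\hookrightarrow Y\times I$ are homotopy equivalences onto deformation retracts, so $\det(q)|_{Y\times\{0\}}$ and $\det(q)|_{Y\times\{1\}}$ are canonically isomorphic. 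The compatibility with (ii) is then the statement that the formation of $\det$ of a direct sum commutes with restriction, which is a functorial consequence of the splitting $\det(P_1\oplus P_2)\cong\det(P_1)\otimes\det(P_2)$ over the whole base $Y\times I$.

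I expect the routine part to be (ii)--(iv), since these follow directly from the linear algebra of the stabilised Phillips model. The subtle step is (v) together with the commutativity square: concretely one must check that the parallel-transport isomorphism on $\det(q_1\oplus q_2)$ is the tensor product of the parallel-transport isomorphisms on $\det(q_i)$, and that this identification is insensitive to the stabilising subspace $V_{(-\mu,\mu)}$ chosen locally. The main obstacle is therefore bookkeeping of signs and of the $\Z_2$-grading by $\mathrm{ind}(q_i)$ across a family where the dimensions of $\ker$ and $\mathrm{coker}$ jump; this is handled exactly as in \cite[\S3]{Markus} by working on small open sets of $Y\times I$ where a common $\mu$ works, verifying compatibility on overlaps, and gluing. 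All remaining naturalities are inherited by descent from $\Psi_0$ to $S_0$.
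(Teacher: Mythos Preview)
Your proposal is correct and follows essentially the same approach as the paper: in each case one lifts the symbol to an operator in the convex set $\sigma^{-1}(p)$, invokes the corresponding operator-level statement from \cite[Proposition~3.5]{Markus}, and uses convexity to make the result independent of the lift. The paper's proof is terser---it simply names which lift to take and defers to loc.\ cit.---but your more explicit treatment of (v) via the deformation retraction of $Y\times I$ onto either end is equivalent to the paper's transport along a chosen $Q_0\in\sigma^{-1}(q)$.
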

\begin{proof}
For 
\begin{enumerate}[label=(\roman*)]
    \item make a natural choice of a pair $(P_1,P_2)\in\sigma^{-1}(p_1)\times \sigma^{-1}(p_2)$ commuting with $\mu_E, \mu_F$ and apply \autocite[Proposition 3.5 (i)]{Markus}.
    \item make a natural choice of any $P_1\times P_2\in \sigma^{-1}(p_1)\times \sigma^{-1}(p_2)$ in loc cit.
    \item make a natural choice $P_1\in \sigma^{-1}(p_1)$ in loc cit.
    \item make the choice $P_1=\mu$ in loc cit.
    \item make a contractible choice of $Q_0\in \sigma^{-1}(q)$, then we have natural isomorphism $\tau$ such that $\tau_t:\textnormal{det}(Q_0)|_{Y\times \{0\}}\cong \textnormal{det}(Q_0)|_{Y\times \{t\}}$ for all $t\in I$ and $\tau_{0}=\textnormal{id}$, then consider the one for $t=1$. The commutativity of the diagram follows immediately from the definition. 
\end{enumerate}  
\end{proof}
The following definition is the main reason, why we introduced the above concepts.

\begin{definition}
\label{definition excision I family}
Let $E_i$, $F_i$ be vector bundles on compact manifold $X$ and $p_i\in S_0(E_i,F_i; Y)$. Let $U,V\subset X$ be open, $U\cup V = X$ and $\mu_E:E_1|_U\to E_2|_U$ , $\mu_F:F_1|_U\to F_2|_U$ isomorphisms, such that
\begin{equation}
\label{commutativity}
    \begin{tikzcd}
    \pi^*(E_1|_U)\arrow[d,"\pi^*(\mu_E)"]\arrow[r,"p_1|_{T^*U}"]&\arrow[d,"\pi^*(\mu_F)"]\pi^*(F_1|_U)\\
    \pi^*(E_2|_U)\arrow[r,"p_2|_{T^*U}"]&\pi^*(F_2|_U)
    \end{tikzcd}
\end{equation}
commutes. 
Choose a function $\chi\in C^\infty_{\textnormal{cpt}}(V,[0,1])$ with $\chi|_{X\backslash U} = 1$. Then we obtain that:
\begin{equation}
\label{excision I family}
   t\in I\longmapsto  (p_1,p_2,\mu_E,\mu_F)_t^\chi = 
\begin{pmatrix}
(1-t+t\chi)p_1& t(1-\chi)\pi^*\mu_F^*\\
t(1-\chi)\pi^*\mu_E& -(1-t+t\chi)(p_2)^*
\end{pmatrix} 
\end{equation}
is elliptic.
\end{definition}
The following result might appear deceptively obvious, but the usual $I^2$-family argument does not go through.

\begin{lemma}
\label{proposition global excision isomorphism}
    Let $X$ be compact, $E_i,F_i$, complex vector bundles on $X$ and  $p_i\in S_0(E_i,F_i;Y)$ with isomorphism 
  $
  \mu_E:E_1 \to E_2$, $\mu_F:F_1\to F_2\,,$
satisfying \eqref{commutativity} on $X$ then we have the commutativity up to contractible choices
\begin{equation*}
\begin{tikzcd}[column sep=huge, row sep=huge]
   \arrow[d,"\textnormal{Prop. \ref{proposition functoriality ds adj} (iii) + (i)}" ] \textnormal{det}(p_1)\textnormal{det}(p^*_2)\arrow[r," \textnormal{Prop. \ref{proposition functoriality ds adj} (ii)}"]&\textnormal{det}\big((p_1,p_2,\mu_E,\mu_F)^0_0\big)\arrow[d,"\textnormal{Prop. \ref{proposition functoriality ds adj} (v)}"]\\
   \mathbb{C}&\arrow[l,"\textnormal{Prop. \ref{proposition functoriality ds adj} (iv)}"] \textnormal{det}\big((p_1,p_2,\mu_E,\mu_F)^0_1\big)
\end{tikzcd}
\end{equation*}
\end{lemma}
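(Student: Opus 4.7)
The plan is a reduction to a model case via functoriality, followed by an eigenspace computation.

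First, both compositions in the diagram depend naturally on the data $(p_1, p_2, \mu_E, \mu_F)$ satisfying \eqref{commutativity}, through the operations of Lemma \ref{proposition functoriality ds adj} together with the contractible choices $P_i \in \sigma^{-1}(p_i)$. I would apply the functoriality isomorphism (i) induced by $(\mu_E, \mu_F)$ to reduce to the universal situation $p_1 = p_2 =: p$, $E_1 = E_2$, $F_1 = F_2$, and $\mu_E = \mathrm{id}$, $\mu_F = \mathrm{id}$. Because (i) is by construction compatible with direct sums (ii), adjoints (iii), triviality (iv) and transport (v), both paths in the diagram transform covariantly under this reduction, so it suffices to check the reduced case. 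In the reduced setting, the left composition becomes the natural evaluation $\det(p) \otimes \det^{*}(p) \to \mathbb{C}$ after the adjoint identification (iii).

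Second, the excision family \eqref{excision I family} collapses in the reduced case to the ``hyperbolic rotation''
\[
R_t \;=\; \begin{pmatrix} (1-t)\,p & t\,\pi^{*}\mathrm{id} \\ t\,\pi^{*}\mathrm{id} & -(1-t)\,p^{*} \end{pmatrix}, \qquad t \in [0,1],
\]
which is elliptic throughout (the symbol has non-vanishing $2\times 2$ block determinant) and has invertible symbol at $t=1$. Lifting to an operator $P \in \sigma^{-1}(p)$, the family $\widetilde{R}_t$ of Fredholm operators has kernel and cokernel controlled, near $t=0$, by $V_{[0,\mu)}(P^{*}P) \oplus V_{[0,\mu)}(PP^{*})$ in the sense of Definition \ref{definition mu}. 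Transport to $t=1$ is then the finite-dimensional rotation on each pair of matched eigenspaces, which is precisely the standard model for trivializing $\det(P) \otimes \det^{*}(P)$ by pairing an eigenvector of $P^{*}P$ with its image under $P$ (an eigenvector of $PP^{*}$); this recovers the natural evaluation.

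The main obstacle will be verifying this eigenspace computation while tracking the $\mathbb{Z}_2$-grading from \eqref{sum isomorphism}, so that the signs along both paths agree. The required compatibility is the complex analogue of the real excision arguments of Donaldson \autocite{Donaldson}, Donaldson--Kronheimer \autocite{DK} and their categorification in Upmeier \autocite{Markus}; the novelty here is only the passage to the complex setting and the consistent handling of the grading convention of Definition \ref{def grading real}, which can be read off from the block matrix form of $R_t$ and the naturality of \eqref{sum isomorphism}.
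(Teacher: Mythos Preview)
Your strategy---reduce via functoriality to the model case $p_1=p_2=p$, $\mu_E=\mu_F=\mathrm{id}$, then do an explicit eigenspace computation for the rotation family---is the same as the paper's. The paper implements the reduction by composing $\Psi_t$ with the fixed matrix $\bigl(\begin{smallmatrix}0&\mu_E^{-1}\\(\mu_F^*)^{-1}&0\end{smallmatrix}\bigr)$ to obtain an endomorphism $\tilde{\Psi}_t$ of $E_1\oplus F_2$, which is your reduction in different packaging.

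There is, however, a genuine gap in your identification of the obstacle. You say the main difficulty is tracking the $\mathbb{Z}_2$-grading so that \emph{signs} agree. But these are complex determinant lines: the two paths differ a priori by an element of $\mathbb{C}^*$, not of $\{\pm 1\}$, and no homotopy or $I^2$-family argument forces this phase to be trivial (the paper flags this: ``the usual $I^2$-family argument does not go through''). Your sketch asserts that the transport ``is precisely the standard model for trivializing $\det(P)\otimes\det^*(P)$'' without supplying the mechanism that kills the complex phase.

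The mechanism, which the paper makes explicit and which your outline is missing, is this: in the model case $\tilde{\Psi}_0=\bigl(\begin{smallmatrix}0&-P^*\\P&0\end{smallmatrix}\bigr)$ is skew-adjoint, so its nonzero spectrum on $V_{[0,\nu)}$ comes in conjugate pairs $\pm i\lambda$ of equal multiplicity. The eigenvectors of $\tilde{\Psi}_t^*\tilde{\Psi}_t$ are independent of $t$ (only the eigenvalues move, as $\lambda^2(1-t)^2+t^2$), so the transport $\alpha_\nu(t)$ can be written explicitly, and the comparison with the evaluation map is a product over eigenvalues of ratios of the form $\big((1-t)+\mu^{-1}t\big)/\big((1-t)^2+|\mu|^{-2}t^2\big)^{1/2}$. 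Each such factor pairs with its conjugate to give $1$. The same pairing shows independence of $\nu$. This conjugate-pair cancellation is the heart of the lemma; once you insert it, your argument is complete and coincides with the paper's.
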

\begin{proof}

Choose $(P_1,P_2)\in\sigma^{-1}(p_1)\times \sigma^{-1}(p_2)$ commuting with $\mu_E,\mu_F$  and construct 
$$
\Psi_t = \begin{pmatrix}
(1-t)P_1& t\mu_F^*\\
t\mu_E&-(1-t)P_2^*
\end{pmatrix}\in\sigma^{-1}\big((p_1,p_2,\mu_E,\mu_F)^0_t\big)\,.
$$
By composing $\Psi_t$ with $\SmallMatrix{0&\mu_E^{-1}\\(\mu_F^*)^{-1}&0}
$ we obtain
$$\tilde{\Psi}_t = \begin{pmatrix}
t\,\textnormal{id}&-(1-t)P^*\\
(1-t)P&t\,\textnormal{id}
\end{pmatrix}: E_1\oplus F_2\longrightarrow E_1\oplus F_2
\,.$$
Let $\nu\in \mathbb{R}^{>0}$ and $\mathfrak{U}\subset Y$ be chosen sufficiently small as in Upmeier \autocite[Definition 3.4]{Markus}, such that 
$
V_{[0,\nu)}(\tilde{\Psi}^*_0\tilde{\Psi}_0)
$
is a vector bundle.

Notice that $\tilde{\Psi}^*_t\tilde{\Psi}_t = \tilde{\Psi}_t\tilde{\Psi}^*_t$. Moreover, by spectral theorem each non-zero eigenvalue $\lambda^2\in(0,\nu)$ of $\tilde{\Psi}_0^*\tilde{\Psi}_0$ has multiplicity $2k$ for some positive integer $k$ and then $\tilde{\Psi}_0$ has eigenvalues $i\lambda$, $-i\lambda$ each of multiplicity $k$ in its set of eigenvalues $\textnormal{spec}(\tilde{\Psi}_0)$. The eigenvectors of $\tilde{\Psi}_t^*\tilde{\Psi}_t$ remain the same, but corresponding eigenvalues are $\lambda^2(1-t)^2 + t^2$. We therefore define 
$\nu(t) = \nu(1-t)^2 + t^2$
 and we have a natural isomorphism 
 \begin{equation}
 \label{identity on Vmu along t}
     V_{[0,\nu)}(\tilde{\Psi}^*_0\tilde{\Psi}_0)\cong V_{[0,\nu(t))}(\tilde{\Psi}^*_t\Psi_t)
 \end{equation}
 given by the identity for all $t\in I$ (here one extends to $t=1$ by considering the same finite set of eigenvectors which now have eigenvalue 1), which gives a continuous isomorphism of vector bundles on $\mathfrak{U}\times I$ and restricts to identity for $t=0$. The isomorphisms of determinant line bundles is then given by 
\begin{align*}
  \alpha_\nu(t):\,&\textnormal{det}(\Psi_0)\stackrel{\textnormal{\autocite[Def. 3.4]{Markus}}}{\cong} \textnormal{det}(V_{[0,\nu)}(\Psi_0^*\Psi_0))\textnormal{det}^*(V_{[0,\nu)}(\Psi_0\Psi^*_0))\\&\stackrel{\textnormal{\eqref{identity on Vmu along t}}}{\cong} \textnormal{det}(V_{[0,\nu(t))}(\Psi_t^*\Psi_t))\textnormal{det}^*(V_{[0,\nu(t))}(\Psi_t\Psi^*_t))\stackrel{\textnormal{\autocite[Def. 3.4]{Markus}}}{\cong} \textnormal{det}(\Psi_t)\,.  
\end{align*}
We see that this is a representative of the transport Prop. \ref{proposition functoriality ds adj} (v) because it restricts to identity at $t=1$. To see that this isomorphism is independent of $\nu$, we can restrict to a single point $y\in Y$. Let $\nu'>\nu >0$, then for $\Psi_0(y)$ choose its diagonalization when restricted to $V_{[0,\nu')}(\Psi_0^*\Psi_0)$. From looking at \autocite[Definition 3.4]{Markus} it is then easy to see that 
$$
\alpha_{\nu'}(t) = \prod_{\begin{subarray}\ \mu\in \textnormal{spec}(\Psi_0)\\
\nu<|\mu|^2<\nu'\end{subarray}}\frac{(1-t) + \mu^{-1}t}{[(1-t)^2+|\mu|^{-2}t^2]^{\frac{1}{2}}}\alpha_\nu(t)\,.
$$
As each $\mu=i\lambda$ comes with its conjugate of the same multiplicity, the factor is equal to one. Let $\alpha': \textnormal{det}(\Psi_0)\cong \textnormal{det}(P)\textnormal{det}(P^*)\cong \mathbb{C}$ be isomorphism combining \eqref{functoriality},\eqref{direct sums} and \eqref{dual},  then it can be checked in the same way that
$$
\alpha_\nu(1) = \prod_{\begin{subarray} \ \mu\in\textnormal{Spec}(\Psi_0)\\
0<|\mu|<\nu\end{subarray}}\frac{|\mu|^2}{\mu}\alpha'\,,
$$
where the factor again becomes one. By covering $Y$ by such sets $\mathfrak{U}_i$ and choosing appropriate $\nu_i$, we can glue the isomorphisms on $\mathfrak{U}_i\times I$, because they coincide on the overlaps $(\mathfrak{U}_i\cap \mathfrak{U}_j)\times I$. Composing 
$\alpha(t): \textnormal{det}(\tilde{\Psi}_0)\to \textnormal{det}(\tilde{\Psi}_t)$
 with 
 $\SmallMatrix{0&\mu_E^{-1}\\(\mu_F^*)^{-1}&0}
$, we obtain Prop. \ref{proposition functoriality ds adj} (v) and the commutativity of the diagram.
\end{proof}
\begin{remark}
\label{excision}
Note that when $p_i\in S_{0}(E_i,F_i;Y)$ have a real structure and $\mu_E,\mu_F$ preserve it, then there exists a natural $\Z_2$-bundle $\textnormal{or}(p_1,p_2,\mu_E,\mu_F)^{\chi}\subset \textnormal{det}(p_1,p_2,\mu_E,\mu_F)^{\chi}$ as in Donaldson--Kronheimer \cite[§7.1.1]{DK} or Upmeier \cite{Markus} The transport isomorphism of Proposition \ref{proposition functoriality ds adj} (v) for the $Y\times I$ family along $\textnormal{or}(p_1,p_2,\mu_E,\mu_F)$ is canonical, because it is the standard transport along fibers $\Z_2$.
\end{remark}
Our main object of study are going to be twisted Dirac operators and Dolbeault operators. Let $X$ be a manifold, $P$ a $U(n)$-principal bundle, $V_n$ a representation of $U(n)$ and $E$ the associated vector bundle, then for a given connection $\nabla_P$ on $P$ and its associated connection $\nabla_E$, the twisted operator $D^{\nabla_{E}}$ has the degree 0 symbol $$S_0\big(\sigma(D)\big)\otimes \textnormal{id}_{\pi^*\big(E\big)}=:\sigma_{E}(D)\,.$$ 
If $\Phi:V\to W$ is an isomorphism of vector bundles, we will also write $\Phi=\textnormal{id}\otimes\Phi:E\otimes V\to E\otimes W$.\\

Let us now formulate the excision isomorphism for complex operators in the form we will need in \ref{section comon resolution}. This generalizes \cite[Thm. 2.10]{Markus} to complex determinant line bundles. Moreover, for real operators it is slightly more general then \cite[Thm. 2.13]{Markus} in that, we do not require a framing of bundles, but isomorphisms in \cite[Thm. 2.13(b)]{Markus}. This would already follow from \cite[Thm. 2.10]{Markus}, but we obtain it as a consequence of Remark \ref{excision}. Note that we also do not require the isomorphisms below to be unitary, as this is not necessary for the operators in \eqref{excision I family} to be elliptic. 

\begin{definition}
\label{definition excision I2 family}
    Let $X_i$ be compact, $E_i, F_i$,  vector bundles on $X_i$ for $i=1,2$ and $D_{i}: \Gamma^{\infty}(E_i)\to \Gamma^{\infty}(F_i)$ complex/real elliptic differential operators. Moreover, let $S_i,T_i\subset X_i$ open, such that $S_i\cup T_i=X_i$ and $I_S:S_1\to S_2$ an isomorphism. We then denote by
\begin{equation}
\label{excision data}
   \begin{tikzcd}
   \arrow[d,dashrightarrow,"\xi_{V}"']\sigma_{V_1}(D_1)\arrow[r,dashrightarrow,"\Phi_1"]&\sigma_{W_1}(D_1)\arrow[d,dashrightarrow,"\xi_W"']\\
   \sigma_{V_2}(D_2)\arrow[r,dashrightarrow,"\Phi_2"]&\sigma_{W_2}(D_2)
   \end{tikzcd} 
\end{equation}
    the collection of isomorphisms $\Phi_i:V_i|_{T_i}\xrightarrow{\sim} W_i|_{T_i}$,  $\xi_{V}:I_V^*(V_2|_{S_2})\xrightarrow{\sim} V_1|_{S_1}$, $\xi_W:I^*_S(W_2|_{S_2})\xrightarrow{\sim} W_1|_{S_1}$ satisfying $\xi_W\circ \Phi_1=I^*_S(\Phi_2)\circ \xi_V$ for families of vector bundles $V_i,W_i\to X_i$.
  \end{definition}
  \begin{lemma}
  \label{upmeier}
For the data given by \eqref{excision data} and a compact subsets $K_i$, s.t. $X_i\backslash K_i\subset T_i$ are identified by $I_S$,we have natural isomorphisms in families
$$
\Xi(D_i,\xi_{V/W},\Phi_i):\textnormal{det}\big(\sigma_{V_1}(D_1)\big)\textnormal{det}^*\big(\sigma_{W_1}(D_1)\big)\xrightarrow{\sim}\textnormal{det}\big(\sigma_{V_2}(D_2)\big)\textnormal{det}^*\big(\sigma_{W_2}(D_2)\big)\,,
$$
such that for another set of data
$$
  \begin{tikzcd}
   \arrow[d,dashrightarrow,"\xi_{\mathcal{V}'}"']\sigma_{V'_1}(D_1)\arrow[r,dashrightarrow,"\Phi'_1"]&\sigma_{W'_1}(D_1)\arrow[d,dashrightarrow,"\xi_{W'}"]\\
   \sigma_{V'_2}(D_2)\arrow[r,dashrightarrow,"\Phi'_2"']&\sigma_{W'_2}(D_2)
   \end{tikzcd} 
$$
for the same $S_i,T_i,K$ the diagram is commutative up to natural isotopies
\begin{equation*}
    \begin{tikzcd}[column sep=large, every label/.append
style={font=\tiny}]
    \arrow[d, "\begin{subarray}a \textnormal{\eqref{direct sums}}\\
\textnormal{\eqref{dual}}\end{subarray}"]\textnormal{det}\big(\sigma_{V_1\oplus V_1'}(D_1)\big)\textnormal{det}^*\big(\sigma_{W_1\oplus W_1'}(D_1)\big)
\arrow[r,"\begin{subarray}a {\Xi(D_i,\xi_{V\oplus V'/W\oplus W'},\Phi\oplus\Phi')}\\\,
\\\,\\
\,
\end{subarray}"]&  \arrow[d,"\begin{subarray}a \textnormal{\eqref{direct sums}}\\
\textnormal{\eqref{dual}}\end{subarray}"]
\textnormal{det}\big(\sigma_{V_2\oplus V_2'}(D_2)\big)\textnormal{det}^*\big(\sigma_{W_2\oplus W_2'}(D_2)\big)\\
\begin{subarray}a \textnormal{det}\big(\sigma_{V_1}(D_1)\big)\textnormal{det}\big(\sigma_{V_1'}(D_1)\big)\\
\textnormal{det}^*\big(\sigma_{W'_1}(D_1)\big)\textnormal{det}^*\big(\sigma_{W_1}(D_1)\big)\end{subarray}\arrow[r,"\begin{subarray}a{\Xi(D_i,\xi_{V/W},\Phi)}\\{\Xi(D_i,\xi_{V'/W'},\Phi')}\end{subarray}"]&\begin{subarray}a \textnormal{det}\big(\sigma_{V_2}(D_2)\big)\textnormal{det}\big(\sigma_{V_2'}(D_2)\big)\\\textnormal{det}^*\big(\sigma_{W'_2}(D_2)\big)\textnormal{det}^*\big(\sigma_{W_2}(D_2)\big)\end{subarray}
    \end{tikzcd}
\end{equation*}
Moreover, if $S_i=X_i$, then $\Xi(D_i,\xi_{V/W},\Phi_i) = (\eqref{functoriality})^{-1}\circ(\eqref{functoriality})$.  
  \end{lemma}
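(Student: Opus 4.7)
The proof adapts the real-case excision of Upmeier \cite[Thm.~2.10]{Markus} to complex determinant line bundles, using the finer transport of Lemma \ref{proposition global excision isomorphism} and Prop.~\ref{proposition functoriality ds adj}(v) in place of the elementary $\mathbb{Z}_2$-bundle transport available in the real setting. The overall strategy is first to deform each twisted symbol so that it reduces to a pure pointwise isomorphism outside the compact core $K_i\subset X_i$, then to compare the two resulting operators on $X_1$ and $X_2$ using the identifications $I_S$, $\xi_V$, $\xi_W$ on $X_i\setminus K_i\subset S_i$.

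\emph{Construction of $\Xi$.} For each $i\in\{1,2\}$ I would choose a cutoff $\chi_i:X_i\to[0,1]$ with $\chi_i\equiv 1$ on a neighborhood of $K_i$ and $\chi_i\equiv 0$ outside a slightly larger compact subset of $T_i$; this is possible because $X_i\setminus K_i\subset T_i$. Applying Definition \ref{definition excision I family} with $(p_1,p_2)=(\sigma_{V_i}(D_i),\sigma_{W_i}(D_i))$, $U=T_i$, $V=X_i$, and $\mu_E=\mu_F=\Phi_i$ produces an $I$-family $(\mathcal{Q}_i)_t$ whose value at $t=1$ agrees with the pointwise automorphism $\bigl(\begin{smallmatrix}0&\Phi_i^*\\ \Phi_i&0\end{smallmatrix}\bigr)$ outside $K_i$. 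Composing \eqref{direct sums}, \eqref{dual}, and Prop.~\ref{proposition functoriality ds adj}(v) then yields a natural isomorphism
$$\alpha_i:\textnormal{det}\big(\sigma_{V_i}(D_i)\big)\textnormal{det}^*\big(\sigma_{W_i}(D_i)\big)\xrightarrow{\sim}\textnormal{det}\big((\mathcal{Q}_i)_1\big).$$
Next, using tubular neighborhoods of $K_i$ together with $I_S$, $\xi_V$, and $\xi_W$, I would form a cut-and-paste compact manifold $\widetilde{X}=K_1\cup_{I_S}(X_2\setminus K_2^\circ)$, diffeomorphic to $X_2$, carrying a single elliptic symbol that restricts to $(\mathcal{Q}_1)_1$ on the $K_1$-part and to $(\mathcal{Q}_2)_1$ on the $X_2\setminus K_2$-part. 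The two global descriptions of this symbol on $\widetilde{X}$ are related by the global symbol identification afforded by $\xi_V,\xi_W$, and Lemma \ref{proposition global excision isomorphism} then produces a natural isomorphism $\beta:\textnormal{det}((\mathcal{Q}_1)_1)\xrightarrow{\sim}\textnormal{det}((\mathcal{Q}_2)_1)$. I set $\Xi:=\alpha_2^{-1}\circ\beta\circ\alpha_1$; independence of $\chi_i$, the tubular neighborhood, and the lifts to pseudo-differential representatives follows from contractibility arguments parallel to those in the proof of Lemma \ref{proposition global excision isomorphism}.

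\emph{Compatibility and the special case.} Compatibility with direct sums reduces to showing that the excision $I$-family of Definition \ref{definition excision I family} and the transport of Prop.~\ref{proposition functoriality ds adj}(v) both respect block-diagonal structures, which is immediate from the construction together with Prop.~\ref{proposition functoriality ds adj}(ii); the square commutes up to the natural isotopies produced by moving the decomposition past the excision deformation. When $S_i=X_i$ one may take $K_i=\emptyset$ and $\chi_i\equiv 0$, so $(\mathcal{Q}_i)_t$ is constant in $t$; then $\alpha_i$ collapses to the composition of \eqref{direct sums} and \eqref{dual}, while the gluing step reduces to the global identification afforded by $\xi_V,\xi_W$, which by definition is the functoriality isomorphism \eqref{functoriality}. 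Hence $\Xi=(\eqref{functoriality})^{-1}\circ(\eqref{functoriality})$ as claimed.

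\emph{Main obstacle.} The hardest step is the construction and canonicity of $\beta$: one must verify that the cut-and-paste operator on $\widetilde{X}$ is globally elliptic and that the resulting isomorphism of complex determinant lines is independent of the gluing and cutoff data. In the real case of \cite{Markus} this is straightforward because orientation transport is discrete; in the complex setting I must instead track eigenvalues and phases using the spectral decomposition in the proof of Lemma \ref{proposition global excision isomorphism}, and check that phase contributions from the gluing region cancel once $\xi_V,\xi_W$ are applied. A secondary subtlety is to verify that the $\mathbb{Z}_2$-gradings appearing in \eqref{sum isomorphism} transport correctly under excision, so that no unexpected signs appear in the direct-sum compatibility square.
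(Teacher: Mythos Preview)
Your overall strategy matches the paper's—deform via the excision $I$-family, compare at $t=1$, deform back—but your comparison step $\beta$ takes an unnecessary detour, and your treatment of the case $S_i=X_i$ contains an error.

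For the comparison step, the paper places the cutoff in $S_i$ rather than near $K_i$: one takes $\chi_i\in C^\infty_{\mathrm{cs}}(S_i)$ with $\chi_i|_{K_i}=1$, the two cutoffs chosen to correspond under $I_S$. At $t=1$ the symbol is then a pointwise isomorphism outside $\mathrm{supp}(\chi_i)\subset S_i$, and one picks (via \cite[Thm.~A.6]{Markus}) pseudo-differential lifts $P_i$ whose kernels and cokernels lie in $\Gamma^\infty_{\mathrm{cs}}(S_i,\cdot)$. Since $I_S:S_1\cong S_2$ and $\xi_V,\xi_W$ identify the bundles over $S_i$, the lifts $P_1,P_2$ are directly matched and $\det(P_1)\cong\det(P_2)$ follows by inspection of kernels and cokernels—no auxiliary manifold $\widetilde X$ and no appeal to Lemma~\ref{proposition global excision isomorphism} at this stage. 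Your placement of $\chi_i$ near $K_i$ instead localizes the operator in the region where you have no direct identification between $X_1$ and $X_2$, which is what forces the cut-and-paste; that route is workable but introduces extra choices (tubular neighborhoods, smoothings) whose independence must then be argued separately. Note also that your description ``$\chi_i\equiv 0$ outside a slightly larger compact subset of $T_i$'' is inconsistent with ``$\chi_i\equiv 1$ on a neighborhood of $K_i$'', since $K_i\supset X_i\setminus T_i$.

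For the case $S_i=X_i$: taking $\chi_i\equiv 0$ does \emph{not} make the family constant in $t$—at $t=1$ one obtains the off-diagonal matrix $\bigl(\begin{smallmatrix}0&\Phi_i^*\\ \Phi_i&0\end{smallmatrix}\bigr)$, not the diagonal one at $t=0$. The actual content is that when $\xi_V,\xi_W$ are global they intertwine the entire $I$-families $(\mathcal{Q}_1)_t$ and $(\mathcal{Q}_2)_t$, so that the two transports cancel and only the functoriality via $\xi_V,\xi_W$ survives; the paper packages this as an application of Lemma~\ref{proposition global excision isomorphism}, which is invoked only here and not in the construction of the middle isomorphism.
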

  \begin{proof}
  The following is standard, and we simply lift it to complex determinant line bundles. Making a contractible choice of $\chi_i\in C^{\infty}_{\textnormal{cs}}(S_i)$, $\chi_i|_{K_i}=1$ identified under $I_V$, the composition of the following isomorphisms gives $\Xi(D_i,\xi_{V/W},\Phi_i)$: 
\begin{align*}
  \textnormal{det}\big(\sigma_{V_1})\otimes \textnormal{det}(\sigma_{W_1}\big)^* &
 \stackrel{\ref{proposition functoriality ds adj}(ii),(i)}{\cong}\textnormal{det}\big((\sigma_{V_1}(D_1),\sigma_{W_1}(D_1),\Phi_1,\Phi_1)^{\chi_1}_0\big)\\
&\stackrel{\ref{proposition functoriality ds adj} (v)}{\cong}
\textnormal{det}\big((\sigma_{V_1}(D_1),\sigma_{W_1}(D_1),\Phi_1,\Phi_1)^{\chi_1}_1\big)\\
&\stackrel{*}{\cong}\textnormal{det}\big((\sigma_{V_2}(D_2),\sigma_{W_2}(D_2),\Phi_2,\Phi_2)^{\chi_2}_1\big)\\
&\stackrel{\ref{proposition functoriality ds adj}(v)}{\cong}\textnormal{det}\big((\sigma_{V_2}(D_2),\sigma_{W_2}(D_2),\Phi_2,\Phi_2)^{\chi_2}_0\big)\\
&\cong \textnormal{det}\big(\sigma_{V_2}(D_2))\textnormal{det}^*(\sigma_{W_2(D_1)}\big)\,,
\end{align*}
where for the step $*$, we are making a contractible choice of $P_i\in \sigma^{-1}\big((\sigma_{V_i}(D_i),\sigma_{W_i}(D_i),\Phi_i,\Phi_i)^{\chi}_1\big)$ supported representatives in $S_i$ of the two symbols on both sides as in Upmeier \cite[Thm. A.6]{Markus} identified by the isomorphism $\xi_{V'},\xi_{W'}$ and using that  
\begin{align*}
   \textnormal{ker}(P_i)\in \Gamma^{\infty}_{\textnormal{cs}}\big(S_i, (E_i\otimes V_i)\oplus (F_i\otimes W_i)\big)\,,\\
  \textnormal{coker}(P_i)\in \Gamma^{\infty}_{\textnormal{cs}}\big(S_i,(F_i\otimes V_i)\oplus (E_i\otimes W_i)\big) \,.
\end{align*}  
The second statement follows from the compatibility under direct sums in \ref{proposition functoriality ds adj}(v). The final statement is just Lemma \ref{proposition global excision isomorphism}.
  \end{proof}
\section{Proof of Theorem \ref{maintheorem}}

 We construct here a double $\tilde{Y}$ for our manifold $X$, such that the ``compactly supported''  orientation on $X$ can be identified with the one on $\tilde{Y}$. We use homotopy theoretic group completion to reduce the problem to trivializing the orientation $\Z_2$-bundles on the moduli space of pairs of vector bundles generated by global sections identified on the normal crossing divisor. Then we express the isomorphism $\vartheta_{\bowtie}$ from Definition \ref{definitionbowtie} using purely vector bundles in §\ref{section comon resolution}. We then construct the isotopy between the two different real structures to obtain an isomorphism of $\Z_2$-bundles by hand. The final result of this section is contained in Proposition \ref{proposition comparing excision} and Proposition \ref{proposition main}.

\subsection{Relative framing on the double}
\label{section relative framing}
Here we construct the double of a non-compact $X$, such that it can be used in §\ref{section moduli space of vector bundles} to define orientations back on moduli spaces over $X$.

\begin{definition}
\label{definition double}
Let $X$ be a non-compact spin manifold $\textnormal{dim}_{\mathbb{R}}(X) = n$. Let $K\subset X$ be a compact subset. Choose a smooth exhaustion function $d: X\to [0,\infty)$  Then by Sard's theorem  for a generic $c> \textnormal{max}\{d(x):x\in K\}$ the set $U = \{x\in X\,|\, d(x)\leq c\}$ is a manifold with the boundary $\partial U = \{U\in X\,|\, d(g)\leq c\}$\,. Normalizing the gradient $\textnormal{grad}(d_g)$ restricted to $\partial U$, we obtain a normal vector field $\nu$ to $\partial U$. Let $V$ be the tubular neighborhood of $\partial U$ in $X$, then it is diffeomorphic to $(-1,1)\times \partial U$ and is a collar. We define $\tilde{Y}:=U\cup_{\partial U}(-U)$, where $-U$ denotes a copy of $U$ with negative orientation. Then $\tilde{Y}$ admits a natural spin-structure which restricts to the original one on $U$ (see for example \cite[p. 193]{KC}). Since we do not need it here explicitly, we do not give its description. 
\end{definition}

Let $T = X\backslash K$, where $K$ is compact, then define $\tilde{T} =(\bar{T}\cap U)\cup (-U)$  (see Figure \ref{figure 1}).
\begin{figure}
\label{figure 1}
\includegraphics[width=8cm]{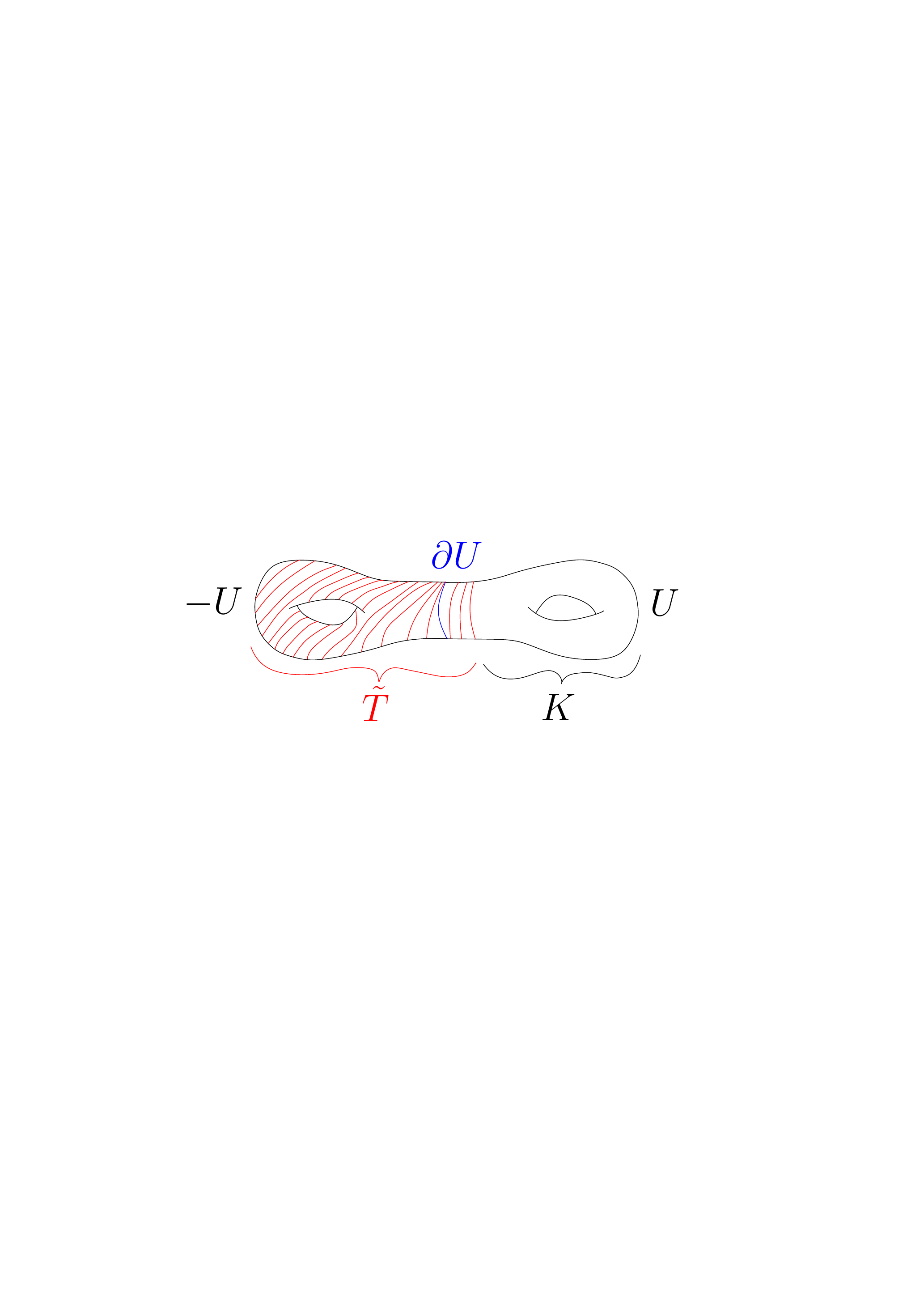}
\centering
\caption{Spin manifold $\tilde{Y}$ and the subset $\tilde{T}\subset \tilde{Y}$.}
\end{figure}

Let $P,Q\to \tilde{Y}$, be two $U(n)$, bundles, such that there exists an isomorphism  $P|_{\tilde{T}}\cong Q|_{\tilde{T}}$.
We define now the moduli stack of pairs of connections on principal bundles identified on $\tilde{T}$. 

\begin{definition}
\label{definition relative framing orienation}
Consider the space $\mathcal{A}_P\times \mathcal{A}_Q\times \mathcal{G}_{P,Q,\tilde{T}}$, where $\mathcal{G}_{P,Q,\tilde{T}}$ is the set of smooth isomorphisms $\tilde{\phi}: P|_{\tilde{T}}\to Q|_{\tilde{T}}$. Let $\mathcal{G}_P\times \mathcal{G}_Q$ be the product of gauge groups. We have a natural action 
\begin{align*}
  (\mathcal{G}_P\times\mathcal{G}_Q)\times (\mathcal{A}_P\times \mathcal{A}_Q\times\mathcal{G}_{P,Q,\tilde{T}} )\to  \mathcal{A}_P\times \mathcal{A}_Q\times \mathcal{G}_{P,Q,\tilde{T}}  \\
  (\gamma_P,\gamma_Q,\nabla_P,\nabla_Q,\tilde{\phi})\mapsto   (\gamma_P(\nabla_P),\gamma_Q(\nabla_Q),\gamma_Q\circ\tilde{\phi}\circ(\gamma_P)^{-1})\,.
\end{align*}
We denote the quotient stack by $\mathcal{B}_{P,Q,\tilde{T}} = [\mathcal{A}_P\times \mathcal{A}_Q\times \mathcal{G}_{P,Q,\tilde{T}} / \mathcal{G}_P\times \mathcal{G}_Q]$\,. Let us define the union
$$
\mathcal{B}_{\tilde{Y},\tilde{T}}= \bigcup_{\begin{subarray}
a[P],[Q]:\\
[P|_{\tilde{T}}]=[Q|_{\tilde{T}}]
\end{subarray}} \mathcal{B}_{P,Q,\tilde{T}}\,,
$$
where we chose representatives $P,Q$ for the isomorphism classes. 
\end{definition}
There exist natural maps $\mathcal{B}_{\tilde{Y}}\xleftarrow{p_1}\mathcal{B}_{\tilde{Y},\tilde{T}}\xrightarrow{p_2} \mathcal{B}_{\tilde{Y}}$ induced by $\mathcal{A}_P\times \mathcal{A}_Q\times \mathcal{G}_{P,Q,\tilde{T}}\to \mathcal{A}_Q$ and $\mathcal{A}_P\times \mathcal{A}_Q\times \mathcal{G}_{P,Q,\tilde{T}}\to \mathcal{A}_P$. Let $O^{\slashed{D}_+}\to \mathcal{B}_{\tilde{Y}}$ be the $\Z_2$-graded H-principal $\mathbb{Z}_2$-bundles from \eqref{Odiracbundle}, then we define 
\begin{equation}
\label{DO bundle}
    D_O(\tilde{Y}) = p_1^*(O^{\slashed{D}_+})\boxtimes_{\mathbb{Z}_2} p_2^*((O^{\slashed{D}_+})^*)\,,
\end{equation}
where $(O^{\slashed{D}_+})^*$ is from Definition \ref{dual}. 
Let us now construct an explicit representative $(\mathcal{B}_{\tilde{Y},\tilde{T}})^{\textnormal{cla}}$.

\begin{definition}
\label{representatinofproduct}
Let $P$ and $Q$ be $U(n)$-bundles on $\tilde{Y}$ isomorphic on $\tilde{T}$. Consider the following two quotient stacks
\begin{align*}
   \mathcal{P}_Q=[\mathcal{A}_P\times \mathcal{A}_Q\times \mathcal{G}_{P,Q,\tilde{T}}\times P /\mathcal{G}_P\times\mathcal{G}_Q]\,,\\
\mathcal{Q}_P=[\mathcal{A}_P\times \mathcal{A}_Q\times \mathcal{G}_{P,Q,\tilde{T}}\times Q /\mathcal{G}_P\times\mathcal{G}_Q]\,, 
\end{align*}
which are $U(n)$-bundles on $\tilde{Y}\times\mathcal{B}_{P,Q,\tilde{T}}$. We have a natural isomorphism $\tau_{P,Q}:\mathcal{P}_Q|_{\bar{T}\times\mathcal{B}_{P,Q,\tilde{T}}} \to \mathcal{Q}_{P}|_{\bar{T}\times\mathcal{B}_{P,Q,\tilde{T}}} $ given by
$
[\nabla_P,\nabla_Q,\tilde{\phi}, p]\mapsto [\nabla_P,\nabla_Q,\tilde{\phi},\tilde{\phi}(p)].
$
After taking appropriate unions, we obtain  bundles $\mathcal{P}_1,\mathcal{P}_2\to \tilde{Y}\times\mathcal{B}_{\tilde{Y},\tilde{T}}$ with an isomorphism  $\mathcal{P}_1|_{\tilde{T}\times\mathcal{B}_{\tilde{Y},\tilde{T}}}\cong \mathcal{P}_2|_{\tilde{T}\times\mathcal{B}_{\tilde{Y},\tilde{T}}}$. Pulling $\mathcal{P}_i$ back to $\tilde{Y}\times(\mathcal{B}_{\tilde{Y},\tilde{T}})^{\textnormal{cla}}$, we obtain $\mathcal{P}_i^{\textnormal{cla}}$ fiber bundles, which are  $U(n)$-bundles on each connected components for some $n\geq 0$. Together with the isomorphism $\tau^{\textnormal{cla}}$, these induce two maps 
$$
\mathfrak{p}_1,\mathfrak{p}_2:\tilde{Y}\times \mathcal{B}_{\tilde{Y},\tilde{T}}\longrightarrow \bigsqcup_{n\geq 0}BU(n)\,,
$$
with a unique (up to contractible choices) homotopy $H_{\mathfrak{p}}: \tilde{T}\times \mathcal{B}_{\tilde{Y},\tilde{T}}\times I\to \bigsqcup_{n\geq 0}BU(n)$ between $\mathfrak{p}_1$ and $\mathfrak{p}_2$ restricted to $\tilde{T}\times \mathcal{B}_{\tilde{Y},\tilde{T}}$. We obtain the following homotopy commutative diagram
\begin{equation}
\label{fibrantdiagram}
    \begin{tikzcd}
  (\mathcal{B}_{\tilde{Y}}\times_{\mathcal{B}_{\tilde{T}}}\mathcal{B}_{\tilde{Y}})^{\textnormal{cla}}\arrow[r]\arrow[d]&  \textnormal{Map}_{\mathcal{C}^0}(\tilde{Y},\bigsqcup_{n\geq 0} BU(n))\arrow[d]\\
  \textnormal{Map}_{\mathcal{C}^0}(\tilde{Y},\bigsqcup_{n\geq 0} BU(n))\arrow[r]&  \textnormal{Map}_{\mathcal{C}^0}(\tilde{T},\bigsqcup_{n\geq 0} BU(n))\,.
    \end{tikzcd}
\end{equation}
This induces a map $\mathcal{B}_{\tilde{Y},\tilde{T}}\to V_{\tilde{Y}}\times^{h}_{\mathcal{V}_{\tilde{T}}}\mathcal{V}_{\tilde{Y}}$, where we use the notation 
$$\mathcal{V}_Z = \textnormal{Map}_{\mathcal{C}^0}(Z,\bigsqcup_{n\geq 0} BU(n))$$
for each topological space $Z$.  If $\bar{T}\hookrightarrow X$ is a neighborhood deformation retract pair then so is $\tilde{T}\hookrightarrow{Y}$. It is then a cofibration in \textbf{Top} and the left vertical and lower horizontal arrow of \eqref{fibrantdiagram} are fibrations in \textbf{Top}. This implies that the natural map 
$$\mathcal{V}_{\tilde{Y},\tilde{T}}=\mathcal{V}_{\tilde{Y}}\times_{\mathcal{V}_{\tilde{T}}}\mathcal{V}_{\tilde{Y}}\longrightarrow \mathcal{V}_{\tilde{Y}}\times^{h}_{\mathcal{V}_{\tilde{T}}}\mathcal{V}_{\tilde{Y}}$$
is a homotopy equivalence. By homotopy inverting, we construct
$
\mathfrak{R}:(\mathcal{B}_{\tilde{Y},\tilde{T}})^{\textnormal{cla}}\to \mathcal{V}_{\tilde{Y},\tilde{T}}\,.
$
It can be shown by following the arguments of Atiyah--Jones \autocite{AJ}, Singer \autocite{Sing}, Donaldson \autocite[Prop 5.1.4]{DK} and Atiyah–-Bott \autocite{AB} that this is a weak homotopy equivalence. We therefore have the natural $\Z_2$-bundle $(D_O(\tilde{Y}))^{\textnormal{cla}}\to \mathcal{V}_{\tilde{Y},\tilde{T}}$.
\end{definition}

We summarize some obvious statements about the above constructions. There is a natural map $u_n: BU(n)\to BU\times \Z$, such that $\pi_2\circ u_n = n$. This induces maps $\mathcal{V}_Z\to \mathcal{C}_Z$ which are homotopy theoretic group completions for any $Z$. In particular we have a natural map $\tilde{\Omega}: \mathcal{V}_{\tilde{Y},\tilde{T}}\to \mathcal{C}_{\tilde{Y},\tilde{T}}:=\mathcal{C}_{\tilde{Y}}\times_{\mathcal{C}_{\tilde{T}}}\mathcal{C}_{\tilde{Y}}$. 

\begin{lemma}
\label{lemma everything is H in gauge}
The spaces $(\mathcal{B}_{\tilde{Y},\tilde{T}})^{\textnormal{cla}}$, $\mathcal{V}_{\tilde{Y},\tilde{T}}$ are H-spaces. The maps $(p_1)^{\textnormal{cla}},(p_2)^{\textnormal{cla}},\mathfrak{R}$ are H-maps. In particular, $(D_O(\tilde{Y}))^{\textnormal{cla}}\to (\mathcal{B}_{\tilde{Y},\tilde{T}})^{\textnormal{cla}}\simeq \mathcal{V}_{\tilde{Y},\tilde{T}}$ is a $\mathbb{Z}_2$-graded strong H-principal $\mathbb{Z}_2$-bundle and there exists a unique $\Z_2$-graded strong H-principal $\mathbb{Z}_2$-bundle $D^{\mathcal{C}}_O(\tilde{Y})\to \mathcal{C}_{\tilde{Y},\tilde{T}}$  up to canonical isomorphisms, such that there is a canonical isomorphism
$$
\mathfrak{q}^*(D^{\mathcal{C}}_O(\tilde{Y}))\cong D_O(\tilde{Y})\,,
$$
where $\mathfrak{q}: \mathcal{V}_{\tilde{Y},\tilde{T}} \to \mathcal{C}_{\tilde{Y},\tilde{T}}$ is the homotopy theoretic group completion. 
\end{lemma}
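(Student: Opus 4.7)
The plan is to build everything from the product H-space structures on the underlying ambient objects and then apply the descent machinery of Proposition \ref{exun}.

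\textbf{Step 1 (H-space structures).} The space $\mathcal{A}_P\times\mathcal{A}_Q\times\mathcal{G}_{P,Q,\tilde{T}}$ carries a direct-sum map to $\mathcal{A}_{P\oplus P'}\times \mathcal{A}_{Q\oplus Q'}\times\mathcal{G}_{P\oplus P',Q\oplus Q',\tilde{T}}$ given by Whitney sum on connections and block-diagonal sum on framings $\tilde{\phi}\oplus\tilde{\phi}'$. This is equivariant for the product gauge group action and therefore descends to a continuous multiplication on $\mathcal{B}_{\tilde{Y},\tilde{T}}$; applying $(-)^{\textnormal{cla}}$ (functorial by Noohi) yields the H-space $(\mathcal{B}_{\tilde{Y},\tilde{T}})^{\textnormal{cla}}$, with unit given by the class of the $0$-rank bundles. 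On the target side, $\mathcal{V}_Z = \textnormal{Map}_{C^0}(Z,\bigsqcup_{n\geq 0}BU(n))$ is an H-space under the Whitney-sum H-map $\bigsqcup_m BU(m)\times \bigsqcup_n BU(n)\to \bigsqcup_{k}BU(k)$ applied pointwise; this H-structure is compatible with restriction to $\tilde{T}$, so the strict fibre product $\mathcal{V}_{\tilde{Y},\tilde{T}}$ inherits a componentwise H-structure.

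\textbf{Step 2 (H-maps).} The two forgetful projections $p_i:\mathcal{B}_{\tilde{Y},\tilde{T}}\to \mathcal{B}_{\tilde{Y}}$ intertwine the sum operations by construction, so $(p_i)^{\textnormal{cla}}$ are H-maps. The map $\mathfrak{R}$ was produced from the classifying maps $\mathfrak{p}_1,\mathfrak{p}_2$ of the universal bundles $\mathcal{P}_1,\mathcal{P}_2$ together with the tautological homotopy $H_{\mathfrak{p}}$ over $\tilde{T}$. Since the universal construction sends direct sums to Whitney sums, $\mathfrak{p}_i$ and $H_{\mathfrak{p}}$ are compatible with addition up to canonical homotopy, hence $\mathfrak{R}$ is a weak H-map.

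\textbf{Step 3 (Strong H-principal structure on $D_O(\tilde{Y})$).} By Theorem \ref{Cao}, $O^{\slashed{D}_+}\to \mathcal{B}_{\tilde{Y}}$ is a $\mathbb{Z}_2$-graded strong H-principal $\mathbb{Z}_2$-bundle. Pulling back along the H-maps $(p_1)^{\textnormal{cla}}$ and $(p_2)^{\textnormal{cla}}$ preserves this property, and dualising via Definition \ref{definitiondual} preserves it as well (the $\mathbb{Z}_2$-bundle is unchanged, only the sign in the isomorphism $q$ is twisted). Lemma \ref{lemma Z2graded} then promotes the tensor product
\[
(D_O(\tilde{Y}))^{\textnormal{cla}} = (p_1)^{\textnormal{cla},*}(O^{\slashed{D}_+})\boxtimes_{\mathbb{Z}_2}(p_2)^{\textnormal{cla},*}\big((O^{\slashed{D}_+})^{*}\big)
\]
to a $\mathbb{Z}_2$-graded strong H-principal $\mathbb{Z}_2$-bundle over $(\mathcal{B}_{\tilde{Y},\tilde{T}})^{\textnormal{cla}}\simeq \mathcal{V}_{\tilde{Y},\tilde{T}}$.

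\textbf{Step 4 (Descent to $\mathcal{C}_{\tilde{Y},\tilde{T}}$).} The natural map $\mathcal{V}_Z\to \mathcal{C}_Z$ is a homotopy-theoretic group completion, and the restriction maps fit into the commutative square of Definition \ref{representatinofproduct}. Because $\tilde{T}\hookrightarrow \tilde{Y}$ is a cofibration and $\mathcal{V}_Z\to \mathcal{C}_Z$ is a fibration after replacing it by one, the induced map on strict fibre products $\mathfrak{q}:\mathcal{V}_{\tilde{Y},\tilde{T}}\to \mathcal{C}_{\tilde{Y},\tilde{T}}$ is again a homotopy-theoretic group completion of H-spaces, and $\pi_0(\mathcal{V}_{\tilde{Y},\tilde{T}})$ contains a countable cofinal sequence (coming from ranks). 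Proposition \ref{exun}(ii) then produces a $\mathbb{Z}_2$-graded strong H-principal $\mathbb{Z}_2$-bundle $D^{\mathcal{C}}_O(\tilde{Y})\to \mathcal{C}_{\tilde{Y},\tilde{T}}$, unique up to canonical isomorphism, with $\mathfrak{q}^{*}D^{\mathcal{C}}_O(\tilde{Y})\cong D_O(\tilde{Y})$.

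\textbf{Main obstacle.} The most delicate point is Step 4: one must verify that $\mathfrak{q}$ really is a homotopy-theoretic group completion, since homotopy group completion need not commute with fibre products in general. The argument uses that both projections $\mathcal{V}_{\tilde{Y}}\to \mathcal{V}_{\tilde{T}}$ are compatible with the monoid structure and (after fibrant replacement) are fibrations, so that the Bousfield--Kan style spectral sequence for $\pi_{*}$ of the homotopy pullback collapses the group-completion to act fibrewise, reducing the assertion to the known componentwise case of Proposition \ref{proposition homotopy theoretic group completion}.
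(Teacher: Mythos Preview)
Your proposal is correct and follows essentially the same approach as the paper: build the H-structures and H-maps from direct sums, assemble $D_O(\tilde{Y})$ as a $\mathbb{Z}_2$-graded strong H-principal bundle via Theorem \ref{Cao}, Definition \ref{definitiondual} and Lemma \ref{lemma Z2graded}, and then invoke Proposition \ref{exun}(ii). The paper's own proof is a one-liner (``the last statement follows using Proposition \ref{exun}(ii)''), treating Steps 1--3 as evident.

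One remark on your ``Main obstacle'': you are overcomplicating Step 4. Because $\tilde{T}\hookrightarrow\tilde{Y}$ is a cofibration, the strict fibre products of mapping spaces are mapping spaces out of the pushout:
\[
\mathcal{V}_{\tilde{Y},\tilde{T}}\;=\;\textnormal{Map}_{C^0}\!\big(\tilde{Y}\cup_{\tilde{T}}\tilde{Y},\,\textstyle\bigsqcup_{n}BU(n)\big),\qquad
\mathcal{C}_{\tilde{Y},\tilde{T}}\;=\;\textnormal{Map}_{C^0}\!\big(\tilde{Y}\cup_{\tilde{T}}\tilde{Y},\,BU\times\mathbb{Z}\big).
\]
The sentence immediately preceding the lemma (``this induces maps $\mathcal{V}_Z\to\mathcal{C}_Z$ which are homotopy theoretic group completions for any $Z$'') therefore applies directly with $Z=\tilde{Y}\cup_{\tilde{T}}\tilde{Y}$, and no fibrewise spectral-sequence argument is needed.
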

\begin{proof}
The last statement follows using Proposition \ref{exun} (ii).
\end{proof}

\subsection{Moduli space of vector bundles generated by global sections}
\label{section moduli space of vector bundles}
We use the definitions of moduli spaces of vector bundles generated by global section of Friedlander--Walker \cite{FriWal} used by Cao-Gross--Joyce \autocite[Definition 3.18]{CGJ}. For definition of Ind-schemes see for example Gaitsgory--Rozenblyum \autocite{Gait}, for general treatment of indization of categories see Kashiwara-Shapira \autocite[§6]{kashiwara}. For $Z$ a scheme over $\mathbb{C}$, this moduli space is defined as the mapping Ind-scheme :
$$
\mathcal{T}_Z =\textnormal{Map}_{\textbf{IndSch}}(Z,\textnormal{Gr}(\mathbb{C}^{\infty}))\,,
$$
where $\textbf{IndSch}$ is the category of Ind-schemes over $\mathbb{C}$ and we view $\textnormal{Gr}(\mathbb{C}^\infty)$ as an object in this category. 

\begin{definition}
\label{definition Delta}
Induced by the embedding of schemes $D\hookrightarrow Y$, we obtain a map $\rho^{\textnormal{vb}}_{D}:\mathcal{T}_{Y}\to\mathcal{T}_{D}$. We can construct the fiber-product in Ind-schemes $\mathcal{T}_{Y,D}\,.$
There is a natural map $\Omega_Y:\mathcal{T}_{Y}\to \mathcal{M}_{Y}$ given by composing with the natural $\textnormal{Gr}(\mathbb{C}^\infty)\to \textnormal{Perf}_{\C}$. Together with the map $\Omega^{\textnormal{ag}}_{D}:\mathcal{T}_{D}\to \mathcal{M}^{D}$ constructed in the same way, we obtain a homotopy commutative diagram of higher stacks:
$$
\begin{tikzcd}[column sep=1.5cm]
\mathcal{T}_{Y}\arrow[d,"\Omega^{\textnormal{ag}}_Y"]\arrow[r,"\rho^{\textnormal{vb}}_{D}"]&\mathcal{T}_{D}\arrow[d,"\Omega^{\textnormal{ag}}_{D}"]&\arrow[l, "\rho^{\textnormal{vb}}_{D}"]\mathcal{T}_{Y}\arrow[d,"\Omega^{\textnormal{ag}}_Y"]\\
\mathcal{M}_{Y}\arrow[r,"\rho_{D}"]&\mathcal{M}^{D}&\arrow[l, "\rho_{D}"]\mathcal{M}_{Y}\,,
\end{tikzcd}
$$
which induces 
$\Omega^{\textnormal{ag}}:\mathcal{T}_{Y,D}= \mathcal{T}_{Y}\times_{\mathcal{T}_D}\mathcal{T}_Y\to \mathcal{M}_{Y,D}\,.$
\end{definition}

As $(-)^{\textnormal{top}}$ commutes with homotopy colimits by Blanc \cite[Prop. 3.7]{Blanc} for an Ind-scheme $\mathcal{S}$ considered as a higher stack represented by the sequence of closed embeddings of finite type schemes
$S_0\to S_1\to S_2\to \ldots $, its topological realization $(\mathcal{S})^{\textnormal{top}}$ is the co-limit in $\textbf{Top}$ of the sequence 
$
S_0^{\textnormal{an}}\to S_1^{\textnormal{an}}\to S_2^{\textnormal{an}}\to \ldots\,, 
$
because the maps are closed embeddings of CW-complexes and thus cofibrations.
Using that filtered co-limits commute with finite limits, we can express $\mathcal{T}_{Y,D}$ as the filtered co-limit of 
$
\mathcal{T}^p_{Y}\times_{\mathcal{T}^p_{D}}\mathcal{T}^p_{Y}\,,
$
where $\mathcal{T}^p_Z = \textnormal{Map}_{\textbf{Sch}}(Z,\textnormal{Gr}(\mathbb{C}^p))$ for any scheme $Z$. 
From this, it also follows that

$$(\mathcal{T}_{Y,D})^{\textnormal{top}}=\varinjlim_{p\to \infty}(\mathcal{T}^p_{Y})^\textnormal{an}\times_{(\mathcal{T}^p_{D})^{\textnormal{an}}}(\mathcal{T}^p_{Y})^{\textnormal{an}}= \mathcal{T}^{\textnormal{an}}_{Y}\times_{\mathcal{T}^{\textnormal{an}}_{D}}\mathcal{T}^{\textnormal{an}}_{Y}\,.$$ 

We have therefore constructed a map 
\begin{equation}
\label{deltatop}
  \Omega^{\textnormal{top}}: \mathcal{T}^{\textnormal{an}}_{Y}\times_{\mathcal{T}^{\textnormal{an}}_{D}}\mathcal{T}^{\textnormal{an}}_{Y} \to (\mathcal{M}_{Y}\times_{\mathcal{M}^{D}}\mathcal{M}_{Y})^{\textnormal{top}}\,.  
\end{equation}
The following is a non-trivial modification of \cite[Prop. 3.22]{CGJ}, \cite[Prop. 4.5]{gross} to the case of $\mathcal{M}_{Y,D}$. We use in the proof the language of spectra (see  Strickland \autocite{Strickland} and Lewis--May \autocite{equivariantspectra}). We only use that the infinite loop space functor $\Omega^\infty: \textbf{Sp}\to\textnormal{Top}$ preserves homotopy equivalences, where $\textbf{Sp}$ is the category of topological spectra. 
\begin{proposition}
\label{lemma groupcompletion}
The map $\Omega^{\textnormal{top}}: \mathcal{T}^{\textnormal{an}}_{Y}\times_{\mathcal{T}^{\textnormal{an}}_{D}}\mathcal{T}^{\textnormal{an}}_{Y} \to (\mathcal{M}_{Y,D})^{\textnormal{top}}$ is a homotopy theoretic group completion of H-spaces. 
\end{proposition}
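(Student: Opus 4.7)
The plan is to reduce the statement to the known homotopy-theoretic group completion results for each factor, then argue that homotopy fiber products preserve the group completion property by passing to spectra. First, I would invoke the analogue of Cao--Gross--Joyce \autocite[Prop.~3.22]{CGJ} together with the refinement in Gross \autocite[Prop.~4.5]{gross} to conclude that the two factor maps
$$\Omega^{\textnormal{ag}}_Y: \mathcal{T}^{\textnormal{an}}_Y \longrightarrow (\mathcal{M}_Y)^{\textnormal{top}}, \qquad \Omega^{\textnormal{ag}}_D: \mathcal{T}^{\textnormal{an}}_D \longrightarrow (\mathcal{M}^D)^{\textnormal{top}}$$
are each homotopy-theoretic group completions of $E_\infty$-spaces under direct sum. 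The H-structure on the fiber products $\mathcal{T}^{\textnormal{an}}_{Y,D}$ and $(\mathcal{M}_{Y,D})^{\textnormal{top}}$ is the diagonal one coming from simultaneous direct sum on both factors, and $\Omega^{\textnormal{top}}$ is an H-map by construction.

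Next, I would verify that both sides are genuinely homotopy fiber products, so that preservation results apply. For the algebraic side, this follows from filtered colimits commuting with finite limits together with $(-)^{\textnormal{top}}$ commuting with homotopy colimits of closed embeddings via Blanc \autocite[Prop.~3.7]{Blanc}, as is already used in the paragraph preceding the proposition. For the stack side, the argument of Definition \ref{definition Gamma} applies: because $D \hookrightarrow Y$ is strictly normal crossing, the analytification is an NDR pair and hence a cofibration, so the restriction map $(\mathcal{M}_Y)^{\textnormal{top}} \to (\mathcal{M}^D)^{\textnormal{top}}$ is a fibration. Consequently the strict fiber products agree with their homotopy counterparts up to natural weak equivalence, and the square
$$
\begin{tikzcd}
\mathcal{T}^{\textnormal{an}}_{Y,D} \arrow[r,"\Omega^{\textnormal{top}}"] \arrow[d] & (\mathcal{M}_{Y,D})^{\textnormal{top}} \arrow[d] \\
\mathcal{T}^{\textnormal{an}}_Y \times \mathcal{T}^{\textnormal{an}}_Y \arrow[r,"\Omega^{\textnormal{ag}}_Y \times \Omega^{\textnormal{ag}}_Y"] & (\mathcal{M}_Y)^{\textnormal{top}} \times (\mathcal{M}_Y)^{\textnormal{top}}
\end{tikzcd}
$$
is homotopy Cartesian on both ends.

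Finally, I would lift to spectra using the $\Gamma$-space formalism of Segal \autocite{Segal} (equivalently via the $E_\infty$-operad of May \autocite{May}): each map of $E_\infty$-spaces is encoded as a map of connective spectra, and homotopy-theoretic group completion is equivalent to this induced map of spectra being a weak equivalence. Since $\Omega^\infty$ is a right adjoint and thus preserves homotopy pullbacks, forming the homotopy fiber product of the weak equivalences of spectra attached to $\Omega^{\textnormal{ag}}_Y$ and $\Omega^{\textnormal{ag}}_D$ yields a weak equivalence of homotopy fiber product spectra. Taking $\Omega^\infty$ and comparing via the previous step, this identifies $\Omega^{\textnormal{top}}$ as a homotopy-theoretic group completion.

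The main obstacle will be the careful bookkeeping at the interface of Ind-schemes and higher stacks. Specifically, one must ensure that the filtered-colimit description $(\mathcal{T}_{Y,D})^{\textnormal{top}} \simeq \varinjlim_p (\mathcal{T}^p_Y \times_{\mathcal{T}^p_D} \mathcal{T}^p_Y)^{\textnormal{an}}$ is compatible with the $E_\infty$-structure (so that the colimit genuinely has the H-structure coming from direct sum of vector bundles), and that applying $(-)^{\textnormal{top}}$ to the stack-theoretic Cartesian square actually produces a homotopy Cartesian square in \textbf{Top}. Both reduce to the cofibration property of $D \hookrightarrow Y$ together with the fact that the Grassmannian filtration $\mathrm{Gr}(\mathbb{C}^p) \hookrightarrow \mathrm{Gr}(\mathbb{C}^{p+1})$ is a closed embedding of CW complexes, so that the filtered colimit is homotopy invariant.
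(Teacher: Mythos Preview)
Your argument has a genuine gap in the final step. You assert that passing to associated spectra and then taking a homotopy fiber product yields the group completion of $\mathcal{T}^{\textnormal{an}}_{Y,D}$. But the functor ``associated connective spectrum'' (equivalently, homotopy-theoretic group completion) is a \emph{left} adjoint, so there is no reason for it to preserve homotopy pullbacks. Concretely: even granting that the spectra attached to $\Omega^{\textnormal{ag}}_Y$ and $\Omega^{\textnormal{ag}}_D$ are weak equivalences, and that $\Omega^\infty$ of their homotopy fiber product recovers $(\mathcal{M}_{Y,D})^{\textnormal{top}}$ (this part is fine, since the targets are already grouplike), you have not shown that this same spectrum is the one associated to the non-grouplike $E_\infty$-space $\mathcal{T}^{\textnormal{an}}_{Y,D}$. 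The comparison you need is precisely that group completion commutes with this particular fiber product, and nothing in your outline establishes it. A secondary issue is that you have not argued that $\mathcal{T}^{\textnormal{an}}_Y \to \mathcal{T}^{\textnormal{an}}_D$ is a fibration, so the strict fiber product on the source side need not be a homotopy fiber product either.

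The paper's proof avoids both issues by a different route: it observes that since $\textnormal{Map}(-,\textnormal{Perf}_{\mathbb{C}})$ and $\textnormal{Map}(-,\textnormal{Gr}(\mathbb{C}^\infty))$ send pushouts to pullbacks, and since the scheme-theoretic pushout $Y\cup_D Y$ exists and is projective (Schwede \autocite{Schwede}, Ferrand \autocite{Ferrand}), one has $\mathcal{M}_{Y,D}\cong \mathcal{M}^{Y\cup_D Y}$ and $\mathcal{T}_{Y,D}\cong \mathcal{T}_{Y\cup_D Y}$. This reduces the statement to the known group-completion result for a \emph{single} quasi-projective variety $Z=Y\cup_D Y$, via Friedlander--Walker's $\mathcal{K}^{\textnormal{semi}}$, Blanc's $\tilde{\textbf{K}}^{\textnormal{st}}$, and the Antieau--Heller comparison. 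No fiber-product compatibility of group completion is needed.
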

\begin{proof}
Let us recall that in a symmetric closed monoidal category $\mathcal{C}$ with the internal hom functor $\textnormal{Map}_{\mathcal{C}}(-,-)$ the contravariant functor $C\mapsto \textnormal{Map}_{\mathcal{C}}(C,D)$ maps co-limits to limits. Thus push-outs are mapped to pullbacks because the homotopy category of higher stacks is symmetric closed monoidal as shown by Toën--Vezzosi in \autocite[Theorem 1.0.4]{TVHAG1}.

 The following diagram
\begin{equation}
    \begin{tikzcd}
     D\arrow[r,"i_{D}"]\arrow[d,"i_{D}"]&Y\\
    Y&
    \end{tikzcd}
\end{equation}
has a push-out $Y\cup _{D}Y$ in the category of schemes over $\mathbb{C}$  using that $i_{D}$ is a closed embedding and Schwede \autocite[Corollary 3.7]{Schwede}. Moreover,  the result of Ferrand \autocite[§6.3]{Ferrand} tells us that $Y\cup _{D}Y$ is projective.

We conclude that there are natural isomorphisms
\begin{align*}
   \mathcal{M}_{Y,D}\cong \textnormal{Map}_{\textbf{HSt}}(Y\cup_{D}Y,\textnormal{Perf}_{\mathbb{C}})=\mathcal{M}^{Y\cup_{D}Y}\,,\\
\mathcal{T}_{Y,D}\cong \textnormal{Map}_{\textbf{IndSch}}(Y\cup_{D}Y,\textnormal{Gr}^\infty(\mathbb{C}))=\mathcal{T}_{Y\cup_{D}Y}\,.
\end{align*}
In fact, under these isomorphisms, the map $\Omega$ from Definition \ref{definition Delta} corresponds to the natural map $\Omega_{Y\cup_{D}Y}:\mathcal{T}_{Y\cup_{D}Y}\to \mathcal{M}^{Y\cup_{D}Y}$.

For a quasi-projective variety $Z$ over $\mathbb{C}$, Friedlander--Walker define in \autocite[Definition 2.9]{FriWal} the space $\mathcal{K}^{\textnormal{semi}}(Z)$ as the infinity loop space  $\Omega^{\infty}\mathcal{T}^{\textnormal{an}}_Z$, where they use that $\mathcal{T}^{\textnormal{an}}$ is an $E_\infty$-space. Therefore there is a map $\mathcal{T}^{\textnormal{an}}_Z\to \mathcal{K}^{\textnormal{semi}}(Z)$, which is a homotopy theoretic group completion by \autocite[6.4]{Lima} and \autocite[§2]{LawLim}. For a dg-category $\mathcal{D}$ over $\mathbb{C}$, Blanc \autocite[Definition 4.1]{Blanc} defines the connective semi-topological K-theory $\tilde{\textbf{K}}^{\textnormal{st}}(\mathcal{D})$ in the category $\textbf{Sp}$. Moreover, in \autocite[Theorem 4.21]{Blanc}, he constructs an equivalence between the $\tilde{\textbf{K}}^{\textnormal{st}}(\mathcal{D})$ and the spectrum of the topological realization of the higher moduli stack of perfect modules of $\mathcal{D}$\footnote{
This moduli stack is denoted in Blanc \autocite{Blanc} by $\mathcal{M}^{\mathcal{D}}$. Unlike the moduli stacks in Toën--Vaquié, it classifies only perfect dg-modules over $\mathcal{D}$ and not the pseudo-perfect ones. For the case $\mathcal{D} = L_{\textnormal{pe}}(Y)$ it therefore coincides with the mapping stack $\mathcal{M}^Y$. When $Y$ is projective and smooth, we already know that $\mathcal{M}^ Y$ and $\mathcal{M}_Y$ are equivalent. }. This induces a homotopy equivalence 
$
\Omega^\infty \tilde{\textbf{K}}^{\textnormal{st}}(\textnormal{Perf}(Z))\to (\mathcal{M}^Z)^{\textnormal{top}}
$
of H-spaces. In \autocite[Theorem 2.3]{AntiHell}, Antieau--Heller prove existence of a natural homotopy equivalence between the H-spaces $\Omega^\infty\tilde{\textbf{K}}^{\textnormal{st}}(\textnormal{Perf}(Z))$ and $\mathcal{K}^{\textnormal{semi}}(Z)$. The composition   $$\mathcal{T}^{\textnormal{an}}_Z\longrightarrow \mathcal{K}^{\textnormal{semi}}(Z)\longrightarrow \Omega^\infty \tilde{\textbf{K}}^{\textnormal{st}}(\textnormal{Perf}(Z))\longrightarrow (\mathcal{M}^Z)^{\textnormal{top}}$$ for $Z=Y\cup_{D}Y$ is homotopy equivalent to $\Omega^{\textnormal{top}}_{Y\cup_{D}Y}$. We have thus shown that $\Omega^{\textnormal{top}}$ is a homotopy theoretic group-completion.
\end{proof}

We now make $(O^{\bowtie})^{\textnormal{top}}\to (\mathcal{M}_{Y,D})^{\textnormal{top}}$ into a weak H-principal $\Z_2$-bundle with respect to the binary operation 
\begin{equation}
\label{muM}
 (\mu_{\mathcal{M}_{Y,D}})^{\textnormal{top}}: (\mathcal{M}_{Y,D})^{\textnormal{top}}\times (\mathcal{M}_{Y,D})^{\textnormal{top}} \to (\mathcal{M}_{Y,D})^{\textnormal{top}}\,,   
\end{equation}
which is determined by
\begin{equation}
\label{diagram}
\begin{tikzcd}
&\mathcal{M}_{Y}\times\mathcal{M}_{Y}\arrow[d,"\rho_{D}\times\rho_{D}"]\arrow[r,"\mu_{\mathcal{M}_{Y}}"]&\mathcal{M}_{Y}\arrow[d,"\rho_{D}"]\\
\mathcal{M}_{Y,D}\times \mathcal{M}_{Y,D}\arrow[ur,"\pi_{1,3}"]\arrow[dr,"\pi_{2,4}"']&\mathcal{M}^{D}\times\mathcal{M}^{D}\arrow[r,"\mu_{\mathcal{M}^{D}}"]&\mathcal{M}^{D}\\
&\mathcal{M}_{Y}\times\mathcal{M}_{Y}\arrow[u,"\rho_{D}\times\rho_{D}"']\arrow[r,"\mu_{\mathcal{M}_{Y}}"]&\mathcal{M}_{Y}\arrow[u,"\rho_{D}"']\,,
\end{tikzcd}
\end{equation}
It can be checked to be commutative and associative in $\textbf{Ho(HSta)}_{\mathbb{C}}$. In fact, as $\mathcal{M}_{Y,D}$ is a homotopy fiber product of $\Gamma$-objects, it is itself one in $\textbf{HSta}_{\mathbb{C}}$ (see Bousfield--Friedlander \autocite[§3]{Bousfield} for definition of $\Gamma$-objects in model categories and Blanc \autocite[p. 45]{Blanc} for the construction in this case). Let us set some notation. For any $\underline{a}$, $\underline{b}$, we have the isomorphisms
$$
\pi_{1,3}^*(\Sigma_{\underline{a}})\otimes\pi^*_{2,4}(\Sigma_{\underline{a}})^*\cong \pi_{1,3}^*(\Sigma_{\underline{b}})\otimes\pi_{2,4}^*(\Sigma_{\underline{b}})^*
$$
by similar construction as in \eqref{taubowtie}. In particular, for fixed $\bowtie$ we obtain the isomorphism
\begin{equation}
\label{sigmabowtie}
\sigma_{\bowtie}:\pi_{1,3}^*(\Sigma_{\underline{k}})\otimes\pi_{2,4}^*(\Sigma_{\underline{k}})^*\overset{\sim}{\longrightarrow} \pi_{1,3}^*(\Sigma_{\underline{0}})\otimes\pi_{2,4}^*(\Sigma_{\underline{0}})^*\,.    
\end{equation}

\begin{proposition}
 \label{proposition Ovartheta Hprincipal} 
Let $O^{\bowtie}\to \mathcal{M}_{Y,D}$ be the $\Z_2$-bundle from Definition \ref{definitionbowtie}, then there exists an isomorphism 
$
\phi^{\bowtie}: O^{\bowtie}\boxtimes _{\Z_2}O^{\bowtie}\to \mu_{\mathcal{M}}^*(O^{\bowtie})
$ depending on $\bowtie$ but independent of $\mathfrak{ord}$.
Moreover, we have
$$
(\textnormal{id}_{\mathcal{M}_{Y,D}}\times \mu_{\mathcal{M}})^*(\phi^{\bowtie})(\textnormal{id}\times \phi^{\bowtie}) = ( \mu_{\mathcal{M}}\times \textnormal{id}_{\mathcal{M}_{Y,D}})^*(\phi^{\bowtie})\circ (\phi^{\bowtie}\times \textnormal{id})\,.
$$
\end{proposition}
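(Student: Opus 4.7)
The plan is to construct $\phi^{\bowtie}$ by pulling apart $\mu_{\mathcal{M}_{Y,D}}^*\mathcal{L}_{Y,D}$ into the expected box product $\pi_1^*\mathcal{L}_{Y,D}\otimes\pi_2^*\mathcal{L}_{Y,D}$ tensored with a canonical square, and then matching the pulled-back self-duality $\mu^*\vartheta^{\textnormal{sp}}_{\bowtie}$ with $\pi_1^*\vartheta^{\textnormal{sp}}_{\bowtie}\otimes\pi_2^*\vartheta^{\textnormal{sp}}_{\bowtie}$ modulo that square. First I would work on $\mathcal{M}_Y\times\mathcal{M}_Y$, where the direct-sum decomposition of the universal $\underline{\textnormal{Hom}}$-complex under $\mu_{\mathcal{M}_Y}$ yields
$$
\mu_{\mathcal{M}_Y}^*\mathbb{P}_{\underline{a}}\simeq \pi_1^*\mathbb{P}_{\underline{a}}\oplus \pi_2^*\mathbb{P}_{\underline{a}}\oplus \mathcal{E}\textnormal{xt}_{\underline{a}}\oplus \sigma^*\mathcal{E}\textnormal{xt}_{\underline{a}},
$$
so that on determinants
$$
\mu_{\mathcal{M}_Y}^*\Lambda_{\underline{a}}\cong \pi_1^*\Lambda_{\underline{a}}\otimes\pi_2^*\Lambda_{\underline{a}}\otimes\Sigma_{\underline{a}}\otimes\sigma^*\Sigma_{\underline{a}}.
$$

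The next step is to show that the cross term $\Sigma_{\underline{a}}\otimes\sigma^*\Sigma_{\underline{a}}$ is canonically a square. By the bifunctor version of Serre duality in \eqref{i_L}, $\sigma^*\Sigma_{\underline{a}}\cong\Sigma_{\underline{k}-\underline{a}}^*$; then the iterated extension by the sections $(s_{i,k},t_{j,l})$ of $\bowtie$, applied exactly as in the construction of $\vartheta^{\textnormal{sp}}_{\bowtie}$ in \eqref{taubowtie}, identifies $\Sigma_{\underline{k}-\underline{a}}^*$ with $\Sigma_{\underline{a}}^*$ modulo line bundles $\Sigma_{D,\pm}$ pulled back from $\mathcal{M}^{\textnormal{sp}}_D\times\mathcal{M}^{\textnormal{sp}}_D$. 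After restriction to $\mathcal{M}^{\textnormal{sp}}_{Y,D}\times\mathcal{M}^{\textnormal{sp}}_{Y,D}$ via $\textnormal{sp}$, these boundary contributions coming from the $\pi_1^*\Lambda_{\underline{0}}$ and $(\pi_2^*\Lambda_{\underline{0}})^*$ factors of $\mathcal{L}_{Y,D}$ agree by the fiber-product identification $\rho_i\circ\pi_1\simeq\rho_i\circ\pi_2$ and therefore cancel in the total cross term, leaving a canonical $C\otimes C^*$ whose square-root $\Z_2$-bundle has a canonical trivialization. Combining this trivialization with the naturality under $\oplus$ of each arrow defining $\vartheta^{\textnormal{sp}}_{\bowtie}$ — Serre duality and multiplication by the sections $s_{i,k},t_{j,l}^{-1}$ — produces the desired $\phi^{\bowtie}$.

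Independence from $\mathfrak{ord}$ will follow from the same $3\times 3$-diagram swap argument as in Step 3 of the proof of Theorem \ref{maintheorem}: changing the order of two divisors $D_i,D_j$ introduces a sign $(-1)^{\textnormal{deg}\cdot\textnormal{deg}}$ once on the $E$- and once on the $F$-side, and these cancel in the ratio defining $\mathcal{L}_{Y,D}$. Associativity on $\mathcal{M}_{Y,D}^{\times 3}$ reduces to the triple direct-sum decomposition
$$
\mu^{(3)*}\mathbb{P}_{\underline{a}}\simeq \bigoplus_{1\leq i\leq 3}\pi_i^*\mathbb{P}_{\underline{a}}\oplus\bigoplus_{i\neq j}\pi_{i,j}^*\mathcal{E}\textnormal{xt}_{\underline{a}},
$$
and both sides of the associativity hexagon trivialize the same total six-term cross factor via the same $C\otimes C^*$ mechanism; no additional sign appears because the only reorderings involved are identity permutations of $L\otimes L^*$ factors. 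The main technical obstacle is bookkeeping the $\Z_2$-graded signs through the interplay of Serre duality, the $\bowtie$-moves, direct-sum swaps, and the dualization convention of Definition \ref{definitiondual}; these must be applied uniformly using the sign rules of Definition \ref{definition weakstrong Hprincipal}, in close parallel with \cite[Definition 3.12]{CGJ} but adapted to the relative setting of the fiber product over $\mathcal{M}^D$.
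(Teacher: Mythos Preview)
Your outline follows essentially the same route as the paper: the direct-sum decomposition of $\mu^*\mathbb{P}_{\underline{a}}$ into diagonal and cross terms, the use of Serre duality $\sigma^*\Sigma_{\underline{a}}\cong\Sigma_{\underline{k}-\underline{a}}^*$ together with the $\bowtie$-moves (what the paper packages as $\sigma_{\bowtie}$ in \eqref{sigmabowtie}) to trivialize the cross contribution on the fiber product, and the $3\times3$ swap for independence of $\mathfrak{ord}$.

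The one point where you are too optimistic is the claim that for associativity ``no additional sign appears because the only reorderings involved are identity permutations of $L\otimes L^*$ factors.'' In fact a nontrivial Koszul sign does arise already in constructing $\phi^{\bowtie}$: to pass from the decomposition of $\mu^*\mathcal{L}_{Y,D}$ to $\pi_{1,2}^*\mathcal{L}_{Y,D}\otimes\pi_{3,4}^*\mathcal{L}_{Y,D}$ one must permute $\pi_2^*\Lambda_{\underline{0}}$ past $\pi_3^*\Lambda_{\underline{0}}$ and $\pi_4^*\Lambda_{\underline{0}}^*$, producing the sign $(-1)^{\textnormal{deg}(\pi_2^*\Lambda_{\underline{0}})\big(\textnormal{deg}(\pi_3^*\Lambda_{\underline{0}})+\textnormal{deg}(\pi_4^*\Lambda_{\underline{0}})\big)}$ (this is \eqref{strongHsigns} in the paper). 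These signs are exactly of the type handled in Lemma~\ref{lemma Z2graded}, and the associativity check amounts to verifying that the two accumulated sign factors on $\mathcal{M}_{Y,D}^{\times 3}$ agree, which is the computation carried out there. So the associativity does not come for free from ``identity permutations''; you need to invoke the $\Z_2$-graded sign calculus of Lemma~\ref{lemma Z2graded} together with the diagram argument.
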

\begin{proof}
First note, that we have the isomorphism
$$\mu^*_{\mathcal{M}_Y}(\Lambda_{\underline{0}})\cong \pi_{1}^*\Lambda_{\underline{0}}\otimes \pi_{1,2}^*\Sigma_{\underline{0}}\otimes \pi_{1,2}^*\circ \sigma^*\Sigma_{\underline{0}}\otimes\pi_2^*\Lambda_{\underline{0}}\,.$$
Using this together with \eqref{diagram}  we obtain the following commutative diagram
\begin{equation*}
    \begin{tikzcd}
    &\pi_1^*\Lambda_{\underline{0}}\otimes \pi_3^*\Lambda_{\underline{0}}\otimes \pi_4^*\Lambda^*_{\underline{0}}\otimes \pi_2^*\Lambda^*_{\underline{0}}\arrow[d]\\
      &\begin{subarray} a \pi_1^*\Lambda_{\underline{0}}\otimes\pi_{1,3}^*\Sigma_{\underline{0}}\otimes \pi_{1,3}^*\Sigma^*_{\underline{k}}\otimes\pi_{3}^*\Lambda_{\underline{0}}\\
   \otimes\pi_4^*\Lambda^*_{\underline{0}}\otimes\pi_{2,4}^*\Sigma^*_{\underline{0}}\otimes \pi_{2,4}^*\Sigma_{\underline{k}} \otimes\pi_2^*\Lambda^*_{\underline{0}}
   \end{subarray}\arrow{d}\\
    \arrow{dd}\mu^*(\pi_1^*\Lambda_{\underline{0}}\otimes \pi_{2}^*\Lambda^*_{\underline{0}})\arrow[r]&\begin{subarray}a\pi^*_{1}\Lambda_{\underline{0}}\otimes \pi_{1,3}^*\Sigma_{\underline{0}} \otimes\pi_{1,3}^*\circ \sigma^*\Sigma_{\underline{0}}\otimes \pi_3^*(\Lambda_{\underline{0}})\\\otimes \pi_4^*\Lambda^*_{\underline{0}}\otimes\pi_{2,4}^*\circ \sigma^*\Sigma^*_{\underline{0}}\otimes \pi^*_{2,4}\Sigma_{\underline{0}}\otimes \pi_2^*\Lambda^*_{\underline{0}}\end{subarray}\arrow[d]\\
    &\begin{subarray}a  \pi^*_{1}\Lambda^*_{\underline{k}}\otimes \pi_{1,3}^*\circ\sigma^*\Sigma^*_{\underline{k}} \otimes\pi_{1,3}^* \Sigma^*_{\underline{k}}\otimes \pi_3^*(\Lambda^*_{\underline{k}})\\\otimes \pi_4^*\Lambda_{\underline{k}}\otimes\pi_{2,4}^*\Sigma_{\underline{k}}\otimes \pi^*_{2,4}\circ \sigma^*\Sigma^*_{\underline{k}}\otimes \pi_2^*\Lambda_{\underline{k}}\end{subarray}
  \arrow[d]\\
   \mu^*(\pi_1^*\Lambda^*_{\underline{0}}\otimes \pi^*_2\Lambda_{\underline{0}})\arrow[r]& \begin{subarray} a \pi_1^*\Lambda^*_{\underline{0}}\otimes\pi_{1,3}^*\Sigma^*_{\underline{0}}\otimes\pi_{1,3}^*\circ\sigma^*\Sigma^*_{\underline{0}}\otimes\pi_{3}^*\Lambda^*_{\underline{0}}\\
   \otimes\pi_4^*\Lambda_{\underline{0}}\otimes\pi_{2,4}^*\Sigma_{\underline{0}}\otimes\pi_{2,4}^*\circ\sigma^*\Sigma_{\underline{0}}\otimes\pi_2^*\Lambda_{\underline{0}}
   \end{subarray}\arrow{d}\\
   &\begin{subarray} a \pi_1^*\Lambda^*_{\underline{0}}\otimes\pi_{1,3}^*\Sigma^*_{\underline{0}}\otimes\pi_{1,3}^*\Sigma_{\underline{k}}\otimes\pi_{3}^*\Lambda^*_{\underline{0}}\\
   \otimes\pi_4^*\Lambda_{\underline{0}}
   \otimes\pi_{2,4}^*\Sigma_{\underline{0}}
   \otimes \pi_{2,4}^* \Sigma^*_{\underline{k}}\otimes \otimes\pi_2\Lambda^*_{\underline{0}}
   \end{subarray}\arrow{d}\\
  & \pi_1^*\Lambda^*_{\underline{0}}\otimes \pi_3^*\Lambda^*_{\underline{0}}\otimes \pi_4^*\Lambda_{\underline{0}}\otimes \pi_2^*\Lambda_{\underline{0}}
    \end{tikzcd}
\end{equation*}
Where the left vertical arrow is $\mu^*(\vartheta_{\bowtie})$ and  the composition of all arrows on the right is $\pi_{1,2}^*(\vartheta_{\bowtie})\otimes \pi_{3,4}^*(\vartheta_{\bowtie})$ by generalization of the arguments in Cao--Gross--Joyce  \cite[p. 43]{CGJ}.  To construct arrows on the right, we use multiple times Serre duality and \eqref{sigmabowtie}.  This is what induces the isomorphism $\phi^{\bowtie}$. Note that we need to permute $\pi_2^*\Lambda_{\underline{0}}$ through $\pi^*_{3}\Lambda_{\underline{0}}$ and $\pi^*_{4}\Lambda^*_{\underline{0}}$ on both ends, giving the extra sign \begin{equation}\label{strongHsigns}
(-1)^{\textnormal{deg}(\pi_2^*\Lambda_{\underline{0}})\big(\textnormal{deg}(\pi_3^*\Lambda_{\underline{0}})+\textnormal{deg}(\pi^*_4\Lambda_{\underline{0}})\big)}
\end{equation} for the isomorphism of $\Z_2$-bundles. Checking the associativity of the isomorphism combines  the ideas of the proof of associativity in Lemma \ref{lemma Z2graded} and the ones used in the diagram above. The independence of $\mathfrak{ord}$ follows by the same arguments as used in 3. of the proof of Theorem \ref{maintheorem}.
\end{proof}

Using the notation from Definition \ref{representatinofproduct}, we have an obvious map 
\begin{equation}
    \Lambda:\mathcal{T}^{\textnormal{an}}_{Y}\times_{\mathcal{T}^{\textnormal{an}}_{D}}\mathcal{T}^{\textnormal{an}}_{Y}\longrightarrow \mathcal{V}_{Y,D}
\end{equation}
 which corresponds to the inclusion of holomorphic maps into the continuous maps to $\textnormal{Gr}(\mathbb{C}^{\infty})^{\textnormal{an}}$\,. This map is continuous  (see Friedlander--Walker \autocite{FriWal2}). 
Let $\bar{T}_i\supset D_i$ be closed tubular neighborhoods for $i=1,\ldots, N$.
One can construct homotopy retracts $H_i$ of $\bar{T}_i$ to $D_i$ which can be extended to $\tilde{H}_i:I\times Y\to Y$, such that $\tilde{H}_i|_{I\times \bar{T_i}} = H_i$ and 
$\tilde{H}_i(t,-)|_{Y\backslash (1+\epsilon_i)T_i} = \textnormal{id}_{_{Y\backslash (1+\epsilon_i)T_i}},$ where $(1+\epsilon_i)T_i$ denotes some tubular neighborhood containing $\bar{T}_i$.
We concatenate them  to get $\tilde{H}$, $H=\tilde{H}|_{\bar{T}\times I}$. Using that $D$ has locally analytically the form $\mathbb{C}^{4-k}\times \{(z_1,\ldots,z_k)\in \mathbb{C}^k:z_1\cdot \ldots \cdot z_k=0\}$, one can assume that $\tilde{H}(t,T)\subset T$ and $\tilde{H}(t,D)=D$\footnote{One can construct this by taking a splitting of $0\to TD_i\to TY\to ND_i\to 0$, taking geodesic flow in the normal direction for all $D_i$. Then around each intersection projecting the flow to be parallel to each of the other divisors.}. The pullback along $\tilde{H}(1,-)$ and $H(1,-)$ induces homotopy equivalences 
\begin{align*}
\label{upsilon}
  \Upsilon: \mathcal{V}_{Y,D}\longrightarrow \mathcal{V}_{Y,\bar{T}}\,,\qquad
  \Upsilon_{\mathcal{C}}:  \mathcal{C}_{Y,D}\longrightarrow \mathcal{C}_{Y,\bar{T}}\,,
  \numberthis
\end{align*}
which we use from now on to identify the spaces. As $X\subset Y$ is Calabi--Yau,  choosing $K=X\backslash T$, where $T$ is the interior of $\bar{T}$, we construct spin $\tilde{Y}$ as in Definition \ref{definition double}. Define
 \begin{equation}
 \label{Sigma_{tilde{Y}}}
     G_{\tilde{Y}}: \mathcal{V}_{Y,\bar{T}}\longrightarrow \mathcal{V}_{\tilde{Y},\tilde{T}}\,, \qquad G^{\mathcal{C}}_{\tilde{Y}}:\mathcal{C}_{Y,\bar{T}}\longrightarrow \mathcal{C}_{\tilde{Y},\tilde{T}}\,, 
 \end{equation}
 by $G_{\tilde{Y}}(m_1,m_2) = (\tilde{m}_1,\tilde{m}_2)$ for each $(m_1,m_2)\in \mathcal{V}_{Y}\times_{\mathcal{V}_{\bar{T}}}\mathcal{V}_{Y}$ such that
\begin{align*}
\label{gluing equation}
   \tilde{m}_1|_{U} = m_1|_U\,,\quad \tilde{m}_1|_{-U} = m_1|_U\,,\qquad 
\tilde{m}_2|_{U} = m_2|_U\,,\quad \tilde{m}_2|_{-U} = m_1|_U\,.
\numberthis
\end{align*}
Which gives us $\Z_2$-bundles:
 \begin{equation}
 \label{DO}
D_O:=\Upsilon^*\circ G_{\tilde{Y}}^*(D_O(\tilde{Y}))\,,\quad  D_O^{\mathcal{C}}\longrightarrow  \mathcal{C}_{Y,D}\,.
\end{equation}

\begin{lemma}
\label{lemma independent}
\label{lemma extend}
Let $E,F,\phi$ be smooth vector bundles and $\phi:E|_{D}\to F|_{D}$ be an isomorphism, smooth on each $D_i$. Then there exists a contractible choice of isomorphism $\bar{\Phi}: E|_{\bar{T}}\to F|_{\bar{T}}$, $\Phi_i: E|_{\bar{T}_i}\to F|_{\bar{T}_i}$ such that $\Phi_i|_{D_i}=\phi|_{D_i}$ and $\Phi_i$ can be deformed into $\bar{\Phi}$ along isomorphism. Moreover, the map \eqref{Sigma_{tilde{Y}}} corresponds to
$$
[E,F,\phi]\longmapsto [E,F,\bar{\Phi}]\,.
$$
The $\Z_2$-graded strong H-principal $\Z_2$-bundles $D_O$ and $D_O^{\mathcal{C}}$ are independent of the choices made. 
\end{lemma}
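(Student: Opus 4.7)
The plan is to establish existence and contractibility of the space of extensions, identify the resulting map with the composition $G_{\tilde Y}\circ \Upsilon$ from \eqref{upsilon}--\eqref{Sigma_{tilde{Y}}}, and then deduce independence from all choices by invoking the homotopy invariance of complex determinant lines (Lemma~\ref{proposition functoriality ds adj}(v)) together with contractibility of the parameter spaces.

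For existence of $\bar\Phi$, I would pull back $\phi$ via the retraction $\tilde H(1,-)\colon \bar T\to D$ (or use a partition of unity subordinate to coordinate tubular neighbourhoods of the strata of $D$, which works equally well). On $D$ this agrees with $\phi$, which is an isomorphism, and by openness of the non-degenerate locus in $\underline{\mathrm{Hom}}(E,F)$ the resulting morphism is an isomorphism on some neighbourhood of $D$; shrinking $\bar T$ inside its original choice produces $\bar\Phi$. The same argument with $H_i$ in place of $\tilde H$ and $\bar T_i$ in place of $\bar T$ yields $\Phi_i$ satisfying $\Phi_i|_{D_i}=\phi|_{D_i}$.

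For contractibility, the set of isomorphisms $E|_{\bar T}\to F|_{\bar T}$ extending $\phi$ is an open subset of an affine space modelled on morphisms vanishing on $D$; any straight-line homotopy between two such extensions restricts to $\phi$ on $D$ at every time, hence remains an isomorphism after possibly shrinking $\bar T$ further. Thus the space is star-shaped around any fixed choice, in particular contractible. The same argument on $\bar T_i$ yields contractibility of the space of $\Phi_i$, and produces a canonical-up-to-contractible-choice path of isomorphisms from $\Phi_i$ to $\bar\Phi|_{\bar T_i}$.

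For identification with \eqref{Sigma_{tilde{Y}}}, unwinding \eqref{gluing equation} shows that $G_{\tilde Y}$ duplicates the pair $(m_1,m_2)$ across $\partial U$ so that on $-U$ both components agree with $m_1$; hence $G_{\tilde Y}$ applied to $[E,F,\bar\Phi]\in\mathcal V_{Y,\bar T}$ yields precisely the doubled pair on $\tilde Y$ identified on $\tilde T$, and precomposition with $\Upsilon$ realises the prescribed assignment $[E,F,\phi]\mapsto [E,F,\bar\Phi]$ up to retraction. For independence of $D_O$ and $D_O^{\mathcal C}$ from the cutoff $c$, the tubular neighbourhoods $\bar T_i$, the retractions $H_i,\tilde H_i$, and the extensions $\bar\Phi,\Phi_i$: all of these lie in contractible parameter spaces, and the induced transports of the complex determinant line via Lemma~\ref{proposition functoriality ds adj}(v) combine to give canonical isomorphisms between the $\Z_2$-bundles produced from any two sets of data. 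For $D_O^{\mathcal C}$, uniqueness in Proposition~\ref{exun}(ii) then transports the statement along the homotopy-theoretic group completion.

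The main technical obstacle is ensuring that the contractibility arguments function in families over $\mathcal V_{Y,D}$ and that the canonical isomorphisms above satisfy the cocycle condition on triple overlaps of choices; this is essentially automatic once the global contractibility of each parameter space is in hand, propagated through continuous dependence of the twisted Dirac operator on its connection.
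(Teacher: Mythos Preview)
Your treatment of existence and contractibility of the extensions $\bar\Phi,\Phi_i$ is essentially the paper's argument (the paper phrases the identification $E|_{\bar T}\cong H^*(E|_D)$ via parallel transport along a partial connection in the $I$-direction rather than an abstract pullback, but this amounts to the same thing), and your identification of the map with $G_{\tilde Y}\circ\Upsilon$ is fine.

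The gap is in the independence claim for the cutoff $c$. Different regular values $c_1,c_2$ produce genuinely different compact spin manifolds $\tilde Y_1,\tilde Y_2$, and the bundles $D_O(\tilde Y_1),D_O(\tilde Y_2)$ live over different moduli spaces $\mathcal B_{\tilde Y_1,\tilde T_1},\mathcal B_{\tilde Y_2,\tilde T_2}$. Saying that ``$c$ lies in a contractible parameter space'' and invoking Lemma~\ref{proposition functoriality ds adj}(v) does not produce an isomorphism between these: there is no single family of operators over a common base to transport along. The paper handles this by an explicit real excision step: for a test family $Z\to\mathcal V_{Y,D}$ one writes down the square of bundles $\tilde E_i,\tilde F_i$ on $\tilde Y_i$ identified over $U_1\cap U_2$, and applies Upmeier's $\Z_2$-bundle excision \cite[Thm.~2.10]{Markus} to obtain $z_1^*D_O(\tilde Y_1)\cong z_2^*D_O(\tilde Y_2)$. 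Note also that $D_O$ is built from the \emph{real} orientation bundles $O^{\slashed D_+}$, so the relevant transport is the real one of Remark~\ref{excision}, not the complex-line transport of Lemma~\ref{proposition functoriality ds adj}(v); you would still need to check that real structures are preserved.

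You also do not address why the resulting isomorphism respects the strong H-principal structure, which is part of the statement. This requires compatibility of excision with direct sums \cite[Thm.~2.10(iii)]{Markus} together with the observation that the canonical complex orientations on the mixed terms $\textnormal{or}(\slashed D^{\nabla_{\bar\C^n\otimes\C^m}})$ are identified under excision; the paper spells this out.
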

\begin{proof}
The isomorphism $H^*(E|_D)\cong E|_{\bar{T}}$   can be constructed by parallel transport along a contractible choice of partial connections in the $I$ direction (see e.g. Lang \cite[§IV.1]{Serge}) which are piece-wise smooth. Doing the same for $F$ gives us $\Phi_i:F|_{\bar{T}} \cong H^*(F|_{D})\cong H^*(E|_{D})\cong E_{\bar{T}}$. As, we can re-parameterize the order using an $I^N$-family of homotopies, it will be independent of it. Moreover, each $\Phi_i$ is defined by $F|_{\bar{T}_i} \cong H^*_i(F|_{D_i})\cong H^*_i(E|_{D_i})\cong E|_{\bar{T}_i}$, which can be deformed to $\bar{\Phi}$ along the transport. 
The choices of splittings
$
0\to TD_i\to T{Y}\to ND_{i}\to 0
$,where $ND_i$ is the normal bundle are contractible and so is the choice of metric for geodesic flow.  Different choices of sizes of these neighborhoods correspond to a choice of some small $\epsilon_i>0$.  For each choice of the data above, the $\Z_2$-bundle $D_O\to \mathcal{V}_{Y,D}$ is independent of the choices made during the construction of $\tilde{Y}$ in Definition \ref{definition double}. For this let $(\tilde{Y}_1,\tilde{T}_1)$, $(\tilde{Y}_2,\tilde{T}_2)$ be two pairs constructed using Definition \ref{definition double}. Recall that this corresponds to fixing two different sets $U_{1,2}\supset K$ with a boundary. A family $Z\to \mathcal{V}_{Y,D}$ gives
$
Z\xrightarrow{z_1} \mathcal{V}_{\tilde{Y}_1, \tilde{T}_1}\,,\, Z\xrightarrow{z_2} \mathcal{V}_{\tilde{Y}_2,\tilde{T}_2}
$
Which can be interpreted as the following diagram of (families) of vector bundles:
$$
    \begin{tikzcd}[column sep=huge, row sep=huge]
    \arrow[d,"\textnormal{id  on } U_1\cap U_2"']\tilde{E}_1\arrow[r,"\tilde{\Phi}_1  \textnormal{ on }\tilde{T_1}"]&  \tilde{F}_1\arrow[d,"\textnormal{id}\textnormal{ on  }U_1\cap U_2"]\\
    \tilde{E}_2\arrow[r,"\tilde{\phi}_2\textnormal{ on }\tilde{T}_2"']&\tilde{F}_2
    \end{tikzcd}\,,
$$
Induced by \cite[Thm. 2.10]{Markus}, we have the isomorphism 
$
   z_1^*\big(D_O(\tilde{Y}_1)\big)\cong \textnormal{or}\big(\slashed{D}^{\nabla_{\textnormal{ad}(\tilde{P}_1)}}\big)\textnormal{or}\big(\slashed{D}^{\nabla_{\textnormal{ad}(\tilde{Q}_1)}}\big)^*
   \cong \textnormal{or}\big(\slashed{D}^{\nabla_{\textnormal{ad}(\tilde{P}_2)}}\big)\textnormal{or}\big(\slashed{D}^{\nabla_{\textnormal{ad}(\tilde{Q}_2)}}\big)^*\cong z_2^*\big(D_O(\tilde{Y}_1)\big)$, where $\tilde{P}_i$, $\tilde{Q}_i$ are the unitary frame bundle for $\tilde{E}_i$, $\tilde{F}_i$. 
 The fact that it is an isomorphism of strong H-principal $Z_2$-bundles follows from compatibility under sums in Theorem \cite[Thm.2.10 (iii)]{Markus} and the natural orientations of $\textnormal{or}(\slashed{D}^{\nabla_{(\tilde{P}_1\times_{\tilde{Y}} \tilde{Q}_1)_{U(n)\times U(m)}\bar{\C}^n\otimes \C^m}})$ and $\textnormal{or}(\slashed{D}^{\nabla_{(\tilde{P}_2\times_{\tilde{Y}} \tilde{Q}_2)_{U(n)\times U(m)}\bar{\C}^n\otimes \C^m}})$ as in \cite[Ex. 2.11]{JTU} compatible under excision as they are determined by the complex structures which are identified.

\end{proof}
 
From now on, we will not distinguish between Ind-schemes and their analytifications. Note that by Proposition \ref{proposition homotopy theoretic group completion}, we have the commutative diagram
\begin{equation}
\label{htgc}
    \begin{tikzcd}[column sep=1.5cm]
    \arrow[d,"(\Omega^{\textnormal{ag}})^{\textnormal{top}}"']\mathcal{T}_{Y,D}\arrow[r,"\Lambda"]&\mathcal{V}_{Y,D}\arrow[d,"\Omega"']\arrow[r,"\Upsilon"]&\mathcal{V}_{Y,\bar{T}}\arrow[d]\arrow[r,"G_{\tilde{Y}}"]&\mathcal{V}_{\tilde{Y},\tilde{T}}\arrow[d,"\tilde{\Omega}"]\\
    (\mathcal{M}_{Y,D})^{\textnormal{top}}\arrow[r,"\Gamma"]&\mathcal{C}_{Y,D}\arrow[r,"\Upsilon_{\mathcal{C}}"]&\mathcal{C}_{Y,\bar{T}}\arrow[r,"G^{\mathcal{C}}_{\tilde{Y}}"]&\mathcal{C}_{\tilde{Y},\tilde{T}}
    \end{tikzcd}\,.
\end{equation}
The map $\Gamma$ was expressed explicitly in Definition \ref{definition Gamma} and all the vertical arrows are homotopy theoretic group completions.

\subsection{Comparing excisions}
\label{section comon resolution}
The results of this section have been also obtained in the author's  work \cite{bojko} by different but equivalent means. We begin by defining  a new set of differential geometric line bundles on $\mathcal{T}_{Y,D}\times \mathcal{T}_{Y,D}$. Let $D=\bar{\partial}+\bar{\partial}^*:\Gamma^{\infty}(\mathcal{A}^{0,\textnormal{even}})\to \Gamma^{\infty}(\mathcal{A}^{0,\textnormal{odd}})$, then we define
$$
\hat{\Sigma}^{\textnormal{dg}}_{\underline{a},P,Q}\longrightarrow \mathcal{A}_P\times \mathcal{A}_Q
$$
given by a complex line $\textnormal{det}(D^{\nabla_{\textnormal{Hom}(E,F\otimes L_{\underline{a}})}})$
 at each point $(\nabla_P,\nabla_Q)$, where $E,F$ are the associated complex vector bundles to $P,Q$ and $\nabla_{\textnormal{Hom}(E,F)}$ is the induced connection on $E^*\otimes F$. This descends to a line bundle on $\hat{\Sigma}^{\textnormal{dg}}_{\underline{a},P,Q}\to \mathcal{B}_P\times \mathcal{B}_Q$. Taking the union over isomorphism classes $[P]$, $[Q]$, we obtain a line bundle
 $$
 \hat{\Sigma}^{\textnormal{dg}}_{\underline{a}}\longrightarrow \mathcal{B}_Y\times \mathcal{B}_Y\,.
 $$
 Using the natural map $\Lambda\times \Lambda: \mathcal{T}_{Y}\times \mathcal{T}_Y\to \mathcal{V}_Y\times \mathcal{V}_Y\simeq (\mathcal{B}_Y)^{\textnormal{cla}}\times (\mathcal{B}_Y)^{\textnormal{cla}}$, we can pull back these bundles to obtain \begin{equation}\Sigma^{\textnormal{dg}}_{\underline{a}}\longrightarrow \mathcal{T}_{Y}\times \mathcal{T}_Y\quad \textnormal{and}\quad \Lambda^{\textnormal{dg}}_{\underline{a}}=\Delta^*(\Sigma^{\textnormal{dg}}_{\underline{a}}).\end{equation}
 \begin{lemma}
 \label{lemma dg orientations}
After a choice of $\bowtie$ there exist natural isomorphisms
 \begin{align*}
   \sigma^{\textnormal{dg}}_{\bowtie}:&\pi_{1,3}^*(\Sigma^{\textnormal{dg}}_{\underline{k}})\otimes\pi_{2,4}^*(\Sigma^{\textnormal{dg}}_{\underline{k}})^*\stackrel{\sim}{\longrightarrow} \pi_{1,3}^*(\Sigma^{\textnormal{dg}}_{\underline{0}})\otimes\pi_{2,4}^*(\Sigma^{\textnormal{dg}}_{\underline{0}})^*\,,\\
   \tau^{\textnormal{dg}}_{\bowtie}:&\pi_{1}^*(\Lambda^{\textnormal{dg}}_{\underline{k}})\otimes\pi_{2}^*(\Lambda^{\textnormal{dg}}_{\underline{k}})^*\stackrel{\sim}{\longrightarrow} \pi_{1}^*(\Lambda^{\textnormal{dg}}_{\underline{0}})\otimes\pi_{2}^*(\Lambda^{\textnormal{dg}}_{\underline{0}})^*\,.
 \end{align*}
 Moreover, we have the isomorphisms 
 \begin{equation}
     \#_{\Sigma^{\textnormal{dg}}_{\underline{a}}}:\Sigma^{\textnormal{dg}}_{\underline{a}}\xlongrightarrow{\sim}\sigma^*(\Sigma^{\textnormal{dg}}_{\underline{k}-\underline{a}})^*\,,\quad \#_{\underline{a}}: \Lambda^{\textnormal{dg}}_{\underline{a}}\xlongrightarrow{\sim} (\Lambda^{\textnormal{dg}}_{\underline{k}-\underline{a}})^*\,.
 \end{equation}
\begin{proof}
The following construction works in families due to the work done in §\ref{section pseudo}, so we restrict ourselves to a point $(p,q) = ([E_1,F_1,\phi_1],[E_2,F_2,\phi_2])$, where $\phi_{1/2,i}:E_{1/2}|_{D_i}\to F_{1/2}|_{D_i}$ are isomorphism. We also set the notation \begin{equation}V_{\underline{a}} = \textnormal{End}(E_1,F_1\otimes L_{\underline{a}})\quad \textnormal{and}\quad W_{\underline{a}} = \textnormal{End}(E_2,F_2\otimes L_{\underline{a}})\,.
\end{equation}
Using $\bar{\Phi}_{\underline{a}}$ to denote the isomorphism $V_{\underline{a}}|_T\to W_{\underline{a}}|_T$ constructed in Lemma \ref{lemma independent}. We then have the data
$$
   \begin{tikzcd}
   \arrow[d,dashrightarrow,"\Omega^{-1}"']\sigma_{V_{\underline{k}}}\arrow[r,dashrightarrow,"i\Phi_{\underline{k}}"]&\sigma_{W_{\underline{k}}}\arrow[d,dashrightarrow,"\Omega^{-1}"]\\
   \sigma_{V_{\underline{0}}}\arrow[r,dashrightarrow,"i\Phi_{\underline{0}}"']&\sigma_{W_{\underline{0}}}
   \end{tikzcd} 
$$
using that $\Omega$ is invertible outside of $D$. We therefore obtain the isomorphism $\sigma_{\bowtie}$ from  Lemma \ref{upmeier}.  Then we have the standard definition of the Hodge star
$
\star:\Gamma^\infty(\mathcal{A}^{p,q})\to \Gamma^\infty(\mathcal{A}^{4-p,4-q})\,,
$
given by 
$
\alpha \wedge *\beta =  \langle\alpha,\beta\rangle\Omega\wedge \bar{\Omega}, 
$
where $\langle-,-\rangle$ is the hermitian metric on forms.  We define then the anti-linear maps
\begin{align*}
\label{antilinear maps}
\#_1: \mathcal{A}^{0,\textnormal{even}}\to\mathcal{A}^{0,\textnormal{even}} \otimes K_X\,,\qquad
\#_2:\mathcal{A}^{0,\textnormal{odd}}\to \mathcal{A}^{0,\textnormal{odd}}\otimes K_X\,\\
\#^{\textnormal{op}}_1: \mathcal{A}^{0,\textnormal{even}}\otimes K_X\to\mathcal{A}^{0,\textnormal{even}} \,,\qquad
\#^{\textnormal{op}}_2:\mathcal{A}^{0,\textnormal{odd}}\otimes K_X\to \mathcal{A}^{0,\textnormal{odd}}\,,
\numberthis{}
\end{align*}
by
$
  \#_1|_{\mathcal{A}^{0,2q}} = (-1)^q\star\,, \#_2|_{\mathcal{A}^{0,2q+1}} =(-1)^{q+1}\star\,,
   \#^{\textnormal{op}}_1|_{\mathcal{A}^{4,2q}} = (-1)^q\star\,, \#^{\textnormal{op}}_2|_{\mathcal{A}^{4,2q+1}} =(-1)^{q+1}\star\,.
$
These solve $\#_i^{\textnormal{op}}\circ \#_i = \textnormal{id}$ and $\#_i\circ \#_i^{\textnormal{op}} = \textnormal{id}$. We have the commutativity relations $D_{K_X}\circ\#_1=\#_2\circ D$ and $\#_2^{\textnormal{op}}\circ D_{K_X} = D\circ \#_1^{\textnormal{op}}$ and obtain the isomorphisms $   \det(\sigma_{V_{\underline{a}}})\cong \overline{\det(\sigma_{V_{\underline{k}-{\underline{a}}}})}\cong{\det(\sigma_{V_{\underline{k}-{\underline{a}}}})}^*\,, $
where the second isomorphism on both lines uses the hermitian metrics on forms which descend to a hermitian metric on the determinant. 
\end{proof}

 \end{lemma}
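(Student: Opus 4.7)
The plan is to construct the four isomorphisms in parallel by working pointwise on $\mathcal{T}_Y\times\mathcal{T}_Y$ (lifting to families through the formalism of §\ref{section pseudo}), with the $\#$-type isomorphisms built from the Hodge star and the $\sigma^{\textnormal{dg}}_{\bowtie}$, $\tau^{\textnormal{dg}}_{\bowtie}$ built from the excision Lemma \ref{upmeier} applied to the sections in $\bowtie$. At a point $\bigl([E_1,F_1,\phi_1],[E_2,F_2,\phi_2]\bigr)$ I would set $V_{\underline{a}}=\textnormal{Hom}(E_1,F_1\otimes L_{\underline{a}})$ and $W_{\underline{a}}=\textnormal{Hom}(E_2,F_2\otimes L_{\underline{a}})$; then $\Sigma^{\textnormal{dg}}_{\underline{a}}$ at this point is $\textnormal{det}(\sigma_{V_{\underline{a}}}(D))\otimes \textnormal{det}^*(\sigma_{W_{\underline{a}}}(D))$ for the Dolbeault $D=\bar{\partial}+\bar{\partial}^*$.

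For the extension data, I would apply Lemma \ref{upmeier} step by step: each section $s_{i,k}:\mathcal{O}_Y\to \mathcal{O}_Y(D_i)$ in $\bowtie$ is an isomorphism on $Y\setminus D_i$, hence induces isomorphisms of the twisted bundles away from (a tubular neighborhood of) $D_i$. Taking $S=Y\setminus D_i$ and $T$ a tubular neighborhood of $D_i$, and similarly for the inverses of the $t_{j,l}$, the excision lemma produces a natural isomorphism of determinant line bundles for going from $L_{\underline{a}}$ to $L_{\underline{a}\pm e_i}$. Composing all such steps in the order prescribed by $\mathfrak{ord}$ (matching the algebraic construction of $\vartheta^{\textnormal{sp}}_{\bowtie}$ in Definition \ref{definitionbowtie}) yields $\sigma^{\textnormal{dg}}_{\bowtie}$, and pulling back along the diagonal gives $\tau^{\textnormal{dg}}_{\bowtie}$.

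For the $\#$-isomorphisms, which encode differential-geometric Serre duality, I would introduce the antilinear Hodge-type maps
\begin{align*}
\#_1: \mathcal{A}^{0,\textnormal{even}}&\longrightarrow \mathcal{A}^{0,\textnormal{even}}\otimes K_Y, \\
\#_2: \mathcal{A}^{0,\textnormal{odd}}&\longrightarrow \mathcal{A}^{0,\textnormal{odd}}\otimes K_Y,
\end{align*}
defined by appropriate signed multiples of $\star$ so that $D_{K_Y}\circ \#_1=\#_2\circ D$. Tensoring by the identity on $V_{\underline{a}}^\vee\cong V_{\underline{k}-\underline{a}}$ (using the hermitian metric) intertwines the twisted Dolbeault operator for $V_{\underline{a}}$ with the one for $V_{\underline{k}-\underline{a}}$, so on determinants I obtain an antilinear isomorphism $\textnormal{det}(\sigma_{V_{\underline{a}}})\cong \overline{\textnormal{det}(\sigma_{V_{\underline{k}-\underline{a}}})}$. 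Composing with the isomorphism $\overline{L}\cong L^*$ furnished by the hermitian metric on the determinant line gives the complex-linear $\#_{\Sigma^{\textnormal{dg}}_{\underline{a}}}$, and an analogous construction for $W_{\underline{a}}$ combines into $\Sigma^{\textnormal{dg}}_{\underline{a}}\xrightarrow{\sim}\sigma^*(\Sigma^{\textnormal{dg}}_{\underline{k}-\underline{a}})^*$; restricting along the diagonal gives $\#_{\underline{a}}$.

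The main obstacle is ensuring that the excision-based definition of $\sigma^{\textnormal{dg}}_{\bowtie}$ is independent of the auxiliary analytic choices (cut-off functions, lifts of symbols to operators, and the order in which the divisors are traversed); this will follow from the compatibility of excision with direct sums and the independence up to contractible choice of the isotopies provided by Lemma \ref{upmeier}. The dependence on the combinatorial ordering $\mathfrak{ord}$ should be controlled exactly as in step~3 of the proof of Theorem \ref{maintheorem}, via a $3\times 3$ diagram expressing the compatibility of the two possible orders in which one passes through two divisors $D_i$ and $D_j$. Naturality in families is automatic from the families version of Lemma \ref{upmeier} and from the fact that the Hodge star varies continuously with the underlying hermitian data, whose space of choices is convex.
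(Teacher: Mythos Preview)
Your approach is correct and close to the paper's, with one structural difference worth noting. For the $\#$-isomorphisms you do exactly what the paper does: build antilinear Hodge-type maps $\#_1,\#_2$ intertwining $D$ with $D_{K_Y}$, pass to determinants, and convert the antilinear isomorphism to a complex-linear duality via the hermitian metric.

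For $\sigma^{\textnormal{dg}}_{\bowtie}$ the paper takes a more direct route than you propose. Rather than iterating the excision Lemma~\ref{upmeier} once per section $s_{i,k}$ (and $t_{j,l}^{-1}$), the paper applies it in a \emph{single} step: it uses the full $\Omega^{-1}$, invertible on all of $X=Y\setminus D$, as the identification $\xi_V=\xi_W$ between $V_{\underline{k}}$ and $V_{\underline{0}}$ (resp.\ $W_{\underline{k}}$ and $W_{\underline{0}}$) over $S=X$, with the $\Phi$-maps on the tubular neighbourhood $T$ coming from the extension $\bar{\Phi}$ of Lemma~\ref{lemma independent}. Your step-by-step version is also valid and in fact mirrors the algebraic Definition~\ref{definitionbowtie} more faithfully; it is essentially the form that reappears in the proof of Proposition~\ref{proposition comparing excision}, where the comparison with the algebraic side is carried out divisor by divisor. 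The one-shot version buys brevity for this lemma; your version buys a closer structural match to the later comparison argument. Your remarks on independence of auxiliary choices and of $\mathfrak{ord}$ are correct but are not part of the paper's proof of this particular lemma---those points are established elsewhere (Lemma~\ref{lemma independent} and the argument in step~3 of Theorem~\ref{mainnctheorem}).
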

\begin{definition}Let $\mathcal{U}^{\textnormal{vb}}\to Y\times \mathcal{T}_{Y}$ be the universal vector bundle generated by global sections. We define
\begin{alignat*}{2}
    \mathcal{E}\textnormal{xt}^{\textnormal{vb}}_{\underline{a}}& = \pi_{1,2\,*}(\pi_{1,3}^*\,\mathcal{U}_{\textnormal{vb}}^*\otimes \pi_{1,3}^*\,\mathcal{U}_{\textnormal{vb}}\otimes \pi_1^*(L_{\underline{a}}))\,,\qquad  & \mathbb{P}^{\textnormal{vb}}_{\underline{a}} &=
  \Delta^*\mathcal{E}\textnormal{xt}^{\textnormal{vb}}_{\underline{a}}\\
  \Sigma_{\underline{a}}^{\textnormal{vb}}&= \textnormal{det}(\mathcal{E}\textnormal{xt}^{\textnormal{vb}}_{\underline{a}})\,,\qquad& \Lambda^{\textnormal{vb}}_{\underline{a}} &= \Delta^*\big(\textnormal{det}(\Sigma^{\textnormal{vb}}_{\underline{a}})\big)\
\end{alignat*}
We then have the isomorphisms 
 \begin{align*}
   \sigma^{\textnormal{vb}}_{\bowtie} &= (\Omega^{\textnormal{ag}})^*(\sigma_{\bowtie}) :\pi_{1,3}^*(\Sigma^{\textnormal{vb}}_{\underline{k}})\otimes\pi_{2,4}^*(\Sigma^{\textnormal{vb}}_{\underline{k}})^*\overset{\sim}{\longrightarrow} \pi_{1,3}^*(\Sigma^{\textnormal{vb}}_{\underline{0}})\otimes\pi_{2,4}^*(\Sigma^{\textnormal{vb}}_{\underline{0}})^*\,,\\
 \tau^{\textnormal{vb}}_{\bowtie} &=\Delta^*(\sigma^{\textnormal{vb}}_{\bowtie}) :\pi_{1}^*(\Lambda^{\textnormal{vb}}_{\underline{k}})\otimes\pi_{2}^*(\Lambda^{\textnormal{vb}}_{\underline{k}})^*\overset{\sim}{\longrightarrow} \pi_{1}^*(\Lambda^{\textnormal{vb}}_{\underline{0}})\otimes\pi_{2}^*(\Lambda^{\textnormal{vb}}_{\underline{0}})^{*}\,.
 \end{align*}
In particular, we have the $\Z_2$-bundle $O^{\bowtie}_{\textnormal{vb}}\to \mathcal{T}_{Y,D}$, such that naturally $O^{\bowtie}_{\textnormal{vb}}\cong (\Omega^{\textnormal{ag}})^*O^{\bowtie}$.
\end{definition} 

The following proposition is the result of trying to develop a more general framework of relating compactly supported coherent sheaves to compactly supported pseudo-differential operators using cohesive modules of Block \cite{block} and Yu \cite{Yu}. 
\begin{proposition}
\label{proposition comparing excision}
There are natural isomorphisms $\kappa^{\textnormal{a,d}}_{\underline{a}}:\Sigma_{\underline{a}}^{\textnormal{vb}}\cong \Sigma^{\textnormal{dg}}_{\underline{a}}$ such that the the following diagram commutes up to natural isotopies:
\begin{equation}
    \label{comparing excision}
  \begin{tikzcd}
\arrow[d,"\pi_{1,3}^*(\kappa^{\textnormal{a,d}}_{\underline{k}})\otimes \pi_{2,4}^*(\kappa^{\textnormal{a,d}}_{\underline{k}})^{-*}"']\pi_{1,3}^*(\Sigma_{\underline{k}}^{\textnormal{vb}})\otimes \pi_{2,4}^*(\Sigma^{\textnormal{vb}}_{\underline{k}})^*\arrow[r,"\sigma^{\textnormal{vb}}_{\bowtie}"]&\pi_{1,3}^*(\Sigma^{\textnormal{vb}}_{\underline{0}})\otimes \pi_{2,4}^*(\Sigma^{\textnormal{vb}}_{\underline{0}})^*\arrow[d,"\pi_{1,3}^*(\kappa^{\textnormal{a,d}}_{\underline{0}})\otimes \pi_{2,4}^*(\kappa^{\textnormal{a,d}}_{\underline{0}})^{-*}"]\\
\pi_{1,3}^*(\Sigma^{\textnormal{dg}}_{\underline{k}})\otimes \pi_{2,4}^*(\Sigma^{\textnormal{dg}}_{\underline{k}})^{*}\arrow[r,"\sigma^{dg}_{\bowtie}"]&\pi_{1,3}^*(\Sigma^{\textnormal{dg}}_{\underline{0}})\otimes \pi_{2,4}^*(\Sigma^{\textnormal{dg}}_{\underline{0}})^*\,,
\\
\arrow[d]\pi_{1}^*(\Lambda_{\underline{k}}^{\textnormal{vb}})\otimes \pi_{2}^*(\Lambda^{\textnormal{vb}}_{\underline{k}})^*\arrow[r,"\tau^{\textnormal{vb}}_{\bowtie}"]&\pi_{1}^*(\Lambda^{\textnormal{vb}}_{\underline{0}})\otimes \pi_{2}^*(\Lambda^{\textnormal{vb}}_{\underline{0}})^*\arrow[d]\\
\pi_{1}^*(\Lambda^{\textnormal{dg}}_{\underline{k}})\otimes \pi_{2}^*(\Lambda^{\textnormal{dg}}_{\underline{k}})^{*}\arrow[r,"\tau^{dg}_{\bowtie}"]&\pi_{1}^*(\Lambda^{\textnormal{dg}}_{\underline{0}})\otimes \pi_{2}^*(\Lambda^{\textnormal{dg}}_{\underline{0}})^*\,.
\end{tikzcd}  
\end{equation}
\end{proposition}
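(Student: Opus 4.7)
The plan is to construct $\kappa^{\textnormal{a,d}}_{\underline{a}}$ via Dolbeault/Hodge theory and then verify the two diagrams by unwinding the isomorphisms $\sigma^{\textnormal{vb}}_{\bowtie}$, $\sigma^{\textnormal{dg}}_{\bowtie}$ step by step and checking compatibility at each individual excision. Fix a pair $(p,q)=([E_1,F_1,\phi_1],[E_2,F_2,\phi_2])$ in $\mathcal{T}_{Y,D}\times\mathcal{T}_{Y,D}$ and choose hermitian metrics (the space of such is contractible). The isomorphism $\kappa^{\textnormal{a,d}}_{\underline{a}}$ is the classical Dolbeault identification: $\mathcal{E}\textnormal{xt}^{\textnormal{vb}}_{\underline{a}}|_{(p,q)}$ computes $R\Gamma\big(Y,\textnormal{Hom}(E_1,F_1\otimes L_{\underline{a}})\big)$, whose determinant is canonically isomorphic to the determinant line of the associated twisted Dolbeault complex, which by Hodge theory (and stabilisation as in \cite[Def.~3.4]{Markus}) coincides with $\det(D^{\nabla_{\textnormal{Hom}(E_1,F_1\otimes L_{\underline{a}})}})$. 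In families this is natural because the relevant Hilbert--space splittings depend continuously on $(\nabla,g)$. The second diagram then follows from the first by pulling back along $\Delta$ together with $\Delta^*\sigma^{\textnormal{dg}}_{\bowtie}=\tau^{\textnormal{dg}}_{\bowtie}$ and $\Delta^*\sigma^{\textnormal{vb}}_{\bowtie}=\tau^{\textnormal{vb}}_{\bowtie}$.

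Next I would decompose $\sigma^{\textnormal{vb}}_{\bowtie}$ and $\sigma^{\textnormal{dg}}_{\bowtie}$ into their defining elementary steps and verify compatibility one step at a time. On the algebraic side each elementary step is an application of \eqref{divcon} coming from the short exact sequence
\begin{equation*}
 0\to L_{\underline{a}}\xrightarrow{\cdot s_{i,k}} L_{\underline{a}+e_i}\to L_{\underline{a}+e_i}\otimes_{\mathcal{O}_Y}\mathcal{O}_{D_i}\to 0
\end{equation*}
(or its inverse version using $t_{j,l}$), followed at the end by Serre duality \eqref{i_L}. On the differential geometric side each step is produced by Lemma \ref{upmeier} applied to the excision data
\begin{equation*}
\begin{tikzcd}
\sigma_{V_{\underline{a}}}(D_1)\arrow[r,dashrightarrow,"\cdot s_{i,k}"]\arrow[d,dashrightarrow,equal]&\sigma_{V_{\underline{a}+e_i}}(D_1)\arrow[d,dashrightarrow,equal]\\
\sigma_{W_{\underline{a}}}(D_2)\arrow[r,dashrightarrow,"\cdot s_{i,k}"]&\sigma_{W_{\underline{a}+e_i}}(D_2)
\end{tikzcd}
\end{equation*}
away from $D_i$, where $\cdot s_{i,k}$ is an isomorphism on $Y\setminus D_i$, while Serre duality is replaced by the Hodge--star construction $\#_{\Sigma^{\textnormal{dg}}_{\underline{a}}}$ of Lemma \ref{lemma dg orientations}. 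The comparison of a single step reduces to the following well known compatibility: for a holomorphic line bundle inclusion $L\hookrightarrow L'$ between holomorphic vector bundles with cokernel $Q$ supported on a smooth divisor, the long exact sequence in Dolbeault cohomology gives a canonical isomorphism of determinant lines which agrees with the one obtained by the complex excision principle applied to the Dolbeault/Dirac operator, since both are induced by the same morphism of elliptic complexes. In families with parameter $\nabla$ the identification is continuous by Proposition \ref{proposition functoriality ds adj}(v), and Lemma \ref{upmeier} guarantees the required compatibilities under direct sums and duals.

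The main obstacle is the Serre-duality step, where one must match the algebraic isomorphism $i_L:\Lambda_L\to\Lambda_{K_Y\otimes L^\vee}^\ast$ coming from Grothendieck--Serre duality with the differential geometric $\#$-isomorphism built from the Hodge star in \eqref{antilinear maps}. The strategy here is to factor both through the same Hodge-theoretic pairing on harmonic representatives: Grothendieck--Serre duality on $Y$, when computed via Dolbeault resolutions and a fixed K\"ahler metric representing $\Omega$, coincides with the pairing $\alpha\wedge\#\beta\mapsto\int_Y\langle\alpha,\beta\rangle\,\Omega\wedge\bar\Omega$ up to contractible homotopy (the choice of K\"ahler form and of trivialisation of $K_Y$ by $\bar\Omega$). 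Once this is verified, all remaining compatibilities, including the sign conventions $(-1)^q$ appearing in $\#_1,\#_2$, come out by direct calculation in each elementary step, and composing the verified diagrams along the sequence of excisions defining $\sigma^{\textnormal{vb}}_{\bowtie}$ and $\sigma^{\textnormal{dg}}_{\bowtie}$ yields commutativity of \eqref{comparing excision} up to the natural isotopies coming from Lemma \ref{upmeier}.
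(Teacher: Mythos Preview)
Your construction of $\kappa^{\textnormal{a,d}}_{\underline{a}}$ via the Dolbeault resolution and Hodge theory is the same as the paper's, and the reduction of the second diagram to the first by pulling back along $\Delta$ is fine. The strategy of decomposing $\sigma^{\textnormal{vb}}_{\bowtie}$ into elementary steps indexed by the sections $s_{i,k},t_{j,l}$ is also correct in spirit.

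There is, however, a genuine gap in the key step. You assert that the determinant isomorphism coming from the short exact sequence $0\to V_{\underline{a}-e_i}\to V_{\underline{a}}\to V_{\underline{a},i}\to 0$ agrees with the excision isomorphism ``since both are induced by the same morphism of elliptic complexes''. This is not a morphism of elliptic complexes in the required sense: the cokernel $V_{\underline{a},i}$ is a sheaf supported on the divisor $D_i$, not a vector bundle on $Y$, so the Dolbeault complex of $V_{\underline{a},i}$ is not governed by an elliptic symbol on $Y$ to which Lemma~\ref{upmeier} applies. The algebraic side uses restriction to $D_i$ and then cancellation via $\phi_{\underline{a},i}:V_{\underline{a},i}\to W_{\underline{a},i}$, whereas the excision of Lemma~\ref{upmeier} needs smooth vector-bundle isomorphisms $\Phi_{\underline{a},i}$ on a tubular neighbourhood $T_i$. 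Relating the two is precisely the content of the proof and is not automatic.

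The paper closes this gap by introducing the locally free kernel $K_{\underline{a},i}=\ker(r_{V_{\underline{a}}}-\phi_{\underline{a},i}\circ r_{W_{\underline{a}}})$ and rewriting the algebraic step entirely in terms of the vector-bundle exact sequences $0\to V_{\underline{a}-e_i}\oplus W_{\underline{a}-e_i}\to K_{\underline{a},i}\oplus V_{\underline{a}-e_i}\to V_{\underline{a}}\to 0$ (and the analogous one for $W$). This gives a genuine isomorphism of elliptic symbols on $Y$, and the proof then deforms the resulting excision data by explicit matrix homotopies (using the auxiliary isomorphism $\Phi_{K_{\underline{a},i}}$ of Lemma~\ref{lemma phi_K}) until it splits into a trivial piece and the desired diagram with $s_i^*$ vertically and $\Phi_{\underline{a},i}$ horizontally. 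Composing over all steps then yields $\prod s_{i,k}^*\prod t_{j,l}^{-*}=\Omega^*$, matching the single-step definition of $\sigma^{\textnormal{dg}}_{\bowtie}$.

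Two smaller points. First, your excision square has vertical arrows labelled ``$=$'' between $\sigma_{V_{\underline{a}}}$ and $\sigma_{W_{\underline{a}}}$; these are different bundles, and what you need there is the extension $\Phi_{\underline{a}}$ of $\phi$ to $T$. Second, the Serre duality discussion is misplaced: $\sigma_{\bowtie}$ (and hence this proposition) only involves the steps \eqref{divcon}; the Hodge-star comparison with Serre duality enters in the subsequent Proposition~\ref{proposition main}, not here.
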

 \begin{proof}
We examine up close the definitions of each object involved and show that up to natural isotopies in families the diagram commutes. We begin therefore with the definition of $\tau^{a,d}_{\underline{a}}$. We restrict again to a point $(p,q) = ([E_1,F_1,\phi_1],[E_2,F_2,\phi_2])$ as it can be shown by using the arguments of §\ref{section pseudo}, \cite[Prop. 3.25]{CGJ} that our methods work in families. We also use
 \begin{align*}
     V_{\underline{a},i}:= V_{\underline{a}}|_{D_i}\,,\qquad  W_{\underline{a},i}:= W_{\underline{a}}|_{D_i}\qquad  \phi_{\underline{a},i}:V_{\underline{a},i}\stackrel{\sim}{\longrightarrow}W_{\underline{a},i}\\
     \Phi_{\underline{a},i}:V_{\underline{a}}|_{T_i}\stackrel{\sim}{\longrightarrow}W_{\underline{a}}|_{T_i}\,,\qquad  \Phi_{\underline{a}}:V_{\underline{a}}|_{T}\stackrel{\sim}{\longrightarrow}W_{\underline{a}}|_{T}
 \end{align*}
 
 Let $E\to Y$ be a holomorphic vector bundle, then $R\Gamma^\bullet(E) = \Gamma(E\otimes \mathcal{A}^{0,\bullet})$. where the differential is given by $\bar{\partial}_E = \bar{\partial}^{\nabla_E}$. Here $\nabla_E$ is the corresponding Chern connection. Let 
	$D_E = \bar{\partial}_E+\bar{\partial}_E^*:\Gamma(E\otimes \mathcal{A}^{0,\textnormal{even}})\to \Gamma(E\otimes \mathcal{A}^{0,\textnormal{odd}})\,,$ then Hodge theory gives us the natural isomorphisms
	$
	\textnormal{det}(D_E)\cong \textnormal{det}\big(R\Gamma^\bullet(E)\big)
	$
	after making a contractible choice of metric on $E$. Continuing to use the notation 
	from Lemma \ref{lemma dg orientations}, we obtain the isomorphisms
\begin{align*}
\kappa^{\textnormal{a,d}}_{\underline{a}}|_p:\Sigma^{\textnormal{vb}}_{\underline{a}}|_{p} \cong \textnormal{det}\big(R\Gamma^\bullet(V_{\underline{a}})\big)\cong \textnormal{det}\big( D_{V_{\underline{a}}}\big)\cong \textnormal{det}(D^{\nabla_{V_{\underline{a}}}})\cong \Sigma^{\textnormal{ag}}_{\underline{a}}|_{p}\,
\end{align*}
generalizing those of  Cao--Gross--Joyce  \cite[Prop. 3.25]{CGJ}, Cao--Leung \cite[Thm. 2.2]{CaoLeungor}, Joyce--Upmeier \cite[p. 38]{JoyceMarkus}.
Recall now that at $(p,q)$ the isomorphism 	$\pi_{1,3}^*(\Sigma_{\underline{a}})\otimes \pi^*_{2,4}(\Sigma^*_{\underline{a}})\cong \pi_{1,3}^*(\Sigma_{\underline{a}-e_i})\otimes\pi_{2,4}^*(\Sigma^*_{\underline{a}-e_i})$ is given by 
\begin{align*}
\label{restriction}
&\textnormal{det}\big(R\Gamma^\bullet(V_{\underline{a}})\big)\otimes \textnormal{det}\big(R\Gamma^\bullet(W_{\underline{a}})\big)^*
\cong \textnormal{det}\big(R\Gamma^\bullet(V_{\underline{a}-e_i})\big)\otimes \textnormal{det}\big(R\Gamma^\bullet(V_{\underline{a},i})\big) \\
&\otimes \textnormal{det}\big(R\Gamma^\bullet(W_{\underline{a},i})\big)^*\otimes \textnormal{det}\big(R\Gamma^\bullet(W_{\underline{a}-e_i})\big)^* \cong  \textnormal{det}\big(R\Gamma^\bullet(V_{\underline{a}-e_i})\big)\otimes \textnormal{det}\big(R\Gamma^\bullet(W_{\underline{a}-e_i})\big)^*\,,
\numberthis{}
\end{align*}
where we are using the short exact sequences 
$
0\to V_{\underline{a}-e_i}\to V_{\underline{a}}\to V_{\underline{a}|_{D_i}}\to 0$ and  $0\to W_{\underline{a}-e_i}\to W_{\underline{a}}\to W_{\underline{a}|_{D_i}}\to 0\,.
$ 
We have the exact complex 
\begin{equation}
\label{both important maps}
    \begin{tikzcd}[every label/.append
style={font=\tiny},column sep=large]
           V_{\underline{a}-e_i}\oplus W_{\underline{a}-e_i}
           \arrow[r,labels={inner sep=7},"{(f_{V_{\underline{a}}},f_{W_{\underline{a}}})}"]&K_{\underline{a},i} \arrow[r,"\begin{pmatrix}p_{V_{\underline{a}}}\\p_{W_{\underline{a}}}
           \end{pmatrix}"]
           & V_{\underline{a}}\oplus W_{\underline{a}}\arrow[r,labels={inner sep=7},"{(r_{V_{\underline{a}}},-\phi_{\underline{a},i}\circ r_{W_{\underline{a}}})}"]& V_{\underline{a},i}\to 0\,,
    \end{tikzcd}
\end{equation}
Here $\rho_{V_{\underline{a}}/W_{\underline{a}}}$ are restrictions, $f_{V_{\underline{a}}/W_{\underline{a}}}$ the factors of inclusion and $p_{V_{\underline{a}}/W_{\underline{a}}}\circ f_{V_{\underline{a}}/W_{\underline{a}}}=s_i$ for the section $s_i:\mathcal{O}\xrightarrow{s_i}\mathcal{O}(D_i)$. Moreover, $K_{\underline{a},i}:=\textnormal{ker}\big(r_{V_{\underline{a}}}-\phi_{\underline{a},i}\circ r_{W_{\underline{a}}}\big)$ is locally free, because $\mathcal{V}_{\underline{a}}|_{D_i}$ has homological dimension 1. This holds also for the corresponding family on $Y\times \mathcal{T}_{Y,D}\times \mathcal{T}_{Y,D}$ by the same argument. The following is a simple consequence of the construction. 
\begin{lemma}
\label{propositiondiagramofresolutions}
We have the quasi-isomorphisms 
\begin{equation*}
    \begin{tikzcd}[column sep=large]
           \arrow[d,"\pi_{V_{\underline{a}-e_i}}"]V_{\underline{a}}\oplus W_{\underline{a}}\arrow[r,"{(f_{V_{\underline{a}}},f_{W_{\underline{a}}})}"]&K_{\underline{a},i}\arrow[d,"p_{V_{\underline{a}}}"]\\
           V_{\underline{a}-e_i}\arrow[r,"s_i"]&V_{\underline{a}}
    \end{tikzcd}\,,\quad   \begin{tikzcd}[column sep=large]
           \arrow[d,"\pi_{W_{\underline{a}-e_i}}"]V_{\underline{a}-e_i}\oplus W_{\underline{a}-e_i}\arrow[r,"{(f_{V_{\underline{a}}},f_{W_{\underline{a}}})}"]&K_{\underline{a},i}\arrow[d,"p_{W_{\underline{a}}}"]\\
           W_{\underline{a}-e_i}\arrow[r,"s_i"]&W_{\underline{a}}
    \end{tikzcd}\,.
\end{equation*}
Using $C_{\underline{a}}$ to denote both upper cones and $C_{V_{\underline{a}}}$, $C_{W_{\underline{a}}}$ to denote the lower cones, this gives a commutative diagram
$
\begin{tikzcd}
\arrow[d]C_{\underline{a}}\arrow[r]& \arrow[d] C_{V_{\underline{a}}}\\
C_{W_{\underline{a}}}\arrow[r]&W_{\underline{a},i}
\end{tikzcd}
$ of quasi-isomorphisms.
	\end{lemma}
Therefore \eqref{restriction} becomes 
	\begin{align*}
   &\textnormal{det}\big(\textnormal{R}\Gamma^\bullet(V_{\underline{a}}) \big)\otimes\textnormal{det}^*\big(R\Gamma^\bullet(V_{\underline{a}-e_i})\big)\otimes \textnormal{det}(R\Gamma^\bullet(W_{\underline{a}-e_i}))\otimes \textnormal{det}^*(R\Gamma^\bullet(W_{\underline{a}})\big)\\
  &\cong \textnormal{det}\big(R\Gamma^\bullet\big(C_{\underline{a}}\big)\big)\otimes \textnormal{det}^*\big(R\Gamma^\bullet\big(C_{\underline{a}}\big)\Big)\cong \mathbb{C}\,.
\end{align*}
Using compatibility with respect to different filtrations discussed in \autocite[p. 22]{knumum} and that a dual of an evaluation is a coevaluation in the monoidal category of line bundles together with checking the correct signs one can show that this is expressed as 
\begin{align*}
\label{algebraic isomorphism}
   &\textnormal{det}(R\Gamma^\bullet(V_{\underline{a}}))\otimes\textnormal{det}^*(R\Gamma^\bullet(V_{\underline{a}-e_i}))\otimes\textnormal{det}(R\Gamma^\bullet(W_{\underline{a}-e_i})\otimes\textnormal{det}^*\big(R\Gamma^\bullet\big(W_{\underline{a}}\big)\big)\\
   \cong 
   &\begin{subarray}
   \ \textnormal{det}\big(R\Gamma^\bullet\big(V_{\underline{a}}\big)\big)\otimes\textnormal{det}^*\big(R\Gamma^\bullet(V_{\underline{a}-e_i})\big)\otimes\textnormal{det}^*\big(R\Gamma^\bullet(K_{\underline{a},i})\big)\otimes\textnormal{det}\big(R\Gamma^\bullet(V_{\underline{a}-e_i}\oplus W_{\underline{a}-e_i})\big)
   \\
  \textnormal{det}^*\big(R\Gamma^\bullet(V_{\underline{a}-e_i}\oplus W_{\underline{a}-e_i})\big)\otimes \textnormal{det}\big(R\Gamma^\bullet(K_{\underline{a},i})\big)\otimes \textnormal{det}\big(R\Gamma^\bullet(W_{\underline{a}-e_i})\big)\otimes\textnormal{det}^*\big(R\Gamma^\bullet(W_{\underline{a}})\big)\end{subarray}\cong
 \mathbb{C}\,,
 \numberthis
\end{align*}
where the last isomorphism is the consequence of the following short exact sequences:
\begin{align*}
\label{global exact sequences}
    0\longrightarrow V_{\underline{a}-e_i}\oplus W_{\underline{a}-e_i}\longrightarrow K_{\underline{a},i}\oplus V_{\underline{a}-e_i}\longrightarrow V_{\underline{a}}\longrightarrow 0\,,\\
    0\longrightarrow  V_{\underline{a}-e_i}\oplus W_{\underline{a}-e_i}\longrightarrow K_{\underline{a},i}\oplus W_{\underline{a}-e_i}\longrightarrow W_{\underline{a}}\longrightarrow 0\,.
    \numberthis
\end{align*}
Let $\bar{\partial}_1,\bar{\partial}_2,\bar{\partial}_3$ be the holomorphic structures on each of the three terms of the first sequence, then choosing its splitting we can define $\bar{\partial}^{t}_2=\begin{pmatrix}
\bar{\partial}_1 &t\bar{\partial}_{(1,1)}\\
0&\bar{\partial}_3
\end{pmatrix}$ and deforming to $t=0$ the sequence splits. This gives us the following diagram commuting up to isotopy:
$$
\begin{tikzcd}
\arrow[d]\textnormal{det}\big(R\Gamma^\bullet(V_{\underline{a}-e_i}\oplus W_{\underline{a}-e_i})\big)\textnormal{det}(R\Gamma^\bullet(V_{\underline{a}}))\arrow[r,labels={inner sep=7},"\textnormal{\cite[Cor. 2]{knumum}}"]&\textnormal{det}\big(R\Gamma^\bullet(K_{\underline{a},i}\oplus V_{\underline{a}-e_i})\big)\arrow[d]\\
\textnormal{det}\big(D_{V_{\underline{a}-e_i}\oplus W_{\underline{a}-e_i}\oplus V_{\underline{a}}}\big)\arrow[r,"*"]&\textnormal{det}\big(D_{K_{\underline{a},i}\oplus V_{\underline{a}-e_i}}\big)\,,
\end{tikzcd}
$$
(and a similar one for the second sequence) where $*$ corresponds to the isomorphism \eqref{functoriality} for
\begin{equation}
\label{total isom}
    \begin{pmatrix}
f_{V_{\underline{a}}}&f_{W_{\underline{a}}}&p_{V_{\underline{a}}}^*\\
0&\textnormal{id}&s_i^*
\end{pmatrix}: V_{\underline{a}-e_i}\oplus W_{\underline{a}-e_i}\oplus V_{\underline{a}}\longrightarrow K_{\underline{a},i}\oplus V_{\underline{a}-e_i}\,.	
\end{equation}
In Lemma \ref{lemma phi_K} below, we show that there exists a natural isomorphism $\Phi_{K_{\underline{a},i}}:K_{\underline{a},i}|_{T_i}\to K_{\underline{a},i}|_{T_i}$, such that all the diagrams below satisfy the conditions in Definition \ref{definition excision I2 family}. We therefore get
\begin{equation*}
 \begin{tikzcd}[ column sep=4cm, row sep=2cm, ampersand replacement=\&]
   \arrow[d,dashrightarrow,"{\begin{psmallmatrix}
f_{V_{\underline{a}}}&f_{W_{\underline{a}}}&p_{V_{\underline{a}}}^*\\
0&\textnormal{id}& s_i^*
\end{psmallmatrix}}"']\sigma_{V_{\underline{a}-e_i}\oplus W_{\underline{a}-e_i}\oplus V_{\underline{a}}}\arrow[r,dashrightarrow,"{\begin{psmallmatrix}
   0&i\Phi^{-1}_{\underline{a}-e_i,i}&0\\
   i\Phi_{\underline{a}-e_i,i}&0&0\\
   0&0&i\Phi_{\underline{a},i}
   \end{psmallmatrix}}"]\&\sigma_{V_{\underline{a}-e_i}\oplus W_{\underline{a}-e_i}\oplus W_{\underline{a}}}\arrow[d,dashrightarrow,"{\begin{psmallmatrix}
f_{V_{\underline{a}}}& f_{W_{\underline{a}}}& p_{V_{\underline{a}}}^*\\
0&\textnormal{id}& s_i^*
\end{psmallmatrix}}"]\\
   \sigma_{K_{\underline{a},i}\oplus V_{\underline{a}-e_i}}\arrow[r,dashrightarrow,"{\begin{psmallmatrix}i\Phi_{K_{\underline{a},i}}&0\\
   0&i\Phi_{\underline{a}-e_i}
   \end{psmallmatrix}}"']\&\sigma_{K_{\underline{a},i}\oplus W_{\underline{a}-e_i}}
   \end{tikzcd} 
\end{equation*}
We may restrict \eqref{total isom} to $X\backslash (1-\epsilon)\bar{T}_i$ because we already cover $T_i$ by the other isomorphisms in the diagram. We choose the compact set $K_i=(1-\epsilon/2)T_i$. Then deform $t\mapsto{\begin{psmallmatrix}
f_{V_{\underline{a}}}&f_{W_{\underline{a}}}&tp_{V_{\underline{a}}}^*\\
0&t\textnormal{id}& s_i^*
\end{psmallmatrix}}$ as these are now isomorphisms in $X\backslash (1-\epsilon)T_i$. Moreover, rotating
$$
t\mapsto \begin{pmatrix}
   \textnormal{sin}(t)\,\textnormal{id}&i\textnormal{cos}(t)\,\Phi^{-1}_{\underline{a}-e_i,i}&0\\
  i\textnormal{cos}(t) \Phi_{\underline{a}-e_i,i}&\textnormal{sin}(t)\,\textnormal{id}&0\\
   0&0&\Phi_{\underline{a},i}
   \end{pmatrix}, \begin{pmatrix}i\textnormal{cos}(t)\,\Phi_{K_{\underline{a},i}}+\textnormal{sin}(t)\,\textnormal{id}&0\\
   0&\Phi_{\underline{a}-e_i}
   \end{pmatrix}
$$
we obtain the separate two diagrams
\begin{equation*}
 \begin{tikzcd}[ column sep=2cm, row sep=1cm, ampersand replacement=\&]
   \arrow[d,dashrightarrow,"{\begin{psmallmatrix}
f_{V_{\underline{a}}}&f_{W_{\underline{a}}}
\end{psmallmatrix}}"']\sigma_{V_{\underline{a}-e_i}\oplus W_{\underline{a}-e_i}}\arrow[r,dashrightarrow,"{\begin{psmallmatrix}
   \textnormal{id}&0\\
   0&\textnormal{id}\\
 \end{psmallmatrix}}"]\&\sigma_{V_{\underline{a}-e_i}\oplus W_{\underline{a}-e_i}}\arrow[d,dashrightarrow,"{\begin{psmallmatrix}
f_{V_{\underline{a}}}& f_{W_{\underline{a}}}
\end{psmallmatrix}}"]\\
   \sigma_{K_{\underline{a},i}}\arrow[r,dashrightarrow,"\textnormal{id}"']\&\sigma_{K_{\underline{a},i}}
   \end{tikzcd} \,,
    \begin{tikzcd}[ column sep=2cm, row sep=1cm, ampersand replacement=\&]
   \arrow[d,dashrightarrow,"s_i^*"']\sigma_{V_{\underline{a}-e_i}}\arrow[r,dashrightarrow,"\Phi_{\underline{a}-e_i,i}"]\&\sigma_{W_{\underline{a}-e_i}}\arrow[d,dashrightarrow,"s_i^*"]\\
   \sigma_{V_{\underline{a}}}\arrow[r,dashrightarrow,"\Phi_{\underline{a},i}"']\&\sigma_{W_{\underline{a}}}
   \end{tikzcd} \end{equation*}
In the left diagram we can extend the identities to all of $Y$, so by Lemma \ref{upmeier}, we showed that \eqref{algebraic isomorphism} coincides with the adjoint of $\Xi$ from Lemma \ref{upmeier} for the right diagram. Using 
this for each step $\underline{k}=\sum_i k_i e_i$, deforming the isomorphisms $\Phi_{\underline{a},i}$ on $T_i$ into $\Phi_{\underline{a}}$ on $T$ and taking $K=\bigcap_{i=1}^NK_i$, we obtain \eqref{comparing excision} for the data $\bowtie$ because $\prod (s_{i,k})^*\prod (t_{j,k})^{-*}=\Omega^*$.  The second diagram in \eqref{comparing excision} is obtained by pulling back along $\Delta: T_{Y,D}\to T_{Y,D}\times T_{Y,D}$.
	\end{proof}
	\begin{remark}
	We could replace in \eqref{comparing excision} the labels $\underline{k}$, $\underline{0}$ by arbitrary $\underline{a}$, $\underline{b}$.
	\end{remark}
	\begin{lemma}
\label{lemma phi_K}	
There exists a natural isomorphism $\Phi_{K_{\underline{a},i}}:K_{\underline{a},i}|_{T_i}\to K_{\underline{a},i}|_{T_i}$, such that $i\textnormal{cos}(t)\Phi_{K_{\underline{a}},i}+\textnormal{sin}(t)\textnormal{id}_{K_{\underline{a},i}}$ are invertible for all $t$ and such that all diagrams used in the proof of Proposition \ref{proposition comparing excision} satisfy the condition of Definition \ref{definition excision I2 family}.
	\end{lemma}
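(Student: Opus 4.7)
The plan is to build $\Phi_{K_{\underline{a},i}}$ as the pullback of the swap automorphism on $V_{\underline{a}}\oplus W_{\underline{a}}|_{T_i}$ via the trivialization afforded by $\Phi_{\underline{a},i}$. Concretely, since $\Phi_{\underline{a},i}\colon V_{\underline{a}}|_{T_i}\xrightarrow{\sim} W_{\underline{a}}|_{T_i}$ is an isomorphism extending $\phi_{\underline{a},i}$ (see Lemma \ref{lemma independent}), I would define
\begin{equation*}
\Phi_{K_{\underline{a},i}}(v,w)=\bigl(\Phi_{\underline{a},i}^{-1}(w),\,\Phi_{\underline{a},i}(v)\bigr)
\end{equation*}
as a map of $V_{\underline{a}}\oplus W_{\underline{a}}|_{T_i}$ to itself, then verify that it preserves the subsheaf $K_{\underline{a},i}|_{T_i}$ and restrict. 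Over a point of $D_i$ this map restricts to the swap of the two summands using $\phi_{\underline{a},i}$, and since $K_{\underline{a},i}|_{D_i}$ is the graph of $\phi_{\underline{a},i}$ (equivalently, after identifying $W$ with $V$ via $\Phi_{\underline{a},i}$, the diagonal of $V_{\underline{a}}\oplus V_{\underline{a}}$), the swap clearly preserves it. Away from $D_i$ nothing is imposed since $K_{\underline{a},i}$ agrees with $V_{\underline{a}}\oplus W_{\underline{a}}$ there.

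With this definition, $\Phi_{K_{\underline{a},i}}^2=\textnormal{id}$, so its spectrum lies in $\{\pm 1\}$. Consequently the eigenvalues of $i\cos(t)\Phi_{K_{\underline{a},i}}+\sin(t)\,\textnormal{id}$ are $\pm i\cos(t)+\sin(t)$, each of modulus $1$, and the operator is invertible for every $t\in I$. This handles the first half of the claim.

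The remaining task is to check that with this choice of $\Phi_{K_{\underline{a},i}}$, the diagrams occurring in the proof of Proposition \ref{proposition comparing excision} (the two large $2\times 2$ squares involving the maps $f_{V_{\underline{a}}},f_{W_{\underline{a}}},p_{V_{\underline{a}}}^*,s_i^*$ together with $\Phi_{\underline{a}-e_i,i}$ and $\Phi_{\underline{a},i}$) satisfy the hypothesis of Definition \ref{definition excision I2 family}: namely strict commutativity of the corresponding symbol diagrams on $T_i$, with compatible identifications on the excision region $X\setminus(1-\epsilon)\bar T_i$. On $T_i$ I would use the trivialization $V_{\underline{a}}\oplus W_{\underline{a}}|_{T_i}\cong V_{\underline{a}}\oplus V_{\underline{a}}|_{T_i}$ (and the analogous identification for $\underline{a}-e_i$) to rewrite $K_{\underline{a},i}|_{T_i}$ as the elementary transform along the diagonal, and the maps $(f_{V_{\underline{a}}},f_{W_{\underline{a}}},p_{V_{\underline{a}}}^*)$ as the canonical structure maps of this transform; in these models both horizontal arrows become the swap of the two $V$-copies tensored with $i$, and commutativity is tautological. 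Transporting back via $\Phi_{\underline{a},i}$ and $\Phi_{\underline{a}-e_i,i}$ yields the required diagrams, and naturality with respect to families and with respect to the contractible choice of $\Phi_{\underline{a},i}$ carries over from Lemma \ref{lemma independent}.

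The main obstacle is the bookkeeping in the last step: one must keep track of the direction of each $\Phi$, of the sign of the defining map $r_{V_{\underline{a}}}-\phi_{\underline{a},i}\circ r_{W_{\underline{a}}}$, and of the interaction between $s_i^*$ and $p_{V_{\underline{a}}}^*$ under conjugation by the swap, so that the two $2\times 2$ symbol diagrams in the proof of Proposition \ref{proposition comparing excision} fit verbatim into Definition \ref{definition excision I2 family}. This is not conceptually hard once one works in the local model above, but it requires a careful matrix-entry check parallel to the one performed in the proof of Proposition \ref{proposition comparing excision} itself.
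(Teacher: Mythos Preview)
Your construction is essentially correct and, in fact, agrees with the paper's once one unwinds the latter. The paper builds $\Phi_{K_{\underline a,i}}$ by first choosing a splitting of the short exact sequence $0\to V_{\underline a-e_i}\to K_{\underline a,i}\to W_{\underline a}\to 0$ on $T_i$ (which simultaneously fixes the compatible $\Phi_{\underline a-e_i,i}$), and then sets $\Phi_{K_{\underline a,i}}$ to be the block map $\textnormal{diag}(\Phi_{\underline a-e_i},\Phi_{\underline a})$ between the two induced direct-sum decompositions $K\cong W_{\underline a-e_i}\oplus V_{\underline a}$ and $K\cong V_{\underline a-e_i}\oplus W_{\underline a}$. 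If one takes the natural section $w\mapsto(\Phi_{\underline a}^{-1}(w),w)$, a direct computation shows the resulting map on $K\subset V_{\underline a}\oplus W_{\underline a}$ is exactly $(v,w)\mapsto(\Phi_{\underline a}^{-1}(w),\Phi_{\underline a}(v))$, i.e.\ your swap. So the two routes produce the same $\Phi_{K_{\underline a,i}}$; yours is simply the more intrinsic description, and your invertibility argument via $\Phi_{K}^2=\textnormal{id}$ is cleaner than the paper's explicit triangular matrix check.

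One point to tighten: your claim that ``$K_{\underline a,i}|_{D_i}$ is the graph of $\phi_{\underline a,i}$'' is not literally true. Since $K$ sits in $0\to K\to V_{\underline a}\oplus W_{\underline a}\to V_{\underline a,i}\to 0$, restricting to $D_i$ gives
\[
0\longrightarrow V_{\underline a-e_i,i}\longrightarrow K_{\underline a,i}|_{D_i}\longrightarrow \textnormal{graph}(\phi_{\underline a,i})\longrightarrow 0,
\]
so $K|_{D_i}$ has the same rank as $V\oplus W$ and only \emph{surjects} onto the graph. This does not affect your argument: the correct check is simply that the ambient swap on $V_{\underline a}\oplus W_{\underline a}$ preserves the kernel of $(r_{V_{\underline a}},-\phi_{\underline a,i}\circ r_{W_{\underline a}})$, which follows immediately from $\Phi_{\underline a,i}|_{D_i}=\phi_{\underline a,i}$. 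Once that is said, your sketch of the commutativity of the excision squares goes through; as in the paper, it reduces to a direct (if tedious) matrix verification using $\Phi_{\underline a,i}\circ s_i=s_i\circ\Phi_{\underline a-e_i,i}$.
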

	\begin{proof}
	Using the octahedral axiom, one can show that there are short exact sequences $0\to V_{\underline{a}-e_i}\xrightarrow{f_{V_{\underline{a}}}}K_{\underline{a},i}\xrightarrow{p_{W_{\underline{a}}}}W_{\underline{a}}\to 0$ and $0\to W_{\underline{a}-e_i}\xrightarrow{f_{W_{\underline{a}}}}K_{\underline{a},i}\xrightarrow{p_{V_{\underline{a}}}}V_{\underline{a}}\to 0$. We obtain the following commutative diagram
	$$
\begin{tikzcd}
0\arrow[r]&\arrow[d,"-\phi_{\underline{a}-e_i,i}"] V_{\underline{a}-e_i}|_{D_i}\arrow[r]& K_{\underline{a},i}|_{D_i}\arrow[r]\arrow[d,"\textnormal{id}"]& W_{\underline{a}}|_{D_i}\arrow[d,"\phi_{\underline{a},i}"]\arrow[r]& 0\\
0\arrow[r]& W_{\underline{a}-e_i}|_{D_i}\arrow[r]& K_{\underline{a},i}|_{D_i}\arrow[r]& V_{\underline{a}}|_{D_i}\arrow[r]& 0
\end{tikzcd}
	$$
	as can be seen from the following diagram:
	$$
	\begin{tikzcd}
    0\arrow[r]&V_{\underline{a}-e_i,i}\arrow[d,"-{\phi_{\underline{a}}-e_i,i}"]\arrow[r]&K_{\underline{a},i}|_{D_i}\arrow[r]\arrow[d,"\textnormal{id}"]&\arrow[d,"\textnormal{id}"]V_{\underline{a},i}\oplus W_{\underline{a},i}\arrow[r]&\arrow[d]V_{\underline{a},i}\arrow[d,"-{\phi_{\underline{a},i}}"]\arrow[r]&0\,,
\\
    0\arrow[r]&W_{\underline{a}-e_i,i}\arrow[r]&K_{\underline{a},i}|_{D_i}\arrow[r]&V_{\underline{a},i}\oplus W_{\underline{a},i}\arrow[r]&W_{\underline{a},i}\arrow[r]&0
	\end{tikzcd}
	$$
	induced by restricting
	$$
	\begin{tikzcd}
    0\arrow[r]&K_{\underline{a},i}|_{D_i}\arrow[r]\arrow[d,"\textnormal{id}"]&\arrow[d,"\textnormal{id}"]V_{\underline{a},i}\oplus W_{\underline{a},i}\arrow[r]&\arrow[d]V_{\underline{a},i}\arrow[d,"-{\phi_{\underline{a},i}}"]\arrow[r]&0
\\
    0\arrow[r]&K_{\underline{a},i}|_{D_i}\arrow[r]&V_{\underline{a},i}\oplus W_{\underline{a},i}\arrow[r]&W_{\underline{a},i}\arrow[r]&0
	\end{tikzcd}
	$$
to the divisor. Choosing a splitting of the first exact sequence in $T_i$. we obtain $(f_{V_{\underline{a}}},f_{W_{\underline{a}}})=\begin{psmallmatrix}\textnormal{id}&-\Phi'_{\underline{a}-e_i,i}\\
0&s_i\end{psmallmatrix}$, where $\Phi'_{\underline{a}-e_i,i}|_{D_i} = \phi_{\underline{a}-e_i,i}$, so we can take $\Phi_{\underline{a}-e_i,i} =\Phi'_{\underline{a}-e_i,i}$. This induces also the splitting of the second sequence in $T_i$ and we can define the isomorphism by
\begin{equation*}
\begin{tikzcd}[column sep=2cm, ampersand replacement=\&]
\Phi_{K_{\underline{a},i}}: K_{\underline{a},i}|_{T_i}\cong W_{\underline{a}-e_i}|_{T_i}\oplus V_{\underline{a}}|_{T_i}\arrow[r, "{\begin{psmallmatrix}\Phi_{\underline{a}-e_i}&0\\
0&\Phi_{\underline{a}}\end{psmallmatrix}}"]\& V_{\underline{a}-e_i}|_{T_i}\oplus W_{\underline{a}}\cong K_{\underline{a},i}|_{T_i}\,.
\end{tikzcd}
\end{equation*}
In this splitting, we then have the invertible isomorphisms
$
i\textnormal{cos}(t)\Phi_{K_{\underline{a},i}}+\textnormal{sin}(t)\textnormal{id} =\begin{psmallmatrix}ie^{it}\Phi_{\underline{a}-e_i}&0\\
s_i&ie^{-it}\Phi_{\underline{a}}\end{psmallmatrix}
$
where we used $\textnormal{id}_{K_{s_i}}=(f_{V_{\underline{a}}},f_{W_{\underline{a}}})\circ (f_{V_{\underline{a}}},f_{W_{\underline{a}}})^{-1}= \begin{psmallmatrix}-\Phi_i^{-1}&0\\
s_i& \Phi_i\end{psmallmatrix}$
and one can check directly, these satisfy the necessary commutativity for all steps needed in Proposition \ref{proposition comparing excision}.
	\end{proof}
\begin{proposition}
\label{proposition main}
There are natural isomorphisms $\Lambda^*(D_O)\cong (\Omega^{\textnormal{ag}})^*(O^{\bowtie})$ of strong H-principal $\Z_2$-bundles independent of $\mathfrak{ord}$. 
\end{proposition}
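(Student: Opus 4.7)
The plan is to construct the isomorphism by interpolating through the differential geometric Dolbeault picture on $\mathcal{T}_{Y,D}$ provided by Proposition \ref{proposition comparing excision}, and then applying the complex excision machinery of §\ref{section pseudo} together with the fact that on a Calabi--Yau 4-fold the twisted Dolbeault operator agrees (up to a canonical sign pattern) with the positive Dirac operator on the spin double $\tilde{Y}$.

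First, I would unpack what $(\Omega^{\textnormal{ag}})^*(O^{\bowtie})$ actually is on $\mathcal{T}_{Y,D}$: by Definition \ref{definitionbowtie} it is the square root $\Z_2$-bundle associated with $(\Omega^{\textnormal{ag}})^*(\vartheta^{\textnormal{sp}}_{\bowtie})$, which after tracing through \eqref{taubowtie} is exactly the orientation bundle associated to the composite isomorphism $\pi_1^*\Lambda_{\underline 0}^{\textnormal{vb}}\otimes\pi_2^*(\Lambda_{\underline 0}^{\textnormal{vb}})^{*}\cong \pi_1^*\Lambda_{\underline k}^{\textnormal{vb}}\otimes\pi_2^*(\Lambda_{\underline k}^{\textnormal{vb}})^{*}\cong \pi_1^*(\Lambda_{\underline 0}^{\textnormal{vb}})^{*}\otimes\pi_2^*\Lambda_{\underline 0}^{\textnormal{vb}}$. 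Proposition \ref{proposition comparing excision} then transports this, via natural isotopies, to the analogous construction on the differential geometric side built out of $\tau^{\textnormal{dg}}_{\bowtie}$ together with the spin Hodge star isomorphisms $\#_{\underline{a}}$ from Lemma \ref{lemma dg orientations}. Call the resulting square root bundle $O^{\bowtie}_{\textnormal{dg}}\to\mathcal{T}_{Y,D}$; Proposition \ref{proposition comparing excision} already supplies a canonical isomorphism $(\Omega^{\textnormal{ag}})^*(O^{\bowtie})\cong O^{\bowtie}_{\textnormal{dg}}$.

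The main step is to match $O^{\bowtie}_{\textnormal{dg}}$ with $\Lambda^*(D_O)$. For this I would apply the excision Lemma \ref{upmeier} to the data \eqref{excision data} given by the two Dolbeault operators $D^{\nabla_{\textnormal{End}(E_1,F_1)}}$ and $D^{\nabla_{\textnormal{End}(E_2,F_2)}}$ on $Y$, with $S_i = T_i$ and $K_i\subset X$, identified near $D$ via the extensions $\bar{\Phi}_{\underline a}$ of Lemma \ref{lemma extend}. The crucial point, and where the Calabi--Yau condition enters, is that the two real structures are compatible: on $X$, the spin Hodge star $\#_1^{S}$ of Theorem \ref{spinprop} attached to the trivial theta characteristic $\mathcal{O}_X$ and the usual Hodge star $\#_1$ from \eqref{antilinear maps} differ by multiplication by $\Omega$, and after the excision to $\tilde{Y}$ the Dolbeault operator $D_{V_{\underline 0}}\oplus D_{W_{\underline 0}}^{*}$ becomes (up to deformation through elliptic symbols in $S_0$) the operator whose real part is the twisted positive Dirac operator $\slashed D_+^{\nabla_{\textnormal{ad}(\tilde P)}}\oplus (\slashed D_+^{\nabla_{\textnormal{ad}(\tilde Q)}})^{*}$. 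Combining this with Remark \ref{excision} and the factorization \eqref{DO} of $D_O$ through $\Upsilon^{*}\circ G_{\tilde Y}^{*}$, the excision isomorphism $\Xi$ of Lemma \ref{upmeier} yields precisely the desired isomorphism $O^{\bowtie}_{\textnormal{dg}}\cong \Lambda^*(D_O)$.

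Finally I would verify the structural properties. Compatibility with the H-space multiplications follows because all three objects — $O^{\bowtie}$, its pullback, and $D_O$ — are built out of determinants of direct sums, and the excision isomorphism $\Xi$ is compatible with direct sums by the second statement of Lemma \ref{upmeier}; combined with Proposition \ref{proposition Ovartheta Hprincipal} and the strong H-principal structure on $D_O(\tilde Y)$ established in Lemma \ref{lemma everything is H in gauge}, this promotes the pointwise identification to an isomorphism of strong H-principal $\Z_2$-bundles. Independence of $\mathfrak{ord}$ follows by the same $3\times 3$ diagram argument used in step 3 of the proof of Theorem \ref{mainnctheorem}, combined with the independence part of Proposition \ref{proposition Ovartheta Hprincipal}. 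The delicate point I expect to be the main obstacle is bookkeeping: the Dolbeault-to-Dirac comparison requires a careful choice of isotopy of symbols on $U$ versus $-U$ in $\tilde Y$, and one must ensure that the signs produced by the $\Z_2$-gradings \eqref{strongHsigns} on both sides match under the identification — this is the same kind of associativity verification as in Lemma \ref{lemma Z2graded}, carried out now with both algebraic and gauge-theoretic gradings in play.
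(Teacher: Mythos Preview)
Your proposal is correct and follows essentially the same route as the paper: factor through an intermediate differential-geometric bundle $O^{\bowtie}_{\textnormal{dg}}$ built from $\tau^{\textnormal{dg}}_{\bowtie}$ and the Hodge-star maps $\#_{\underline{a}}$, use Proposition~\ref{proposition comparing excision} for the first isomorphism, and then match $O^{\bowtie}_{\textnormal{dg}}$ with $\Lambda^*(D_O)$ by recognising that on the Calabi--Yau locus the combination ``multiply by $\Omega$ then apply $\#$'' recovers the spin real structure whose real part is $\slashed{D}_+$. The strong H-principal compatibility via the direct-sum clause of Lemma~\ref{upmeier} and the $\mathfrak{ord}$-independence via the $3\times 3$ diagram are likewise exactly what the paper does.

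One organisational remark: your phrasing ``apply the excision Lemma~\ref{upmeier} to the data \ldots\ with $S_i=T_i$'' is a little garbled (the $T_i$ of the tubular neighbourhoods and the $T_i$ of Definition~\ref{definition excision I2 family} collide, and Lemma~\ref{upmeier} is not applied at this step in quite that form). The paper instead makes the second isomorphism explicit by transporting the real structure along the excision $I$-family $(\sigma_{V_{\underline 0}},\sigma_{W_{\underline 0}},\bar\Phi,\bar\Phi)^{\chi}_t$ on $Y$: at $t=0$ one reads off $\vartheta^{\textnormal{dg}}_{\bowtie}$, while at $t=1$ the symbol is supported away from $D$, so one may excise to $\tilde Y$ (via Lemma~\ref{lemma independent}) and the real structure there is literally the Dirac one. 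This is the same content as your ``up to deformation through elliptic symbols'' sentence, just made pointwise precise; your sketch would go through once that transport is written down.
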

\begin{proof}
Let $O^{\bowtie}_{\textnormal{dg}}$ be the $\Z_2$-bundle associated to
$$\vartheta^{\textnormal{dg}}_{\bowtie} =  \big(\pi^*_1(\#_{\underline{k}})\otimes \pi_2^*(\#_{\underline{k}})^{-1}\big)\circ (\tau^{\textnormal{dg}}_{\bowtie})^{-1}:\pi^*_1(\Lambda^{\textnormal{dg}}_{\underline{0}})\otimes  \pi_2^*(\Lambda^{\textnormal{dg}}_{\underline{0}})^*\to \pi^*_1(\Lambda^{\textnormal{dg}}_{\underline{0}})^*\otimes  \pi_2^*(\Lambda^{\textnormal{dg}}_{\underline{0}})\,,$$
then it is a strong H-principal $\Z_2$-bundle by similar arguments to \ref{proposition Ovartheta Hprincipal} using Lemma \ref{lemma dg orientations} and part 2 of Lemma \ref{excision}, where we obtain similarly as in \eqref{strongHsigns} additional signs $(-1)^{\textnormal{deg}(\pi_2^*\Lambda^{\textnormal{dg}}_{\underline{0}})\big(\textnormal{deg}(\pi_3^*\Lambda^{\textnormal{dg}}_{\underline{0}})+\textnormal{deg}(\pi^*_4\Lambda^{\textnormal{dg}}_{\underline{0}})\big)}$.

We have the isomorphisms of strong H-principal $\Z_2$-bundles
$$
\Lambda^*(O^{\bowtie})\cong O^{\bowtie}_{\textnormal{dg}}\cong D_O\,,
$$
where the first isomorphism is constructed by applying Proposition \ref{proposition comparing excision} and Lemma \ref{upmeier} and is clearly strong H-principal. For the second one, let us again restrict ourselves to the point $(p,q)$ as in the proof of Proposition \ref{comparing excision}. Note that for all $t$ we have isomorphisms:
$$
\begin{tikzcd}[column sep=huge]
  \arrow[d,dashed]\textnormal{det}(\sigma_{V_{\underline{0}}},\sigma_{W_{\underline{0}}},\bar{\Phi},\bar{\Phi})^t_{\chi} \arrow[r,"{\textnormal{Prop. \ref{proposition functoriality ds adj}(v)}}"]&\textnormal{det}(\sigma_{V_{\underline{0}}},\sigma_{W_{\underline{0}}},\bar{\Phi},\bar{\Phi})^1_{\chi}\arrow[d,"\pi_1^*(\Omega)\otimes \pi_2^*(\Omega^{-*})"] \\
  \arrow[d,"\eqref{antilinear maps}"] \textnormal{det}(\sigma_{V_{\underline{0}}},\sigma_{W_{\underline{0}}},\bar{\Phi},\bar{\Phi})^t_{\chi} \arrow[r,"{\textnormal{Prop. \ref{proposition functoriality ds adj}(v)}}"]&\textnormal{det}(\sigma_{V_{\underline{k}}},\sigma_{W_{\underline{k}}},\bar{\Phi},\bar{\Phi})^1_{\chi}\arrow[d,"\eqref{antilinear maps}"]\\
  \textnormal{det}^*(\sigma_{V_{\underline{0}}},\sigma_{W_{\underline{0}}},\bar{\Phi},\bar{\Phi})^t_{\chi} \arrow[r,"{\textnormal{Prop. \ref{proposition functoriality ds adj}(v)}}"]&\textnormal{det}^*(\sigma_{V_{\underline{0}}},\sigma_{W_{\underline{0}}},\bar{\Phi},\bar{\Phi})^1_{\chi}
\end{tikzcd}
$$
The composition of the vertical arrows on the left for $t=0$ is precisely $\vartheta^{\textnormal{dg}}_{\bowtie}$, while the associated $\Z_2$-torsor to the real structure corresponding to vertical arrows on the right is by excision as in Lemma \ref{lemma independent} isomorphic to $D_O|_{(p,q)}$ using that $\bar{\Phi}$ are unitary and  \eqref{antilinear maps} restrict outside of $\epsilon T$ to \cite[Def. 3.24]{GJT}. As the diagram is commutative for all $t$, we obtain the required isomorphisms $D_O|_{(p,q)}\cong O^{\bowtie}_{\textnormal{dg}}$. It is an isomorphisms of strong H-principal $\Z_2$-bundles by the compatibility under direct sums of Lemma \ref{excision} and using the arguments of \cite[Prop. 3.25]{CGJ} (see also proof of Lemma \ref{lemma independent}) together with paying attention to the signs above. The independence of  any choice of $\mathfrak{ord}$ can then be shown  because diagram \eqref{comparing excision} commutes and both $\sigma^{\textnormal{vb}}_{\bowtie}$ and $\sigma^{\textnormal{dg}}_{\bowtie}$ are independent (see proof of Theorem \ref{mainnctheorem}). We only sketch the idea, as the precise formulation is comparably more tedious than the proof of Proposition \ref{proposition comparing excision}: one uses excision on the diagram \eqref{3x3diag} using the common resolutions of Lemma \ref{propositiondiagramofresolutions}, where the top right corner of \eqref{3x3diag} has resolution $V_{\underline{0}}\oplus W_{\underline{0}}\to K_{e_2,2}$, bottom left $V_{\underline{0}}\oplus W_{\underline{0}}\to K_{e_1,1}$ and the bottom right one $$(V_{\underline{0}}\oplus W_{\underline{0}}\to K_{e_1,1}\oplus V_{e_2}\oplus W_{e_2}\to K_{e_1+e_2,1})\cong (V_{\underline{0}}\oplus W_{\underline{0}}\to K_{e_2,2}\oplus V_{e_1}\oplus W_{e_1}\to K_{e_1+e_2,2})\,.$$  The automorphism on $K_{e_1,1}|_{T_1}$, $K_{e_2,2}|_{T_2}$, $K_{e_1+e_2,1}|_{T_1}$, $K_{e_1+e_2,2}|_{T_i}$ are then the ones constructed in Lemma \ref{lemma phi_K} and one follows the arguments of Proposition \ref{proposition comparing excision} to remove contributions of all $K$'s.
\end{proof}
Theorem \ref{maintheorem} now follows from the above corollary together with applying Proposition \ref{exun} (i) and then (ii). 
	
\section{Orientation groups for non-compact Calabi--Yau fourfolds}
\label{bigsection signs}
In this final section, we describe the behavior of orientations under direct sums. We recall the notion of orientation group from \autocite{JTU} and formulate the equivalent version of \autocite[Theorem 2.27]{JTU} for the non-compact setting, where we replace K-theory with compactly supported K-theory. For background on compactly supported cohomology theories, see Spanier \autocite{Spanier}, Ranicki--Roe \autocite[§2]{RaRo}. From the algebraic point of view, see the discussion in Joyce--Song \autocite[§6.7]{JoyceSong} and Fulton \autocite[§18.1]{Fultonintersection}.

\subsection{Orientation on compactly supported K-theory}

In \eqref{DO}, we define the strong H-principal $\mathbb{Z}_2$-bundle  $D^{\mathcal{C}}_O\to \mathcal{C}_{Y,D}$. We first describe its commutativity rules as in \autocite[Definition 2.22]{JTU}.
\begin{definition}
\label{DCO H principal map}
Let $\mu_{\mathcal{C}}: \mathcal{C}_{Y,D}\times \mathcal{C}_{Y,D}\to \mathcal{C}_{Y,D},\mu_{\textnormal{cs}}:\mathcal{C}^{\textnormal{cs}}\times \mathcal{C}^{\textnormal{cs}}\to \mathcal{C}^{\textnormal{cs}}$ be the binary maps and 
\begin{equation}
\label{tautau}
    \tau:D^{\mathcal{C}}_O\boxtimes D^{\mathcal{C}}_O\longrightarrow \mu_{\mathcal{C}}^*(D^\mathcal{C}_O)\,,\quad \tau^{\textnormal{cs}}:O^{\textnormal{cs}}\boxtimes O^{\textnormal{cs}}\longrightarrow \mu^*_{\textnormal{cs}}(O^{\textnormal{cs}})
\end{equation}
be the isomorphisms of $\Z_2$-bundles on $\mathcal{C}_{Y,D}\times \mathcal{C}_{Y,D}$, which make $(D^{\mathcal{C}}_O,\tau)$ into a strong H-principal $\Z_2$-bundle and $\tau^{\textnormal{cs}}$ a restriction of $\tau$.
\end{definition}
We recall the notion of Euler-form as defined in Joyce--Tanaka--Upmeier \autocite[Definition 2.20]{JTU} for real elliptic differential operators.

\begin{definition}
\label{definition differential euler form}
Let $X$ be a smooth compact manifold, $E_0,E_1$ vector bundles on $X$ and $P:E_0\to E_1$ a real or complex elliptic differential operator. Let $E,F\to X$ be complex vector bundles, the Euler form $\chi_P: K^0(X)\times K^0(X)\to\Z$ is defined by
$$
\chi_P(\llbracket E \rrbracket,\llbracket F \rrbracket) =  \textnormal{ind}_{\mathbb{C}}(\sigma(P)\otimes \textnormal{id}_{\pi^*(\textnormal{Hom}(E,F))})
$$
together with bi-additivity of $\chi_P$. We used the notation from §\ref{section pseudo} for symbols of operators. If $X$ is spin and $P=\slashed{D}_+$, we write $\chi^{\R}_X := \chi_{\slashed{D}}$. Similarly, if $X$ is a complex manifold and $D =\bar{\partial} + \bar{\partial}^*: A^{0,\textnormal{even}}\to A^{0,\textnormal{odd}}$ is the Dolbeault operator, then we use $\chi_X := \chi_D$.
\end{definition}

 Recall that we have a comparison map $c: G_0(X) \to  K^0(X)$, where $G_0(X)$ is the Grothendieck group associated to $D^b\big(\textnormal{Coh}(X)\big)$.  Let $\chi^{\textnormal{ag}}_X:K_0(X)\times K_0(X)\to \Z$ be defined by 
$
\chi^{\textnormal{ag}}_X(E,F) = \sum_{i\in \Z}(-1)^i \textnormal{dim}_\C(\textnormal{Ext}^i(E,F)),
$
then when $X$ is smooth , we have $\chi_X\circ (c\times c) = \chi^{\textnormal{ag}}_X$.

\begin{proposition}
\label{proposition sign}
Let $i_1,i_2: Y\to Y\cup_{D}Y$ be the inclusions of the two copies of $Y$
and $\delta(\alpha,\beta)\in K^0(Y\cup_{D}Y)$ denote a K-theory class, such that $i_1^*(\delta(\alpha,\beta)) = \alpha$ and $i_2^*(\delta(\alpha,\beta)) = \beta$. We have the bijection $\pi_0(\mathcal{C}_{Y,D})=K^0(Y\cup_{D}Y).$
Let $\mathcal{C}_{\delta(\alpha,\beta)}$ be the components corresponding to $\delta(\alpha,\beta)$ and $D^{\mathcal{C}}_O|_{\delta(\alpha,\beta)}$ the restriction of $D^{\mathcal{C}}_O$ to it.  Suppose a choice of trivialization $o_{\delta(\alpha,\beta)}:\Z_2\to D^{\mathcal{C}}_O|_{\delta(\alpha,\beta)}$ is given for each $\delta(\alpha,\beta)\in K^0(Y\cup_{D}Y)$, then define
$\epsilon_{\delta(\alpha_1,\beta_1),\delta(\alpha_2,\beta_2)}\in \{-1,1\}$ by
$$
\tau\big(o_{\delta(\alpha_1,\beta_1)}\boxtimes_{\Z_2}o_{\delta(\alpha_2,\beta_2)}\big) = \epsilon_{\delta(\alpha_1,\beta_1),\delta(\alpha_2,\beta_2)}o_{\delta(\alpha_1,\beta_1) + \delta(\alpha_2,\beta_2)}
$$
These signs satisfy:
\begin{align*}
\label{compatibilitysignsdg}
\epsilon_{\delta(\alpha_2,\beta_2),\delta(\alpha_1,\beta_1)}=&(-1)^{\big(\chi_{Y}(\alpha_1,\alpha_1) - \chi_{Y}(\beta_1,\beta_1)\big)\big(\chi_{Y}(\alpha_2,\alpha_2) - \chi_{Y}(\beta_2,\beta_2)\big) + \chi_{Y}(\alpha_1,\alpha_2) - \chi_{Y}(\beta_1,\beta_2)}\\
&\epsilon_{\delta(\alpha_1,\beta_1),\delta(\alpha_2,\beta_2)}\,.\\
\epsilon_{\delta(\alpha_1,\beta_1),\delta(\alpha_2,\beta_2)}&\epsilon_{\delta(\alpha_1+\alpha_2,\beta_1+\beta_2),\delta(\alpha_3,\beta_3)}
=\epsilon_{\delta(\alpha_2,\beta_2),\delta(\alpha_3,\beta_3)}
\epsilon_{\delta(\alpha_2+\alpha_3,\beta_2+\beta_3),\delta(\alpha_1,\beta_1)}\,.
\numberthis
\end{align*}
Let $(M_{\gamma(\alpha,\beta)})^{\textnormal{top}} = \Gamma^{-1}(\mathcal{C}_{\gamma(\alpha,\beta)})$, $(O^{\bowtie}_{\gamma(\alpha,\beta)})^{\textnormal{top}} =(O^{\bowtie})^{\textnormal{top}}|_{(M_{\gamma(\alpha,\beta)})^{\textnormal{top}}} $  and
$o^{\textnormal{ag}}_{\gamma(\alpha,\beta)} = \mathfrak{I}^{\bowtie}\big(\Gamma^*(o_{\gamma(\alpha,\beta)})\big)$
the trivializations of $(O^{\bowtie}_{\gamma(\alpha,\beta)})^{\textnormal{top}}$ obtained using $\mathfrak{I}^{\bowtie}$ from \eqref{isomorphism DCO Ovartheta}. Let  $\phi^{\bowtie}$ be from Proposition \ref{proposition Ovartheta Hprincipal},
then it satisfies
$$
\phi^{\bowtie}(o^{\textnormal{ag}}_{\delta(\alpha_1,\beta_1)}\boxtimes_{\Z_2} o^{\textnormal{ag}}_{\delta(\alpha_2,\beta_2)}\big) =\epsilon_{\delta(\alpha_1,\beta_1),\delta(\alpha_2,\beta_2)} o^{\textnormal{ag}}_{\delta(\alpha_1,\beta_1) + \delta(\alpha_2,\beta_2)}\,.
$$
\end{proposition}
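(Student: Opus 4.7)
The plan is to combine three inputs: (i) the strong H-principal structure on $D^{\mathcal{C}}_O$ from Lemma \ref{lemma everything is H in gauge}; (ii) the identification $\mathfrak{I}^{\bowtie}$ of strong H-principal $\Z_2$-bundles from Theorem \ref{maintheorem}, proved via Proposition \ref{proposition main}; and (iii) a computation of the $\Z_2$-degree of $D^{\mathcal{C}}_O$ on each connected component of $\mathcal{C}_{Y,D}$. First I would make precise the bijection $\pi_0(\mathcal{C}_{Y,D})\cong K^0(Y\cup_D Y)$ using the cofibration argument of Definition \ref{definition Gamma} together with the pushout/mapping-space computation from the proof of Proposition \ref{lemma groupcompletion}. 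The degree of $D^{\mathcal{C}}_O$ on the component labeled by $\delta(\alpha,\beta)$ is then computed by pulling back through $\mathfrak{I}^{\bowtie}$: by Definitions \ref{Plambda} and \ref{definitionbowtie} the line bundle $\mathcal{L}_{Y,D}$ has degree $\chi_Y(\alpha,\alpha)-\chi_Y(\beta,\beta)\pmod 2$ on $\pi_1^{-1}(\mathcal{M}_\alpha)\cap\pi_2^{-1}(\mathcal{M}_\beta)$, which via $\mathfrak{I}^{\bowtie}$ transports to the same degree for $D^{\mathcal{C}}_O$ on $\mathcal{C}_{\delta(\alpha,\beta)}$.

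Next, the two identities in \eqref{compatibilitysignsdg} should be extracted from the H-principal structure of $(D^{\mathcal{C}}_O,\tau)$. For the associativity relation, I would apply the axiom of Definition \ref{definition weakstrong Hprincipal} directly, which gives the standard cocycle $\epsilon_{\delta_1,\delta_2}\epsilon_{\delta_1+\delta_2,\delta_3}=\epsilon_{\delta_2,\delta_3}\epsilon_{\delta_1,\delta_2+\delta_3}$; then one commutativity switch $\delta_1\leftrightarrow(\delta_2+\delta_3)$ combined with the symmetry rule for $\epsilon$ (proved just below) rearranges this into the stated asymmetric form. For the commutativity rule, the factor $(-1)^{\deg_1\deg_2}=(-1)^{(\chi_Y(\alpha_1,\alpha_1)-\chi_Y(\beta_1,\beta_1))(\chi_Y(\alpha_2,\alpha_2)-\chi_Y(\beta_2,\beta_2))}$ is automatic from the $\Z_2$-gradedness of strong H-principal bundles (see the sign comparison in Definition \ref{definition weakstrong Hprincipal} and Lemma \ref{lemma Z2graded}). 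The residual factor $(-1)^{\chi_Y(\alpha_1,\alpha_2)-\chi_Y(\beta_1,\beta_2)}$ arises from analyzing the commutativity of the two component factors separately: since $D_O(\tilde Y)=p_1^*(O^{\slashed D_+})\boxtimes_{\Z_2}p_2^*((O^{\slashed D_+})^*)$, \autocite[Theorem 2.27]{JTU} applied to $O^{\slashed D_+}\to\mathcal{B}_{\tilde Y}$ contributes a switching sign of the form $(-1)^{\chi^{\slashed D}_{\tilde Y}(-,-)}$ for each factor, and the dualization of Definition \ref{definitiondual} introduces the extra minus on the $p_2$-factor. Finally, excision through the double (as implemented in §\ref{section relative framing} and recast algebraically by Proposition \ref{proposition comparing excision}) converts these $\tilde Y$-Dirac Euler forms into the $Y$-Dolbeault Euler forms $\chi_Y(\alpha_1,\alpha_2)$ and $\chi_Y(\beta_1,\beta_2)$.

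The second assertion, on the algebraic side, is then a direct consequence: Proposition \ref{proposition main} says $\mathfrak{I}^{\bowtie}$ is an isomorphism of strong H-principal $\Z_2$-bundles, so by definition it intertwines $\tau$ with $\phi^{\bowtie}$; applying this to the trivializations $o^{\textnormal{ag}}_{\delta(\alpha,\beta)}=\mathfrak{I}^{\bowtie}(\Gamma^*o_{\delta(\alpha,\beta)})$ gives the displayed formula with exactly the same signs $\epsilon_{\delta(\alpha_1,\beta_1),\delta(\alpha_2,\beta_2)}$.

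The main obstacle I expect is the precise bookkeeping in the commutativity calculation — specifically, verifying that the two contributions from the $p_i^*$-factors combine into exactly $(-1)^{\chi_Y(\alpha_1,\alpha_2)-\chi_Y(\beta_1,\beta_2)}$ with the right sign of the dualization, and confirming via the excision of §\ref{section comon resolution} (rather than by an independent index-theoretic Riemann--Roch comparison) that the Dirac switching index on $\tilde Y$ really does reduce to the Dolbeault switching index on $Y$ at the level of parities. All remaining steps are formal manipulations with strong H-principal $\Z_2$-bundles.
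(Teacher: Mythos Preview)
Your proposal is correct and follows essentially the same route as the paper: express $D^{\mathcal{C}}_O$ via the product $p_1^*(O^{\slashed D_+})\otimes p_2^*((O^{\slashed D_+})^*)$ on the double, read off the commutativity sign from \autocite{JTU} together with the dualization of Definition \ref{definitiondual} and the tensor rule of Lemma \ref{lemma Z2graded}, convert the $\tilde Y$-Dirac Euler forms to $Y$-Dolbeault Euler forms by index excision, and deduce the algebraic statement from the strong H-principal isomorphism of Proposition \ref{proposition main}. Two small remarks: the paper obtains the $\Z_2$-degree directly on the gauge side as $\chi^{\R}_{\tilde Y}(\tilde\alpha,\tilde\alpha)-\chi^{\R}_{\tilde Y}(\tilde\beta,\tilde\beta)$ rather than by transporting from $\mathcal{L}_{Y,D}$, and for the Dirac-to-Dolbeault conversion it invokes ordinary excision of the numerical index (which suffices for parities) rather than the full Proposition \ref{proposition comparing excision}; your observation that the displayed associativity identity is the standard cocycle composed with one commutativity switch is a detail the paper leaves implicit.
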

\begin{proof}
Recall the definition of $\tilde{Y}$, $\tilde{T}$ from Definition \ref{definition double}. One can express $D^{\mathcal{C}}_O(\tilde{Y})$ as a product of $\Z_2$-graded $\Z_2$-bundles $p_1^*(O_{\mathcal{C}}^{\slashed{D}_{\tilde{Y}}})\otimes p_2^*(O_{\mathcal{C}}^{\slashed{D}_{\tilde{Y}}})^*$  obtained by Proposition \ref{exun} from $O^{\slashed{D}_{\tilde{Y}}}$ in Example \ref{z2gradedbundles}, where $\mathcal{C}_{\tilde{Y}}\xleftarrow{p_1}\mathcal{C}_{\tilde{Y}}\times_{\mathcal{C}_{D}}\mathcal{C}_{\tilde{Y}}\xrightarrow{p_2} \mathcal{C}_{\tilde{Y}}$ are the projections and $\textnormal{deg}\big(O^{\slashed{D}_{\tilde{Y}}}_{\mathcal{C}}\big)|_{\mathcal{C}_{\tilde{\alpha}}} = \chi^{\mathbb{R}}_{\tilde{Y}}(\tilde{\alpha},\tilde{\alpha})$ . Using Definition \ref{dual} and Lemma \ref{lemma Z2graded} together with Joyce--Tanaka--Upmeier \autocite[2.26]{JTU}, a simple computation shows that for each $\tilde{\delta}_i(\tilde{\alpha}_i,\tilde{\beta}_i)$,  which under inclusion $\tilde{\iota}_{1,2}: \tilde{Y}\to \tilde{Y}\cup_{\tilde{T}}\tilde{Y}$ restrict to $\tilde{\alpha}_i,\tilde{\beta}_i$ respectively, we have the formula
\begin{align*}
\label{switching and signs on Y}
  &\tilde{\tau}_{\tilde{\delta}_2(\tilde{\alpha}_2,\beta_2), \tilde{\delta}_1(\tilde{\alpha}_1,\tilde{\beta}_1)} =&
  &(-1)^{\big(\chi^{\R}_{\tilde{Y}}(\tilde{\alpha}_1,\tilde{\alpha}_1) - \chi^{\R}_{\tilde{Y}}(\tilde{\beta}_1,\tilde{\beta}_1)\big)\big(\chi^{\R}_{\tilde{Y}}(\tilde{\alpha}_2,\tilde{\alpha}_2) - \chi^{\R}_{\tilde{Y}}(\tilde{\beta}_2,\tilde{\beta}_2)\big) + \chi^{\R}_{\tilde{Y}}(\tilde{\alpha}_1,\tilde{\alpha}_2) - \chi^{\R}_{\tilde{Y}}(\tilde{\beta}_1,\tilde{\beta}_2)}\\&
  \tilde{\tau}_{\tilde{\delta}_1(\tilde{\alpha}_1,\tilde{\beta}_1), \tilde{\delta}_2(\tilde{\alpha}_2,\tilde{\beta}_2)}
  \numberthis
\end{align*}
Two points $[E^\pm,F^\pm,\phi^\pm]$ of $\mathcal{V}_{Y}\times_{\mathcal{V}_{\bar{T}}}\mathcal{V}_{Y}$  map to $[\tilde{E}^{\pm},\tilde{F}^\pm,\tilde{\phi}^\pm]\in\mathcal{V}_{\tilde{Y},\tilde{T}}$, as described in \eqref{gluing equation}. Using excision on index and Definition \ref{definition differential euler form}, it follows that 
$$
\chi^{\R}_{\tilde{Y}}(\llbracket \tilde{E}^+\rrbracket,\llbracket \tilde{E}^-\rrbracket) - \chi_{\tilde{Y}}^{\R}(\llbracket \tilde{F}^+\rrbracket,\llbracket \tilde{F}^-\rrbracket)= \chi_{Y}(\llbracket E^+\rrbracket,\llbracket E^-\rrbracket) - \chi_{Y}(\llbracket F^+\rrbracket,\llbracket F^-\rrbracket)\,.
$$
Using \eqref{htgc} and biadditivity of $\chi$, we obtain 
$$
\chi^{\R}_{\tilde{Y}}(\tilde{\alpha}_1,\tilde{\alpha}_2) - \chi^{\R}_{\tilde{Y}}(\tilde{\beta}_1,\tilde{\beta}_2)= \chi_{Y}(\alpha_1,\alpha_2) - \chi_{Y}(\beta_1,\beta_2)\,,
$$
where $\tilde{\alpha}_i,\tilde{\beta}_i$ are K-theory classes glued from $\alpha_i,\beta_i$ as in \eqref{gluing equation}, from which we obtain 
\begin{align*}
\label{switching and signs}
  \tau_{\delta(\alpha_2,\beta_2), \delta(\alpha_1,\beta_1)} =& (-1)^{\big(\chi_{Y}(\alpha_1,\alpha_1) - \chi_{Y}(\beta_1,\beta_1)\big)\big(\chi_{Y}(\alpha_2,\alpha_2) - \chi_{Y}(\beta_2,\beta_2)\big) + \chi_{Y}(\alpha_1,\alpha_2) - \chi_{Y}(\beta_1,\beta_2)}\\
  &\tau_{\delta(\alpha_1,\beta_1), \delta(\alpha_2,\beta_2)}\,.  
\end{align*}
This leads to \eqref{compatibilitysignsdg} by using that $D^{\mathcal{C}}_O$ is strong H-principal. 
To conclude the final statement of the proposition, one applies Proposition \ref{proposition main}.
\end{proof}

Recall from Definition \ref{definition beforencmain} that we have the map $\Gamma^{\textnormal{cs}} : (\mathcal{M}_X)^{\textnormal{top}}\to \mathcal{C}^{\cs}_X$. There exists a compactly supported Chern character which is an isomorphism 
\begin{equation}
\label{cschern}
\textnormal{ch}_{\textnormal{cs}}: K^*_{\textnormal{cs}}(X)\otimes_{\mathbb{Z}}\mathbb{Q}\longrightarrow H^{*}_{\textnormal{cs}}(X,\mathbb{Q})\end{equation}
of $\Z_2$-graded rings. We also have the Euler form on $H^{\textnormal{even}}_{\textnormal{cs}}(X,\mathbb{Q})$:
\begin{align*}
\label{compactly supported chi}
& \bar{\chi}: H^{\textnormal{even}}_{\textnormal{cs}}(X, \mathbb{Q})\times H^{\textnormal{even}}_{\textnormal{cs}}(X, \mathbb{Q})\longrightarrow \mathbb{Q}\\
& \bar{\chi}(a,b)=\textnormal{deg}(a^\vee\cdot b\cdot \textnormal{td}(TX))_4 \,. 
\numberthis
\end{align*}
Combining \eqref{cschern} and \eqref{compactly supported chi}, one gets $
    \bar{\chi}:K^0_{\cs}(X)\times K^0_{\cs}(X)\to \Z\,.$
    Note that, we have $\bar{\chi}(\alpha,\beta) =\chi_{Y}(\bar{\alpha},\bar{\beta})$, where for a class $\alpha\in K^0_{\cs}(X)$, $\bar{\alpha}$ denotes the class of $K^0(Y)$ extended trivially.

Recall from Example \ref{algebraicbundles} that we have the isomorphisms $\phi^\omega:O^{\omega}\boxtimes_{\Z_2}O^{\omega}\to \mu^*(O^\omega)$. We can now state the main result of comparison of signs under sums in non-compact Calabi--Yau fourfolds. 

\begin{theorem}
\label{signscomparison}
Let $\mathcal{C}^{\textnormal{cs}}_{\alpha}$ denote the connected component of $\mathcal{C}_X^{\textnormal{cs}}$ corresponding to $\alpha\in K^0_{\textnormal{cs}}(X) =\pi_0(\mathcal{C}^{\textnormal{cs}}_X)$\, and $O^{\cs}_\alpha = O^{\cs}|_{\mathcal{C}^{\cs}_\alpha}$. Let $(\mathcal{M}_{\alpha})^{\textnormal{top}}=(\Gamma^{\textnormal{cs}})^{-1}(\mathcal{C}^{\textnormal{cs}}_{\alpha})$. After fixing choices of trivializations $o^{\textnormal{cs}}_\alpha$ of $O^{\textnormal{cs}}_\alpha$, we define $\epsilon_{\alpha,\beta}$
$$
\phi^\omega\Big(\mathfrak{I}\big(\Gamma^*o^{\textnormal{cs}}_\alpha\big)\boxtimes \mathfrak{I}\big(\Gamma^*o^{\textnormal{cs}}_{\beta}\big)\Big) \cong \epsilon_{\alpha,\beta}\mathfrak{I}\big(\Gamma^*o^{\textnormal{cs}}_{\alpha+\beta}\big)\,.
$$
If one moreover fixes  the preferred choice of $o^{\cs}_0$, such that \begin{equation}
\label{preferredchoice}
\tau^{\textnormal{cs}}(o^{\textnormal{cs}}_0\boxtimes o^{\textnormal{cs}}_0)=o^{\textnormal{cs}}_0\,,
\end{equation}
then  $\epsilon:(\alpha,\beta)\mapsto \epsilon_{\alpha,\beta}\in \{\pm 1 \}$ is up to equivalences the unique group 2-cocycle satisfying
\begin{equation}
\label{skewsymmetry}
    \epsilon_{\alpha,\beta}=(-1)^{\bar{\chi}(\alpha,\alpha)\bar{\chi}(\beta,\beta)+\bar{\chi}(\alpha,\beta)}\epsilon_{\beta,\alpha}
\end{equation}
\end{theorem}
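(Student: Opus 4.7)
The strategy is to reduce the sign computation to Proposition \ref{proposition sign} using the canonical identifications provided by Theorem \ref{mainnctheorem}, and to verify that the preferred normalization yields unique cocycle data up to coboundary.

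First I would observe that the inclusion $\kappa^{\cs}:\mathcal{C}^{\cs}_X\hookrightarrow \mathcal{C}_{Y,D}$ picks out the components labelled by classes of the form $\delta(\bar{\alpha},0)\in K^0(Y\cup_D Y)$, where $\bar{\alpha}\in K^0(Y)$ denotes the extension by zero across $D$ of $\alpha\in K^0_{\cs}(X)$; see Definition \ref{definition beforencmain}. Under this identification $O^{\cs}_\alpha$ restricts from $D^{\mathcal{C}}_O|_{\mathcal{C}_{\delta(\bar{\alpha},0)}}$, and the H-principal isomorphism $\tau^{\cs}$ of Definition \ref{DCO H principal map} is the restriction of $\tau$. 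Combining Proposition \ref{proposition main} (which identifies $\phi^{\bowtie}$ with $\tau$ under $\mathfrak{I}^{\bowtie}$) with the construction of $\phi^\omega$ as a pullback of $\phi^{\bowtie}$ along $\zeta$ (Example \ref{algebraicbundles}), one checks that $\phi^\omega$ is identified with $\tau^{\cs}$ under $\mathfrak{I}$. I would then apply Proposition \ref{proposition sign} to pairs $\delta_i:=\delta(\bar{\alpha}_i,0)$: the zero components contribute trivially to every Euler-form term, and the identity $\chi_Y(\bar{\alpha},\bar{\beta})=\bar{\chi}(\alpha,\beta)$ (an immediate consequence of Hirzebruch--Riemann--Roch together with the support condition on $\bar{\alpha},\bar{\beta}$) collapses the general switching formula to the required skew-symmetry \eqref{skewsymmetry}. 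The 2-cocycle condition $\epsilon_{\alpha,\beta}\epsilon_{\alpha+\beta,\gamma}=\epsilon_{\beta,\gamma}\epsilon_{\alpha,\beta+\gamma}$ is a direct consequence of the associativity axiom for strong H-principal $\Z_2$-bundles (Definition \ref{definition weakstrong Hprincipal}), which $\phi^\omega$ inherits via Proposition \ref{proposition Ovartheta Hprincipal}.

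For uniqueness up to equivalence, the preferred choice $\tau^{\cs}(o^{\cs}_0\boxtimes o^{\cs}_0)=o^{\cs}_0$ forces $\epsilon_{0,0}=1$, and the cocycle relation then yields $\epsilon_{0,\alpha}=\epsilon_{\alpha,0}=1$. Given another normalized cocycle $\epsilon'$ satisfying the same skew-symmetry, the quotient $\eta:=\epsilon/\epsilon'$ is a symmetric normalized $\Z_2$-valued 2-cocycle on $K^0_{\cs}(X)$; following the argument in \autocite[Theorem 2.27]{JTU} one shows that any such $\eta$ is a coboundary $\eta_{\alpha,\beta}=f(\alpha)f(\beta)f(\alpha+\beta)^{-1}$, corresponding precisely to the allowed rescaling $o^{\cs}_\alpha\mapsto f(\alpha)o^{\cs}_\alpha$ of the trivializations. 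I expect the main obstacle to lie in verifying that the identification of $\phi^\omega$ with $\tau^{\cs}$ under $\mathfrak{I}$ is coherent across all pairs of components simultaneously: Proposition \ref{proposition main} provides the identification only up to natural isotopies depending on $\mathfrak{ord}$ and on contractible auxiliary data, so one must carefully verify that the induced signs at the level of $\Z_2$-torsors are unambiguous and that the resulting cocycle is indeed globally well-defined on $K^0_{\cs}(X)$ rather than only on individual components.
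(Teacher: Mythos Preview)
Your proposal is correct and follows essentially the same approach as the paper: reduce to Proposition \ref{proposition sign} by specializing to classes of the form $\delta(\bar{\alpha},0)$, use $\chi_{Y}(\bar{\alpha},\bar{\beta})=\bar{\chi}(\alpha,\beta)$ to collapse the switching formula, and extract the cocycle condition from the strong H-principal structure. Your treatment of uniqueness via the coboundary argument is more explicit than the paper's terse proof, and your stated concern about coherence of $\mathfrak{I}$ across components is already addressed by the fact that Proposition \ref{proposition main} gives an isomorphism of \emph{strong} H-principal $\Z_2$-bundles, which by definition intertwines the multiplicative data globally.
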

\begin{proof}
Recall from the proof of Theorem \ref{maintheorem} that we have the isomorphism $\zeta^*\big(O^{\bowtie}\big)\cong O^\omega$ which is by construction in Proposition \ref{proposition Ovartheta Hprincipal} strong H-principal as can be checked directly by comparing $\Z_2$-torsors at each $\text{Spec}(A)$-point $[i_*(E),0]$.

By Proposition \ref{proposition sign} after setting $\beta_1$ and $\beta_2$ equal to 0 and $\alpha_1=\bar{\alpha}$, $\alpha_2 = \bar{\beta}$ it then follows that, we have \eqref{skewsymmetry} together with
\begin{equation}
\label{cocycle condition}
      \epsilon_{\alpha,0} = \epsilon_{0,\alpha}=1\,,\qquad
    \epsilon_{\alpha,\beta}\epsilon_{\alpha+\beta,\gamma}=\epsilon_{\beta,\gamma}\epsilon_{\alpha,\beta+\gamma}
\end{equation}
which it precisely the cocycle condition. The first condition follows from \eqref{preferredchoice}.
\end{proof}

We now discuss the orientation group from Joyce--Tanaka--Upmeier \autocite[Definition 2.26]{JTU} applied to $K^0_{\cs}(X)$ instead of $K^0(X)$ in our non-compact setting. The \textit{compactly supported orientation group} is defined as 
$$
\Omega_{\textnormal{cs}}(X) = \{(\alpha,o^{\textnormal{cs}}_{\alpha}): \alpha\in K_{\textnormal{cs}}^{0}(X), o^{\cs}_\alpha \textnormal{orientation on }\mathcal{C}^{\cs}_\alpha\}\,.
$$
The multiplication is given by 
$
(\alpha, o^{\cs}_\alpha)\star(\beta,o^{\cs}_\beta) =  \big(\alpha + \beta, \tau^{\cs}_{\alpha,\beta}(o^{\cs}_\alpha\boxtimes_{\Z_2} o^{\cs}_\beta)\big)\,.
$
 The resulting group is the unique group extension $0\to \Z_2\to \Omega_{\textnormal{cs}}(X)\to K^0_{\textnormal{cs}}(X)\to 0$ for the group 2-cycle $\epsilon$ of Theorem \ref{signscomparison}. Choices of orientations induce a splitting $K^0_{\textnormal{cs}}\to \Omega_{\textnormal{cs}}(X)$ as sets.
This fixed $\epsilon:K^0_{\textnormal{cs}}(X)\times K^0_{\textnormal{cs}}(X)\to \Z_2$ in Theorem \ref{signscomparison}.
Let us describe the method used in  \autocite[Thm. 2.27]{JTU} for extending orientations. Choosing generators of $K^0_{\textnormal{cs}}(X)$, one obtains
\begin{equation}
\label{Ktheory trivilization }
 K^0_{\cs}(X) \cong \Z^r\times \prod_{k=1}^p\Z_{m_k}\times\prod_{j=1}^q\Z_{2^{p_j}}\,,
\end{equation}
where $m_k>2$ odd and $p_j>0$.  Fixing a choice of isomorphism \eqref{Ktheory trivilization }, choose orientation on each $C^{\cs}_{\alpha_i}$, $\alpha_i=
(0,\ldots, 0,1,,0\ldots,0)$,
where 1 is in position $i$. Use $\tau^{\cs}$ to obtain orientations for all $\alpha\in K^0_{\textnormal{cs}}(X)$ by adding generators going from left to right in the form $(a_1,\ldots, a_p, (b_j)^q_{j=1}, (c_k)^p_{k=1})$ and using in each step
$o^{\textnormal{cs}}_{\alpha' + g} = \tau^{\textnormal{cs}}(o^{\cs}_{\alpha'}\boxtimes o^{\cs}_g),$
where $g$ is a generator. As a result, one obtains the splitting:
\begin{equation}
\label{trivialization isomorphism}
 \textnormal{Or}(\mathfrak{o}):\Omega_\cs(X)\cong K^0_{\cs}(X)\times \{-1,1\} \cong \Z^r\times \prod_{k=1}^q\Z_{m_k}\times\prod_{j=1}^p\Z_{2^{p_j}}\times \{-1,1\}\,, 
\end{equation}
where $\mathfrak{o}$ is the set of orientation on $C^{\cs}_\alpha$ for the chosen generators $\alpha$. Let $\bar{\chi}_{ij} := \bar{\chi}(\alpha_i,\alpha_j)$. The next result replaces K-theory by compactly supported K-theory in Joyce--Tanaka--Upmeier \cite[Thm. 2.27]{JTU} and considers the $\Z_2$-bundle $O^{\textnormal{cs}}$ we constructed.
\begin{theorem}[{\cite[Thm. 2.27]{JTU}}]
\label{theorem oreintation group}
 Let $Or(\mathfrak{o})$ be the isomorphism \eqref{trivialization isomorphism} for a given choice of orientations $\mathfrak{o}$ on generators corresponding to the isomorphism \eqref{Ktheory trivilization }. Let $T_2$ be the 2-torsion subgroup of $K^0_{\cs}(X)$. Then:
\begin{enumerate}[label=(\roman*)]
\item Define the map $\xi: T_2\to \mathbb{Z}_2$ as $\Xi(\gamma) = \epsilon_{\gamma,\gamma}$, Then it is a group homomorphism. 
\item Using $\textnormal{Or}(\mathfrak{o})$ from \eqref{trivialization isomorphism} to identify $\Omega^{\cs}(X)$ with $\Z^{r}\times\prod_{k=1}^p\Z_{m_k} \times\prod_{j=1}^q\Z_{2^{p_j}}\times \{-1,1\}$ the induced group structure on the latter becomes
\begin{align*}
   &\Big(a_1,\ldots, a_r, (b_j)_{j=1}^{p}, (c_k)_{k=1}^q, o\Big)\star\Big(a'_1,\ldots, a'_r, (b'_j)_{j=1}^{p} ,(c'_k)_{k=1}^q, o'\Big)\\
   &=\Big(a_1+a'_1,\ldots, a_r+a'_r, (b_j-b'_j)^p_{j=1}, (c_k + c'_k)_{k=1}^q,\\& (-1)^{\sum_{1\leq h<i\leq r}(\bar{\chi}_{hi}+\bar{\chi}_{hh}\bar{\chi}_{ii})a'_ha_i}\Xi(\gamma) o\cdot o'\Big)\,,
\end{align*}
where $\gamma = (0,\ldots, 0, (0)_{k=1}^q, (\tilde{c}_j)_{j=1}^{q})$ and $$\tilde{c}_j=\bigg\lfloor\frac{\bar{c}_j+\bar{c}'_j}{\overline{c_j+c'_j}} \bigg\rfloor 2^{p_j-1}$$ for the unique representatives $0\leq \bar{c}_j,\bar{c}'_j, \overline{c_j+c'_j}<2^{p_j}$.
\end{enumerate}
\label{theorem choice of orientation}
\end{theorem}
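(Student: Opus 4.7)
The plan is to follow the strategy of \cite[Thm.~2.27]{JTU} verbatim, since all the algebraic input we need consists of the two relations
\begin{equation*}
\epsilon_{\alpha,\beta}\epsilon_{\alpha+\beta,\gamma}=\epsilon_{\beta,\gamma}\epsilon_{\alpha,\beta+\gamma},\qquad
\epsilon_{\alpha,\beta}=(-1)^{\bar{\chi}(\alpha,\alpha)\bar{\chi}(\beta,\beta)+\bar{\chi}(\alpha,\beta)}\epsilon_{\beta,\alpha},
\end{equation*}
together with the normalization $\epsilon_{\alpha,0}=\epsilon_{0,\alpha}=1$, all of which are supplied by Theorem \ref{signscomparison}. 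The only substantive difference from the compact case is that $K^0(X)$ is replaced by $K^0_{\cs}(X)$ and $\chi$ by $\bar\chi$; since neither the cocycle identity nor the skew-symmetry formula cares about which Euler pairing one uses, the proof proceeds identically.

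For part (i), the first observation is that on the 2-torsion $T_2\subset K^0_{\cs}(X)$ the bilinearity of $\bar\chi$ forces $\bar\chi(\gamma_1,\gamma_2)=0$ for all $\gamma_i\in T_2$, because $2\bar\chi(\gamma_1,\gamma_2)=\bar\chi(2\gamma_1,\gamma_2)=0$ and $\bar\chi$ takes integer values. In particular $\bar\chi(\gamma_i,\gamma_i)=0$, so skew-symmetry collapses to $\epsilon_{\gamma_1,\gamma_2}=\epsilon_{\gamma_2,\gamma_1}$ on $T_2$. I would then compute $\Xi(\gamma_1+\gamma_2)=\epsilon_{\gamma_1+\gamma_2,\gamma_1+\gamma_2}$ by applying the cocycle identity three times: first with $(\alpha,\beta,\gamma)=(\gamma_1,\gamma_1,\gamma_2)$ to rewrite it as $\Xi(\gamma_1)\cdot\epsilon_{\gamma_1,\gamma_1+\gamma_2}\epsilon_{\gamma_1,\gamma_2}^{-1}$, then with $(\alpha,\beta,\gamma)=(\gamma_1+\gamma_2,\gamma_2,\gamma_1)$ to eliminate $\epsilon_{\gamma_1,\gamma_1+\gamma_2}$, and finally with $(\alpha,\beta,\gamma)=(\gamma_2,\gamma_2,\gamma_1)$ to bring in $\Xi(\gamma_2)$. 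Using $2\gamma_i=0$, $\epsilon_{0,\star}=1$, and the symmetry $\epsilon_{\gamma_1,\gamma_2}=\epsilon_{\gamma_2,\gamma_1}$, the cross terms cancel in pairs and one is left with $\Xi(\gamma_1+\gamma_2)=\Xi(\gamma_1)\Xi(\gamma_2)$, so $\Xi$ is a homomorphism.

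For part (ii), I would argue by unwinding the definition of $o^{\cs}_\alpha$ in terms of the chosen ordered generators. Writing $\alpha=(a_1,\ldots,a_r,(b_j),(c_k))$, the trivialization $o^{\cs}_\alpha$ is constructed by iterated application of $\tau^{\cs}$ to generators taken in the fixed left-to-right order. To compute $\tau^{\cs}(o^{\cs}_\alpha\boxtimes o^{\cs}_{\alpha'})$ and compare it to $o^{\cs}_{\alpha+\alpha'}$ one must (a) commute the generators of $\alpha'$ past those of $\alpha$ to interleave them correctly, and (b) handle the fact that for each cyclic factor $\Z_{2^{p_j}}$ the sum $\bar c_j+\bar c'_j$ may exceed $2^{p_j}$, in which case one must subtract a 2-torsion class $2^{p_j-1}\cdot(\text{generator})$ and absorb it via $\Xi$. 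The skew-symmetry formula contributes a factor $(-1)^{\bar\chi_{hi}+\bar\chi_{hh}\bar\chi_{ii}}$ for every swap of generators labeled by $h<i$ in step (a); counting the swaps gives the exponent $\sum_{h<i}(\bar\chi_{hi}+\bar\chi_{hh}\bar\chi_{ii})a'_h a_i$. The wrap-around contributions in step (b) produce, by definition, $\Xi(\gamma)$ with $\gamma$ the sum of the discarded $2^{p_j-1}$-generators, which is exactly the formula for $\tilde c_j$ given in the statement.

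The main obstacle is the bookkeeping in part (ii), namely verifying that when one moves odd-torsion and free generators past each other the resulting sign is really given by the stated double sum (the odd-torsion factors $\Z_{m_k}$ contribute no sign because $m_k$ is odd, which must be checked separately using bilinearity of $\bar\chi$ modulo 2), and that the wrap-around elements in distinct $\Z_{2^{p_j}}$ components do not interact beyond what the homomorphism property of $\Xi$ from part (i) permits. Both points are handled by applying the cocycle identity systematically in the order of generators dictated by the chosen splitting \eqref{Ktheory trivilization }, exactly as in the proof of \cite[Thm.~2.27]{JTU}.
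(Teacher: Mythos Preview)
Your proposal is correct and matches the paper's approach: the paper gives no proof of its own for this statement, simply citing \cite[Thm.~2.27]{JTU} and noting that the only change is replacing $K^0(X)$ by $K^0_{\cs}(X)$ and $\chi$ by $\bar\chi$, with the necessary cocycle and skew-symmetry relations supplied by Theorem~\ref{signscomparison}. Your sketch of how the JTU argument actually runs (the homomorphism computation for $\Xi$ on $T_2$, the generator-commutation bookkeeping, and the wrap-around contribution from the cyclic $2$-power factors) is therefore more detailed than what the paper itself provides, but entirely in line with its intent.
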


\printbibliography
\end{document}